\definecolor{green}{RGB}{0,144,0}
\definecolor{bluegreen}{RGB}{17,100,180}
\numberwithin{equation}{section}
\numberwithin{figure}{section}
\numberwithin{table}{section}
\newtheorem{theorem}{Theorem}[section]
\newtheorem*{theorem*}{Theorem}
\newtheorem{conjecture}[theorem]{Conjecture}
\newtheorem{lemma}[theorem]{Lemma}
\newtheorem{corollary}[theorem]{Corollary}
\newtheorem{proposition}[theorem]{Proposition}
\newtheorem{question}[theorem]{Question}
\newtheorem{algorithm}[theorem]{Algorithm}
\newcommand\zz{\mathbb{Z}}
\newtheorem{remark}[theorem]{Remark}
\newtheorem{example}[theorem]{Example}
\newcommand{\teichmuller}{Teichm{\"u}ller }
\newcommand{\calH}{\mathcal{H}}
\newcommand{\odd}{\mathcal{H}^{odd}}
\newcommand{\even}{\mathcal{H}^{even}}
\newcommand{\hyp}{\mathcal{H}^{hyp}}
\newcommand{\C}{\mathbb{C}}
\newcommand{\Z}{\mathbb{Z}}
\newcommand{\R}{\mathbb{R}}
\DeclareMathOperator{\SL}{\text{SL}}
\DeclareMathOperator{\Sp}{\text{Sp}}
\DeclareMathOperator{\PSL}{\text{PSL}}
\DeclareMathOperator{\PGL}{\text{PGL}}
\DeclareMathOperator{\PGamL}{\text{P$\Gamma$L}}
\DeclareMathOperator{\PG}{\text{PG}}
\DeclareMathOperator{\Sym}{\text{Sym}}
\DeclareMathOperator{\Alt}{\text{Alt}}
\DeclareMathOperator{\Mon}{\text{Mon}}
\DeclareMathOperator{\Pic}{\text{Pic}}
\DeclareMathOperator{\ind}{\text{ind}}
\DeclareMathOperator{\Gal}{\text{Gal}}
\DeclareMathOperator{\Mod}{\text{Mod}}
\DeclareMathOperator{\Aff}{\text{Aff}}
\DeclareMathOperator{\Aut}{\text{Aut}}
\DeclareMathOperator{\Spin}{\text{Spin}}
\title[Monodromy and spin parity of single-cylinder origamis]{On the monodromy and spin parity of single-cylinder origamis in the minimal stratum}
\author{Tarik Aougab}
\address{Department of Mathematics, Haverford College, Haverford, PA 19041, USA}
\curraddr{}
\email{taougab@haverford.edu}
\author{Adam Friedman-Brown}
\address{Department of Mathematics, University of Virginia, Charlottesville, VA 2290, USA}
\curraddr{}
\email{asm4pr@virginia.edu}
\author{Luke Jeffreys}
\address{School of Mathematics, University of Bristol, Fry Building, Woodland Road, Bristol BS8 1UG, UK}
\curraddr{}
\email{luke.jeffreys@bristol.ac.uk}
\thanks{The third author is a Leverhulme Early Career
Fellow (ECF-2023-553) and thanks the Leverhulme Trust for their support.}
\author{Jiajie Ma}
\address{Department of Mathematics, Duke University, Durham, NC 27708-0320, USA}
\curraddr{}
\email{jason.ma@duke.edu}
\date{}
\subjclass[2020]{Primary: 32G15, 30F30, 30F60. Secondary: 57M50.}
\begin{document}

\begin{abstract}
In a paper with Menasco-Nieland, the first author constructed factorially many origamis in the minimal stratum of the moduli space of translation surfaces having simultaneously a single vertical cylinder and a single horizontal cylinder. Moreover, these origamis were constructed using the minimal number of squares required for origamis in the minimal stratum. We shall call such origamis minimal $[1,1]$-origamis. 

In this work, we calculate all of the spin parities of the Aougab-Menasco-Nieland origamis, and we therefore determine the connected component of the minimal stratum within which each is contained. Motivated by understanding the $\SL(2,\Z)$-orbits of these origamis, we investigate their monodromy groups, in particular proving that all of them are alternating or projective special linear groups. In fact, we prove more generally that the monodromy group of a minimal $[1,1]$-origami must almost always be a finite simple group. Finally, we determine the Kontsevich-Zorich monodromies of these origamis in low genus and give a conjecture in general.

Note that previous works in the literature (e.g., \cite{EKZ}, \cite{FFM}, \cite{G-R}, \cite{KanMat}, \cite{MYZ}, \cite{Zm}, \cite{Z3}) often chose to discuss just one of these $\SL(2,\Z)$-invariants at a time: in particular, to the best our knowledge, this is one of the first places where all of these $\SL(2,\Z)$-invariants are computed explicitly in a single paper for such a large family of origamis.
\end{abstract}

\maketitle
\tableofcontents


\section{Introduction}\label{sec:intro}

An \emph{origami} (or \emph{square-tiled surface}) is a closed, connected and orientable surface obtained from a collection of unit squares in $\R^{2}$ by identifying via translation left-hand sides with right-hand sides, and top sides with bottom sides. The torus, obtained by identifying the opposite sides of a single unit square, is the basic example and, in fact, all origamis are branched covers of the square-torus. Indeed, an origami can be equivalently defined as a closed, connected Riemann surface $X$ equipped with a non-zero holomorphic 1-form $\omega$ such that $X$ is realised as a finite branched cover $f:X\to\C/(\Z+i\Z)$, branched only over $\textbf{0}\in\C/(\Z+i\Z)$, such that $\omega = f^{*}(dz)$.

From the latter definition, we see that origamis are special cases of \emph{translation surfaces} -- closed, connected Riemann surfaces $X$ equipped with a non-zero holomorphic 1-form $\omega$. The moduli space $\calH$ of genus $g$ translation surfaces is an important object of modern research. It is naturally stratified by the orders of the zeros of the 1-form $\omega$, and each stratum can have up to three connected components. We direct the reader to Section~\ref{sec:prelim} for more details. For now, we denote by $\calH(2g-2)$ the subset of $\calH$ of those translation surfaces $(X,\omega)$ for which $\omega$ has a single zero of order $2g-2$. This subset is called the \emph{minimal stratum}. It can have three connected components: a hyperelliptic component $\hyp(2g-2)$, a component $\odd(2g-2)$ called the odd component containing origamis with odd spin parity, and a component $\even(2g-2)$ called the even component containing origamis with even spin parity. Again, we direct the reader to Section~\ref{sec:prelim} for further details.

A \emph{cylinder} in an origami is a maximally embedded flat annulus. Equivalently, it is a maximal collection of freely homotopic closed geodesics in the (singular) flat metric determined by $\omega$ that avoid the zeros of $\omega$. An important piece of geometric/combinatorial information is the number of cylinders in the vertical and horizontal directions. The simplest (yet also the most combinatorially restricted) origamis are those with simultaneously a single horizontal cylinder and a single vertical cylinder. We shall call such an origami a \emph{$[1,1]$-origami}.

The third author~\cite{J1} has determined the minimal number of squares required to construct a $[1,1]$-origami in any connected component of any stratum of the moduli space $\calH$ of translation surfaces. For the odd and even components of $\calH(2g-2)$, a $[1,1]$-origami requires at least $2g-1$ squares, while such an origami requires $4g-4$ squares in the hyperelliptic component. We shall call a $[1,1]$-origami in $\calH(2g-2)$ \emph{minimal} if it is built from $2g-1$ squares.

The above mentioned work of the third author constructed only a single $[1,1]$-origami realising the minimal number of squares in each connected component of the moduli space. It is therefore natural to ask how many such $[1,1]$-origamis exist in each connected component. The first author in joint work with Menasco-Nieland~\cite{AMN} constructed factorially many minimal $[1,1]$-origamis in the minimal stratum $\calH(2g-2)$, for $g\geq 3$. However, this work did not determine which connected components of $\calH(2g-2)$ the constructed origamis were contained in. That is, one can ask the following.

\begin{question}
Which connected components of $\calH(2g-2)$ contain the minimal $[1,1]$-origamis constructed by Aougab-Menasco-Nieland?
\end{question}

Note that the hyperelliptic component is ruled out by the discussion above (a $[1,1]$-origami here requiring at least $4g-4$ squares). We answer the above question with the following theorem.

\begin{theorem}\label{thm:AMN}
All of the $(g-2)!$ origamis of odd genus in the construction of Aougab-Menasco-Nieland have odd spin parity and hence lie in the odd component $\odd(2g-2)$ of the minimal stratum $\calH(2g-2)$.

Of the $(g-3)(g-3)!$ origamis of even genus in the construction of Aougab-Menasco-Nieland, $\frac{1}{4}(3(g-3)^{2}+1)(g-4)!$ have odd spin parity, so lie in the odd component $\odd(2g-2)$, while the remaining $(\frac{g}{2}-1)(\frac{g}{2}-2)(g-4)!$ have even spin parity and so lie in the even component $\even(2g-2)$.
\end{theorem}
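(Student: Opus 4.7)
The plan is to compute the spin parity (Arf invariant) of each origami in the Aougab-Menasco-Nieland family directly, express it as an explicit function of the combinatorial parameters that index the construction, and then count how many parameters give each value. Recall that an AMN origami is specified by combinatorial data on $g-2$ (odd genus) or $g-3$ (even genus) objects, so the target counts $(g-2)!$, $\tfrac{1}{4}(3(g-3)^{2}+1)(g-4)!$, and $(\tfrac{g}{2}-1)(\tfrac{g}{2}-2)(g-4)!$ should emerge as enumerations of parameters satisfying a parity condition.

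First I would choose a symplectic basis of $H_{1}(X;\Z/2)$ adapted to the $[1,1]$-structure. The horizontal and vertical core curves $h$ and $v$ give two natural $\Z/2$-classes, and the remaining $2g-2$ basis elements can be represented by simple closed curves built from one horizontal saddle connection and one vertical saddle connection joined at the unique cone point of total angle $2\pi(2g-1)$. For each such basis curve $\gamma$, I would compute $q(\gamma)$ using the Johnson/Kontsevich-Zorich formula $q(\gamma)\equiv 1+\mathrm{ind}(\gamma)\pmod{2}$, where $\mathrm{ind}(\gamma)$ is the degree of the Gauss map of the tangent direction along a smoothed representative. Since the horizontal and vertical tangent directions are parallel on the flat pieces of each cylinder, the index contribution of each such curve reduces to the turning data at the cone point, which is completely encoded by the cyclic order in which the horizontal and vertical saddle connections arrive at the zero; this cyclic order is precisely the AMN data.

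Summing the products $q(a_{i})q(b_{i})$ across the symplectic basis then produces the spin parity as an explicit combinatorial expression in the AMN permutation. I expect the resulting formula to simplify to a sum over descent-like or parity-of-position statistics, so that Step 4 becomes the enumeration of AMN permutations with a prescribed residue modulo $2$. For odd $g$, the aim is to show that every AMN permutation gives Arf invariant $1$; for even $g$, the aim is to verify the split $\tfrac{1}{4}(3(g-3)^{2}+1)(g-4)!$ versus $(\tfrac{g}{2}-1)(\tfrac{g}{2}-2)(g-4)!$ by direct enumeration.

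The main obstacle will be the combined geometric/combinatorial step: translating the turning data at the cone point into a tractable closed-form expression in the AMN parameter, while controlling the relations among the many saddle-connection cycles so that the chosen basis is genuinely symplectic over $\Z/2$. Once a clean formula is available, the uniform oddness in the odd-genus case and the asymptotic $3{:}1$ split in the even-genus case should both follow from elementary but careful combinatorial counting.
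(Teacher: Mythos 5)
Your plan has the right ingredients (a symplectic basis over $\Z/2$, the Johnson quadratic form $\Phi(\gamma)=\mathrm{ind}(\gamma)+1$, counting permutations by a parity statistic) but stops exactly where the actual work begins, and the step you flag as ``the main obstacle'' is the whole theorem. Two concrete gaps. First, for odd genus the nontrivial assertion is that \emph{every} AMN origami has the same (odd) Arf invariant. You plan to derive this from a closed formula in the AMN data and observe that the formula is identically $1$, but such a formula would depend on the full sequence of chosen pairs $(2k_i,2k_i+1)$ together with their intersection pattern after the orthogonalisation steps; it is not at all clear that this collapses to a constant without a structural reason. The paper supplies that reason by a different argument: any two odd-genus AMN origamis are connected by combinatorial moves $\tau\mapsto\tau\mu$ with $\mu$ a product of two disjoint $3$-cycles (Proposition~\ref{prop:same-spin}), and a separate lemma (Lemma~\ref{lem:key}) shows these moves preserve the Arf invariant by tracking the orthogonalisation explicitly; then one computation (Proposition~\ref{prop:odd-odd-2}/Example~\ref{ex:ARF:AMNodd}) finishes it. Your approach, as written, gives no analogue of the invariance lemma and so cannot justify the constancy. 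Second, for even genus the spin parity genuinely varies, and the criterion is not a ``descent-like'' statistic: the paper proves (Theorem~\ref{thm:AMN-even-g}) that the even-genus AMN origami built from $(\sigma_{g-1},\eta)$ with inserted pair $k$ has even spin iff $k\equiv 1\pmod 4$ and $\eta(k)\equiv 2\pmod 4$, a mod-$4$ condition on \emph{both} $k$ and its image, derived by tracking the Arf invariant through the three steps of the surgery (add two squares, then swap labels $k$ and $2g-2$). Without identifying this criterion, the enumeration $\left(\tfrac{g}{2}-1\right)\left(\tfrac{g}{2}-2\right)(g-4)!$ cannot be reached; the paper's count itself involves a careful bijective argument via ``tuples'' recording the positions of the pairs $(k,k-1)$ with $k\equiv 1\pmod 4$.

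A smaller technical caution on the basis you propose: curves built from one horizontal and one vertical saddle connection joined at the cone point of angle $2\pi(2g-1)$ do not automatically give a symplectic $\Z/2$-basis, and their pairwise intersection numbers depend on the cyclic ordering of separatrices, which is exactly the AMN data you would need to control. The paper instead uses the cycles $c_i$ joining identified horizontal edges (Zorich's generating set) and the orthogonalisation machinery, which gives a systematic recursion for the intersection numbers; you would need an equivalent handle on your basis before any index computation could begin. In short, the proposal identifies the correct invariant and the correct formula but does not supply either the invariance argument that powers the odd-genus case or the surgery-tracking argument that powers the even-genus case, and both are essential.
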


We remark that much more is known about counting origamis and square-tiled surfaces \textit{in the asymptotics}, as the number of squares goes to infinity (see for instance the work of Delecroix-Goujard-Zograf-Zorich \cite{DGZZ1}, \cite{DGZZ2}, \cite{DGZZ3}). In the complex structure on the moduli space of quadratic differentials induced by period coordinates, square-tiled surfaces are the integer points; for this reason, computing such asymptotic counts allows for the calculation of Masur-Veech volumes (see also the works of Eskin-Okounkov~\cite{EO} and Zorich~\cite{Z1}). On the other hand, Theorem \ref{thm:AMN} addresses a somewhat orthogonal question: how many origamis are there in a particular connected component of the moduli space (namely, the non-hyperelliptic components of $\calH(2g-2)$) with precisely $2g-1$ squares? 

We see that all of the odd genus origamis are contained in the odd connected component, and the even genus origamis are partitioned between the odd and even components in a ratio that limits to 3:1. It is therefore natural to ask whether there exist factorially many odd genus minimal origamis in the even component. Moreover, as detailed in Section~\ref{sec:AMN} below, the even genus construction of Aougab-Menasco-Nieland requires (when understood geometrically) a surgery-like operation while the odd genus construction remains naturally algebraic.

In this paper, we give a natural generalisation of the odd genus construction of Aougab-Menasco-Nieland that allows us to construct factorially many odd genus minimal $[1,1]$-origamis in the even component and also avoids the surgery-like operation in even genus. However, as a cost of avoiding the surgery-like operation, we cannot produce as many origamis.

\begin{theorem}\label{thm:gen-const}
The natural generalisations of the odd genus Aougab-Menasco-Nieland construction given in Section~\ref{sec:gen-const} produce, for odd $g\geq 5$, $(g-5)!$ minimal $[1,1]$-origamis in the even component $\even(2g-2)$ of the minimal stratum, and, for even $g\geq 4$, $(g-4)!$ many minimal $[1,1]$-origamis in each of $\odd(2g-2)$ and $\even(2g-2)$.
\end{theorem}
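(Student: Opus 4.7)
The plan mirrors the proof pattern of Theorem~\ref{thm:AMN}, applied to the generalised construction of Section~\ref{sec:gen-const}. First, I would encode the construction as a pair $(h,v)$ of single $(2g-1)$-cycles in $S_{2g-1}$, parametrised by a permutation on a set of size $(g-5)!$ when $g$ is odd, or of size $(g-4)!$ when $g$ is even (the latter together with one additional binary parameter that will ultimately toggle the spin parity). The resulting pair defines a minimal $[1,1]$-origami in $\calH(2g-2)$ precisely when $h$ and $v$ are each single $(2g-1)$-cycles (guaranteeing the single horizontal and single vertical cylinder) and when the commutator $[h,v]$ is itself a single $(2g-1)$-cycle (forcing a unique zero of order $2g-2$); both properties I would verify directly from the explicit form of the construction.

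Second, I would show that distinct parameter values yield non-isomorphic origamis by fixing a canonical labelling of the squares via the $h$-orbit based at a chosen square, so that the pair $(h,v)$ is recovered uniquely from the origami. This is the same rigidification used in~\cite{AMN} for the odd genus construction, and it immediately gives the factorial lower bounds stated in the theorem.

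The main technical obstacle is computing the spin parity. I would choose a symplectic basis of $H_{1}(X;\Z/2)$ adapted to the $[1,1]$-decomposition: pair the horizontal core curve $\alpha$ with the vertical core curve $\beta$ (adjusted by a saddle-connection segment so that $\alpha\cdot\beta=1$), and complete to a basis using additional cycles read off from the cycle structure of $hv^{-1}$. Evaluating the quadratic form $q$ of the spin structure on each basis element then reduces, in this combinatorial setting, to a count of subcycle lengths of $h$ and $v$ modulo $2$, and the Arf invariant is the mod-$2$ sum $\sum q(a_{i})q(b_{i})$. I expect the calculation for odd $g\geq 5$ to produce $q$-values that are independent of the parameter, yielding a uniform Arf invariant of $0$ and so placing every origami in $\even(2g-2)$; for even $g\geq 4$ I expect the binary parameter to flip exactly one term in the Arf sum, splitting the $2(g-4)!$ origamis evenly between $\odd(2g-2)$ and $\even(2g-2)$. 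Bookkeeping the Arf contribution across an entire family at once is the genuine difficulty; I would address it by isolating the parameter-dependent portion of the computation early, and then checking directly that it is constant modulo $2$ in the odd genus case and toggled by the binary choice in the even genus case.
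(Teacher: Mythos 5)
Your overall architecture matches the paper's: encode as a pair of $(2g-1)$-cycles, check that $\tau$ and $[\sigma_g,\tau]$ are both $(2g-1)$-cycles, rigidify by fixing $\sigma_g$ to get the factorial count, then compute spin parity via the Arf invariant of the associated quadratic form. The counting and $[1,1]$-verification steps are fine and essentially what Section~\ref{sec:gen-const} does.

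The gap is in the spin-parity calculation, and it is the heart of the matter. You acknowledge that ``bookkeeping the Arf contribution across an entire family at once'' is the genuine difficulty, but your plan for it --- ``isolate the parameter-dependent portion and check directly that it is constant mod~2'' --- is precisely the thing that needs to be proved, and you do not propose a mechanism. The paper's resolution is a concrete invariance lemma (Lemma~\ref{lem:key}): multiplying $\tau$ on the right by any permutation of the form $\mu=(2i,2j,2k)(2i+1,2j+1,2k+1)$ supported away from a fixed prefix set $P$ leaves the spin parity unchanged. Together with Proposition~\ref{prop:same-spin}, which shows that any two origamis in the same subfamily differ by a composition of such $\mu$'s (since transpositions of the ``chosen pairs'' in the placement algorithm generate the full symmetric group on those pairs), this reduces the problem to a single representative computation per subfamily. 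Without an analogue of that lemma, your ``isolate the parameter-dependent part'' step has no traction. It is also worth flagging that such an invariance statement genuinely requires the $3$-cycle structure of $\mu$: the paper exhibits an example (Figure~\ref{fig:non-example}) where replacing a $3$-cycle by a transposition $(2i,2j)(2i+1,2j+1)$ flips the spin parity. So a naive ``the family is connected, hence the parity is constant'' argument would be false unless you track which moves preserve parity and which toggle it.

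A second, lesser concern is your choice of symplectic basis. You propose pairing the horizontal and vertical core curves $\alpha,\beta$ and completing to a basis using ``cycles read off from the cycle structure of $hv^{-1}$'', with the quadratic form values reducing to a ``count of subcycle lengths of $h$ and $v$ modulo $2$''. This is not a standard method and you give no argument that the resulting collection is a basis, nor that the index computation reduces to a cycle-length count. The paper instead follows Zorich's orthogonalisation: the $2g$ homology classes $c_0,\dots,c_{2g-1}$ dual to the horizontal side-identifications form a generating set with $\Phi(c_i)=1$ for every $i$, and one iteratively symplectically orthogonalises them while tracking $\Phi$ via the quadratic refinement. The intersection data $c_i\cdot c_j$ is read directly from the square diagram, and the Arf bookkeeping is transparent. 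I would expect you to need something equivalent. Finally, describing the two even-genus cases as a single family with ``one additional binary parameter'' is a defensible framing, but in the paper these are two separate constructions (Subsections~\ref{subsec:gen-even-odd} and~\ref{subsec:gen-even-even}) with different fixed prefixes of $\tau$, each proved to produce constant spin parity by its own representative computation; the ``binary parameter toggles one Arf term'' claim is a conjecture in your proposal, not an argument.
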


The \emph{monodromy group} of an origami is a subgroup of the symmetric group of degree equal to the number of squares of the origami. An origami is said to be \emph{primitive} if its monodromy group is a primitive subgroup of the symmetric group. Equivalently, an origami is primitive if it is not a non-trivial proper cover of another origami. We direct the reader to Section~\ref{sec:prelim} for more details. In the case of minimal $[1,1]$-origamis in $\calH(2g-2)$, the monodromy group must be a subgroup of the alternating group $\Alt_{2g-1}$ of degree $2g-1$.

The well-studied $\SL(2,\R)$ action on translation surfaces (see, for example, the works of Eskin-Mirzakhani~\cite{EM} and Eskin-Mirzakhani-Mohammadi~\cite{EMM}) restricts to an action of $\SL(2,\Z)$ on primitive origamis. The monodromy group of an origami is a weak $\SL(2,\Z)$-invariant in the sense that it is preserved by the action of $\SL(2,\Z)$, but two origamis can have the same monodromy group even if they do not lie in the same $\SL(2,\Z)$-orbit. The action of $\SL(2,\Z)$ also respects the connected components of strata. The classification of $\SL(2,\Z)$-orbits of primitive origamis has only been carried out in the stratum $\calH(2)$ by Hubert-Leli\`evre~\cite{HL} and McMullen~\cite{McM} and remains open in general.

We provide the following analysis of the monodromy groups of the origamis considered in this paper.

\begin{theorem}\label{thm:monodromy}
    All of the minimal $[1,1]$-origamis constructed by Aougab-Menasco-Nieland and their generalisations constructed in Section~\ref{sec:gen-const} are primitive.

    Furthermore, all of the odd genus origamis constructed by Aougab-Mensaco-Nieland have monodromy group isomorphic to the alternating group $\Alt_{2g-1}$. The remaining origamis have monodromy groups isomorphic to the alternating group $\Alt_{2g-1}$ or to $PSL(d,q)$ for some $d$ and $q$ with $\frac{q^{d}-1}{q-1} = 2g-1$ and $\gcd(d,q-1) = 1$.
\end{theorem}

The latter sentence follows from Theorem~\ref{thm:simple} below, however we are not able to determine which of the two groups (alternating or projective special linear) occur. Despite this, and based on computer experiments, we make the following conjecture:

\begin{conjecture}
    All of the $[1,1]$-origamis in Theorem~\ref{thm:monodromy} have monodromy group isomorphic to $\Alt_{2g-1}$.
\end{conjecture}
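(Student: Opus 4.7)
The plan is to combine the primitivity established in Theorem~\ref{thm:monodromy} with the classification of primitive permutation groups that contain a full cycle, and then rule out every non-alternating possibility. Write $n := 2g-1$. For a minimal $[1,1]$-origami in $\calH(2g-2)$, the horizontal and vertical permutations $\sigma_h, \sigma_v$ are each a single $n$-cycle, and the cycle structure of the commutator $[\sigma_h, \sigma_v]$ records the ramification at the unique branch point: since the surface has a single zero of order $2g-2 = n-1$ and the origami has exactly $n$ squares, $[\sigma_h, \sigma_v]$ must itself be a single $n$-cycle. Because $n$ is odd, all these elements are even, so the monodromy group $G$ sits inside $\Alt_n$.

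The main tool is the classification of primitive permutation groups containing a full cycle (due to Burnside and Schur for prime degree, completed by Jones): such a group of degree $n$ is either $\Alt_n$ or $\Sym_n$, or a subgroup of $\mathrm{AGL}(1,n)$ with $n$ prime, or a group $G$ with $\PGL(d,q) \leq G \leq \PGamL(d,q)$ acting on $\PG(d-1,q)$ with $n = (q^d-1)/(q-1)$ and $d \geq 2$, or one of the sporadic pairs $(\PSL(2,11),11)$, $(M_{11},11)$, $(M_{23},23)$. Since $G \leq \Alt_n$, the first case yields exactly $G = \Alt_n$, as desired. The affine case is immediate: in $\mathrm{AGL}(1,n)$ with $n$ prime, every element of order $n$ is a translation $x \mapsto x+c$, and translations commute, so $[\sigma_h,\sigma_v] = e$, contradicting that the commutator is a non-trivial $n$-cycle. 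The sporadic case only arises for $g = 6$ or $g = 12$ and can be handled by direct computer verification on the finitely many origamis in each family at those genera.

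The substantive remaining case is the projective one. The cleanest route to eliminate it is via Jordan's theorem: a primitive permutation group of degree $n$ containing a cycle of prime length $p$ with $3 \leq p \leq n - 3$ necessarily contains $\Alt_n$. For each of the remaining families (the even-genus Aougab--Menasco--Nieland origamis and the generalisations of Section~\ref{sec:gen-const}) I would search for a short word $w(\sigma_h,\sigma_v)$ whose action on the $2g-1$ squares produces such a $p$-cycle, most conveniently a $3$-cycle. This is the same strategy used to establish the odd-genus case in Theorem~\ref{thm:monodromy}, and the explicit inductive descriptions of the remaining families suggest that a similar word will work. As a safety net, one can also bound $|G|$ against $|\PGamL(d,q)|$ using the arithmetic constraint $n = (q^d-1)/(q-1)$ to cut down the admissible pairs $(d,q)$ to a manageable list for each $g$.

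The principal obstacle is uniformity. Each family is parameterised by combinatorial data (permutations of varying size), and showing that a single fixed word $w$ produces a $3$-cycle for \emph{every} parameter value requires careful tracking of square labels under $w$, typically by induction on the parameter. The computer experiments alluded to by the authors strongly suggest such a uniform word exists, but converting this evidence into a rigorous argument that simultaneously treats every origami in every construction---rather than verifying the monodromy group genus-by-genus---is where the real difficulty lies.
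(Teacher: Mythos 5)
This statement is a \emph{conjecture} in the paper, not a proved theorem: the authors explicitly say they ``were not able to exactly determine the monodromy group for all of the origamis in the above theorem'' and offer only computer evidence. There is therefore no proof in the paper against which to measure your argument, and you should be aware that your proposal, as written, does not constitute a proof either --- you say so yourself in the final paragraph. What you have given is an attack plan, and it is worth comparing it to the machinery the authors actually develop.

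Your plan is essentially the authors' own. The reduction to the classification of primitive groups containing a full cycle is exactly what Section~6 of the paper does (via Jones' theorem), and the elimination of $\Sym_n$ (parity), $C_p\leq G\leq\text{AGL}(1,p)$ (commuting translations force a trivial commutator), and $\PSL(2,11)$ is carried out there; the paper refines this to the precise list $\Alt_{2g-1}$, $\PSL(d,q)$ with $\gcd(d,q-1)=1$, $\PGamL(2,8)$, $M_{11}$, $M_{23}$ (Theorem~\ref{thm:simple}). Your suggestion to then exhibit a $3$-cycle inside the monodromy group and invoke Jordan's theorem is likewise exactly how the paper handles the odd genus AMN family (Proposition~\ref{prop:3-cycle} together with Theorem~\ref{thm:Jordan}). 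Where your proposal --- like the paper --- stops short is the ``substantive remaining case'': producing such a word uniformly for the even genus AMN origamis and the generalised constructions of Section~\ref{sec:gen-const}. The paper proves primitivity for all of these but does not produce the required $3$-cycle (or small prime cycle), and neither do you. Your remark that a genus-by-genus verification would handle the sporadic degrees $n=11,23$ is fine, but it does not touch the infinite families where a $\PSL(d,q)$-type group could in principle arise (and the paper shows in Subsection~\ref{subsec:finite-simple} that such groups \emph{do} occur as monodromy groups of primitive minimal $[1,1]$-origamis, just not, conjecturally, for these particular families). So the genuine gap is the one you name: converting the computer evidence into a uniform combinatorial argument producing a $3$-cycle across every parameter in every construction. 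Until that is done the statement remains, as in the paper, a conjecture.
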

We also remark that~\cite[Theorem 3.12]{Zm} states that the monodromy group of any origami in the minimal stratum must contain the alternating group once the number of squares is large enough. Our $2g-1$ squares lies below this threshold.

Theorem~\ref{thm:monodromy} alone is not strong enough to allow us to separate $\SL(2,\Z)$-orbits. However, in Section~\ref{sec:double-covers} we prove the following result that implies that the odd genus Aougab-Menasco-Nieland origamis lie in at least two $\SL(2,\Z)$-orbits (for $g\geq 5$).

\begin{theorem}\label{thm:double-covers}
    For all odd $g\geq 3$, exactly $(g-3)!!$ of the $(g-2)!$ odd genus origamis constructed by Aougab-Menasco-Nieland are orientation double covers of quadratic differentials in the stratum $\mathcal{Q}_{\frac{g-1}{2}}(2g-3,-1^{3})$.

    None of the remaining origamis considered in this paper are orientation double covers.
\end{theorem}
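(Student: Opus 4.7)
The plan is to recast the question of being an orientation double cover as a purely combinatorial condition on the monodromy pair, and then count for each construction. An origami with monodromy $(\sigma_h, \sigma_v) \in \Sym(S) \times \Sym(S)$ on the set $S$ of $2g-1$ squares is an orientation double cover of a half-translation surface if and only if there exists a non-trivial involution $\iota \in \Sym(S)$ satisfying $\iota \sigma_h \iota = \sigma_h^{-1}$ and $\iota \sigma_v \iota = \sigma_v^{-1}$, since such an $\iota$ is induced by a holomorphic involution $\tau$ of $X$ with $\tau^{*}\omega = -\omega$. After relabelling squares so that $\sigma_h$ is the standard cycle $(1\,2\,\cdots\,2g-1)$, the involutions anti-conjugating $\sigma_h$ are precisely the $2g-1$ reflections $\iota_c \colon i \mapsto c - i \pmod{2g-1}$. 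Since $2g-1$ is odd, each $\iota_c$ has exactly one fixed square, and a Riemann--Hurwitz computation then forces the four fixed points of $\tau$ on $X$ to be distributed as one square centre, two edge midpoints, and the singularity, placing the quotient in $\mathcal{Q}_{(g-1)/2}(2g-3, -1^3)$ whenever such an $\iota$ exists.

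For the positive count I would recall from Section~\ref{sec:AMN} the parameterisation of the $(g-2)!$ odd-genus AMN origamis by permutations of a $(g-2)$-element set, and then translate the condition $\iota_c \sigma_v \iota_c = \sigma_v^{-1}$ into a symmetry requirement on this parameter. The matching between $(g-3)!!$ and the number of perfect matchings on a $(g-3)$-element set, together with the observation that any anti-conjugating $\iota_c$ pairs up the elements of the AMN parameter once one ``axis'' element is used to determine $c$, strongly suggests that the valid parameters biject with the perfect matchings of the remaining $g-3$ elements. Establishing this bijection, and checking that distinct pairs $(c, \iota_c)$ never collapse two AMN origamis onto the same one, would complete the count.

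For the even-genus AMN origamis and both generalisations in Section~\ref{sec:gen-const}, I would show that none of the $2g-1$ candidate involutions $\iota_c$ anti-conjugates the corresponding $\sigma_v$. The strategy is to extract a combinatorial invariant of $\sigma_v$ which is reversed up to cyclic shift under conjugation by any $\iota_c$, for instance the cyclic sequence of displacements $i \mapsto \sigma_v(i) - i \pmod{2g-1}$, and verify that this sequence fails the palindromic condition in each excluded construction. The main obstacle is the uniformity of this last step: because $c$ ranges over $2g-1$ values, the argument must eliminate all candidate involutions simultaneously, which I expect to follow from a single structural symmetry property of the AMN-style parameters that is satisfied only in the odd-genus AMN case and broken by the even-genus and generalised constructions.
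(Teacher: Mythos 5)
Your overall reduction — an origami is an orientation double cover iff there is an involution $\iota$ anti-conjugating both $\sigma$ and $\tau$, the anti-conjugating involutions of the standard $(2g-1)$-cycle are the $2g-1$ reflections $\iota_c\colon i\mapsto c-i$, and one then analyses each $c$ in turn — is the same as the paper's, which realises $\iota_c$ geometrically as the rotation of the origami by $\pi$ about the centre of square $c$.

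However, two steps of the plan are either incorrect or unsubstantiated. The counting object is wrong: the number of perfect matchings on a $(g-3)$-element set is $(g-4)!!$, not $(g-3)!!$ (for $g=7$ these are $3$ and $8$). The correct count is a sequential product, not a matching count: taking $c=2$, placing the pair $(3,2)$ forces its mirror placement under $\iota_2$, and the first choice has $g-3$ admissible slots (excluding $(1,2)$ and the central slot opposite it, both of which would prematurely close the cycle), then $g-5$, and so on down to $2$, giving $(g-3)(g-5)\cdots 2=(g-3)!!$. Separately, the Riemann--Hurwitz paragraph asserts rather than proves that the involution has exactly four fixed points. Because $2g-1$ is odd, $\iota_c$ fixes exactly one square centre and exactly one vertical-edge midpoint, and it always fixes the zero; but the number of fixed horizontal-edge midpoints is only forced to be odd, so a priori the total could be $r=4,8,12,\dots$, with the quotient landing in $\mathcal{Q}(2g-3,-1^{r-1})$ rather than $\mathcal{Q}(2g-3,-1^{3})$. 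You would need a construction-specific argument that exactly one horizontal edge is $\iota_c$-invariant for these origamis. Finally, the exclusion of the even-genus AMN origamis and the three generalised families is left as a sketch; the paper carries it out by a six-case analysis on the rotation centre $c$, finding for each disallowed $c$ a side label whose two occurrences are sent by $\iota_c$ to positions of opposite parity. Your palindromic-displacement test would have to eliminate all $2g-1$ values of $c$ simultaneously across four different constructions, and it is not clear that a single structural invariant achieves this without essentially reproducing that case analysis.
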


In Section~\ref{sec:SL2Z}, we present computational evidence that leads to the following conjecture.

\begin{conjecture}\label{con:SL2Z}
    For odd genus $g\geq 5$, the origamis contructed by Aougab-Menasco-Nieland lie in exactly two $\SL(2,\Z)$-orbits inside the odd component (there is only one orbit in genus three as there is a single origami in the construction).
    
    The odd genus generalisations of Subsection~\ref{subsec:gen-odd-even} lie in a single $\SL(2,\Z)$-orbit inside the even component.

    The even genus Aougab-Menasco-Nieland origamis and the even genus generalisations of Section~\ref{sec:gen-const} lie in a single $\SL(2,\Z)$-orbit in each of the odd and even components.
\end{conjecture}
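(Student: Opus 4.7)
The plan is to combine Theorem~\ref{thm:double-covers} with a direct analysis of how the generators of $\SL(2,\Z)$ act on the explicit combinatorial descriptions of the origamis of Sections~\ref{sec:AMN} and~\ref{sec:gen-const}.

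\textbf{Lower bounds.} Being an orientation double cover of a quadratic differential is $\SL(2,\Z)$-invariant, so Theorem~\ref{thm:double-covers} immediately splits the odd-genus Aougab-Menasco-Nieland family into at least two $\SL(2,\Z)$-classes: the $(g-3)!!$ orientation double covers and the other $(g-2)!-(g-3)!!$ origamis. This gives the lower bound of two in the odd-genus case, and the trivial lower bound of one suffices for the remaining parts of the conjecture. The bulk of the work therefore goes into proving the matching upper bounds.

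\textbf{Upper bounds.} Each $[1,1]$-origami in these families is encoded by a pair of permutations $(\sigma_h,\sigma_v) \in S_{2g-1}^{2}$ describing the horizontal and vertical gluings, so the generators $T=\bigl(\begin{smallmatrix}1 & 1\\ 0 & 1\end{smallmatrix}\bigr)$ and $S=\bigl(\begin{smallmatrix}0 & -1\\ 1 & 0\end{smallmatrix}\bigr)$ of $\SL(2,\Z)$ admit explicit combinatorial descriptions: $T$ is a Dehn twist in the single horizontal cylinder, and $S$ swaps the roles of $\sigma_h$ and $\sigma_v$. The goal is, for any two AMN origamis not separated by Theorem~\ref{thm:double-covers}, to produce a word in $T$ and $S$ carrying the first to the second. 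A natural strategy is to compute the action of $S$ directly on the indexing set of each family (permutations in $S_{g-2}$ for the odd-genus AMN family, with analogous sets for the other constructions), and then show that composite moves of the form $T^{i_{1}}ST^{i_{2}}S\cdots$ realise enough connections to force transitivity on the predicted orbits.

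\textbf{Main obstacle.} The action of $S$ on the indexing data has no clean algebraic description: after rotating one must re-cut the surface along its new vertical decomposition and relabel so that the result again fits the AMN template (or its generalisation), and this relabeling depends delicately on $\sigma_v$. Since the index set has factorial size, proving connectivity seems to require either an inductive argument on $g$, with the base cases handled by the computer experiments referred to in Section~\ref{sec:SL2Z} and a compatibility mechanism promoting orbit containments from smaller genus, or the discovery of a new $\SL(2,\Z)$-invariant that exactly matches the predicted orbit count. Ruling out unexpected coincidences that might collapse the count below the conjectured number is the hardest part, and it is precisely this subtlety that leaves the result as a conjecture for the moment.
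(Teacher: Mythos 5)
This statement is a conjecture, and the paper does not prove it; Section~\ref{sec:SL2Z} leaves the matter open, presenting only the lower bound coming from Theorem~\ref{thm:double-covers} and the computational evidence of Table~\ref{tab:SL2Z-orbit}. Your proposal is honest about this, and your lower-bound reasoning exactly matches the paper's: being an orientation double cover is an $\SL(2,\Z)$-invariant, so Theorem~\ref{thm:double-covers} forces at least two orbits in odd genus and gives no constraint in the other cases.

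On the upper bound, though, your sketch misses the one concrete thing the paper does observe. You propose searching for words in $T$ and $S$ that realise transitivity, treating this as an open combinatorial search. The paper is more specific: the natural candidate strategy is to show that the combinatorial moves of Proposition~\ref{prop:same-spin} — multiplying $\tau$ on the right by $\mu=(2i,2j,2k)(2i+1,2j+1,2k+1)$ — are realisable by $\SL(2,\Z)$. This would give a uniform upper bound of one orbit per construction. But the paper points out this \emph{cannot} be correct as stated, because Proposition~\ref{prop:same-spin} connects all $(g-2)!$ odd-genus AMN origamis while the $\SL(2,\Z)$-action must preserve the double-cover property, yielding at least two orbits. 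So any upper-bound argument must intrinsically distinguish the $(g-3)!!$ double covers from the rest, and the paper identifies this as the reason the problem remains open. Your "main obstacle" paragraph gestures at the difficulty but does not pinpoint this specific provable failure of the most natural approach, which is the key structural observation the paper contributes on the upper bound side.
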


In the final paragraph of the above conjecture we are saying that the origamis of Aougab-Menasco-Nieland and the generalisations of Section~\ref{sec:gen-const} that lie in the same connected component are in fact contained in the same $\SL(2,\Z)$-orbit.

Investigating which groups can arise as the monodromy group of a primitive minimal $[1,1]$-origami in $\calH(2g-2)$, in Section~\ref{sec:simple}, we prove the following result.

\begin{theorem}\label{thm:simple}
    Let $G$ be the monodromy group of a primitive minimal $[1,1]$-origami in the stratum $\calH(2g-2)$, $g\geq 3$. Then, with the exception of $\PGamL(2,8)$ that occurs in $\odd(8)$, $G$ is simple and isomorphic to $\Alt_{2g-1}$, $PSL(d,q)$ for some $d$ and $q$ with $\frac{q^{d}-1}{q-1} = 2g-1$ and $\gcd(d,q-1) = 1$, or to one of the Mathieu groups $M_{11}$ or $M_{23}$.
\end{theorem}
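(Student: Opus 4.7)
The plan is to translate the geometric hypotheses into group theory and then apply the classification of primitive permutation groups of degree $n$ containing an $n$-cycle. Writing $n = 2g-1$ and labelling the squares by $\{1, \dots, n\}$, the $[1,1]$-hypothesis forces the horizontal and vertical gluing permutations $h, v$ (with $G = \langle h, v\rangle$) to both be $n$-cycles, and the minimal-stratum hypothesis forces the commutator $[h, v]$ to be a single $n$-cycle: via Riemann--Hurwitz for a covering of the torus branched over a single point, a zero of order $2g-2$ corresponds to a unique length-$(2g-1)$ cycle in the commutator. Primitivity of the origami is primitivity of $G$, and since $n$ is odd every $n$-cycle is even, so $G \leq \Alt_n$.

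The classical classification --- assembled from work of Burnside, Schur, Feit and Jones --- of primitive $G \leq \Sym_n$ containing an $n$-cycle lists the possibilities as: (i) $\Alt_n$ or $\Sym_n$; (ii) a subgroup of $\text{AGL}(1,p)$ with $n = p$ prime; (iii) a group with $\PGL(d,q) \leq G \leq \PGamL(d,q)$ and $n = (q^d-1)/(q-1)$, $d \geq 2$; or (iv) one of $\PSL(2,11)$ (on $11$ points via cosets of $A_{5}$), $M_{11}$ (on $11$ points) or $M_{23}$ (on $23$ points). The containment $G \leq \Alt_n$ excludes $\Sym_n$ in (i), yielding $G = \Alt_{2g-1}$. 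Case (ii) is immediate: every $n$-cycle in $\text{AGL}(1,p)$ is a translation, and translations commute, so $[h,v] = \text{id}$, contradicting the $n$-cycle commutator. In (iv), $M_{11}$ and $M_{23}$ appear in the theorem's list; the exotic degree-$11$ action of $\PSL(2,11)$ must be ruled out separately, for instance by a direct check that no pair of order-$11$ generators has an order-$11$ commutator in this action.

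The substantive case is (iii). Since $|\PGL(d,q)/\PSL(d,q)| = \gcd(d,q-1)$ and the elements of order $n$ in $\PGL(d,q)$ are precisely (conjugates of) the Singer cycles, $\PSL(d,q)$ contains an $n$-cycle if and only if $\gcd(d,q-1) = 1$, in which case $\PSL = \PGL$ is simple. When $\gcd(d,q-1) > 1$, one has $G \supseteq \PGL \supsetneq \PSL$ so $\PSL$ is a proper non-trivial normal subgroup of $G$ and $G$ is not simple; this situation must be excluded by a case analysis using the signs of Singer and diagonal elements together with $G \leq \Alt_n$, plus the obstruction that the prescribed commutator of two $n$-cycles cannot land in a non-identity coset in these cases. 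When $\gcd(d,q-1) = 1$, the remaining task is to rule out strict extensions $\PSL(d,q) \lneq G \leq \PGamL(d,q)$ by field automorphisms. Writing any element of $\PGamL \setminus \PGL$ as $\sigma^{a}\phi$ with $\sigma$ a Singer cycle and $\phi \in \Gal(\mathbb{F}_q/\mathbb{F}_p)$ non-trivial, for such an element to be an $n$-cycle one needs a tight arithmetic coincidence among $n$, $\mathrm{ord}(\phi)$, and the action of $\phi$ on the Singer torus; enumerating these coincidences isolates the single exceptional pair $(d,q) = (2,8)$, producing $\PGamL(2,8)$ in $\odd(8)$ (where $n = 9$ and $g = 5$), which can be verified by explicit computation in GAP or Magma.

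The main obstacle is this last coset-analysis inside $\PGamL(d,q)$: one needs to control the cycle types of products $\sigma^{a}\phi^{b}$ sharply enough to pinpoint exactly the pair $(d,q) = (2,8)$, and to rule out $\gcd(d,q-1) > 1$ entirely. Once both of these sub-cases are handled, assembling the surviving possibilities from (i)--(iv) produces exactly the list in the theorem: $\Alt_{2g-1}$, $\PSL(d,q)$ with $\gcd(d,q-1) = 1$ and $(q^d-1)/(q-1) = 2g-1$, the Mathieu groups $M_{11}$ and $M_{23}$, and the single non-simple exception $\PGamL(2,8)$ in $\odd(8)$.
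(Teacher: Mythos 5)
Your proposal follows the same strategy as the paper's: translate the $[1,1]$-origami hypotheses into the condition that $G \leq \Alt_{2g-1}$ is primitive and generated by two $(2g-1)$-cycles $h,v$ with $[h,v]$ also a $(2g-1)$-cycle, invoke Jones's classification of primitive groups of degree $n$ containing a regular cyclic subgroup, and then prune the list using the $n$-cycle commutator. Where you diverge is in two of the pruning steps, and in both cases the paper's treatment is more direct.

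For case (iii), your discussion of $\gcd(d,q-1) > 1$ oscillates between an unnecessary ``signs/cosets'' analysis and the observation that actually finishes the job: the commutator $[h,v]$ lies in the derived subgroup $\PGL(d,q)' = \PSL(d,q)$, and the maximal element order in $\PSL(d,q)$ is $n/\gcd(d,q-1)$. So if $[h,v]$ has order $n$ one must have $\gcd(d,q-1)=1$, whence $\PGL(d,q) = \PSL(d,q)$ is simple; there is nothing further to exclude. You should make this one-line argument the whole content of the sub-case rather than a ``plus.'' For the extensions $\PGL(d,q) \lneq G \leq \PGamL(d,q)$, you correctly identify that the task is to show $n$-cycles in $\PGamL(d,q)$ are Singer cycles in $\PGL(d,q)$ outside a single exception, and you call this the ``main obstacle,'' proposing an arithmetic analysis of cycle types of $\sigma^a\phi$. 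But this is precisely the content of another theorem of Jones (Theorem 1 in the cited reference \cite{Jones1}), which the paper simply invokes: the only abelian regular subgroups of $\PGamL(d,q)$ (for $d\ge 2$, $q \ne 3, 8$) are Singer subgroups, and for $(d,q)=(2,8)$ there is exactly one extra conjugacy class of cyclic regular subgroups not contained in $\PGL(2,8)$. Citing this replaces your hardest step with a reference. With those two repairs your sketch becomes the paper's proof; the remaining checks ($\mathrm{AGL}(1,p)$ forces $[h,v]=\mathrm{id}$, $\PSL(2,11)$ on $11$ points fails the commutator condition while $M_{11}$, $M_{23}$, $\PGamL(2,8)$ succeed) agree with the paper's and are handled the same way, by inspection or machine computation.
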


Group theoretically, this is equivalent to the statement that any primitive permutation group of degree $n$ that is generated by two $n$-cycles whose commutator is also an $n$-cycle is one of the groups listed in the theorem.

Given an $\SL(2,\R)$-orbit closure $\mathcal{M}$ of translation surfaces of genus $g$, one can consider the (orbifold) vector bundle over $\mathcal{M}$ induced by $H^{1}(S,\R)$ where $S$ is a topological surface of genus $g$. This bundle is called the \emph{Hodge bundle} over $\mathcal{M}$. The \emph{Kontsevich-Zorich cocycle} is a dynamical cocycle on the Hodge bundle induced by the action of $\SL(2,\R)$. The \emph{Kontsevich-Zorich monodromy group} is a subgroup of the symplectic group that records the action of $\SL(2,\R)$ on the fibers of the Hodge bundle. The algebraic structure of Kontsevich-Zorich monodromy group is strongly related to the Lyapunov spectrum of the Kontsevich-Zorich cocycle. See, for example, the works of Filip~\cite{Filip} and Matheus-Yoccoz-Zmiaikou~\cite{MYZ}.

In Section~\ref{sec:KZ-calcs}, we investigate the Kontsevich-Zorich monodromy groups of our origamis in genus 3 through 7. In particular, we prove the following theorem.

\begin{theorem}\label{thm:KZ-monodromy}
    Let $g = 3, 5$, or 7. The Zariski closure of the Kontsevich-Zorich monodromy group (restricted to the zero-holonomy subspace) of any of the genus $g$ orientation double covers of Theorem~\ref{thm:double-covers} is $\Sp(g-1,\R)^{2}$.

    Let $g = 4,5,$ or 6. The Zariski closure of the Kontsevich-Zorich monodromy group (restricted to the zero-holonomy subspace) of any origami of genus $g$ considered in this paper that is not an orientation double cover is $\Sp(2g-2,\R)$.
\end{theorem}

These monodromy groups are as large as possible given any restrictions forced by symmetries of the origami. We conjecture that this behaviour persists in all genera.

\subsection{Connection to filling pairs of curves}

A pair of simple closed curves $\{\alpha,\beta\}$ in minimal position on a closed surface $S_{g}$ of genus $g$ is said to be a \emph{filling pair} if its complement $S_{g}\setminus(\alpha\cup\beta)$ is a disjoint union of topological discs. Filling pairs are useful objects of study in low-dimensional topology. For example, Thurston used filling pairs in his construction of pseudo-Anosov homemorphisms of a surface~\cite{FLP}.

Given a $[1,1]$-origami, the core curves of the vertical and horizontal cylinders form a filling pair on the underlying surface. In fact, since the equivalence relation defining the moduli space of translation surfaces corresponds to the action of the mapping class group $\Mod(S_{g})$, a $[1,1]$-origami corresponds to a $\Mod(S_{g})$ orbit of a filling pair on the closed surface $S_{g}$ of genus $g$. A filling pair obtained from a minimal $[1,1]$-origami in the minimal stratum will have geometric intersection number $2g-1$ (the minimum possible for a filling pair) and its complement will be a single topological disc. The above theorems can therefore be translated into the language of mapping class group orbits of minimally intersecting filling pairs. This is also the language used for the main theorem in the work of the first author and Menasco-Nieland discussed above.

\subsection{Upper bounds}

It follows from work of Chang~\cite{Chang}, on counting the mapping class group orbits of minimally intersecting filling pairs, that the number of minimal $[1,1]$-origamis in $\calH(2g-2)$ is bounded above by a function that roughly behaves like $(g-1)(2g-2)!$ (we direct the reader to Chang's paper for the exact details). There are no specialised upper bounds for the individual connected components of $\calH(2g-2)$.

\subsection{Sketch of the proof of Theorem~\ref{thm:AMN}}

We will show that the odd genus origamis constructed by Aougab-Menasco-Nieland are related to each other by a sequence of combinatorial moves. We will prove (Lemma~\ref{lem:key}) that the spin parity of an origami is preserved under these combinatorial moves. It therefore suffices to prove that a single odd genus origami has odd spin. We carry out such a calculation in Proposition~\ref{prop:odd-odd-2}.

This argument also works for the generalised constructions of Section~\ref{sec:gen-const}.

To calculate the spin parity of the even genus Aougab-Menasco-Nieland origamis, we study the steps of the surgery-like construction that takes an odd genus origami to an even genus origami. We determine how the spin parity of the origami changes under each step of the construction. This is carried out in Subsection~\ref{subsec:AMN-even-spin}.

\subsection{Outline of the paper}

In Section~\ref{sec:prelim}, we give an introduction to translation surfaces, origamis, and spin structures. In Section~\ref{sec:AMN}, we recall the constructions of the first author and Menasco-Nieland of the minimal $[1,1]$-origamis in $\calH(2g-2)$. In Section~\ref{sec:gen-const}, we give the generalisations of these constructions that we use to prove Theorem~\ref{thm:gen-const}. In Section~\ref{sec:spin}, we determine the spin parities of the origamis . In Section~\ref{sec:mono} we investigate their monodromy groups. In Section~\ref{sec:double-covers}, we investigate which of the origamis are orientation double covers. In Section~\ref{sec:SL2Z}, we summarise our investigation of the $\SL(2,\Z)$-invariants of the origamis and present a conjecture about their orbits. In Section~\ref{sec:simple}, we prove Theorem~\ref{thm:simple} concerning simple monodromy groups and state some open questions. Finally, in Section~\ref{sec:KZ-calcs}, we provide the calculations required to prove Theorem~\ref{thm:KZ-monodromy}.

\subsection{Acknowledgements}

We thank Tim Burness for very helpful discussions regarding the proof of Theorem~\ref{thm:simple} and related references. We also thank Jayadev Athreya for suggesting that we investigate the Kontsevich-Zorich monodromy groups of these origamis. For the purpose of open access, the authors have applied a Creative Commons Attribution (CC BY) licence to any Author Accepted Manuscript version arising from this submission.


\section{Preliminaries}\label{sec:prelim}

Here we give the required preliminaries on translation surfaces, origamis, and spin structures. We direct the reader to the surveys of Forni-Matheus~\cite{FM} and Zorich~\cite{Z2} for more details.

\subsection{Translation surfaces}

For $g\geq 2$, we define the space $\calH_{g}$ (which we will often abbreviate to just $\calH$) to be the {\em moduli space of translation surfaces} of genus $g$. That is, $\calH$ is the quotient by the action of the mapping class group on the set of pairs $(X,\omega)$ where $X$ is a closed connected Riemann surface of genus $g$ and $\omega$ is a non-zero holomorphic 1-form on $X$, also called an \emph{Abelian differential}. Locally, $\omega = f(z)dz$ for some holomorphic function $f$.

Given an Abelian differential $\omega$ on a Riemann surface $X$, let $\Sigma$ be the set of zeros of $\omega$. On $X\setminus\Sigma$, contour integration gives a collection of charts to $\C$ with transition maps given by translations $z\mapsto z+c$. Equivalently, given a finite collection of polygons in $\C$ with parallel sides identified by translation, one can define a Riemann surface structure on the surface $X$ obtained from the quotient of these polygons by the side identifications. The local pullback of ${\rm d}z$ will give rise to a well-defined Abelian differential on $X$.

By the Riemann-Roch theorem, the sum of the orders of the zeros of an Abelian differential on a Riemann surface of genus $g$ is equal to $2g-2$ and this data can be used to stratify $\calH$. The stratum $\calH(k_{1},\ldots,k_{n})\subset\calH$, with $k_{i}\geq 1$ and $\sum k_{i} = 2g-2$, is the subset of $\calH$ consisting of Abelian differentials with $n$ distinct zeros of orders $k_{1},\ldots,k_{n}$. The stratum $\calH(2g-2)$ is known as the \emph{minimal stratum}.

Each stratum $\calH(k_{1},\ldots,k_{n})$ is a complex orbifold of dimension $2g+n-1$ (recall that local coordinates are given by the period map to $H^{1}(X,\Sigma;\C)$). The strata of $\calH$ may be disconnected and Kontsevich-Zorich~\cite{KZ} completely classified the connected components using the notions of hyperellipticity and spin parity which we will define in the following subsections. The relevant part of the the classification for this paper is that
\begin{itemize}
\item[1.] $\calH(2)$ is connected and identified with its hyperelliptic component;
\item[2.] $\calH(4)$ has two connected components: a hyperelliptic component and a component corresponding to odd spin parity; and
\item[3.] for $g\geq 4$, $\calH(2g-2)$ has three connected components: a hyperelliptic component, a component corresponding to odd spin parity, and a component corresponding to even spin parity.
\end{itemize}

\subsection{Hyperellipticity}

A translation surface $(X,\omega)$ is said to be {\em hyperelliptic} if there exists an isometric involution $\tau:X\to X$ that induces a ramified double cover $\pi:X\to S_{0,2g+2}$ from $X$ to the $(2g+2)$-times punctured sphere. There exists a connected component of the stratum $\calH(2g-2)$, called the hyperelliptic component and denoted $\hyp(2g-2)$, consisting entirely of hyperelliptic translation surfaces.

\subsection{Spin structures and spin parity}

Here we give the complex analytic definition of a spin structure on a Riemann surface. See the work of Atiyah~\cite{A} for a discussion of its relation to the definition of a spin structure in terms of $\Spin(n)$-bundles.

Let $\Pic(X)$ be the Picard group of the Riemann surface $X$. A {\em spin structure} on $X$ is a choice of divisor class $D\in \Pic(X)$ such that
\[2D = K_{X},\]
where $K_{X}$ is the canonical class of $X$. Let $L$ be a line bundle corresponding to the divisor class $D$, and let $\Gamma(X,L)$ be the space of holomorphic sections of the line bundle $L$ on $X$. We define the {\em parity of the spin structure} $D$ to be
\[\dim\Gamma(X,L)\text{ mod } 2.\]

Given a translation surface $(X,\omega)\in\calH(2k_{1},\ldots,2k_{n})$, let $P_{i}$ be the zero of $\omega$ of order $k_{i}$. The divisor
\[Z_{\omega} = 2k_{1}P_{1}+\cdots+2k_{n}P_{n}\]
represents the canonical class $K_{X}$. As such, we have a canonical choice of spin structure on $X$ given by the divisor class
\[D_{\omega} = [k_{1}P_{1}+\cdots+k_{n}P_{n}].\]
Atiyah \cite{A} and Mumford \cite{M} proved that the parity of a spin structure is invariant under continuous deformation, from which it follows that the parity of the canonical spin structure of a translation surface is constant on each connected component of the stratum. We will say that a non-hyperelliptic connected component of $\calH(2k_{1},\ldots,2k_{n})$ has {\em even} (resp. {\em odd}) spin parity and denote it by $\even(2k_{1},\ldots,2k_{n})$ (resp. $\odd(2k_{1},\ldots,2k_{n}))$ if the parity of $D_{\omega}$ is 0 (resp. 1) for any (hence all) translation surface(s) in the component.

There is an equivalent definition of spin parity given in the language of quadratic forms on the homology of the surface that allows us to more easily compute the spin parity of a translation surface.

Indeed, an Abelian differential $\omega$ on $X$ determines a flat metric on $X$ with cone-type singularities. Moreover, this metric has trivial holonomy, and away from the zeros of $\omega$ there is a well-defined horizontal direction. We can therefore define the index, $\ind(\gamma)$, of a simple closed curve $\gamma$ on $X$ that avoids the singularities to be the degree of the Gauss map of $\gamma$. That is, $\ind(\gamma)$ is the integer such that the total change of angle between the vector tangent to $\gamma$ and the vector tangent to the horizontal direction determined by $\omega$ is $2\pi\cdot \ind(\gamma)$.

Given $(X,\omega)\in\calH(2k_{1},\ldots,2k_{n})$, we define a function $\Phi:H_{1}(X,\Z_{2})\to\Z_{2}$ by
\[\Phi([\gamma]) = \ind(\gamma)+1\text{ mod } 2,\]
where $\gamma$ is a simple closed curve and extend to a general homology class by linearity. It can be checked that this function is well-defined.

The function $\Phi$ can be shown to be a quadratic form on $H_{1}(X,\Z_{2})$, by which we mean
\[\Phi(a+b) = \Phi(a)+\Phi(b)+a\cdot b,\]
where $a\cdot b$ denotes the standard symplectic intersection form on $H_{1}(X,\Z_{2})$. Now given a choice of representatives $\{[\alpha_{i}],[\beta_{i}]\}_{i = 1}^{g}$ of a symplectic basis for $H_{1}(X,\Z_{2})$ (i.e., with $[\alpha_{i}]\cdot[\beta_{j}] = \delta_{ij}$ and $0 = [\alpha_{i}]\cdot[\alpha_{j}] = [\beta_{i}]\cdot[\beta_{j}]$), we define the {\em Arf invariant} of $\Phi$ to be
\[\sum_{i = 1}^{g}\Phi([\alpha_{i}])\cdot\Phi([\beta_{i}])\text{ mod }2 = \sum_{i = 1}^{g}(\ind(\alpha_{i})+1)(\ind(\beta_{i})+1)\text{ mod }2.\]
Arf \cite{Arf} proved that this number is independent of the choice of symplectic basis and Johnson \cite{Jo} showed that quadratic forms on $H_{1}(X,\Z_{2})$ are in one-to-one correspondence with spin structures on $X$. Moreover, Johnson proved that the value of the Arf invariant of $\Phi$ coincides with the parity of the canonical spin structure determined by $\omega$. We will make use of this formula when we calculate the parity of spin structures later in the paper.

\subsection{Origamis, monodromy groups and $\SL(2,\Z)$-orbits}\label{subsec:ori-prep}

An \emph{origami} (also known as a \emph{square-tiled surface}) is a translation surface obtained from a collection of unit squares in $\C$ by identifying by translation left-hand sides with right-hand sides and top sides with bottom sides. See Figure~\ref{fig:origami-examples} for two origamis in $\calH(4)$. Sides with the same label are identified by translation. Ignore the smaller label in the interiors of the squares for now. An Euler characteristic argument shows that an origami in $\calH(2g-2)$ must be built from at least $2g-1$ squares.

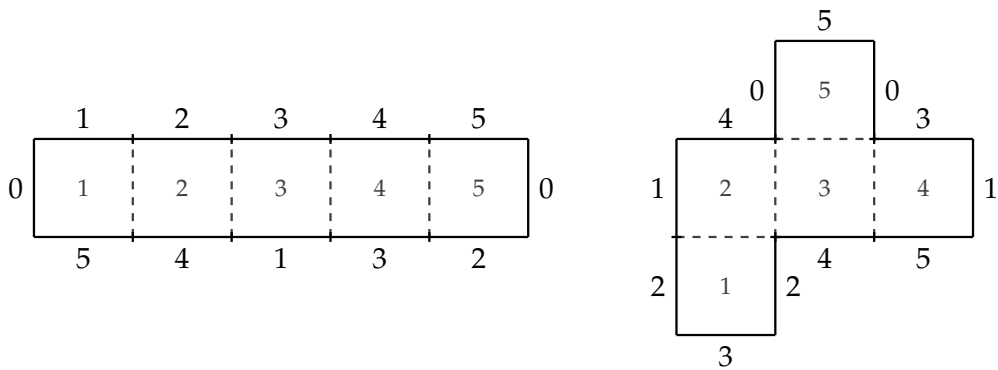
\begin{figure}[t]
    \centering
    \begin{tikzpicture}[scale = 1.3, line width = 0.3mm]
    \draw (0,0)--(0,1);
    \draw (0,1)--(5,1);
    \draw (5,1)--(5,0);
    \draw (5,0)--(0,0);
    \foreach \i in {1,2,...,4}{
        \draw [dashed, color = darkgray] (\i,0)--(\i,1);
        \draw (\i,0.05)--(\i,-0.05);
        \draw (\i,1.05)--(\i,0.95);
    }
    \draw (0,0.5) node[left] {$0$};
    \draw (5,0.5) node[right] {$0$};
    \foreach \i in {1,...,5}{
        \draw (\i-0.5,1) node[above] {$\i$};
        \draw (\i-0.5,0.5) node[color = darkgray] {\footnotesize $\i$};
    }
    \draw (0.5,0) node[below] {$5$};
    \draw (1.5,0) node[below] {$4$};
    \draw (2.5,0) node[below] {$1$};
    \draw (3.5,0) node[below] {$3$};
    \draw (4.5,0) node[below] {$2$};
    \draw (6.5,-1)--(6.5,1);
    \draw (6.5,1)--(7.5,1);
    \draw (7.5,1)--(7.5,2);
    \draw (7.5,2)--(8.5,2);
    \draw (8.5,2)--(8.5,1);
    \draw (8.5,1)--(9.5,1);
    \draw (9.5,1)--(9.5,0);
    \draw (9.5,0)--(7.5,0);
    \draw (7.5,0)--(7.5,-1);
    \draw (7.5,-1)--(6.5,-1);
    \foreach \i in {0,1}{
        \draw [dashed, color = darkgray] (6.5+\i,0+\i)--(7.5+\i,0+\i);
        \draw (6.45+\i,0+\i)--(6.55+\i,0+\i);
        \draw (7.45+\i,0+\i)--(7.55+\i,0+\i);
        \draw [dashed, color = darkgray] (7.5+\i,0)--(7.5+\i,1);
        \draw (7.5+\i,0.05)--(7.5+\i,-0.05);
        \draw (7.5+\i,1.05)--(7.5+\i,0.95);
    }
    \draw (7.5,1.5) node[left] {$0$};
    \draw (8.5,1.5) node[right] {$0$};
    \draw (6.5,0.5) node[left] {$1$};
    \draw (9.5,0.5) node[right] {$1$};
    \draw (6.5,-0.5) node[left] {$2$};
    \draw (7.5,-0.5) node[right] {$2$};
    \draw (7,1) node[above] {$4$};
    \draw (7,-1) node[below] {$3$};
    \draw (8,2) node[above] {$5$};
    \draw (8,0) node[below] {$4$};
    \draw (9,1) node[above] {$3$};
    \draw (9,0) node[below] {$5$};
    \draw (7,-0.5) node[color = darkgray] {\footnotesize $1$};
    \draw (7,0.5) node[color = darkgray] {\footnotesize $2$};
    \draw (8,0.5) node[color = darkgray] {\footnotesize $3$};
    \draw (9,0.5) node[color = darkgray] {\footnotesize $4$};
    \draw (8,1.5) node[color = darkgray] {\footnotesize $5$};
    \end{tikzpicture}
    \caption{Two origamis in $\calH(4)$. The side labels indicate the identifications by translation. The smaller numbers are square numberings.}
    \label{fig:origami-examples}
\end{figure}

As mentioned in the introduction, a \emph{cylinder} in an origami is a maximally embedded flat annulus, or, equivalently, a maximal collection of freely homotopic closed geodesics in the (singular) flat metric determined by $\omega$ that avoid the zeros of $\omega$. The origami on the left of Figure~\ref{fig:origami-examples} has a single horizontal cylinder running between the sides labelled $0$. It also has a single vertical cylinder running vertically through all of the squares in the origami. We call an origami a \emph{$[1,1]$-origami} if it simultaneously has a single horizontal cylinder and a single vertical cylinder. The origami on the right of Figure~\ref{fig:origami-examples} has three horizontal cylinders running between the sides labelled 0, 1, and 2, respectively. It does, however, have a single vertical cylinder. It follows from the work of the third author~\cite{J1} that $[1,1]$-origamis in the odd or even components of $\calH(2g-2)$ can be built using the theoretical minimum of $2g-1$ squares. We call such origamis \emph{minimal $[1,1]$-origamis} since they realise the minimum number of squares for the ambient stratum. The same work of the third author also proves that a $[1,1]$-origami in the hyperelliptic component of $\calH(2g-2)$ instead requires at least $4g-4$ squares. This is why the theorems of this work require genus at least three since the minimal stratum in genus two is identified with its hyperelliptic component, so there do not exist any $[1,1]$-origamis in $\calH(2)$ built from $2g-1 = 3$ squares. It is also why the only connected components that appear in the main theorems are the odd and even components.

An origami constructed from $n$ unit squares can also be described by a pair of permutations ($\sigma$,$\tau$) with each permutation lying in the symmetric group $\Sym_{n}$ of degree $n$. The permutation $\sigma$ describes the identifications of right and left sides while the permutation $\tau$ describes the identifications of the top and bottom sides. Firstly, we number the squares from 1 to $n$. We then define $\sigma$ to be the element of $\Sym_{n}$ such that $\sigma(i) = j$ if and only if the right-hand side of the square labelled by $i$ is identified with the left-hand side of the square labelled by $j$. Symmetrically, we define $\tau$ to be the permutation satisfying $\tau(i) = j$ if and only if the top side of the square labelled by $i$ is glued to the bottom side of the square labelled by $j$. For example, the origamis in Figure~\ref{fig:origami-examples} can be realised by the pairs $O = ((1,2,3,4,5),(1,3,4,2,5))$ and $O' = ((1)(2,3,4)(5),(1,2,3,5,4))$, respectively. Since a different labelling of the squares could produce a different pair of permutations, an origami corresponds to a pair $(\sigma,\tau)$ considered up to simultaneous conjugation of $\sigma$ and $\tau$. However, we will abuse notation in this article and denote an origami simply by the pair $(\sigma,\tau)$.

\begin{remark}[Order of multiplication]
We make the convention that permutations are multiplied right to left so that $(\rho_{1}\rho_{2})(i) = \rho_{1}(\rho_{2}(i))$. For example, $(1,2)(2,3) = (1,2,3)$.
\end{remark}

An origami $O = (\sigma,\tau)$ lies in the stratum $\calH(k_{1},\ldots,k_{d})$ if when writing the commutator $[\sigma,\tau] = \sigma\tau\sigma^{-1}\tau^{-1}$ as a product of disjoint non-trivial cycles we obtain $d$ cycles of lengths $k_{1}+1,\ldots,k_{d}+1$. Observe that for $O$ and $O'$, the origamis in Figure~\ref{fig:origami-examples}, we have the commutators equal to $(1,3,2,5,4)$ and $(1,5,4,2,3)$, respectively. In particular, both origamis lie in $\calH(4)$, as claimed.

Observe that an origami $O = (\sigma,\tau)$ will be a minimal $[1,1]$-origami in the stratum $\calH(2g-2)$ if and only if $\sigma$ and $\tau$ are both $(2g-1)$-cycles whose commutator is also a $(2g-1)$-cycle. There is not a straightforward way to determine the spin parity of an origami from the permutation description.

The \emph{monodromy group} $\Mon(O)$ of an $n$-squared origami $O = (\sigma,\tau)$ is defined to be the subgroup $\langle\sigma,\tau\rangle\leqslant\Sym_{n}$ generated by $\sigma$ and $\tau$. Notice that the monodromy group of a minimal $[1,1]$-origami in $\calH(2g-2)$ will be a subgroup of the alternating group $\Alt_{2g-1}$ of degree $2g-1$.

The permutation group theoretic properties of the monodromy group are naturally related to certain topological properties of the underlying origami. For example, the monodromy group will be transitive since, by our assumption, the origami is connected. Furthermore, an origami is said to be \emph{primitive} if it is not a proper cover of another origami other than the square-torus (of which all origamis are a cover). This topological notion is related to primitivity in the permutation group sense. Indeed, a partition $B = \{\Delta_{1},\ldots,\Delta_{k}\}$ of $\{1,2,\ldots,n\}$ is said to be a \emph{block system} for a permutation group $G\leqslant \Sym_{n}$ if for all $\Delta\in B$ and all $g\in G$ either $g(\Delta) = \Delta$ or $g(\Delta)\cap\Delta=\varnothing$. A transitive permutation group $G\leqslant\Sym_{n}$ is then said to be \emph{primitive} if the only block systems that exist for $G$ are the trivial block systems $B = \{\{1,2,\ldots,n\}\}$ and $B' = \{\{1\},\{2\},\ldots,\{n\}\}$. It can be shown that an origami is primitive if and only if its monodromy group is primitive as a permutation group. We direct the reader to the thesis of Zmiaikou~\cite{Zm} for more on the permutation group descriptions of origamis.

We also remark that the definitions of the stratum and monodromy group of an origami are well-defined in the sense that they are invariant under simultaneous conjugation of $\sigma$ and $\tau$.

The group $\SL(2,\Z)$ acts on origamis by its natural action on the plane. Indeed, up to cutting and pasting, unit squares are mapped to unit squares and parallel sides are sent to parallel sides. Consider $\SL(2,\Z) = \langle T, S\rangle$, where
\[T = \begin{bmatrix}
1 & 1 \\
0 & 1
\end{bmatrix}
\,\,\,\text{and}\,\,\,
S = \begin{bmatrix}
1 & 0 \\
1 & 1
\end{bmatrix}.\]
The matrix $T$ acts by horizontally shearing the origami to the right while the matrix $S$ acts by vertically shearing the origami upwards. It can then be checked that
\[T((\sigma,\tau)) = (\sigma,\tau\sigma^{-1})\,\,\,\,\,\,\,\text{and}\,\,\,\,\,\,\,S((\sigma,\tau)) = (\sigma\tau^{-1},\tau).\]

In particular, it follows that the number of squares, the stratum, the primitivity, and the monodromy group of an origami are invariant under the action of $\SL(2,\Z)$. It can also be argued that the action of $\SL(2,\Z)$ preserves the connected components of strata. As such, it makes sense to discuss the $\SL(2,\Z)$-orbits of primitive origamis in a given connected component of a stratum.

\subsection{Kontsevich-Zorich monodromy groups}\label{subsec:KZ-prep}

Given a translation surface $(X,\omega)$, we define $\Aff(X,\omega)$ to be its group of affine diffeomorphisms; that is, $\Aff(X,\omega)$ is the group of self-diffeomorphisms of $X$ that preserve the zero set $\Sigma$ of $\omega$ and are affine in the translation charts given by $\omega$ on $X\setminus\Sigma$. The differential of any such map is a matrix in $\SL(2,\R)$ and we get a group homomorphism $D:\Aff(X,\omega)\to\SL(2,\R)$. The kernel of $D$ is the group $\Aut(X,\omega)$ of (translation) automorphisms of $(X,\omega)$, while the image of $D$ is the Veech group $\SL(X,\omega)$ (the $\SL(2,\R)$ stabiliser of $(X,\omega)$). Moreover, these groups fit into a short exact sequence
\[1\to\Aut(X,\omega)\to\Aff(X,\omega)\to\SL(X,\omega)\to 1.\]

In our case, since our origamis lie in the minimal stratum $\calH(2g-2)$, we have that $\Aut(X,\omega) = 1$ and hence $\Aff(X,\omega)\cong\SL(X,\omega)$, which will be some finite index subgroup of $\SL(2,\Z)$.

Recall that the fiber of the Hodge bundle is $H^{1}(X,\R)$. By Poincar\'e duality, we will equivalently consider $H_{1}(X,\R)$. The action of $\Aff(X,\omega)$ on the fibers then induces a representation
\[\widetilde{\alpha}:\Aff(X,\omega)\to\Sp(H_{1}(X,\R))\cong\Sp(2g,\R),\]
since the action preserves the symplectic intersection form on homology.

All origamis are branched covers of the unit torus $\mathbb{T}^{2}$. That is, every origami $(X,\omega)$ admits a map $p:(X,\omega)\to(\mathbb{T}^{2},dz)$ branched only at $0\in\mathbb{T}^{2}$ and with $\omega = p^{*}(dz)$. This map gives rise to a splitting of $H_{1}(X,\R)$:
\[H_{1}(X,\R) = H_{1}^{st}(X)\oplus H_{1}^{(0)}(X),\]
where 
\[H_{1}^{st}(X) = p_{*}^{-1}(H_{1}(\mathbb{T}^{2},\R))\,\,\,\text{and}\,\,\,H_{1}^{(0)}(X) = \ker p_{*} = \left\{\gamma\in H_{1}(X,\R)\,:\,\int_{\gamma}\omega = 0\right\}.\]
The latter is called the \emph{zero-holonomy subspace}. The splitting is preserved by the action of $\Aff(X,\omega)$. The subspace $H_{1}^{st}(X)$ is 2-dimensional and dual to the span of the real part $\Re(\omega)$ and imaginary part $\Im(\omega)$ in $H^{1}(X,\R)$. Moreover, it can be shown that any $f\in\Aff(X,\omega)$ acts on $H_{1}^{st}(X)$ by $D(f)\in\SL(X,\omega)$. For this reason, $H_{1}^{st}(X)$ is called the \emph{tautological plane}. Since the action of $\Aff(X,\omega)$ on the tautological plane is fully understood, we define the Kontsevich-Zorich monodromy group of an origami $(X,\omega)$ to be
\[\alpha(\Aff(X,\omega)):= \widetilde{\alpha}(\Aff(X,\omega))|_{H_{1}^{(0)}(X)}.\]


\section{The construction of Aougab-Menasco-Nieland}\label{sec:AMN}

In this section, we describe the constructions of the $[1,1]$-origamis given by the first author with Menasco-Nieland~\cite{AMN}. In the remainder of the paper, for convenience, we will refer to these origamis as \textit{AMN origamis}.

\subsection{Odd genus}\label{subsec:AMN-odd}

Let $\sigma_{g} := (1,2,\ldots,2g-1)$. Up to simultaneous conjugation, we can assume that the $[1,1]$-origamis are of the form $(\sigma_{g},\tau)$, with $\tau$ another $(2g-1)$-cycle. For the origami to lie in $\calH(2g-2)$ we require that the commutator $[\sigma_{g},\tau] = \sigma_{g}\tau\sigma_{g}^{-1}\tau^{-1}$ is also a $(2g-1)$-cycle. It will be easier in the construction below to demonstrate that $[\sigma_{g}^{-1},\tau^{-1}]$ is a $(2g-1)$-cycle.

The permutations $\tau$ are constructed as follows. We begin with
\[\tau = \begin{pmatrix}
\bullet & \bullet & 1 & \bullet & \bullet & \cdots & \bullet & \bullet \\
1 & 2 & 3 & 4 & 5 & \cdots & 2g-2 & 2g-1
\end{pmatrix}\]
with the dots to be filled in later. We now place the pair $(3,2)$ in the top row of $\tau$ above some $(2j,2j+1)$. The next pair placed in the top row is then $(2j+1,2j)$ which is placed above some $(2k,2k+1)$. This process continues until the final pair $(2i+1,2i)$ is placed above $(1,2)$ which completes the top row of $\tau$. The construction guarantees that $\tau$ is a $(2g-1)$-cycle. Indeed, if the chosen pairs are $(2k_{i},2k_{i}+1)$ for $1\leq i \leq g-2$, then we have
\[\tau = (1,3,2k_{1},2k_{2}+1,2k_{3},...,2k_{g-3}+1,2k_{g-2},2,2k_{1}+1,2k_{2},2k_{3}+1,...,2k_{g-3},2k_{g-2}+1)\]
and
\[\sigma_{g}^{-1}\tau^{-1}\sigma_{g}\tau = (1,\rho^{2}(1),\rho^{4}(1),\ldots,\rho^{-1}(1),2g-1,2g-3,\ldots,3),\]
where $\rho = \tau^{-1}\sigma_{g}\tau$ is the $(2g-1)$-cycle obtained by considering the top row of $\tau$ as a $(2g-1)$-tuple. We obtain $(g-2)!$ minimal $[1,1]$-origamis in $\calH(2g-2)$~\cite[Theorem 4.1]{AMN}.

\begin{example}\label{ex:ori:AMNodd}
    For $g = 5$, the process could unfold as:
\[\tau = \begin{pmatrix}
\bullet & \bullet & 1 & \bullet & \bullet & \bullet & \bullet & \bullet & \bullet \\
1 & 2 & 3 & 4 & 5 & 6 & 7 & 8 & 9
\end{pmatrix}.\]
Now place $(3,2)$ above $(4,5)$ to obtain
\[\tau = \begin{pmatrix}
\bullet & \bullet & 1 & 3 & 2 & \bullet & \bullet & \bullet & \bullet \\
1 & 2 & 3 & 4 & 5 & 6 & 7 & 8 & 9
\end{pmatrix}.\]
Place $(5,4)$ above $(6,7)$ giving
\[\tau = \begin{pmatrix}
\bullet & \bullet & 1 & 3 & 2 & 5 & 4 & \bullet & \bullet \\
1 & 2 & 3 & 4 & 5 & 6 & 7 & 8 & 9
\end{pmatrix}.\]
Now place $(7,6)$ above $(8,9)$, which forces $(9,8)$ to go above $(1,2)$, and we finish with
\[\tau = \begin{pmatrix}
9 & 8 & 1 & 3 & 2 & 5 & 4 & 7 & 6 \\
1 & 2 & 3 & 4 & 5 & 6 & 7 & 8 & 9
\end{pmatrix} = (1,3,4,7,8,2,5,6,9).\]
This gives the $[1,1]$-origami $O = (\sigma_{5}, \tau)$ with $\sigma_{5}^{-1}\tau^{-1}\sigma_{5}\tau = (1,2,4,6,8,9,7,5,3)$ and so $O$ is indeed in the stratum $\calH(8)$. As a surface, $O$ can be realised as in Figure~\ref{fig:AMN-origami}. Note that the labels correspond exactly to the rows of $\tau$.
\end{example}

We discuss a topological realisation of this construction from the perspective of filling pairs in Appendix~\ref{app:alg}.

\begin{figure}[t]
    \centering
    \begin{tikzpicture}[scale = 1.3, line width = 0.3mm]
    \draw (0,0)--(0,1);
    \draw (0,1)--(9,1);
    \draw (9,1)--(9,0);
    \draw (9,0)--(0,0);
    \foreach \i in {1,2,...,8}{
        \draw [dashed, color = darkgray] (\i,0)--(\i,1);
        \draw (\i,0.05)--(\i,-0.05);
        \draw (\i,1.05)--(\i,0.95);
        \draw (\i-0.5,0.5) node[color = darkgray] {\footnotesize ${\i}$};
    }
    \draw (8.5,0.5) node[color = darkgray] {\footnotesize ${9}$};
    \draw (0,0.5) node[left] {$0$};
    \draw (9,0.5) node[right] {$0$};
    \foreach \i in {1,2,...,9}{
        \draw (\i-0.5,1) node[above] {$\i$};
    }
    \foreach \i in {9,8}{
        \draw (9-\i+0.5,0) node[below] {$\i$};
    }
    \draw (2.5,0) node[below] {$1$};
    \foreach \i in {3,5,7}{
        \draw (\i+0.5,0) node[below] {$\i$};
    }
    \foreach \i in {2,4,6}{
        \draw (\i+2.5,0) node[below] {$\i$};
    }
    \end{tikzpicture}
\caption{A realisation of the origami $O$ from Example~\ref{ex:ori:AMNodd}.}\label{fig:AMN-origami}
\end{figure}
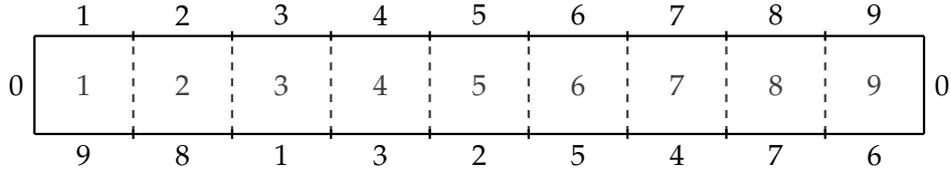

\begin{figure}[b]
    \centering
    \begin{tikzpicture}[scale = 1.3, line width = 0.3mm]
    \foreach \j in {0,-2}{
        \draw (0,\j)--(0,\j+1);
        \draw (0,\j+1)--(11,\j+1);
        \draw (11,\j+1)--(11,\j);
        \draw (11,\j)--(0,\j);
        \foreach \i in {1,2,...,10}{
            \draw [dashed, color = darkgray] (\i,\j)--(\i,\j+1);
            \draw (\i,\j+0.05)--(\i,\j-0.05);
            \draw (\i,\j+1.05)--(\i,\j+0.95);
            \draw (\i-0.5,\j+0.5) node[color = darkgray] {\footnotesize ${\i}$};
        }
        \draw (10.5,\j+0.5) node[color = darkgray] {\footnotesize ${11}$};
        \draw (0,\j+0.5) node[left] {$0$};
        \draw (11,\j+0.5) node[right] {$0$};
        \foreach \i in {1,2,...,11}{
            \draw (\i-0.5,\j+1) node[above] {$\i$};
        }
        \foreach \i in {9,8}{
            \draw (9-\i+0.5,\j) node[below] {$\i$};
        }
        \draw (2.5,\j) node[below] {$1$};
        \foreach \i in {3,5}{
            \draw (\i+0.5,\j) node[below] {$\i$};
        }
        \foreach \i in {2,4,6}{
            \draw (\i+2.5,\j) node[below] {$\i$};
        }
        \draw (9.5,\j) node[below] {$11$};
    }
    \draw (7.5,0) node[below] {$7$};
    \draw (10.5,0) node[below] {$10$};
    \draw (7.5,-2) node[below] {$10$};
    \draw (10.5,-2) node[below] {$7$};
    \end{tikzpicture}
\caption{The surgery-like construction used to build the even genus AMN origami from Example~\ref{ex:ori:AMNeven}.}\label{fig:even-AMN-example}
\end{figure}
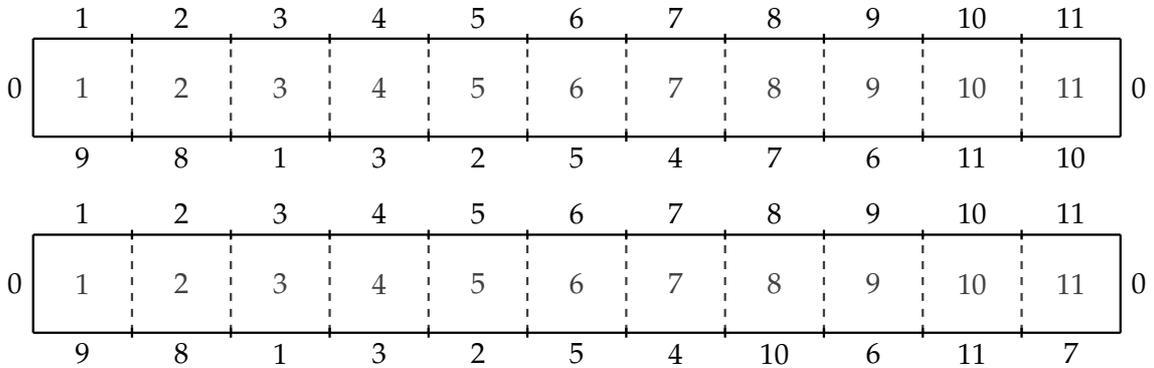

\subsection{Even genus}\label{subsec:AMN-even}

To construct $\tau$ in even genus $g\geq 4$, we begin with a permutation $\eta$ that was produced via the odd genus construction for $g-1$. The top row of $\eta$ is placed into the top row of $\tau$ above the positions they are sent to by $\eta$. Note that we have the positions above $2g-2$ and $2g-1$ in $\tau$ still blank. A choice of $k\in\{3,5,\ldots,g-3\}\setminus\{\eta^{-1}(1)\}$ is made. We then move $k$ in the top row above $2g-1$, then place $2g-1$ above $2g-2$, and $2g-2$ above $\eta(k)$. This completes $\tau$. It can then be demonstrated that $\tau$ is a $(2g-1)$-cycle with $[\sigma_{g},\tau]$ also a $(2g-1)$-cycle~\cite[Subsection 4.4]{AMN}.

\begin{example}\label{ex:ori:AMNeven}
For $g=6$, we can choose $\eta$ to be the odd-genus example and add two more numbers at the end:
\[\tau = \begin{pmatrix}
9 & 8 & 1 & 3 & 2 & 5 & 4 & 7 & 6 & \bullet & \bullet\\
1 & 2 & 3 & 4 & 5 & 6 & 7 & 8 & 9 & 10 & 11
\end{pmatrix}.\]
Next, we choose $k=7$, which is possible since $\eta(7)\ne 1$. Then we place $7$ above $11$, replace the old $7$ with $10$, and place $11$ above $10$:
\[\tau = \begin{pmatrix}
9 & 8 & 1 & 3 & 2 & 5 & 4 & 10 & 6 & 11 & 7\\
1 & 2 & 3 & 4 & 5 & 6 & 7 & 8 & 9 & 10 & 11
\end{pmatrix} = (1,3,4,7,11, 10, 8,2,5,6,9).\]
Thus, in the cycle notation, we have added $11$ and $10$ in between $7$ and $\eta(7)=8$. This gives the $[1,1]$-origami $O=(\sigma_6,\tau)$ with $\sigma_6^{-1}\tau^{-1}\sigma_6\tau=(1,2,4,9,7,8,11,6,10,5,3)$, so $O$ is in the stratum $\calH(10)$.

We can also realise this process geometrically with a surgery-like construction. See Figure~\ref{fig:even-AMN-example}. We start by taking the origami from Figure~\ref{fig:AMN-origami}. To this, we add two squares to the right with tops labelled 10 and 11, and bottoms labelled 11 and 10, respectively. On the bottom of the origami, we then swap the labels 7 and 10.
\end{example}





\section{Generalised constructions}\label{sec:gen-const}

In this section, we give the generalisations of the Aougab-Menaso-Nieland constructions required for Theorem~\ref{thm:gen-const}. We prove that the constructed origamis are indeed $[1,1]$-minimal origamis in $\calH(2g-2)$. That the constructions have the claimed spin parities will be argued in Section~\ref{sec:spin}. In each case, the idea is to prescribe part of the permutation $\tau$ in order to guarantee the required spin structure, before allowing pairs $(2j,2j+1)$ to be placed as in the original construction of the odd genus AMN origamis.

\subsection{Odd genus even spin}\label{subsec:gen-odd-even}

In this subsection, we will give a construction that resembles the original construction of the first author and Menasco-Nieland. We intend the constructed origamis to lie in $\even(2g-2)$ for $g\geq 5$ odd. Note that there is no even component in $\calH(4)$ which is why $g = 3$ is not considered.

As above, fix $g\geq 5$ odd and let $\sigma_{g} = (1,2,\ldots,2g-1)$. We will construct $\tau$ by filling in the blanks in the following permutation:
\[\tau = \begin{pmatrix}
9 & 6 & 1 & 8 & 7 & 5 & 4 & \bullet & \bullet & \cdots & \bullet & \bullet \\
1 & 2 & 3 & 4 & 5 & 6 & 7 & 8 & 9 & \cdots & 2g-2 & 2g-1
\end{pmatrix}.\]
We have fixed more of the initial permutation here which (as we will see in Section~\ref{sec:spin}) forces the even spin parity we desire. We now complete the permutation by adding the pairs $(3,2),(11,10),\ldots,(2g-1,2g-2)$ to the top row of $\tau$. We place $(3,2)$ above some $(2j,2j+1)$. As long as $(2j,2j+1)\neq (8,9)$, we will not close up a cycle in $\tau$. This means that we have $(2g-1-9)/2 = g-5$ choices for the pair $(2j,2j+1)$. Now we place $(2j+1,2j)$ above some $(2k,2k+1)\neq(8,9)$. We have $g-6$ choices. We continue in this vain until we are forced to place the final pair $(2i+1,2i)$ above $(8,9)$ at which point we close up the cycle that is $\tau$. That is, if the chosen pairs are $(2k_{i},2k_{i}+1)$ for $1\leq i \leq g-5$, then we have
\[\tau = (1,3,2k_{1},2k_{2}+1,2k_{3},\ldots,2k_{g-5}+1,8,4,7,5,6,2,2k_{1}+1,2k_{2},\ldots,2k_{g-5},9).\]

Letting $\rho$ be the $(2g-1)$-cycle given by the top row of $\tau$, we find that
\[\sigma_{g}^{-1}\tau^{-1}\sigma_{g}\tau = (1,\rho^{2}(1),\rho^{4}(1),\ldots,\tau^{-1}(2g-1),8,6,2g-1,2g-3,\ldots,9,5,3)\]
is a $(2g-1)$-cycle as required.

So we have produced $(g-5)!$ many $(2g-1)$-cycles $\tau$ such that the origami $O = (\sigma_{g},\tau)$ lies in the stratum $\calH(2g-2)$. We will argue that each such $(\sigma_{g},\tau)$ pair gives rise to a distinct origami. Indeed, recall that origamis are given by pairs of permutations $(\sigma,\tau)$ up to simultaneous conjugation of $\sigma$ and $\tau$. Observe that $\eta\sigma_{g}\eta^{-1} = \sigma_{g}$ if and only if $\eta = \sigma_{g}^{i}$ for some $i$. So two pairs $(\sigma_{g},\tau_{1})$ and $(\sigma_{g},\tau_{2})$ give rise to the same origami if and only if $\tau_{1} = \sigma_{g}^{i}\tau_{2}\sigma_{g}^{-i}$ for some $i$. By the general form of $\tau$ given above, $\tau_{2}(1) = 3$ and so we must have $\tau_{1}(1+i) = 3+i$. However, again by the general form for $\tau$, we see that the only value $j$ for which $\tau_{1}(j) = j+2$ (modulo the cycle) is $j = 1$. Therefore, $i = 0$ and so $\tau_{1} = \tau_{2}$. As such, we have produced $(g-5)!$ many minimal $[1,1]$-origamis in $\calH(2g-2)$.

\begin{example}\label{ex:ori:odd-even}
Consider $g=7$. We begin with
\[\tau = \begin{pmatrix}
9 & 6 & 1 & 8 & 7 & 5 & 4 & \bullet & \bullet & \bullet & \bullet & \bullet & \bullet \\
1 & 2 & 3 & 4 & 5 & 6 & 7 & 8 & 9 & 10 & 11 & 12 & 13
\end{pmatrix}.\]
We can place $(3,2)$ above $(10,11)$ or $(12,13)$. We choose $(12,13)$ and obtain
\[\tau = \begin{pmatrix}
9 & 6 & 1 & 8 & 7 & 5 & 4 & \bullet & \bullet & \bullet & \bullet & 3 & 2 \\
1 & 2 & 3 & 4 & 5 & 6 & 7 & 8 & 9 & 10 & 11 & 12 & 13
\end{pmatrix}.\]
We are then forced to place $(13,12)$ above $(10,11)$, and $(11,10)$ above $(1,2)$ obtaining
\begin{align*}
\tau &= \begin{pmatrix}
9 & 6 & 1 & 8 & 7 & 5 & 4 & 11 & 10 & 13 & 12 & 3 & 2 \\
1 & 2 & 3 & 4 & 5 & 6 & 7 & 8 & 9 & 10 & 11 & 12 & 13
\end{pmatrix} \\
 & = (1,3,12,11,8,4,7,5,6,2,13,10,9).
\end{align*}
We have
\[\sigma_{7}^{-1}\tau^{-1}\sigma_{7}\tau = (1,7,4,10,12,2,8,6,13,11,9,5,3).\]
\end{example}

\subsection{Even genus odd spin}\label{subsec:gen-even-odd}

In this subsection, we will construct factorially many minimal $[1,1]$-origamis in $\calH(2g-2)$ for $g\geq 4$ even. We will prove in Section~\ref{sec:spin} that these origamis have odd spin parity. The construction again resembles that of Subsection~\ref{subsec:AMN-odd}.

Let $g\geq 4$ be even. Let $\sigma_{g}$ be defined as above. Here, we begin with 
\[\tau = \begin{pmatrix}
4 & 6 & 1 & 5 & 7 & \bullet & \bullet & \cdots & \bullet & \bullet \\
1 & 2 & 3 & 4 & 5 & 6 & 7 & \cdots & 2g-2 & 2g-1
\end{pmatrix}.\]
We now complete $\tau$ by adding in the pairs $(3,2),(9,8),\ldots,(2g-1,2g-2)$ into the top rows. We begin by placing $(3,2)$ above some $(2j,2j+1)\neq (6,7)$. We have $(2g-1-7)/2 = g-4$ choices. Next we place $(2j+1,2j)$ above some $(2k,2k+1)\neq (6,7)$. We have $g-3$ choices. We continue in this way until we are forced to place the final pair $(2i+1,2i)$ above $(6,7)$. If the choices of pairs are $(2k_{i},2k_{i}+1)$ for $1\leq i\leq g-4$, then we have obtained
\[\tau = (1,3,2k_{1},2k_{2}+1,2k_{3},\ldots,2_{g-4}+1,6,2,2k_{1}+1,2k_{2},\ldots,2k_{g-4},7,5,4).\]

Letting $\rho$ be the $(2g-1)$-cycle given by the top row of $\tau$, we find that
\[\sigma_{g}^{-1}\tau^{-1}\sigma_{g}\tau = (1,4,5,6,2g-1,2g-3,\ldots,7,\rho^{2}(7),\rho^{4}(7),\ldots,\tau^{-1}(2g-1),3)\]
is a $(2g-1)$-cycle as required.

By an argument similar to that given in the previous subsection, it can be shown that we have constructed $(g-4)!$ many origamis minimal $[1,1]$-origamis in $\calH(2g-2)$.

\begin{example}\label{ex:ori:even-odd}
For $g = 6$ and choosing $(2k_{1},2k_{1}+1) = (8,9)$, which forces $(2k_{2},2k_{2}+1) = (10,11)$, we obtain
\begin{align*}
\tau &= \begin{pmatrix}
4 & 6 & 1 & 5 & 7 & 11 & 10 & 3 & 2 & 9 & 8 \\
1 & 2 & 3 & 4 & 5 & 6 & 7 & 8 & 9 & 10 & 11
\end{pmatrix} \\
&= (1,3,8,11,6,2,9,10,7,5,4),
\end{align*}
with
\[\sigma_{6}^{-1}\tau^{-1}\sigma_{6}\tau = (1,4,5,6,11,9,7,10,2,8,3).\]
\end{example}

\subsection{Even genus even spin}\label{subsec:gen-even-even}

Finally, in this subsection, we will give the construction of the origamis we will, in Section~\ref{sec:spin}, prove are contained in $\even(2g-2)$.

Here, we start with
\[\tau = \begin{pmatrix}
7 & 5 & 1 & 6 & 4 & \bullet & \bullet & \cdots & \bullet & \bullet \\
1 & 2 & 3 & 4 & 5 & 6 & 7 & \cdots & 2g-2 & 2g-1
\end{pmatrix}.\]
Again, we complete $\tau$ by adding pairs $(3,2),(9,8),\ldots,(2g-1,2g-2)$ to the top row. We begin by placing $(3,2)$ above some $(2j,2j+1)\neq (6,7)$. We have $g-4$ choices for this. Next we place $(2j+1,2j)$ above some $(2k,2k+1)\neq (6,7)$. We have $g-3$ choices. Continuing in this way, we place the final pair above $(6,7)$ completing $\tau$. If the choices of pairs are $(2k_{i},2k_{i}+1)$ for $1\leq i\leq g-4$, then we have obtained
\[\tau = (1,3,2k_{1},2k_{2}+1,2k_{3},\ldots,2k_{g-4}+1,6,4,5,2,2k_{1}+1,2k_{2},\ldots,2k_{g-4},7).\]

Letting $\rho$ be the $(2g-1)$-cycle given by the top row of $\tau$, we find that
\[\sigma_{g}^{-1}\tau^{-1}\sigma_{g}\tau = (1,5,2g-1,2g-3,\ldots,7,4,\rho^{2}(4),\rho^{4}(4),\ldots,\tau^{-1}(2g-1),6,3)\]
is a $(2g-1)$-cycle as required.

As above, it can be checked that we have constructed $(g-4)!$ many minimal $[1,1]$-origamis in $\calH(2g-2)$.

\begin{example}\label{ex:ori:even-even}
Letting $g=6$ and choosing $(2k_{1},2k_{1}+1) = (8,9)$, which forces $(2k_{2},2k_{2}+1) = (10,11)$, we obtain
\begin{align*}
\tau &= \begin{pmatrix}
7 & 5 & 1 & 6 & 4 & 11 & 10 & 3 & 2 & 9 & 8 \\
1 & 2 & 3 & 4 & 5 & 6 & 7 & 8 & 9 & 10 & 11
\end{pmatrix} \\
&= (1,3,8,11,6,4,5,2,9,10,7),
\end{align*}
with
\[\sigma_{6}^{-1}\tau^{-1}\sigma_{6}\tau = (1,5,11,9,7,4,10,2,8,6,3).\]
\end{example}

\begin{remark}\label{rem:gen-const}
Observe that in all of the constructions of this section, including the original construction of the first author and Menasco-Nieland (that of Subsection~\ref{subsec:AMN-odd}), the fact that $[\sigma,\tau]$ is a $(2g-1)$-cycle only depends on the part of the construction of $\tau$ where we place pairs $(2j+1,2j)$ over pairs $(2k,2k+1)$. So the natural variations of the above constructions, where we place pairs in this way without requiring the resulting $\tau$ to be a $(2g-1)$-cycle, always give rise to origamis in $\calH(2g-2)$ with a single horizontal cylinder even if $\tau$ is not a $(2g-1)$-cycle.
\end{remark}


\section{Spin parity calculations}\label{sec:spin}

In this section, we calculate the spin parities of the origamis in the families given in Subsection~\ref{subsec:AMN-odd} and Section~\ref{sec:gen-const}. That is, we calculate the spin parities of the odd geneus AMN origamis and their generalisations.

\subsection{An orthogonalisation method}\label{subsec:ortho}

Given an origami $O$ with a single horizontal cylinder and with sides labelled $0,1,\ldots,2g-1$, as in Figure~\ref{fig:origami-labelling}, we can associate an element of $H_{1}(O,\mathbb{Z}/2\mathbb{Z})$ to each pair of sides with the same label. Namely, we define the elements $c_{i}$ to be the cycles corresponding to the simple closed curves that travel from the centre of one of the sides labelled $i$ to the centre of the other side labelled $i$. Note that we will always assume that the left and top sides of the origami are labelled in the order $0,1,\ldots,2g-1$. Moreover, since we are working in homology with coefficients in $\mathbb{Z}/2\mathbb{Z}$, the orientations of the $c_{i}$ play no role, and the standard symplectic pairing on homology $\_\cdot\_$ when restricted to the $c_{i}$ becomes geometric intersection number modulo 2.

It follows from~\cite[Appendix C, Lemma 6]{Z2} that the set $\{c_{i}\mid 0\leq i\leq 2g-1\}$ is a generating set for $H_{1}(O,\mathbb{Z}/2\mathbb{Z})$. In fact, since the homology has dimension $2g$, the set is actually a basis. However, in order to calculate the Arf invariant of the quadratic form $\Phi$ coming from the spin structure, we require a symplectic basis. That is, we require a basis of the form $\{a_{i},b_{i}\mid 1\leq i\leq 2g\}$ such that
\[a_{i}\cdot a_{j} = b_{i}\cdot b_{j} = 0\,\,\, \forall i,j\]
and
\[a_{i}\cdot b_{j} = \delta_{i,j}.\]
Zorich describes~\cite[Appendix C]{Z2} a method for transforming our generating set into a symplectic basis, which here we call `orthogonalisation', in such a way that the value of the Arf invariant is known once the symplectic basis is achieved.

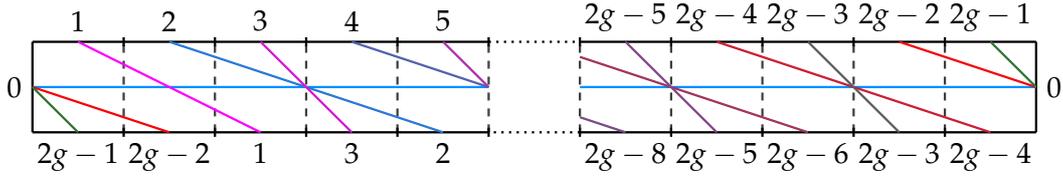
\begin{figure}[t]
    \centering
    \begin{tikzpicture}[scale = 1.2, line width = 0.3mm]
    \draw (0,0)--(0,1);
    \draw (0,1)--(5,1);
    \draw [dotted] (5,1)--(6,1);
    \draw (6,1)--(11,1);
    \draw (11,1)--(11,0);
    \draw (11,0)--(6,0);
    \draw [dotted] (6,0)--(5,0);
    \draw (5,0)--(0,0);
    
    \foreach \i in {1,2,...,10}{
        \draw [dashed, color = darkgray] (\i,0)--(\i,1);
        \draw (\i,0.05)--(\i,-0.05);
        \draw (\i,1.05)--(\i,0.95);
    }
    \draw (0,0.5) node[left] {$0$};
    \draw (11,0.5) node[right] {$0$};
    \foreach \i in {1,2,...,5}{
        \draw (\i-0.5,1) node[above] {$\i$};
        \draw (11-\i+0.5,1) node[above] {$2g-\i$};
    }
    \foreach \i in {1,2}{
        \draw (\i-0.5,0) node[below] {$2g-\i$};
    }
    \draw (2.5,0) node[below] {$1$};
    \foreach \i in {3,2}{
        \draw (6-\i+0.5,0) node[below] {$\i$};
    }
    \foreach \i in {4,6,8}{
        \draw (14-\i+0.5,0) node[below] {$2g-\i$};
    }
    \foreach \i in {3,5}{
        \draw (12-\i+0.5,0) node[below] {$2g-\i$};
    }
    \draw [color = {rgb,100:red,0;green,100;blue,100}] (0,0.5)--(5,0.5);
    \draw [color = {rgb,100:red,0;green,100;blue,100}] (6,0.5)--(11,0.5);
    \draw [color = {rgb,100:red,100;green,0;blue,100}] (0.5,1)--(2.5,0);
    \draw [color = {rgb,100:red,16;green,84;blue,84}] (1.5,1)--(4.5,0);
    \draw [color = {rgb,100:red,84;green,16;blue,84}] (2.5,1)--(3.5,0);
    \draw [color = {rgb,100:red,32;green,68;blue,68}] (3.5,1)--(5,0.5);
    \draw [color = {rgb,100:red,68;green,32;blue,68}] (4.5,1)--(5,0.5);
    \draw [color = {rgb,100:red,48;green,52;blue,52}] (6,0.166666)--(6.5,0);
    \draw [color = {rgb,100:red,64;green,36;blue,36}] (6,0.833333)--(8.5,0);
    \draw [color = {rgb,100:red,52;green,48,;blue,52}] (6.5,1)--(7.5,0);
    \draw [color = {rgb,100:red,80;green,20;blue,20}] (7.5,1)--(10.5,0);
    \draw [color = {rgb,100:red,36;green,64,;blue,36}] (8.5,1)--(9.5,0);
    \draw [color = {rgb,100:red,96;green,4;blue,4}] (9.5,1)--(11,0.5);
    \draw [color = {rgb,100:red,96;green,4;blue,4}] (0,0.5)--(1.5,0);
    \draw [color = {rgb,100:red,20;green,80,;blue,20}] (10.5,1)--(11,0.5);
    \draw [color = {rgb,100:red,20;green,80,;blue,20}] (0,0.5)--(0.5,0);
    \end{tikzpicture}
    \caption{The generating set for $H_{1}(O,\mathbb{Z}/2\mathbb{Z})$ inside the origami $O$.}\label{fig:origami-labelling}
\end{figure}

Recall that $\Phi([\gamma]) = \text{ind}(\gamma) + 1$, for $\gamma$ a simple closed curve. Since all of the cycles $c_{i}$ are represented by curves that maintain their direction with respect to the horizontal foliation in the origami, the index of the representative curve is 0 and we have $\Phi(c_{i}) = 1$ for all $i$. The key equality that we will repeatedly apply in our calculation of the Arf invariant is the following:
\[\Phi(c + c') = \Phi(c) + \Phi(c') + c\cdot c',\]
for all $c,c'\in H_{1}(O,\mathbb{Z}/2\mathbb{Z})$. This equality holds by definition from the fact that $\Phi$ is a quadratic form. Recall that, given a symplectic basis $\{a_{i},b_{i}\}$, the Arf invariant of $\Phi$ is then
\[\sum_{i = 1}^{g}\Phi(a_{i})\Phi(b_{i}).\]

The orthogonalisation procedure will repeatedly change an element of the basis in the following way. If we have just chosen $c'$ and $c''$ to form a symplectic pair $a_{i}$ and $b_{i}$ in the basis, then all of the elements that are not orthogonal to this pair will change by
\[c\mapsto c + (c\cdot c'')c' + (c\cdot c')c'' = : \tilde{c},\]
after which $\tilde{c}\cdot a_{i} = \tilde{c}\cdot b_{i} = 0$.

\begin{example}\label{ex:ARF:AMNodd}
We perform an example calculation with the origami shown in Figure~\ref{fig:origami-labelling}. We have $\Phi(c_{i}) = 1$ for all $i$ and the following intersection data
\[c_{0}\cdot c_{i} = 1,\,\forall\,i\neq 0,\]
\[c_{1}\cdot c_{i} = 1,\,\text{ for }i = 0,\]
then for $2\leq j\leq 2g-2$ even we have
\[c_{j}\cdot c_{i} = 1,\,\text{ for }i = 0,j+1,\]
and for $3\leq j\leq 2g-1$ odd we have
\[c_{j}\cdot c_{i} = 1,\,\text{ for }i = 0,j-1,\]
with all other intersections equal to 0.

We first choose $a_{1} = c_{0}$ and $b_{1} = c_{1}$. We see that these satisfy $a_{1}\cdot b_{1} = 1$. We have $\Phi(a_{1}) = 1 = \Phi(b_{1})$. Now orthogonalising we see that for $2\leq j\leq 2g-1$ we have
\[c_{j} \mapsto c_{j} + (c_{j}\cdot c_{0})c_{1} + (c_{j}\cdot c_{1})c_{0} = c_{j} + c_{1}.\]
We now have
\[\Phi(c_{j}+c_{1}) = \Phi(c_{j}) + \Phi(c_{1}) + c_{j}\cdot c_{1} = 1 + 1 + 0 \equiv 0 \mod 2.\]

Note that the intersections are now, for $2\leq j\leq 2g-2$ even,
\[(c_{j}+c_{1})\cdot(c_{i}+c_{1}) = 1,\,\text{ for }i = j+1,\]
and, for $3\leq j\leq 2g-1$ odd,
\[(c_{j}+c_{1})\cdot(c_{i}+c_{1}) = 1,\,\text{ for }i = j-1,\]
with all other intersections equal to 0. Therefore, for $2\leq i\leq g$, we can choose $a_{i} = c_{2i-2}+c_{1}$ and $b_{i} = c_{2i-1}+c_{1}$ and we obtain a symplectic basis $\{a_{i},b_{i}\}_{i = 1}^{g}$ with $\Phi(a_{i}) = \Phi(b_{i}) = \delta_{i,1}$.

Finally, the Arf invariant is calculated to be
\[\sum_{i = 1}^{g}\Phi(a_{i})\Phi(b_{i}) = 1\cdot 1 + (g-1)\cdot 0\cdot 0 = 1.\]
Hence, the origami has odd spin parity.
\end{example}

\subsection{The key lemma}\label{subsec:key-lem}

In this section, we prove the lemma that will allow us to show that, for a fixed construction of Section~\ref{sec:gen-const}, any two origamis produced by the same method have the same spin parity.

\begin{lemma}\label{lem:key}
Let $O = (\sigma_{g},\tau)$ be a $[1,1]$-origami given by one of the constructions in Subsection~\ref{subsec:AMN-odd} or Section~\ref{sec:gen-const}. Define
\[P := \left\{\begin{matrix}
\varnothing, &\text{if $O$ is given by Subsection~\ref{subsec:AMN-odd}} \\
\{4,5,6,7,8,9\}, &\text{if $O$ is given by Subsection~\ref{subsec:gen-odd-even}} \\
\{4,5,6,7\}, &\text{if $O$ is given by Subsection~\ref{subsec:gen-even-odd} or~\ref{subsec:gen-even-even}.}
\end{matrix}\right.\]
Realising $O$ with the top sides labelled as in Figure~\ref{fig:origami-labelling}, and letting $Q = \{2,3,\ldots,2g-1\}\setminus P$, let $O' = (\sigma_{g},\tau')$ be an origami obtained from $O$ by performing a permutation $\mu = (2i,2j,2k)(2i+1,2j+1,2k+1)$ of the labels of the bottom sides of $O$ with $\{2i,2i+1,2j,2j+1,2k,2k+1\}\subset Q$. Equivalently, we have $\tau' = \tau\mu$. Then $O'$ has the same spin parity as $O$.
\end{lemma}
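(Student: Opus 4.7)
The approach is to compute the Arf invariants of the canonical spin structures on $O$ and $O'$ via the orthogonalisation method of Subsection~\ref{subsec:ortho} and show they coincide. I will use the Zorich generating sets $\{c_m\}_{m=0}^{2g-1}$ for $O$ and $\{c'_m\}_{m=0}^{2g-1}$ for $O'$, each satisfying $\Phi(c_m)=\Phi(c'_m)=1$. Writing $a=2i$, $b=2i+1$, $c=2j$, $d=2j+1$, $e=2k$, $f=2k+1$, the move $\mu$ alters only the six curves $c_a, c_b, c_c, c_d, c_e, c_f$: for any $m \notin \{a,b,c,d,e,f\}$, the cycle $c_m$ in $O$ is literally identical to $c'_m$ in $O'$, with the same intersections against any other unaffected curve. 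In particular, the distinguished pair $(c_0, c_1)$ is unchanged, and the first orthogonalisation step contributes the same to the Arf invariant in both origamis.

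The key structural observation, coming from the constructions of Sections~\ref{sec:AMN} and~\ref{sec:gen-const}, is that each label pair $(2m, 2m+1)$ with $2m, 2m+1 \in Q$ appears in the top row of $\tau$ as the block $(2m+1, 2m)$ placed above a column pair $(2p, 2p+1)$. This ``swap over adjacent columns'' configuration forces $c_{2m} \cdot c_{2m+1} = 1$, so each such pair becomes a symplectic pair in the generating set with both entries satisfying $\Phi = 1$. Under $\mu$, the three affected label pairs $(a,b), (c,d), (e,f)$ get cyclically reassigned to one another's column pairs, but the swap structure is preserved, so $c'_{2m} \cdot c'_{2m+1} = 1$ still holds for each affected $m$ in $O'$.

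I then plan to run the orthogonalisation in the order $(c_0, c_1)$ first, followed by the three affected pairs, followed by the remaining unaffected pairs. The heart of the argument is to verify that the restriction of the Arf invariant to the six-dimensional symplectic subspace spanned by the affected curves is the same in $O$ and in $O'$---specifically, equal to $3 \equiv 1 \pmod{2}$, from three symplectic pairs each contributing $\Phi \cdot \Phi = 1$ after any necessary orthogonalisation corrections. In $O$ the three pairs are already mutually orthogonal; in $O'$ the cross-pair intersections $c'_a \cdot c'_c$ and the like are generally non-trivial, but I expect the corrections added during orthogonalisation to preserve the parity of the $\Phi$-products. Since the remaining unaffected pairs contribute identically in both cases, this yields the required equality $\mathrm{Arf}(O) = \mathrm{Arf}(O')$ and hence equality of spin parities.

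The main obstacle is verifying this invariance of the affected subspace's Arf contribution in full generality: under $\mu$ the cross-pair intersections among the six affected curves change, and the intersections between affected and certain unaffected curves (for example, the ``long'' chords $c_m$ whose endpoints straddle the relocated column pairs) may also change. I plan to handle this via a direct algebraic calculation exploiting the cyclic symmetry of $\mu$, checking both that the orthogonalised affected subspace always contributes $1 \pmod 2$ to the Arf and that any induced modifications to the unaffected pairs cancel modulo $2$ in the final Arf sum. This is the most intricate step of the argument, and I expect it to reduce to a small case analysis keyed to the relative ordering of the three column pairs involved in $\mu$.
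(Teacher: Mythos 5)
Your proposal follows the same broad strategy as the paper — orthogonalisation, isolate the three affected $Q$-pairs, compare Arf contributions — but it contains a specific incorrect claim and misses the cancellation mechanism that actually makes the lemma work.

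The incorrect claim is that the restriction of the Arf invariant to the subspace spanned by the six affected curves is ``equal to $3 \equiv 1 \pmod 2$, from three symplectic pairs each contributing $\Phi \cdot \Phi = 1$.'' This cannot be right: after orthogonalising against $(c_0, c_1)$, the cycles $c_m$ for $m\in Q$ typically become $c_m + c_1$, and $\Phi(c_m+c_1)=\Phi(c_m)+\Phi(c_1)+c_m\cdot c_1 = 1+1+0 = 0$. So the $\Phi$-values of the affected pairs are generally $0$, not $1$, by the time you reach them, and further orthogonalisation steps flip them unpredictably. In the paper's own worked Example~\ref{ex:ARF:3-cycle}, the three affected pairs contribute $0+1+1 \equiv 0$ on $O$ and $0+0+0 \equiv 0$ on $O'$ — not $1$. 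Relatedly, the assertion that ``in $O$ the three pairs are already mutually orthogonal'' is unjustified and false in general; the cross-pair intersections are nontrivial on both $O$ and $O'$, and what $\mu$ does is cyclically permute the intersection pattern among them.

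What your proposal is missing is the actual cancellation argument. The paper first establishes (Lemma~\ref{lem:arf-track}) that throughout the orthogonalisation, a $Q$-pair $c,c'$ that began as $c_{2j}, c_{2j+1}$ always satisfies $\Phi(c)=\Phi(c')$ and always changes in lockstep (both or neither pick up $c+c'$). It then runs the orthogonalisation on both $O$ and $O'$ in the order $A$, $B$, $C$; the first pair $A$ has identical $\Phi$-values on both, orthogonalising against $A$ flips $\Phi$ of both elements of $B$ and both elements of $C$ by exactly $1$ on $O'$ relative to $O$, and one then computes
\[
(\Phi+1)(\Phi'+1) + (\Psi+1)(\Psi'+1) \equiv \Phi\Phi' + \Psi\Psi' + \underbrace{(\Phi+\Phi')}_{=0} + \underbrace{(\Psi+\Psi')}_{=0} \pmod 2,
\]
where the cross terms vanish precisely because $\Phi=\Phi'$ and $\Psi=\Psi'$ for each $Q$-pair. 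This is the heart of the proof, and it is exactly the step your proposal defers as ``the most intricate step'' without indicating why the cancellation should occur. Finally, one must also check that an unaffected cycle $c_m$ ($m \in Q$ outside the six) picks up the same number of corrections modulo $2$ from the orthogonalisation steps against $A,B,C$ on $O$ as on $O'$; your proposal mentions this as a hope but gives no argument. These two points — the $\Phi(c)=\Phi(c')$ lockstep property and the resulting $(\Phi+1)(\Phi'+1)$ cancellation — are the essential content you would need to supply to complete the proof.
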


Before we prove the lemma, we will give an illustrative example for two origamis produced by the genus 6 construction of Subsection~\ref{subsec:gen-even-odd}.

\begin{figure}[b]
    \centering
    \begin{tikzpicture}[scale = 1.2, line width = 0.3mm]
        \foreach \i in {0,-2.2}{
            \draw (0,\i)--(0,1+\i);
            \draw (0,1+\i)--(11,1+\i);
            \draw (11,1+\i)--(11,\i);
            \draw (11,\i)--(0,\i);
            \foreach \j in {1,...,10}{
                \draw [dashed, color = darkgray] (\j,\i)--(\j,1+\i);
                \draw (\j,0.95+\i)--(\j,1.05+\i);
                \draw (\j,-0.05+\i)--(\j,0.05+\i);
            }
            \draw (0,0.5+\i) node[left] {$0$};
            \draw (11,0.5+\i) node[right] {$0$};
            \draw (0.5,\i) node[below] {$4$};
            \draw (1.5,\i) node[below] {$6$};
            \draw (2.5,\i) node[below] {$1$};
            \draw (3.5,\i) node[below] {$5$};
            \draw (4.5,\i) node[below] {$7$};
            \foreach \j in {1,...,11}{
                \draw (\j-0.5,1+\i) node[above] {$\j$};
            }
        }
        \draw (5.5,0) node[below] {$11$};
        \draw (6.5,0) node[below] {$10$};
        \draw (7.5,0) node[below] {$3$};
        \draw (8.5,0) node[below] {$2$};
        \draw (9.5,0) node[below] {$9$};
        \draw (10.5,0) node[below] {$8$};
        \draw (5.5,-2.2) node[below] {$9$};
        \draw (6.5,-2.2) node[below] {$8$};
        \draw (7.5,-2.2) node[below] {$11$};
        \draw (8.5,-2.2) node[below] {$10$};
        \draw (9.5,-2.2) node[below] {$3$};
        \draw (10.5,-2.2) node[below] {$2$};
        \draw [color = {rgb,100:red,0;green,100;blue,100}] (0,0.5)--(11,0.5);
        \draw [color = {rgb,100:red,100;green,0;blue,100}] (0.5,1)--(2.5,0);
        \draw [color = {rgb,100:red,16;green,84;blue,84}] (1.5,1)--(8.5,0);
        \draw [color = {rgb,100:red,84;green,16;blue,84}] (2.5,1)--(7.5,0);
        \draw [color = {rgb,100:red,32;green,68;blue,68}] (3.5,1)--(0.5,0);
        \draw [color = {rgb,100:red,68;green,32;blue,68}] (4.5,1)--(3.5,0);
        \draw [color = {rgb,100:red,96;green,4;blue,4}] (1.5,0)--(5.5,1);
        \draw [color = {rgb,100:red,52;green,48,;blue,52}] (6.5,1)--(4.5,0);
        \draw [color = {rgb,100:red,80;green,20;blue,20}] (7.5,1)--(10.5,0);
        \draw [color = {rgb,100:red,36;green,64,;blue,36}] (8.5,1)--(9.5,0);
        \draw [color = {rgb,100:red,10;green,10;blue,80}] (9.5,1)--(6.5,0);
        \draw [color = {rgb,100:red,20;green,80,;blue,20}] (10.5,1)--(5.5,0);
        \draw [color = {rgb,100:red,0;green,100;blue,100}] (0,0.5-2.2)--(11,0.5-2.2);
        \draw [color = {rgb,100:red,100;green,0;blue,100}] (0.5,1-2.2)--(2.5,-2.2);
        \draw [color = {rgb,100:red,16;green,84;blue,84}] (1.5,-1.2)--(10.5,-2.2);
        \draw [color = {rgb,100:red,84;green,16;blue,84}] (2.5,-1.2)--(9.5,-2.2);
        \draw [color = {rgb,100:red,32;green,68;blue,68}] (3.5,-1.2)--(0.5,-2.2);
        \draw [color = {rgb,100:red,68;green,32;blue,68}] (4.5,-1.2)--(3.5,-2.2);
        \draw [color = {rgb,100:red,96;green,4;blue,4}] (1.5,-2.2)--(5.5,-1.2);
        \draw [color = {rgb,100:red,52;green,48,;blue,52}] (6.5,-1.2)--(4.5,-2.2);
        \draw [color = {rgb,100:red,80;green,20;blue,20}] (7.5,-1.2)--(6.5,-2.2);
        \draw [color = {rgb,100:red,36;green,64,;blue,36}] (8.5,-1.2)--(5.5,-2.2);
        \draw [color = {rgb,100:red,10;green,10;blue,80}] (9.5,-1.2)--(8.5,-2.2);
        \draw [color = {rgb,100:red,20;green,80,;blue,20}] (10.5,-1.2)--(7.5,-2.2);
    \end{tikzpicture}
    \caption{Realisations of the origamis $O$ and $O'$ in $\calH(10)$.}
    \label{fig:spin-example}
\end{figure}
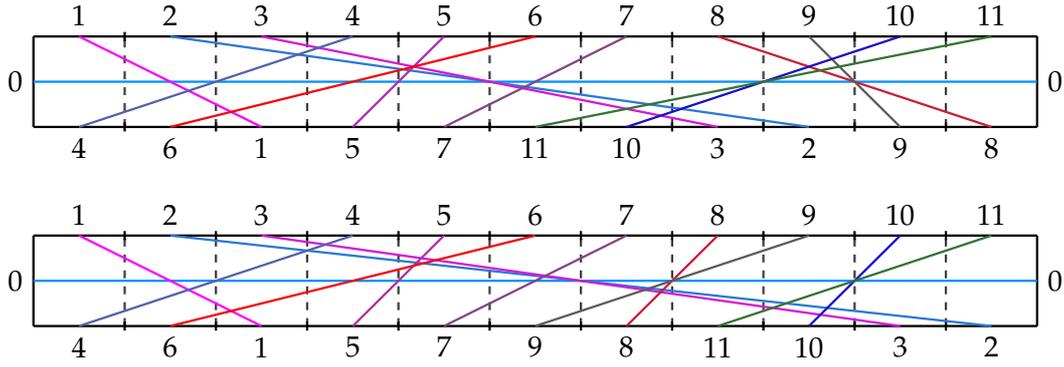

\begin{example}\label{ex:ARF:3-cycle}
Consider the origamis $$O = ((1,2,3,4,5,6,7,8,9,10,11),(1,3,8,11,6,2,9,10,7,5,4))$$ and $$O' = ((1,2,3,4,5,6,7,8,9,10,11),(1,3,10,9,6,2,11,8,7,5,4)).$$ We can realise them as in Figure~\ref{fig:spin-example}. In the language of Lemma~\ref{lem:key}, we have $P = \{4,5,6,7\}$ and $\mu = (2,8,10)(3,9,11)$.

On both $O$ and $O'$ begin with $a_{1} = c_{0}$ and $b_{1} = c_{1}$. We have $\Phi(a_{1}) = \Phi(b_{1}) = 1$. After orthogonalisation, $c_{4}$ becomes $c_{4}+c_{1}+c_{0}$, $c_{6}$ becomes $c_{6}+c_{1}+c_{0}$, and the remaining cycles $c_{i}$ become $c_{i}+c_{1}$. Next we will choose $a_{2} = c_{4}+c_{1}+c_{0}$ and $b_{2} = c_{5}+c_{1}$. It can be checked that $a_{2}\cdot b_{2} = 1$ with $\Phi(a_{2}) = \Phi(b_{2}) = 0$. Orthogonalising fixes $c_{6}+c_{1}+c_{0}$, sends $c_{2}+c_{1}$ to $c_{2}+c_{4}+c_{0}$, $c_{3}+c_{1}$ to $c_{3}+c_{4}+c_{0}$, and $c_{i}+c_{1}$ to $c_{i}+c_{5}$ for $7\leq i\leq 11$. We now choose $a_{3} = c_{6}+c_{1}+c_{0}$ and $b_{3} = c_{7}+c_{5}$. Again, it can be checked that $a_{3}\cdot b_{3} = 1$ and $\Phi(a_{3}) = \Phi(b_{3}) = 0$. Orthogonalising fixes $c_{2}+c_{4}+c_{0}$ and $c_{3}+c_{4}+c_{0}$, and sends each remaining $c_{i}+c_{5}$ to $c_{i}+c_{7}$, for $8\leq i\leq 11$.

Up to now, the orthogonalisation process has depended only on $P$ and the order in which we chose the $a_{i}$ and $b_{i}$. That is, so far, there is no difference between $O$ and $O'$.

We will first complete the calculation of the spin parity of $O$. We will choose $a_{4} = c_{2}+c_{4}+c_{0}$ and $b_{4} = c_{3}+c_{4}+c_{0}$, then $a_{4}\cdot b_{4} = 1$ and $\Phi(a_{4}) = \Phi(b_{4}) = 0$. Orthogonalising fixes $c_{10}+c_{7}$ and $c_{11}+c_{7}$, and sends $c_{8}+c_{7}$ to $c_{8}+c_{7}+c_{3}+c_{2}$ and $c_{9}+c_{7}$ to $c_{9}+c_{7}+c_{3}+c_{2}$. Next, set $a_{5} = c_{8}+c_{7}+c_{3}+c_{2}$ and $b_{5} = c_{9}+c_{7}+c_{3}+c_{2}$, giving $a_{5}\cdot b_{5} = 1$ and $\Phi(a_{5}) = \Phi(b_{5}) = 1$. The final orthogonalisation step sends $c_{10}+c_{7}$ to $c_{10}+c_{9}+c_{8}+c_{7}$ and $c_{11}+c_{7}$ to $c_{11}+c_{9}+c_{8}+c_{7}$. Finally, we set $a_{6} = c_{10}+c_{9}+c_{8}+c_{7}$ and $b_{6} = c_{11}+c_{9}+c_{8}+c_{7}$ to get $a_{6}\cdot b_{6} = 1$ and $\Phi(a_{6}) = \Phi(b_{6}) = 1$. So the spin parity of $O$ is $\sum_{i = 1}^{g}\Phi(a_{i})\Phi(b_{i}) = 1 + 0 + 0 + 0 + 1 + 1 \equiv 1$. Hence, $O$ has odd spin parity.

Now, returning to $O'$, we see that the intersection pattern of the cycles corresponding to the sides labelled by the pairs $\{3,2\},\{9,8\}$, and $\{11,10\}$ has changed from that on $O$ due to the action of $\mu$. We will again choose $a_{4} = c_{2}+c_{4}+c_{0}$ and $b_{4} = c_{3}+c_{4}+c_{0}$, giving $a_{4}\cdot b_{4} = 1$ and $\Phi(a_{4}) = \Phi(b_{4}) = 0$. However, when we perform the orthogonalisation step here we see that all cycles are fixed. That is, $c_{i}+c_{7}$ is fixed as $c_{i}+c_{7}$ for $8\leq i\leq 11$. On $O$, the differing intersection pattern meant that $c_{8}+c_{7}$ and $c_{9}+c_{7}$ were changed at this step. Next we choose $a_{5} = c_{8}+c_{7}$ and $b_{5} = c_{9}+c_{7}$, giving $a_{5}\cdot b_{5} = 1$ and $\Phi(a_{5}) = \Phi(b_{5}) = 0$. Orthogonalisation fixes $c_{10}+c_{7}$ and $c_{11}+c_{7}$. Note that these were changed in this step for $O$. We choose $a_{6} = c_{10}+c_{7}$ and $b_{6} = c_{11}+c_{7}$ to get $a_{6}\cdot b_{6} = 1$ and $\Phi(a_{6}) = \Phi(b_{6}) = 0$. So the spin parity of $O'$ is $\sum_{i = 1}^{g}\Phi(a_{i})\Phi(b_{i}) = 1 + 0 + 0 + 0 + 0 + 0 \equiv 1$. Hence $O'$ also has odd spin parity.
\end{example}

The key takeaway from this example is that, even though the values of $\Phi(a_{i})\Phi(b_{i})$ for $i \in\{5,6\}$ changed between the origamis $O$ and $O'$, the sum $\Phi(a_{5})\Phi(b_{5})+\Phi(a_{6})\Phi(b_{6})$ did not change modulo 2. This is due to the fact that the side labels were moved by a 3-cycle on the pairs. That this holds in general is the content of the following proof of Lemma~\ref{lem:key}.

\begin{proof}[Proof of Lemma~\ref{lem:key}]
On both $O$ and $O'$, let $c_{i}$ denote the homology cycle represented by a simple closed curve between the sides of the origami labelled by $i$. Choose $a_{1} = c_{0}$ and $b_{1} = c_{1}$. Perform the orthogonalisation of the remaining cycles $c_{i}$.

If $O$ has been built using the method of Subsection~\ref{subsec:AMN-odd}, then $P = \varnothing$. At this point, after the orthogonalisation, each cycle $c_{i}$ has become $c_{i}+c_{1}$ for $2\leq i\leq 2g-1$. Note that this is the same on both $O$ and $O'$. See Figure~\ref{fig:intersec-patterns} a).

If $O$ has been built using the method of Subsection~\ref{subsec:gen-odd-even}, then $P = \{4,5,6,7,8,9\}$ and after orthogonalising with respect to $a_{1}$ and $b_{1}$ the cycles $c_{6}$ and $c_{9}$ have become $c_{6}+c_{1}+c_{0}$ and $c_{9}+c_{1}+c_{0}$, and all other cycles $c_{i}$ have become $c_{i}+c_{1}$. Now choose $a_{2} = c_{6}+c_{1}+c_{0}$ and $b_{2} = c_{9}+c_{1}+c_{0}$. Orthogonalising will fix $c_{2}+c_{1}$, $c_{3}+c_{1}$, $c_{4}+c_{1}$, and $c_{5}+c_{1}$, but will send $c_{7}+c_{1}$ to $c_{7}+c_{9}+c_{0}$, $c_{8}+c_{1}$ to $c_{8}+c_{9}+c_{0}$, and send the remaining $c_{i}+c_{1}$ to $c_{i}+c_{9}+c_{6}+c_{1}$. Next we set $a_{3} = c_{4}+c_{1}$ and $b_{3} = c_{5}+c_{1}$. Orthogonalising sends $c_{2}+c_{1}$ to $c_{2}+c_{5}+c_{4}+c_{1}$, $c_{3}+c_{1}$ to $c_{3}+c_{5}+c_{4}+c_{1}$, $c_{7}+c_{9}+c_{0}$ to $c_{7}+c_{9}+c_{5}+c_{4}+c_{0}$, $c_{8}+c_{9}+c_{0}$ to $c_{8}+c_{9}+c_{5}+c_{4}+c_{0}$, and fixes the remaining cycles $c_{i}+c_{9}+c_{6}+c_{1}$, $10\leq i\leq 2g-1$. Now set $a_{4} = c_{7}+c_{9}+c_{5}+c_{4}+c_{0}$ and $b_{4} = c_{8}+c_{9}+c_{5}+c_{4}+c_{0}$. Orthogonalising sends $c_{2}+c_{5}+c_{4}+c_{1}$ to $c_{2}+c_{8}+c_{7}+c_{5}+c_{4}+c_{1}$, $c_{3}+c_{5}+c_{4}+c_{1}$ to $c_{3}+c_{8}+c_{7}+c_{5}+c_{4}+c_{1}$, and the remaining cycles $c_{i}+c_{9}+c_{6}+c_{1}$ to $c_{i}+c_{9}+c_{8}+c_{7}+c_{6}+c_{1}$. This process can be carried out on both $O$ and $O'$ since the changed side labels for $O'$ were in $Q$ which is disjoint from $P$. See Figure~\ref{fig:intersec-patterns} b).

If $O$ has been built using the method of Subsection~\ref{subsec:gen-even-odd}, then $P = \{4,5,6,7\}$. Orthogonalising with respect to $a_{1}$ and $b_{1}$ sends $c_{4}$ to $c_{4}+c_{1}+c_{0}$, $c_{6}$ to $c_{6}+c_{1}+c_{0}$ and all other cycles $c_{i}$ to $c_{i}+c_{1}$. Next we choose $a_{2} = c_{4}+c_{1}+c_{0}$ and $b_{2} = c_{5}+c_{1}$. Orthogonalising fixes $c_{6}+c_{1}+c_{0}$, sends $c_{2}+c_{1}$ to $c_{2}+c_{4}+c_{0}$, $c_{3}+c_{1}$ to $c_{3}+c_{4}+c_{0}$, and the remaining cycles $c_{i}+c_{1}$ to $c_{i}+c_{5}$. Now choose $a_{3} = c_{6}+c_{1}+c_{0}$ and $b_{3} = c_{7}+c_{5}$. Orthogonalising fixes $c_{2}+c_{4}+c_{0}$ and $c_{3}+c_{4}+c_{0}$, and sends the remaining cycles $c_{i}+c_{5}$ to $c_{i}+c_{7}$. Again, this process can be carried out on both $O$ and $O'$ since the changed side labels for $O'$ were in $Q$ which is disjoint from $P$. See Figure~\ref{fig:intersec-patterns} c).

\begin{figure}[t]
    \centering
    \begin{tikzpicture}[scale = 0.9, line width = 0.3mm]
        \draw (0,0)--(0,1);
        \draw (0,1)--(3,1);
        \draw (3,0)--(0,0);
        \foreach \j in {1,2,3}{
            \draw [dashed, color = darkgray] (\j,0)--(\j,1);
            \draw (\j,0.95)--(\j,1.05);
            \draw (\j,-0.05)--(\j,0.05);
            \draw (\j-0.5,1) node[above] {$\j$};
        }
        \draw (0,0.5) node[left] {$0$};
        \draw (0.5,0) node[below] {$2j+1$};
        \draw (1.5,0) node[below] {$2j$};
        \draw (2.5,0) node[below] {$1$};
        \draw [color = {rgb,100:red,0;green,100;blue,100}] (0,0.5)--(3,0.5);
        \draw [color = {rgb,100:red,100;green,0;blue,100}] (0.5,1)--(2.5,0);
        \draw [color = {rgb,100:red,16;green,84;blue,84}] (1.5,1)--(3,0.5);
        \draw [color = {rgb,100:red,84;green,16;blue,84}] (2.5,1)--(3,0.5);
        \draw [color = {rgb,100:red,5;green,80,;blue,15}] (0.5,0)--(0,0.25);
        \draw [color = {rgb,100:red,10;green,80,;blue,10}] (1.5,0)--(0,0.25);
        \draw (1.5,-1) node {a)};
        \foreach \i in {5}{
            \draw (\i,0)--(\i,1);
            \draw (\i,1)--(\i+9,1);
            \draw (\i+9,0)--(\i,0);
            \foreach \j in {1,...,9}{
                \draw [dashed, color = darkgray] (\i+\j,0)--(\i+\j,1);
                \draw (\i+\j,0.95)--(\i+\j,1.05);
                \draw (\i+\j,-0.05)--(\i+\j,0.05);
                \draw (\i+\j-0.5,1) node[above] {$\j$};
            }
            \draw (\i,0.5) node[left] {$0$};
            \draw (\i+0.5,0) node[below] {$9$};
            \draw (\i+1.5,0) node[below] {$6$};
            \draw (\i+2.5,0) node[below] {$1$};
            \draw (\i+3.5,0) node[below] {$8$};
            \draw (\i+4.5,0) node[below] {$7$};
            \draw (\i+5.5,0) node[below] {$5$};
            \draw (\i+6.5,0) node[below] {$4$};
            \draw (\i+7.5,0) node[below] {$2j+1$};
            \draw (\i+8.5,0) node[below] {$2j$};
            \draw [color = {rgb,100:red,0;green,100;blue,100}] (\i,0.5)--(\i+9,0.5);
            \draw [color = {rgb,100:red,100;green,0;blue,100}] (\i+0.5,1)--(\i+2.5,0);
            \draw [color = {rgb,100:red,16;green,84;blue,84}] (\i+1.5,1)--(\i+9,0.5);
            \draw [color = {rgb,100:red,84;green,16;blue,84}] (\i+2.5,1)--(\i+9,0.5);
            \draw [color = {rgb,100:red,32;green,68;blue,68}] (\i+3.5,1)--(\i+6.5,0);
            \draw [color = {rgb,100:red,68;green,32;blue,68}] (\i+4.5,1)--(\i+5.5,0);
            \draw [color = {rgb,100:red,96;green,4;blue,4}] (\i+1.5,0)--(\i+5.5,1);
            \draw [color = {rgb,100:red,52;green,48,;blue,52}] (\i+6.5,1)--(\i+4.5,0);
            \draw [color = {rgb,100:red,80;green,20;blue,20}] (\i+7.5,1)--(\i+3.5,0);
            \draw [color = {rgb,100:red,36;green,64,;blue,36}] (\i+8.5,1)--(\i+0.5,0);
            \draw [color = {rgb,100:red,5;green,80,;blue,15}] (\i+7.5,0)--(\i+9,0.25);
            \draw [color = {rgb,100:red,10;green,80,;blue,10}] (\i+8.5,0)--(\i+9,0.25);
            \draw (\i+4.5,-1) node {b)};
        }
        \foreach \i in {-0.5}{
            \draw (\i,-4)--(\i,-3);
            \draw (\i,-3)--(\i+7,-3);
            \draw (\i+7,-4)--(\i,-4);
            \foreach \j in {1,...,7}{
                \draw [dashed, color = darkgray] (\i+\j,-4)--(\i+\j,-3);
                \draw (\i+\j,-4+0.95)--(\i+\j,-4+1.05);
                \draw (\i+\j,-4-0.05)--(\i+\j,-4+0.05);
                \draw (\i+\j-0.5,-3) node[above] {$\j$};
            }
            \draw (\i,-4+0.5) node[left] {$0$};
            \draw (\i+0.5,-4) node[below] {$4$};
            \draw (\i+1.5,-4) node[below] {$6$};
            \draw (\i+2.5,-4) node[below] {$1$};
            \draw (\i+3.5,-4) node[below] {$5$};
            \draw (\i+4.5,-4) node[below] {$7$};
            \draw (\i+5.5,-4) node[below] {$2j+1$};
            \draw (\i+6.5,-4) node[below] {$2j$};
            \draw [color = {rgb,100:red,0;green,100;blue,100}] (\i,-4+0.5)--(\i+7,-4+0.5);
            \draw [color = {rgb,100:red,100;green,0;blue,100}] (\i+0.5,-3)--(\i+2.5,-4);
            \draw [color = {rgb,100:red,16;green,84;blue,84}] (\i+1.5,-3)--(\i+7,-4+0.5);
            \draw [color = {rgb,100:red,84;green,16;blue,84}] (\i+2.5,-3)--(\i+7,-4+0.5);
            \draw [color = {rgb,100:red,32;green,68;blue,68}] (\i+3.5,-3)--(\i+0.5,-4);
            \draw [color = {rgb,100:red,68;green,32;blue,68}] (\i+4.5,-3)--(\i+3.5,-4);
            \draw [color = {rgb,100:red,96;green,4;blue,4}] (\i+1.5,-4)--(\i+5.5,-3);
            \draw [color = {rgb,100:red,52;green,48,;blue,52}] (\i+6.5,-3)--(\i+4.5,-4);
            \draw [color = {rgb,100:red,5;green,80,;blue,15}] (\i+6.5,-4)--(\i+7,-4+0.25);
            \draw [color = {rgb,100:red,10;green,80,;blue,10}] (\i+5.5,-4)--(\i+7,-4+0.25);
            \draw (\i+3.5,-2) node {c)};
        }
        \foreach \i in {7.5}{
            \draw (\i,-4)--(\i,-3);
            \draw (\i,-3)--(\i+7,-3);
            \draw (\i+7,-4)--(\i,-4);
            \foreach \j in {1,...,7}{
                \draw [dashed, color = darkgray] (\i+\j,-4)--(\i+\j,-3);
                \draw (\i+\j,-4+0.95)--(\i+\j,-4+1.05);
                \draw (\i+\j,-4-0.05)--(\i+\j,-4+0.05);
                \draw (\i+\j-0.5,-3) node[above] {$\j$};
            }
            \draw (\i,-4+0.5) node[left] {$0$};
            \draw (\i+0.5,-4) node[below] {$7$};
            \draw (\i+1.5,-4) node[below] {$5$};
            \draw (\i+2.5,-4) node[below] {$1$};
            \draw (\i+3.5,-4) node[below] {$6$};
            \draw (\i+4.5,-4) node[below] {$4$};
            \draw (\i+5.5,-4) node[below] {$2j+1$};
            \draw (\i+6.5,-4) node[below] {$2j$};
            \draw [color = {rgb,100:red,0;green,100;blue,100}] (\i,-4+0.5)--(\i+7,-4+0.5);
            \draw [color = {rgb,100:red,100;green,0;blue,100}] (\i+0.5,-3)--(\i+2.5,-4);
            \draw [color = {rgb,100:red,16;green,84;blue,84}] (\i+1.5,-3)--(\i+7,-4+0.5);
            \draw [color = {rgb,100:red,84;green,16;blue,84}] (\i+2.5,-3)--(\i+7,-4+0.5);
            \draw [color = {rgb,100:red,32;green,68;blue,68}] (\i+3.5,-3)--(\i+4.5,-4);
            \draw [color = {rgb,100:red,68;green,32;blue,68}] (\i+4.5,-3)--(\i+1.5,-4);
            \draw [color = {rgb,100:red,96;green,4;blue,4}] (\i+3.5,-4)--(\i+5.5,-3);
            \draw [color = {rgb,100:red,52;green,48,;blue,52}] (\i+6.5,-3)--(\i+0.5,-4);
            \draw [color = {rgb,100:red,5;green,80,;blue,15}] (\i+6.5,-4)--(\i+7,-4+0.25);
            \draw [color = {rgb,100:red,10;green,80,;blue,10}] (\i+5.5,-4)--(\i+7,-4+0.25);
            \draw (\i+3.5,-2) node {d)};
        }
    \end{tikzpicture}
    \caption{Intersection patterns in origamis constructed using the methods of a) Subsection~\ref{subsec:AMN-odd}, b) Subsection~\ref{subsec:gen-odd-even}, c) Subsection~\ref{subsec:gen-even-odd}, and d) Subsection~\ref{subsec:gen-even-even}.}
    \label{fig:intersec-patterns}
\end{figure}
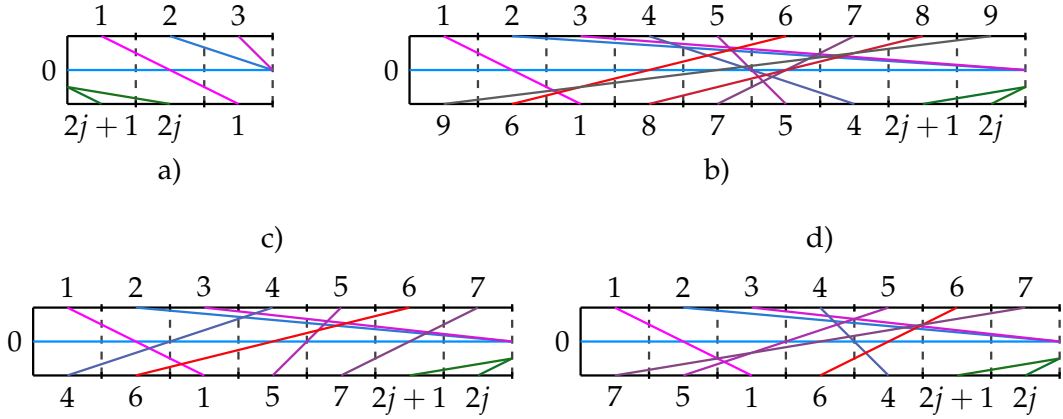

Finally, if $O$ has been built using the method of Subsection~\ref{subsec:gen-even-even}, then $P = \{4,5,6,7\}$, and orthogonalising with respect to $a_{1}$ and $b_{1}$ sends $c_{5}$ to $c_{5}+c_{1}+c_{0}$, $c_{7}$ to $c_{7}+c_{1}+c_{0}$, and the remaining cycles $c_{i}$ to $c_{i}+c_{1}$. Next we choose $a_{2} = c_{5}+c_{1}+c_{0}$ and $b_{2} = c_{7}+c_{1}+c_{0}$. Orthogonalising fixes $c_{2}+c_{1}, c_{3}+c_{1}$ and $c_{4}+c_{1}$, and sends $c_{6}+c_{1}$ to $c_{6}+c_{7}+c_{0}$ and the remaining cycles $c_{i}+c_{1}$ to $c_{i}+c_{7}+c_{5}+c_{1}$. Now choose $a_{3} = c_{4}+c_{1}$ and $b_{3} = c_{6}+c_{7}+c_{0}$. Orthogonalising sends $c_{2}+c_{1}$ to $c_{2}+c_{7}+c_{6}+c_{4}+c_{0}$, $c_{3}+c_{1}$ to $c_{3}+c_{7}+c_{6}+c_{4}+c_{0}$, and each remaining cycle $c_{i}+c_{7}+c_{5}+c_{1}$ to $c_{i}+c_{7}+c_{5}+c_{4}$. Again, this process carries out in the same way on both $O$ and $O'$. See Figure~\ref{fig:intersec-patterns} d).

In all cases, we can begin the orthogonalisation process on $O$ and $O'$ in the same way, handling the cycles $c_{0}$, $c_{1}$, and those corresponding to labels in $P$. The remaining cycles correspond to labels in $Q$ and we claim that they can be paired up to form the remaining choices of $a_{i}$ and $b_{i}$. 

\begin{lemma} \label{lem:pair-up-q-cycles}
    In every iteration $i$ of the orthogonalisation process, any pair of cycles $c$ and $c'$ that began as $c_{2j}$ and $c_{2j+1}$ for $2j,2j+1\in Q$ are of the form $c = c_{2j} + d_i$, $c' = c_{2j+1} + d_i$ where $d_i$ is some cycle forced by the orthogonalisation process. Thus, $c \cdot c' = 1$ and $c'' \cdot c = c'' \cdot c'$ for all $c'' \not \in \{c,c'\}$.
\end{lemma}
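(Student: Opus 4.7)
The plan is to prove this by induction on the iteration index $i$. For the base case $i=0$, before any orthogonalisation, we have $d_0 = 0$, and I must verify that $c_{2j}\cdot c_{2j+1} = 1$ together with $c_{2j}\cdot c_m = c_{2j+1}\cdot c_m$ for every other generator $c_m$. The key combinatorial input is that, in each of the four constructions, whenever $(2j,2j+1)\in Q$ the values $\tau(2j)$ and $\tau(2j+1)$ are consecutive integers; this can be read directly off the explicit cycle formulas in Subsections~\ref{subsec:AMN-odd},~\ref{subsec:gen-odd-even},~\ref{subsec:gen-even-odd}, and~\ref{subsec:gen-even-even}. Representing each $c_i$ by the straight segment from top position $i$ to bottom position $\tau(i)$, two such segments cross exactly when their top and bottom endpoint pairs interleave in opposite orders. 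Applied to $c_{2j}$ and $c_{2j+1}$ (whose top endpoints $\{2j,2j+1\}$ and bottom endpoints $\{\tau(2j),\tau(2j+1)\}$ are both adjacent pairs) this immediately gives $c_{2j}\cdot c_{2j+1}=1$. Comparing with any $c_m$ whose endpoints lie outside these two adjacent pairs yields $c_m\cdot c_{2j} = c_m\cdot c_{2j+1}$. The waistband $c_0$ meets every $c_m$ with $m\geq 1$ exactly once and is automatically compatible, and $c_1$ is handled by the same straddling analysis since $\tau(1)=3$ in all four constructions.

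For the inductive step, suppose that at the start of iteration $i$ we have $c = c_{2j} + d$ and $c' = c_{2j+1} + d$ with $c\cdot c' = 1$ and $c\cdot c'' = c'\cdot c''$ for every $c''\notin\{c,c'\}$. The iteration selects a symplectic pair $a_i, b_i$ distinct from $c$ and $c'$ and applies the map
\[v \longmapsto v + (v\cdot b_i)\,a_i + (v\cdot a_i)\,b_i\]
to every remaining cycle. Since $c\cdot a_i = c'\cdot a_i$ and $c\cdot b_i = c'\cdot b_i$ by hypothesis, both $c$ and $c'$ are modified by the same vector $\Delta_i := (c\cdot b_i)\,a_i + (c\cdot a_i)\,b_i$, and hence the new common $d_i = d+\Delta_i$ works as required. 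Persistence of the intersection identities is then a short $\mathbb{Z}/2$-bilinear computation: writing $\Delta_i(v) := (v\cdot b_i)\,a_i + (v\cdot a_i)\,b_i$ and using $a_i\cdot b_i = 1$ together with $a_i\cdot a_i = b_i\cdot b_i = 0$, one obtains
\[(v + \Delta_i(v))\cdot(w + \Delta_i(w)) = v\cdot w + (v\cdot b_i)(w\cdot a_i) + (v\cdot a_i)(w\cdot b_i) \pmod{2},\]
and the right-hand side takes the same value when $(v,w)$ is replaced by $(c,c'')$ versus $(c',c'')$, and equals $1$ when $(v,w) = (c,c')$.

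The only place where the specifics of the constructions genuinely enter is the base case, where one must verify that each pair from $Q$ is sent by $\tau$ to a pair of consecutive integers. I expect this to be the main obstacle in practice: it requires tabulating the cycle formulas and checking the claim case by case across the four constructions, with small additional attention to how $c_0$ and $c_1$ behave. Everything afterwards is routine linear algebra over $\mathbb{Z}/2$.
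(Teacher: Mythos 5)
Your proof is correct and follows essentially the same approach as the paper's: an induction on orthogonalisation iterations, where the inductive step hinges on the observation that $c$ and $c'$ are modified by the same vector (because they have identical pairings with $a_i,b_i$) and where the persistence of the intersection data is exactly the $\mathbb{Z}/2$-bilinear computation you record. You spend more effort than the paper on the base case, which the paper dispatches with ``trivially holds at the beginning of the orthogonalisation process''; your identification of the underlying combinatorial fact --- that in each construction $\tau$ sends every $Q$-pair $\{2j,2j+1\}$ to an adjacent pair in reversed order, so $c_{2j}$ and $c_{2j+1}$ cross once and cross any third cycle equally often --- is the right reason, even if the straight-chord picture you invoke glosses over the choice of arc representative (changing a representative by a wrap adds $c_0$ to both members of the pair, which leaves the lemma's conclusions unaffected).
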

\begin{proof}
    The statement trivially holds at the beginning of the orthogonalisation process. Proceeding inductively, if $c_{2j},c_{2j+1}$ are transformed to $c_{2j} + d_i, c_{2j+1} + d_i$ at the $i$-th iteration, and neither has been chosen as a symplectic basis element, choose a pair of symplectic basis elements $a_n,b_n$ and assume $a_n,b_n \not = c_{2j},c_{2j+1}$. Then, $c_{2j} + d_i \mapsto c_{2j} + (c_{2j} \cdot a_{i+1} + d_i \cdot a_{i+1})b_{i+1} + (c_{2j} \cdot b_{i+1} + d_i \cdot b_{i+1})a_{i+1}, c_{2j+1} \mapsto c_{2j+1} + (c_{2j+1} \cdot a_{i+1} + d_i \cdot a_{i+1})b_{i+1} + (c_{2j+1} \cdot b_{i+1} + d_i \cdot b_{i+1})a_{i+1}$ with $c_{2j} \cdot a_{i+1} = c_{2j+1} \cdot a_{i+1}$, $c_{2j} \cdot b_{i+1} = c_{2j+1} \cdot b_{i+1}$. This completes the first part of the proof. It follows that $c'' \cdot c = c'' \cdot (c_{2j}+ d_i) = c'' \cdot (c_{2j+1} + d_i) = c'' \cdot c'$ for all $c'' \not \in \{c,c'\}$ and
    \begin{multline*}
        c \cdot c' = (c_{2j}+ d_i) \cdot (c_{2j+1} + d_i) 
        = c_{2j} \cdot c_{2j+1} + c_{2j} \cdot d_i + d_i \cdot c_{2j+1} + d_i\cdot d_i \\
        = 1 + 2c_{2j} \cdot d_i + d_i \cdot d_i
        = 1 \mod 2.
    \end{multline*}
\end{proof}

We must therefore check that the orthogonalisation steps required to complete this process result in bases on $O$ and $O'$ having the same Arf invariant. Note that this does need to be checked as the orthogonalisation steps will depend on the intersection patterns of cycles with labels in $Q$ which, by assumption, are different on $O$ and $O'$. The key observation is summarized in the following lemma.

\begin{lemma} \label{lem:arf-track}
    If $c$ and $c'$ are cycles that began as $c_{2j}$ and $c_{2j+1}$ for $2j,2j+1\in Q$, $\Phi(c) = \Phi(c')$. Once $c,c'$ are chosen as symplectic basis elements, any pair of cycles $d, d'$ coming from some $\{2l,2l+1\}\subset Q$ will either remain unchanged and hence their values under $\Phi$ are fixed, or they are both changed by adding $c+c'$ and their values under $\Phi$ change by 1 modulo 2. Moreover, relative orthogonality is preserved among the remaining cycles. 
\end{lemma}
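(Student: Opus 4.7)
The plan is to leverage the structural output of Lemma~\ref{lem:pair-up-q-cycles}: paired $Q$-cycles always take the form $c = c_{2j} + d_i$, $c' = c_{2j+1} + d_i$, with $c \cdot c' = 1$ and every vector outside $\{c,c'\}$ intersecting $c$ and $c'$ identically modulo $2$. These two facts will carry both halves of the statement.

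For the first claim, I would argue by induction on the orthogonalisation step. At step zero, $c = c_{2j}$ and $c' = c_{2j+1}$ both have $\Phi$-value equal to $1$, since each is represented by a closed curve inside the horizontal cylinder and therefore has index $0$. For the inductive step, consider an orthogonalisation against a symplectic pair $(a_n,b_n) \neq (c,c')$. Lemma~\ref{lem:pair-up-q-cycles} forces $c \cdot a_n = c' \cdot a_n$ and $c \cdot b_n = c' \cdot b_n$, so both cycles are modified by the \emph{same} vector $v = (c \cdot a_n)b_n + (c \cdot b_n)a_n$. A single application of the quadratic form identity $\Phi(x + v) = \Phi(x) + \Phi(v) + x \cdot v$, together with the observation that $c \cdot v = c' \cdot v = 0 \bmod 2$, then closes the induction.

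For the second claim, suppose $c,c'$ have just been installed as a symplectic pair, and let $d,d'$ be a surviving pair coming from some $\{2l,2l+1\} \subset Q$. The orthogonalisation formula $d \mapsto d + (d \cdot c')c + (d \cdot c)c'$ simplifies via $d \cdot c = d \cdot c'$ (again by Lemma~\ref{lem:pair-up-q-cycles}) to $d \mapsto d + (d \cdot c)(c + c')$. The identical formula with the same scalar $d' \cdot c = d \cdot c$ applies to $d'$, so both cycles are either left untouched or receive the summand $c + c'$ in unison. In the nontrivial case, expanding $\Phi(d + c + c')$ via the quadratic form identity, invoking $c \cdot c' = 1$, and applying the first claim to replace $\Phi(c) + \Phi(c')$ by $0 \bmod 2$, delivers precisely the advertised shift by $1$ modulo $2$.

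Preservation of the remaining relative orthogonality is then a direct check: because $c + c'$ intersects every surviving cycle an even number of times modulo $2$, adding a multiple of $c + c'$ to each survivor leaves all their mutual intersection numbers unchanged, so the hypotheses of Lemma~\ref{lem:pair-up-q-cycles} persist to the next iteration. The step I expect to require the most care is the inductive passage of the first claim, where one must verify that every intersection identity invoked really is delivered by Lemma~\ref{lem:pair-up-q-cycles} rather than by an accident of the originally chosen basis; once this is in place, the remainder of the argument is essentially formal.
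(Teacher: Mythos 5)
Your proposal is correct and follows essentially the same route as the paper: both proofs reduce all three assertions to the structural conclusion of Lemma~\ref{lem:pair-up-q-cycles} (paired $Q$-cycles differ by a common summand and have identical intersections with everything else), then close via the quadratic-form identity. The only minor stylistic divergence is in the first claim, where you proceed by induction on orthogonalisation steps (showing $c,c'$ are always modified by the same vector $v$ with $c\cdot v = c'\cdot v$), while the paper directly expands $\Phi(c_{2j}+d_i)$ and $\Phi(c_{2j+1}+d_i)$ and compares term by term; these are two presentations of the same argument. For the third claim, your observation that $e\cdot(c+c')=0$ for every surviving $e$ gives a slightly cleaner derivation than the paper's four-term expansion, but again the content is identical.
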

\begin{proof}
    The lemma is mostly a corollary of Lemma \ref{lem:pair-up-q-cycles}. The first statement follows from the computation $\Phi(c)$ = $\Phi(c_{2j} + d_i) = \Phi(c_{2j}) + \Phi(d_i) + c_{2j} \cdot d_{i} = \Phi(c_{2j+1}) + \Phi(d_i) + c_{2j+1} \cdot d_i = \Phi(c_{2j+1} + d_i) = \Phi(c')$. The second statement is a consequence of $d \cdot c = d \cdot c' = d' \cdot c = d' \cdot c'$ and if $d \cdot c = 1$, $\Phi(d + c + c') = \Phi(d) + \Phi(c) + \Phi(c') + d \cdot c + d \cdot c' + c \cdot c' = \Phi(d) + 2\Phi(c) + 2 d \cdot c + 1 = \Phi(d) + 1 \mod 2$. Finally, if $d, e$ are two cycles distinct from $c, c'$, we have
    \begin{align*}
    &(d + (d\cdot c)(c+c'))\cdot (e + (e\cdot c)(c+c')) \\[-15pt]
    &= d\cdot e + (e\cdot c)(d\cdot(c+c')) + (d\cdot c)(e\cdot (c+c')) + (d\cdot c)(e\cdot c)\overbrace{(c+c')\cdot(c+c')}^{=0} \\
    &= d\cdot e + (d\cdot c)(e\cdot c)+\underbrace{(d\cdot c')}_{=(d\cdot c)}(e\cdot c)+(d\cdot c)(e\cdot c)+(d\cdot c)\underbrace{(e\cdot c')}_{=(e\cdot c)} \\[-15pt]
    &\equiv d\cdot e.
\end{align*}
\end{proof}

\begin{figure}[b]
    \centering
    \includegraphics{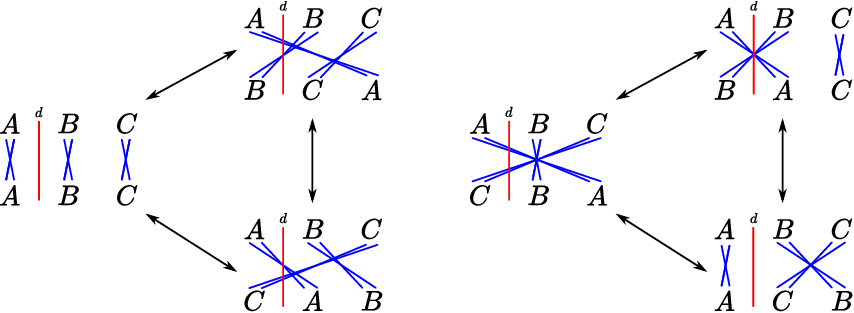}
    \caption{Intersection patterns between the pairs of sides involved in $\mu$.}
    \label{fig:AMN-3-cycles}
\end{figure}

Now consider the three pairs $A = \{x,x+1\}, B=\{y,y+1\}$ and $C=\{z,z+1\}$ with $A\cup B\cup C = \{2i,2i+1,2j,2j+1,2k,2k+1\}$, where $\mu = (2i+1,2j+1,2k+1)(2i,2j,2k)$ is the permutation changing the labels of $O$ to the labels of $O'$. We assume that $x<y<z$. The situation then lies on one of the 3-cycles in Figure~\ref{fig:AMN-3-cycles}. The diagram should be read as follows. The letters $A, B$ and $C$ represent their corresponding side labels on the top and bottom of the origami. The blue lines between the same letters represent the pair of cycles between the side labels. The red curve represents a distinct homology cycle $d$.

We will argue explicitly in the situation where $O$ corresponds to the left-most vertex of the left 3-cycle in Figure~\ref{fig:AMN-3-cycles} and $O'$ corresponds to the top vertex. The other cases are argued similarly. For clarity, we will let $\Phi_{O}$ and $\Phi_{O'}$ denote the quadratic form on $O$ and $O'$ respectively.

Let $|P| = 2p$. So far, we have set $a_{1},b_{1},\ldots,a_{p+1},b_{p+1}$. Next, we choose $a_{p+2}$ and $b_{p+2}$ to be the cycles that were originally $c_{x}$ and $c_{x+1}$, where $A = \{x,x+1\}$. Note that $\Phi_{O}(a_{p+2})=\Phi_{O'}(a_{p+2})$ and $\Phi_{O}(b_{p+2})=\Phi_{O'}(b_{p+2})$. If on $O$ the cycles $a_{p+2}$ and $b_{p+2}$ are orthogonal (resp. non-orthogonal) to the cycles that started as $c_{y}$, $c_{y+1}$, $c_{z}$ and $c_{z+1}$, where $B = \{y,y+1\}$ and $C = \{z,z+1\}$, then they will be non-orthogonal (resp. orthogonal) on $O'$. Therefore, after orthogonalising with respect to $a_{p+2}$ and $b_{p+2}$, the values under $\Phi_{O}$ of the cycles that started as $c_{y}$, $c_{y+1}$, $c_{z}$ and $c_{z+1}$ will differ by 1 modulo 2 from the values under $\Phi_{O'}$. Next, we set $a_{p+3}$ and $b_{p+3}$ to be the cycles that started as $c_{y}$ and $c_{y+1}$. The previous discussion then means $\Phi_{O'}(a_{p+3})\equiv \Phi_{O}(a_{p+3}) + 1$ and $\Phi_{O'}(b_{p+3})\equiv \Phi_{O}(b_{p+3}) + 1$. The cycles $a_{p+3}$ and $b_{p+3}$ are orthogonal (resp. non-orthogonal) to the cycles that started as $c_{z}$ and $c_{z+1}$ on $O$ if and only if they are orthogonal (resp. non-orthogonal) on $O'$ and so the orthogonalisation process affects the values of $\Phi_{O}$ and $\Phi_{O'}$ in the same way. Next, we set $a_{p+4}$ and $b_{p+4}$ to be the cycles that started as $c_{z}$ and $c_{z+1}$. We have $\Phi_{O'}(a_{p+4})\equiv\Phi_{O}(a_{p+4})+1$ and $\Phi_{O'}(b_{p+4})\equiv\Phi_{O}(b_{p+4})+1$. Therefore, we have
\begin{align*}
&\Phi_{O'}(a_{p+3})\Phi_{O'}(b_{p+3}) + \Phi_{O'}(a_{p+4})\Phi_{O'}(b_{p+4}) \\
&\equiv (\Phi_{O}(a_{p+3})+1)(\Phi_{O}(b_{p+3})+1) + (\Phi_{O}(a_{p+4})+1)(\Phi_{O}(b_{p+4})+1) \\
&= \Phi_{O}(a_{p+3})\Phi_{O}(b_{p+3}) + \Phi_{O}(a_{p+3}) + \Phi_{O}(b_{p+3}) \\
&\hspace*{2cm}+ \Phi_{O}(a_{p+4}) + \Phi_{O}(b_{p+4}) + \Phi_{O}(a_{p+4})\Phi_{O}(b_{p+4}) \\
&\equiv \Phi_{O}(a_{p+3})\Phi_{O}(b_{p+3}) + \Phi_{O}(a_{p+4})\Phi_{O}(b_{p+4}).
\end{align*}
So the contribution of the cycles involving the sides labelled by $A, B$ and $C$ to the Arf invariant on $O$ and $O'$ are the same.

Now consider a cycle $d = c_{m}$ for $m\in Q\setminus\{2i,2i+1,2j,2j+1,2k,2k+1\}$. See Figure~\ref{fig:AMN-3-cycles}, for an example. We observe that the number of orthogonalisation steps involving $A, B$ and $C$ that affect $d$ are the same modulo 2 on $O$ and $O'$. Therefore, since each orthogonalisation step changes the value of $\Phi$ by 1 modulo 2, $\Phi_{O}(d) \equiv \Phi_{O'}(d)$ after all of the above orthogonalisation steps.

We can now continue the orthogonalisation process by choosing the remaining $a_{n}$ and $b_{n}$ to be the cycles that started as cycles $c_{2l},c_{2l+1}\in Q\setminus\{2i,2i+1,2j,2j+1,2k,2k+1\}$. The calculations will proceed in the same way for both $O$ and $O'$. It therefore follows that the Arf invariants of $\Phi_{O}$ and $\Phi_{O'}$ are equal. That is, $O$ and $O'$ have the same spin parity, as claimed.
\end{proof}

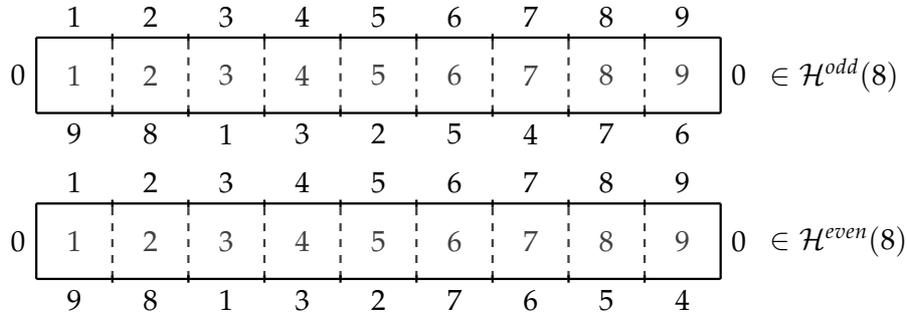
\begin{figure}[t]
    \centering
    \begin{tikzpicture}[scale = 1, line width = 0.3mm]
        \foreach \i in {0,-2.2}{
            \draw (0,\i+0.5) node[left] {$0$};
            \draw (9,\i+0.5) node[right] {$0$};
            \draw (0,\i)--(0,\i+1);
            \draw (0,\i+1)--(9,\i+1);
            \draw (9,\i+1)--(9,\i);
            \draw (9,\i)--(0,\i);
            \foreach \j in {1,...,8}{
                \draw [dashed, color = darkgray] (\j,\i)--(\j,\i+1);
                \draw (\j,\i+0.95)--(\j,\i+1.05);
                \draw (\j,\i-0.05)--(\j,\i+0.05);
                \draw (\j-0.5,\i+1) node[above] {$\j$};
                \draw (\j-0.5,\i+0.5) node[color = darkgray] {\footnotesize ${\j}$};
            }
            \draw (8.5,\i+0.5) node[color = darkgray] {\footnotesize ${9}$};
            \draw (9-0.5,\i+1) node[above] {$9$};
            \draw (0.5,\i) node[below] {$9$};
            \draw (1.5,\i) node[below] {$8$};
            \draw (2.5,\i) node[below] {$1$};
            \draw (3.5,\i) node[below] {$3$};
            \draw (4.5,\i) node[below] {$2$};
        }
        \draw (5.5,0) node[below] {$5$};
        \draw (6.5,0) node[below] {$4$};
        \draw (7.5,0) node[below] {$7$};
        \draw (8.5,0) node[below] {$6$};
        \draw (9.5,0.5) node[right] {$\in\odd(8)$};
        \draw (5.5,-2.2) node[below] {$7$};
        \draw (6.5,-2.2) node[below] {$6$};
        \draw (7.5,-2.2) node[below] {$5$};
        \draw (8.5,-2.2) node[below] {$4$};
        \draw (9.5,-2.2+0.5) node[right] {$\in\even(8)$};
    \end{tikzpicture}
    \caption{Origamis demonstrating the need of the 3-cycles in $\mu$ of Lemma~\ref{lem:key}. Here the label-changing permutation is $(4,6)(5,7)$.}
    \label{fig:non-example}
\end{figure}

The origamis in Figure~\ref{fig:non-example} demonstrate the necessity of the 3-cycles in $\mu$. Indeed, the labels here have been changed by the permutation $(4,6)(5,7)$ and we see that the spin parities are distinct. In fact, an alternate proof of Lemma~\ref{lem:key} would show that multiplying $\tau$ by such a $\mu = (2i,2j)(2i+1,2j+1)$ changes the spin parity by 1 modulo 2.

The choice of the structure of the permutation $\mu$ in Lemma~\ref{lem:key} is also motivated by the fact that all of the origamis constructed by a single choice of method from Section~\ref{sec:gen-const} are related by multiplying by a number of such permutations. More concretely, we have the following proposition.

\begin{proposition}\label{prop:same-spin}
For a fixed construction of Subsection~\ref{subsec:AMN-odd} or Section~\ref{sec:gen-const}, any two origamis $O = (\sigma_{g},\tau)$ and $O' = (\sigma_{g},\tau')$ are related by a sequence of moves $\tau = \tau_{0}\mapsto\tau_{1}\mapsto\cdots\mapsto\tau_{n}=\tau'$ of the form $\tau_{i+1} = \tau_{i}\mu_{i}$ with $\mu_{i} = (2j+1,2k+1,2l+1)(2j,2k,2l)$ such that $\{2j,2j+1,2k,2k+1,2l,2l+1\}\subset Q$, for $Q$ as defined in Lemma~\ref{lem:key}.
\end{proposition}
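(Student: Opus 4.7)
The plan is to parameterize the origamis in each of the four constructions by an ordered tuple and then reduce the connectivity claim to the fact that the symmetric group is generated by adjacent transpositions. Fix one of the constructions of Subsection~\ref{subsec:AMN-odd} or Section~\ref{sec:gen-const}; in each case $\tau$ is completely determined by a sequence $(k_1,\ldots,k_m)$ that permutes a specific subset of $\{1,\ldots,g-1\}$ (with $m$ equal to $g-2$, $g-5$, $g-4$, and $g-4$ respectively), together with the fixed initial data coming from $P$. Each construction can be written uniformly by the recursive rules
\[
\tau(2k_{i-1}) = 2k_i+1, \qquad \tau(2k_{i-1}+1) = 2k_i, \qquad 1\leq i\leq m,
\]
with the convention $k_0 = 1$ so that $(2,3)=(2k_0,2k_0+1)$ opens the recursion, while $\tau(2k_m)$ and $\tau(2k_m+1)$ are prescribed by the ``closing pair'' specific to each construction. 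Distinct tuples produce distinct origamis, so the proposition becomes a purely combinatorial statement about connecting tuples via $\mu$-moves.

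The main step I would carry out is to prove that, for each $1\leq i\leq m-1$, the move
\[
\mu_i := (2k_{i-1},2k_i,2k_{i+1})(2k_{i-1}+1,2k_i+1,2k_{i+1}+1)
\]
sends the origami associated to $(k_1,\ldots,k_m)$ to the origami associated to the tuple with $k_i$ and $k_{i+1}$ interchanged. This is a direct verification: one simply computes $\tau\mu_i(x)$ for each of the six elements in the support of $\mu_i$ and uses the recursive rules to match the output against the value predicted by the swapped tuple. The two boundary cases $i=1$ (where the pair $(2,3)=(2k_0,2k_0+1)$ appears in the support) and $i+1=m$ (where $\tau(2k_{i+1})$ and $\tau(2k_{i+1}+1)$ come from the closing pair rather than from the recursion) are handled by exactly the same substitution. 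Finally one checks that the three pairs supporting $\mu_i$ all lie in $Q$: for the odd genus AMN construction $P=\varnothing$ and this is automatic, while for each generalised construction the lower bound built into the construction on the values of $k_j$, together with $\{2,3\}\cap P=\varnothing$, guarantees it.

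Granted this swap lemma, the proposition follows immediately: the adjacent transpositions $(i,i+1)$ for $1\leq i\leq m-1$ generate the full symmetric group on $\{1,\ldots,m\}$, so any two tuples are related by a finite composition of adjacent swaps, and the corresponding concatenation of $\mu_i$-moves produces the required chain $\tau = \tau_0 \mapsto \tau_1 \mapsto \cdots \mapsto \tau_n = \tau'$. As a bonus, every intermediate $\tau_j$ is itself an origami from the same construction.

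The main obstacle I foresee is carrying out the six-point verification of the swap lemma uniformly across all four constructions and handling the two boundary cases without excessive case splitting. I expect, however, that once the four recursions and their respective closing pairs are laid out side by side, the verification collapses to a single short calculation that works in all cases simultaneously.
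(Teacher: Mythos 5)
Your proof is correct and follows essentially the same strategy as the paper: parameterize the origamis of a fixed construction by the ordered tuple $(k_1,\ldots,k_m)$, verify a swap lemma showing that a $\mu$-move of the prescribed form realises a generator of $\Sym_m$ acting on the tuple, and conclude by connectivity. The one notable difference is your choice of generating set: you use the adjacent transpositions $(i,i+1)$, realised uniformly by $\mu_i = (2k_{i-1},2k_i,2k_{i+1})(2k_{i-1}+1,2k_i+1,2k_{i+1}+1)$ with the convention $k_0 = 1$, whereas the paper uses the transposition $(1,2)$ together with the cyclic shift $(k_1,\ldots,k_m)\mapsto(k_m,k_1,\ldots,k_{m-1})$, realised by $\mu = (3,2k_1+1,2k_2+1)(2,2k_1,2k_2)$ and $\mu = (3,2k_{m-1}+1,2k_m+1)(2,2k_{m-1},2k_m)$ respectively. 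Your formulation is arguably cleaner since every move is an instance of a single formula, and your explicit recursive description $\tau(2k_{i-1})=2k_i+1$, $\tau(2k_{i-1}+1)=2k_i$ makes the six-point verification and the check that $\{2k_{i-1},2k_{i-1}+1,2k_i,2k_i+1,2k_{i+1},2k_{i+1}+1\}\subset Q$ transparent across all four constructions; but the underlying idea and the work required are the same.
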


\begin{proof}
We begin by considering the construction of Subsection~\ref{subsec:AMN-odd}. In this case, for the choice of pairs $(2k_{i},2k_{i}+1)$, $1\leq i\leq g-2$ in the construction, we produce the $[1,1]$-origami $O = (\sigma_{g},\tau)$ with 
\[\tau = (1,3,2k_{1},2k_{2}+1,2k_{3},\ldots,2k_{g-2},2,2k_{1}+1,2k_{2},2k_{3}+1,\ldots,2k_{g-2}+1).\]
If instead we had chosen the order $(2k_{2},2k_{2}+1)$, then $(2k_{1},2k_{1}+1)$, and then $(2k_{i},2k_{i}+1)$ for $3\leq i\leq g-2$, we would have produced the origami $O' = (\sigma_{g},\tau')$ with
\[\tau' = (1,3,2k_{2},2k_{1}+1,2k_{3},\ldots,2k_{g-2},2,2k_{2}+1,2k_{1},2k_{3}+1,\ldots,2k_{g-2}+1).\]
Here we have
\[\tau' = \tau(3,2k_{1}+1,2k_{2}+1)(2,2k_{1},2k_{2}).\]
Similarly, if we had instead chosen the pair $(2k_{g-2},2k_{g-2}+1)$ first, and then $(2k_{i},2k_{i}+1)$ for $1\leq i\leq g-3$, we would obtain the origami $O'' = (\sigma_{g},\tau'')$ with
\[\tau'' = (1,3,2k_{g-2},2k_{1}+1,2k_{2},\ldots,2k_{g-3},2,2k_{g-2}+1,2k_{1},2k_{2}+1,\ldots,2k_{g-3}+1).\]
Here we have
\[\tau'' = \tau(3,2k_{g-3}+1,2k_{g-2}+1)(2,2k_{g-3},2k_{g-2}).\]
Since all permutations of the chosen pairs $(2k_{i},2k_{i}+1)$ can be realised by applying a sequence of moves of the above two forms (this follows from the fact that the transposition $(1,2)$ and the standard $n$-cycle together generate $\Sym_{n}$), we see that any two origamis produced by the construction of Subsection~\ref{subsec:AMN-odd} are connected by a sequence of moves of the form claimed in the statement of the proposition.

The arguments for the constructions of Subsections~\ref{subsec:gen-odd-even},~\ref{subsec:gen-even-odd} and ~\ref{subsec:gen-even-even} follow analogously.
\end{proof}

The following then follows easily from Lemma~\ref{lem:key} and Proposition~\ref{prop:same-spin}.

\begin{corollary}\label{cor:same-spin}
For a fixed construction of Subsection~\ref{subsec:AMN-odd} or Section~\ref{sec:gen-const}, any two origamis produced by this method have the same spin parity.
\end{corollary}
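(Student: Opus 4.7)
The plan is to combine Proposition~\ref{prop:same-spin} and Lemma~\ref{lem:key} in a direct inductive chain. Given two origamis $O = (\sigma_g, \tau)$ and $O' = (\sigma_g, \tau')$ produced by the same one of the four constructions, I would first invoke Proposition~\ref{prop:same-spin} to obtain a finite sequence
\[
\tau = \tau_0 \mapsto \tau_1 \mapsto \cdots \mapsto \tau_n = \tau',
\]
with each $\tau_{i+1} = \tau_i \mu_i$ where $\mu_i = (2j+1,2k+1,2l+1)(2j,2k,2l)$ has support contained in $Q$.

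Next, I would set up an induction on $i$, letting $O_i := (\sigma_g, \tau_i)$. The base case is trivial since $O_0 = O$. For the inductive step, observe that $O_{i+1}$ is obtained from $O_i$ by multiplying $\tau_i$ on the right by a permutation of exactly the form appearing in the hypothesis of Lemma~\ref{lem:key}: a product of two disjoint $3$-cycles, one on even indices and one on odd indices, with support $\{2j,2j+1,2k,2k+1,2l,2l+1\} \subset Q$. Hence Lemma~\ref{lem:key} applies verbatim and gives that $O_i$ and $O_{i+1}$ have the same spin parity. (One should verify that the definition of $Q$ depends only on which of the four constructions is being used, not on the particular origami in the family, so that $Q$ is the same set for every $O_i$; this is immediate from the statement of Lemma~\ref{lem:key}, since the set $P$ is prescribed by the construction type.)

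Concatenating these equalities of spin parities along the chain yields $\mathrm{spin}(O) = \mathrm{spin}(O')$, which is the desired conclusion. There is essentially no obstacle here: the entire content is in the two preceding results, and the corollary is just the transitive closure of the pairwise statement of Lemma~\ref{lem:key} along the path furnished by Proposition~\ref{prop:same-spin}. The only thing worth flagging in the write-up is the compatibility of indices between the two statements (namely, that each intermediate $\tau_i$ is itself a valid output of the construction, so that Lemma~\ref{lem:key} can legitimately be applied at each step rather than only at the endpoints); this is implicit in the proof of Proposition~\ref{prop:same-spin}, where the intermediate moves correspond to rearranging the order in which the pairs $(2k_i, 2k_i+1)$ are inserted.
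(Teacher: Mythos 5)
Your proof is correct and takes exactly the approach the paper intends: the paper's entire proof is the remark that the corollary ``follows easily from Lemma~\ref{lem:key} and Proposition~\ref{prop:same-spin},'' and the inductive chain you lay out is precisely the implicit content of that remark. The compatibility point you flag---that each intermediate $\tau_i$ is itself a valid output of the fixed construction, so that Lemma~\ref{lem:key} can be applied to each consecutive pair $(O_i, O_{i+1})$ rather than only to the endpoints---is indeed the one detail worth verifying, and it holds as you say because the moves in the proof of Proposition~\ref{prop:same-spin} correspond to reordering the insertion of the pairs $(2k_i, 2k_i+1)$, each reordering producing another origami of the same construction.
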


\begin{remark}\label{rem:spin}
It follows from Lemma~\ref{lem:key} that the modified constructions discussed in Remark~\ref{rem:gen-const} produce factorially many origamis in $\calH(2g-2)$ with a single horizontal cylinder half of which have odd spin parity and half of which have even spin parity. Corollary~\ref{cor:same-spin} then says that all of the origamis with simultaneously a single vertical cylinder have the same spin parity.

More concretely, each of the constructions of Subsection~\ref{subsec:AMN-odd} or Section~\ref{sec:gen-const} produces $(|Q|/2)!$ many origamis in $\calH(2g-2)$ with a single horizontal cylinder. Here $Q$ is as defined in Lemma~\ref{lem:key}. It is possible for one of these to lie in the hyperelliptic component if it is not a $[1,1]$-origami. In such a case, we must have $2g+2$ Weierstraß points fixed by the hyperelliptic involution. Exactly two of these lie in the interior of the horizontal cylinder and the single zero of order $2g-2$ (the image of the vertices of the squares) must also be a fixed point. Therefore, the remaining $2g-1$ Weierstraß points must lie at the mid-points of every horizontal edge. Since we have $\tau(1) = 3$ for all $\tau$, one of the Weierstraß points lying in the cylinder is forced to lie in the centre of the second square. This then forces
\[\tau = \begin{pmatrix}
3 & 2 & 1 & 2g-1 & 2g-2 & \cdots & 5 & 4 \\
1 & 2 & 3 & 4 & 5 & \cdots & 2g-2 & 2g-1
\end{pmatrix}\]
which is only achievable by the modification of the construction of Subsection~\ref{subsec:AMN-odd}. By~\cite[Corollary 5]{KZ}, this origami has spin parity $\lfloor\frac{g+1}{2}\rfloor\mod 2$.

Therefore, the modification of the construction of Subsection~\ref{subsec:AMN-odd} produces $\left(\frac{|Q|}{2}\right)! = (g-1)!$ many origamis in $\calH(2g-2)$ with a single horizontal cylinder. One of these lies in the hyperelliptic component, $\frac{(g-1)!}{2}-1$ lie in the component corresponding to spin parity $\lfloor\frac{g+1}{2}\rfloor\mod 2$, and the other $\frac{(g-1)!}{2}$ lie in the component corresponding to spin parity $\lfloor\frac{g+1}{2}\rfloor + 1\mod 2$. The modifications of the constructions of Section~\ref{sec:gen-const}, produce $\left(\frac{|Q|}{2}\right)!$ many origamis in $\calH(2g-2)$ with a single horizontal cylinder divided equally between the odd and even components.
\end{remark}

\subsection{Odd genus odd spin}

Here, we will prove that all of the origamis constructed using the method of Subsection~\ref{subsec:AMN-odd} have odd spin parity. Recall that this is the odd genus case of the original construction of the first author and Menasco-Nieland.

\begin{proposition}\label{prop:odd-odd-2}
For odd $g\geq 3$, the origami $O = (\sigma_{g},\tau)$ with
\[\tau = \begin{pmatrix}
2g-1 & 2g-2 & 1 & 3 & 2 & 5 & 4 & 7 & 6 & \cdots & 2g-3 & 2g-4 \\
1 & 2 & 3 & 4 & 5 & 6 & 7 & 8 & 9 & \cdots & 2g-2 & 2g-1
\end{pmatrix}
\]
given by the construction of Subsection~\ref{subsec:AMN-odd} has odd spin parity.
\end{proposition}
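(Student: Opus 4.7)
The plan is to compute the Arf invariant of the quadratic form $\Phi$ for this specific origami $O$ directly, via the orthogonalisation method of Subsection~\ref{subsec:ortho}. In fact, this is precisely the origami realised in Figure~\ref{fig:origami-labelling}: the bottom-edge labels $2g-1, 2g-2, 1, 3, 2, 5, 4, \ldots, 2g-3, 2g-4$ of that figure exactly match the top row of the $\tau$ in the statement. Thus, the proof essentially reproduces the computation of Example~\ref{ex:ARF:AMNodd}, with the observation that every step goes through uniformly for all odd $g\geq 3$.

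First, I would take the generating set $\{c_0, c_1, \ldots, c_{2g-1}\}$ of $H_1(O, \Z/2\Z)$ supplied by \cite[Appendix C, Lemma 6]{Z2}, where $c_0$ is the core of the horizontal cylinder and $c_i$ is represented by a curve joining the two edges labelled $i$. Since each representative can be chosen to be parallel to, or transverse with constant direction to, the horizontal foliation, one has $\Phi(c_i) = \mathrm{ind}(c_i) + 1 = 1$ for every $i$. Next, I would read off the intersection pattern directly from the form of $\tau$: $c_0 \cdot c_i = 1$ for all $i \neq 0$; the pair of edges labelled $1$ are adjacent in the cylinder boundary (since $\tau(1) = 3$ places the bottom copy of edge $1$ adjacent to the top copy), so $c_1 \cdot c_j = 0$ for all $j \geq 2$; and for $j \geq 2$ the only nontrivial intersections are $c_{2k} \cdot c_{2k+1} = 1$ for $1 \leq k \leq g-1$, reflecting the fact that $\tau$ places the pairs $(2k+1, 2k)$ as neighbours in its top row.

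With the intersection data in hand, the orthogonalisation is immediate. Choose $a_1 = c_0,\, b_1 = c_1$, so $a_1 \cdot b_1 = 1$ and $\Phi(a_1)\Phi(b_1) = 1$. The orthogonalisation step sends $c_j \mapsto c_j + c_1$ for each $j \geq 2$, and one computes $\Phi(c_j + c_1) = \Phi(c_j) + \Phi(c_1) + c_j \cdot c_1 = 1 + 1 + 0 = 0$. Because $c_1$ was already orthogonal to every $c_j$ with $j\geq 2$, the new cycles retain the intersection pattern $(c_{2k} + c_1) \cdot (c_{2k+1} + c_1) = 1$ with all other pairings zero. Setting $a_{k+1} = c_{2k} + c_1$ and $b_{k+1} = c_{2k+1} + c_1$ for $1\leq k \leq g-1$ therefore completes a symplectic basis, each additional pair contributing $0 \cdot 0 = 0$ to the Arf invariant. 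Summing,
\[
\sum_{i=1}^{g} \Phi(a_i)\Phi(b_i) = 1 \cdot 1 + \sum_{k=1}^{g-1} 0 \cdot 0 = 1,
\]
so $O$ has odd spin parity, as required.

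The only point that requires any care is the intersection computation; this rests on choosing representatives of the $c_i$ that respect the cyclic order of edges along the cylinder boundary, so that curves may ``wrap around'' the cylinder to avoid any spurious crossings. This is implicit in the Zorich generating set and is visibly consistent with the adjacencies in $\tau$, so no genuine obstacle arises. A small but worthwhile sanity check is the dimension count: the symplectic basis above has exactly $g$ pairs, matching $\dim H_1(O, \Z/2\Z) = 2g$, which confirms that the $c_i$ were actually linearly independent and not merely a spanning set.
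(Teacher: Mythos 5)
Your proof is correct and follows exactly the same route as the paper: the paper's proof of Proposition~\ref{prop:odd-odd-2} simply observes that $O$ is the origami of Figure~\ref{fig:origami-labelling} and defers to the Arf-invariant computation of Example~\ref{ex:ARF:AMNodd}, which is precisely the orthogonalisation you carried out. Your intersection data and resulting symplectic basis $a_1=c_0,\, b_1=c_1,\, a_{k+1}=c_{2k}+c_1,\, b_{k+1}=c_{2k+1}+c_1$ agree with the paper's (up to a trivial shift of index), and the conclusion $\sum_i\Phi(a_i)\Phi(b_i)=1$ is identical.
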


\begin{proof}
The origami $O$ is exactly the origami presented in Figure~\ref{fig:origami-labelling}. In Example~\ref{ex:ARF:AMNodd}, we demonstrated that this origami has odd spin parity.
\end{proof}

By Corollary~\ref{cor:same-spin}, we are able to claim the following.

\begin{corollary}\label{cor:odd-odd}
All of the origamis produced by the constructions of Subsection~\ref{subsec:AMN-odd} have odd spin parity.
\end{corollary}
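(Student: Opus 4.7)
The plan is essentially to combine the two structural results already established in the previous subsections with the base-case computation from Proposition~\ref{prop:odd-odd-2}. Concretely, Corollary~\ref{cor:same-spin} tells us that any two origamis produced by a fixed construction from Subsection~\ref{subsec:AMN-odd} or Section~\ref{sec:gen-const} have the same spin parity, since Proposition~\ref{prop:same-spin} connects them by a sequence of moves $\tau_{i+1} = \tau_i \mu_i$ of the particular ``double $3$-cycle'' form, and Lemma~\ref{lem:key} asserts that each such move preserves spin parity. So for the odd genus Aougab--Menasco--Nieland family it suffices to exhibit a single representative of odd spin parity.

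The plan, then, is: first identify a distinguished origami in the family and check it is indeed produced by the construction of Subsection~\ref{subsec:AMN-odd}. The natural choice is the one in Proposition~\ref{prop:odd-odd-2}, namely the $\tau$ that at each step places the next available pair $(2j+1,2j)$ above the rightmost unfilled slot; this is exactly the origami drawn in Figure~\ref{fig:origami-labelling}. Second, invoke Proposition~\ref{prop:odd-odd-2} (proved via the explicit orthogonalisation in Example~\ref{ex:ARF:AMNodd}) to conclude that this representative has Arf invariant $1$, i.e.\ odd spin. Third, apply Corollary~\ref{cor:same-spin} to propagate this spin parity to every other origami obtained from the construction of Subsection~\ref{subsec:AMN-odd}, finishing the proof.

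In short, the proof is a one-line synthesis, and there is essentially no obstacle at this stage: the real work was already done in Lemma~\ref{lem:key} (the tracking of how $\Phi$ changes under the $3$-cycle move, which is the technical heart) and in Example~\ref{ex:ARF:AMNodd} (the explicit Arf computation on the base origami). The only thing to be mildly careful about is verifying that the origami of Proposition~\ref{prop:odd-odd-2} really does arise as an instance of the Subsection~\ref{subsec:AMN-odd} recipe, i.e.\ corresponds to a valid choice of pairs $(2k_i, 2k_i+1)$; this is immediate from reading off the top row of $\tau$.
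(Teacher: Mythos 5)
Your proposal is correct and follows exactly the route the paper itself takes: Corollary~\ref{cor:same-spin} (which rests on Lemma~\ref{lem:key} and Proposition~\ref{prop:same-spin}) reduces the claim to a single representative, and Proposition~\ref{prop:odd-odd-2} (via the Arf computation of Example~\ref{ex:ARF:AMNodd}) supplies that base case. Your added remark checking that the distinguished $\tau$ really arises from the Subsection~\ref{subsec:AMN-odd} recipe is a sensible sanity check and is indeed immediate from its top row.
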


This is the first half of Theorem~\ref{thm:AMN} of the introduction.

\subsection{Odd genus even spin}

Here we will prove that all of the origamis constructed in Subsection~\ref{subsec:gen-odd-even} have even spin parity. As above, it suffices to prove this for a single origami in this construction.

\begin{proposition}
For odd $g\geq 5$, the origami $O = (\sigma_{g},\tau)$ with
\[\tau = \begin{pmatrix}
9 & 6 & 1 & 8 & 7 & 5 & 4 & 11 & 10 & 13 & 12 & \cdots & 2g-1 & 2g-2 & 3 & 2 \\
1 & 2 & 3 & 4 & 5 & 6 & 7 & 8 & 9 & 10 & 11 & \cdots & 2g-4 & 2g-3 & 2g-2 & 2g-1
\end{pmatrix}
\]
given by the construction of Subsection~\ref{subsec:gen-odd-even} has even spin parity.
\end{proposition}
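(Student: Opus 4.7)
The plan is to apply the orthogonalisation procedure of Subsection~\ref{subsec:ortho} directly to the specific origami $O = (\sigma_g, \tau)$, recycling the initial steps that the proof of Lemma~\ref{lem:key} already performs for any origami from the construction of Subsection~\ref{subsec:gen-odd-even}. In particular, the first four symplectic pairs $(a_i, b_i)$ for $i = 1, 2, 3, 4$ defined there involve only the cycles $c_i$ for $i \in P \cup \{0,1\}$, whose intersections in $O$ are the same for every $g \geq 5$ (being determined entirely by the prefilled portion of $\tau$). A direct calculation using $c_i \cdot c_j = 1$ iff $(i-j)(\tau(i)-\tau(j)) < 0$ yields $\Phi(a_i)\Phi(b_i) = 1, 0, 0, 1$ for $i = 1, 2, 3, 4$, contributing $2 \equiv 0$ to the Arf invariant.

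After these steps, the remaining generators become $\tilde c_2 = c_2 + c_8 + c_7 + c_5 + c_4 + c_1$, $\tilde c_3 = c_3 + c_8 + c_7 + c_5 + c_4 + c_1$, and $\tilde c_j = c_j + c_9 + c_8 + c_7 + c_6 + c_1$ for $j \in \{10, \ldots, 2g-1\}$. In $O$ the endpoints $\tau(2) = 2g-1$ and $\tau(3) = 2g-2$ place the labels $2, 3$ at the extreme right of the bottom row; using this, a short computation gives $\Phi(\tilde c_2) = \Phi(\tilde c_3) = 0$, so taking $(a_5, b_5) = (\tilde c_2, \tilde c_3)$ contributes nothing more.

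The crucial observation is that, unlike in the generic setting of Lemma~\ref{lem:key}, the cycles $\tilde c_{2k}$ and $\tilde c_{2k+1}$ for $k \geq 5$ are \emph{not} orthogonal to $\tilde c_2$ and $\tilde c_3$ in $O$: the rightward placement of labels $2, 3$ forces $c_i \cdot c_j = 1$ whenever $i \in \{2k, 2k+1\}$ and $j \in \{2, 3\}$, and this propagates to $\tilde c_i \cdot \tilde c_j = 1$. Hence one more orthogonalisation step is required, producing $\hat c_i = \tilde c_i + \tilde c_2 + \tilde c_3$. The quadratic form identity then gives
\[\Phi(\hat c_{2k}) = \Phi(\tilde c_{2k}) + \Phi(\tilde c_2) + \Phi(\tilde c_3) + \tilde c_{2k}\cdot \tilde c_2 + \tilde c_{2k}\cdot \tilde c_3 + \tilde c_2\cdot \tilde c_3 = 0+0+0+1+1+1 \equiv 1,\]
and similarly $\Phi(\hat c_{2k+1}) = 1$. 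A quick check, using $c_{2k}\cdot c_{2l} = c_{2k}\cdot c_{2l+1} = 0$ for $k\neq l$ with $k,l\geq 5$, shows that distinct pairs $(\hat c_{2k}, \hat c_{2k+1})$ are mutually orthogonal and that $\hat c_{2k}\cdot \hat c_{2k+1} = 1$, so they furnish the final $g - 5$ symplectic pairs.

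Summing everything, the Arf invariant equals $1 + 0 + 0 + 1 + 0 + (g-5)\cdot 1 = g - 3$, which is even for every odd $g \geq 5$, so $O$ has even spin parity. The main subtlety — and the reason the generic argument of Lemma~\ref{lem:key} does not suffice on its own — is the nontrivial intersection of the interior $\tilde c_{2k}$ cycles with the extreme-right cycles $\tilde c_2, \tilde c_3$ induced by the specific placement of labels $2$ and $3$; this forces the extra orthogonalisation step against $(a_5, b_5)$, and the resulting $g - 5$ units of contribution conspire with the first four pairs to produce an even Arf invariant.
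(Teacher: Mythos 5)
Your proof is correct and follows essentially the same computational route as the paper's. You carry out exactly the orthogonalisation sequence described in Lemma~\ref{lem:key} for the $P=\{4,\ldots,9\}$ case, recovering the same five symplectic pairs with $\Phi(a_i)\Phi(b_i)=1,0,0,1,0$, the same updated generators $\tilde c_2,\tilde c_3,\tilde c_j$, and then the extra orthogonalisation against $(a_5,b_5)$ that pushes $\Phi(\hat c_{2k})=\Phi(\hat c_{2k+1})=1$ for the remaining $g-5$ pairs, giving Arf invariant $g-3\equiv 0$ for odd $g$; this matches the paper's computation line by line. One caution: the formula $c_i\cdot c_j=1\iff (i-j)(\tau(i)-\tau(j))<0$ that you invoke as a general fact depends on a specific choice of arc representing each $c_i$ (it fails, for instance, against the intersection data the paper tabulates in Example~\ref{ex:ARF:AMNodd}, where the last two curves are taken with a different winding); fortunately for the $\tau$ of this proposition it reproduces the paper's explicitly listed intersection data, so your calculations go through, but you should either justify the formula or cite the paper's tabulated data directly.
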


\begin{proof}
Let $c_{i}$ denote the cycle corresponding to a simple closed between the sides labelled $i$. We have $\Phi(c_{i}) = 1$, for all $0\leq i\leq 2g-1$, and
\begin{align*}
c_{0}\cdot c_{i}&=1 \,\,\,\,\forall\,i\neq 0,\\
c_{1}\cdot c_{i}&=1 \,\,\,\,\forall\,i\in\{0,6,9\},\\
c_{2}\cdot c_{i}&=1 \,\,\,\,\forall\,i\neq 1,\\
c_{3}\cdot c_{i}&=1 \,\,\,\,\forall\,i\neq 1,\\
c_{4}\cdot c_{i}&=1 \,\,\,\,\forall\,i\in\{0,2,3,5,6,7,8,9\},\\
c_{5}\cdot c_{i}&=1 \,\,\,\,\forall\,i\in\{0,2,3,4,6,7,8,9\},\\
c_{6}\cdot c_{i}&=1 \,\,\,\,\forall\,i\in\{0,1,2,3,4,5,9\},\\
c_{7}\cdot c_{i}&=1 \,\,\,\,\forall\,i\in\{0,2,3,4,5,8,9\},\\
c_{8}\cdot c_{i}&=1 \,\,\,\,\forall\,i\in\{0,2,3,4,5,7,9\},\\
c_{9}\cdot c_{i}&=1 \,\,\,\,\forall\,i\in\{0,1,2,3,4,5,6,7,8\},\\
c_{2j}\cdot c_{i}&=1 \,\,\,\,\forall\,10\leq 2j\leq 2g-2,\,i\in\{0,2,3,2j+1\},\,\text{and}\\
c_{2j+1}\cdot c_{i}&=1 \,\,\,\,\forall\,11\leq 2j+1\leq 2g-1,\,i\in\{0,2,3,2j\},
\end{align*}
with all other intersections equal to 0.

If follows from the proof of Lemma~\ref{lem:key} that we can choose $a_{1} = c_{0}$, $b_{1} = c_{1}$, $a_{2} = c_{6}+c_{1}+c_{0}$, $b_{2} = c_{9}+c_{1}+c_{0}$, $a_{3} = c_{4}+c_{1}$, $b_{3} = c_{5}+c_{1}$, $a_{4} = c_{7}+c_{9}+c_{5}+c_{4}+c_{0}$ and $b_{4} = c_{8}+c_{9}+c_{5}+c_{4}+c_{0}$ with $\Phi(a_{1})=\Phi(b_{1})=\Phi(a_{4})=\Phi(b_{4}) = 1$ and $\Phi(a_{2})=\Phi(b_{2})=\Phi(a_{3})=\Phi(b_{3}) = 0$, and such that after all of the orthogonalisation steps have been performed $c_{2}$ and $c_{3}$ have been sent to $c_{2}+c_{8}+c_{7}+c_{5}+c_{4}+c_{1}$ and $c_{3}+c_{8}+c_{7}+c_{5}+c_{4}+c_{1}$, and the remaining cycles $c_{i}$ have been sent to $c_{i}+c_{9}+c_{8}+c_{7}+c_{6}+c_{1}$.

From here we can choose $a_{5} = c_{2}+c_{8}+c_{7}+c_{5}+c_{4}+c_{1}$ and $b_{5} = c_{3}+c_{8}+c_{7}+c_{5}+c_{4}+c_{1}$ with $\Phi(a_{5}) = \Phi(b_{5}) = 0$. Orthogonalising sends $c_{i}+c_{9}+c_{8}+c_{7}+c_{6}+c_{1}$ to $c_{i}+c_{9}+c_{8}+c_{7}+c_{6}+c_{3}+c_{2}+c_{1}$. Finally, for $6\leq j\leq g$, we can set $a_{j} = c_{2j-2}+c_{9}+c_{8}+c_{7}+c_{6}+c_{3}+c_{2}+c_{1}$ and $b_{j} = c_{2j-1}+c_{9}+c_{8}+c_{7}+c_{6}+c_{3}+c_{2}+c_{1}$, giving $\Phi(a_{j}) = \Phi(b_{j}) = 1$. Indeed, these cycles already satisfy the correct symplectic orthogonality conditions.

Hence, we have
\[\sum_{i = 1}^{g}\Phi(a_{i})\Phi(b_{i}) = 1\cdot 1 + 0\cdot 0 + 0\cdot 0 + 1\cdot 1 + 0\cdot 0 + (g-5)(1\cdot 1)\equiv 0,\]
since $g-5$ is even. Therefore, $O$ has even spin parity, as claimed.
\end{proof}

\begin{corollary}
All of the origamis produced by the constructions of Subsection~\ref{subsec:gen-odd-even} have even spin parity.
\end{corollary}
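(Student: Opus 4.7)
The plan is to reduce the statement to a single verification, already carried out in the preceding proposition, by invoking the structural result Corollary~\ref{cor:same-spin}. That corollary asserts that any two origamis produced by a fixed construction from Subsection~\ref{subsec:AMN-odd} or Section~\ref{sec:gen-const} share the same spin parity, so I only need one explicit example of even spin parity from the Subsection~\ref{subsec:gen-odd-even} recipe to conclude the claim for the whole family.

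The first step is simply to observe that the preceding proposition furnishes precisely such a representative: the origami $O=(\sigma_{g},\tau)$ with
\[\tau = \begin{pmatrix}
9 & 6 & 1 & 8 & 7 & 5 & 4 & 11 & 10 & 13 & 12 & \cdots & 2g-1 & 2g-2 & 3 & 2 \\
1 & 2 & 3 & 4 & 5 & 6 & 7 & 8 & 9 & 10 & 11 & \cdots & 2g-4 & 2g-3 & 2g-2 & 2g-1
\end{pmatrix}\]
has even spin parity for every odd $g\geq 5$. This $\tau$ is constructed by the Subsection~\ref{subsec:gen-odd-even} recipe with the pairs $(2k_{i},2k_{i}+1)$ chosen in their natural left-to-right order, so it is an honest member of the family under consideration.

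The second step is to apply Corollary~\ref{cor:same-spin}, which itself rests on Lemma~\ref{lem:key} and Proposition~\ref{prop:same-spin}: any other origami $O'=(\sigma_{g},\tau')$ produced by the Subsection~\ref{subsec:gen-odd-even} construction is related to $O$ by a sequence of right-multiplications $\tau\mapsto\tau\mu$ with $\mu = (2i,2j,2k)(2i+1,2j+1,2k+1)$ supported in $Q = \{2,3,\ldots,2g-1\}\setminus\{4,5,6,7,8,9\}$, and each such move preserves the spin parity. Hence every origami obtained from Subsection~\ref{subsec:gen-odd-even} has the same spin parity as the representative $O$, namely even.

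There is no real obstacle here; the technical content has been absorbed into Lemma~\ref{lem:key}, Proposition~\ref{prop:same-spin}, and the explicit Arf invariant calculation of the preceding proposition. The corollary is therefore a one-line consequence, and the only thing to check is that the representative used in the preceding proposition indeed arises from a valid choice of pairs in the Subsection~\ref{subsec:gen-odd-even} construction, which is immediate from the form of $\tau$ displayed above.
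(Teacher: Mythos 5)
Your proposal is correct and mirrors the paper's (implicit) argument exactly: a single representative is shown to have even spin parity in the preceding proposition, and Corollary~\ref{cor:same-spin} propagates that parity to every origami arising from the Subsection~\ref{subsec:gen-odd-even} construction. Nothing is missing.
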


\subsection{Even genus odd spin}

Here, we will prove that all of the origamis constructed in Subsection~\ref{subsec:gen-even-odd} have odd spin parity.

\begin{proposition}
For even $g\geq 4$, the origami $O = (\sigma_{g},\tau)$ with
\[\tau = \begin{pmatrix}
4 & 6 & 1 & 5 & 7 & 9 & 8 & 11 & 10 & \cdots & 2g-1 & 2g-2 & 3 & 2 \\
1 & 2 & 3 & 4 & 5 & 6 & 7 & 8 & 9 & \cdots & 2g-4 & 2g-3 & 2g-2 & 2g-1
\end{pmatrix}
\]
has odd spin parity.
\end{proposition}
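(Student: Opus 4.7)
My plan is to compute the Arf invariant directly, following the template used for the preceding odd-genus-even-spin proposition. The construction of Subsection~\ref{subsec:gen-even-odd} corresponds to $P = \{4,5,6,7\}$ in the notation of Lemma~\ref{lem:key}, so the first three iterations of the orthogonalisation already supply
\[
a_1 = c_0,\ b_1 = c_1,\ a_2 = c_4 + c_1 + c_0,\ b_2 = c_5 + c_1,\ a_3 = c_6 + c_1 + c_0,\ b_3 = c_7 + c_5,
\]
after which $c_2, c_3$ become $c_2 + c_4 + c_0, c_3 + c_4 + c_0$ and each $c_i$ with $8 \le i \le 2g-1$ becomes $c_i + c_7$. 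By Lemmas~\ref{lem:pair-up-q-cycles} and~\ref{lem:arf-track} these complete to a symplectic basis via $(a_4, b_4) = (c_2 + c_4 + c_0, c_3 + c_4 + c_0)$ and $(a_j, b_j) = (c_{2j-2} + c_7, c_{2j-1} + c_7)$ for $5 \le j \le g$.

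It then remains to evaluate each $\Phi(a_i), \Phi(b_i)$ using $\Phi(c_i) = 1$ together with $\Phi(x+y) = \Phi(x) + \Phi(y) + x\cdot y$. The intersection numbers I need come from two sources: first, the fixed prefix $(4,6,1,5,7)$ of $\tau$, which determines all pairwise intersections among $c_0, c_1, c_2, c_3, c_4, c_5, c_6, c_7$ (in particular $c_0\cdot c_i = 1$ for $i\ne 0$, $c_1\cdot c_i = 1$ only when $i \in \{0,4,6\}$, $c_2\cdot c_4 = c_3\cdot c_4 = 1$, and $c_5\cdot c_7 = 0$); and second, the single key vanishing $c_7\cdot c_i = 0$ for every $i\ge 8$, which follows from a coordinate check because $c_7$ joins the top of column~$7$ to the bottom of column~$5$ while each $c_i$ with $i\ge 8$ is supported in the strip $x \ge 6.5$ and only shares a boundary corner with $c_7$.

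Feeding these into the quadratic form, each of $\Phi(a_2), \Phi(b_2), \Phi(a_3), \Phi(b_3), \Phi(a_4), \Phi(b_4)$ expands into a sum of an even number of $1$'s and therefore vanishes mod~$2$, while for $j \ge 5$ one has $\Phi(a_j) = c_{2j-2}\cdot c_7 = 0$ and $\Phi(b_j) = c_{2j-1}\cdot c_7 = 0$ immediately from the second observation. The Arf invariant therefore collapses to $\Phi(a_1)\Phi(b_1) = 1$, giving odd spin parity. Corollary~\ref{cor:same-spin} then extends this to every origami produced by the construction of Subsection~\ref{subsec:gen-even-odd}. The only real obstacle is the careful bookkeeping of the low-index intersections; once these are pinned down, the $g-4$ pairs associated to labels $\ge 8$ contribute nothing automatically.
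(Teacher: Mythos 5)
Your proof follows the same route as the paper's: choose $a_1,b_1,\dots,a_4,b_4$ and then the pairs $(c_{2j-2}+c_7,\,c_{2j-1}+c_7)$ for $5\le j\le g$, evaluate $\Phi$ on each, and sum. The conclusion and all the $\Phi$-values match the paper, and the appeal to Lemma~\ref{lem:key}/Lemmas~\ref{lem:pair-up-q-cycles},~\ref{lem:arf-track} to justify the completion of the symplectic basis is also what the paper does.

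One remark on the geometric justification of the ``key vanishing.'' You claim that every $c_i$ with $i\ge 8$ is supported in the strip $x\ge 6.5$, but that is false for $c_9$: the label $9$ sits on the top of column $9$ and on the bottom of column $6$, so $c_9$ runs from $(8.5,1)$ down to $(5.5,0)$ and therefore reaches $x=5.5$. The desired conclusion $c_7\cdot c_i=0$ for all $i\ge 8$ is nonetheless correct --- the segment $c_7$ from $(6.5,1)$ to $(4.5,0)$ and the segment $c_9$ from $(8.5,1)$ to $(5.5,0)$ simply do not cross (solving the two linear parametrizations gives an intersection parameter outside $[0,1]$), and the same direct check works for the longer-index pairs. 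So the step as justified has a small slip, but the fact being used is true and the final Arf computation $1+0+\dots+0\equiv 1$ stands.
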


\begin{proof}
Let $c_{i}$ denote the cycle corresponding to a simple closed between the sides labelled $i$. We have $\Phi(c_{i}) = 1$, for all $0\leq i\leq 2g-1$, and
\begin{align*}
c_{0}\cdot c_{i}&=1 \,\,\,\,\forall\,i\neq 0,\\
c_{1}\cdot c_{i}&=1 \,\,\,\,\forall\,i\in\{0,4,6\},\\
c_{2}\cdot c_{i}&=1 \,\,\,\,\forall\,i\neq 1,\\
c_{3}\cdot c_{i}&=1 \,\,\,\,\forall\,i\neq 1,\\
c_{4}\cdot c_{i}&=1 \,\,\,\,\forall\,i\in\{0,1,2,3\},\\
c_{5}\cdot c_{i}&=1 \,\,\,\,\forall\,i\in\{0,2,3,6\},\\
c_{6}\cdot c_{i}&=1 \,\,\,\,\forall\,i\in\{0,1,2,3,5\},\\
c_{7}\cdot c_{i}&=1 \,\,\,\,\forall\,i\in\{0,2,3\},\\
c_{2j}\cdot c_{i}&=1 \,\,\,\,\forall\,8\leq 2j\leq 2g-2,\,i\in\{0,2,3,2j+1\},\,\text{and}\\
c_{2j+1}\cdot c_{i}&=1 \,\,\,\,\forall\,9\leq 2j+1\leq 2g-1,\,i\in\{0,2,3,2j\},
\end{align*}
with all other intersections equal to 0.

It follows from the proof of Lemma~\ref{lem:key} that we can choose $a_{1} = c_{0}$, $b_{1} = c_{1}$, $a_{2} = c_{4}+c_{1}+c_{0}$, $b_{2} = c_{5}+c_{1}$, $a_{3} = c_{6}+c_{1}+c_{0}$ and $b_{3} = c_{7}+c_{5}$ with $\Phi(a_{1})=\Phi(b_{1})=1$ and $\Phi(a_{2})=\Phi(b_{2})=\Phi(a_{3})=\Phi(b_{3})=0$, and such that after orthogonalisation $c_{2}$ and $c_{3}$ have been sent to $c_{2}+c_{4}+c_{0}$ and $c_{3}+c_{4}+c_{0}$, and the remaining cycles $c_{i}$ have been sent to $c_{i}+c_{7}$.

We can now choose $a_{4} = c_{2}+c_{4}+c_{0}$ and $b_{4} = c_{3}+c_{4}+c_{0}$ with $\Phi(a_{4}) = \Phi(b_{4}) = 0$. Orthogonalising fixes all other cycles $c_{i}+c_{7}$. Finally, for $5\leq j\leq g$, we can set $a_{j} = c_{2j-2}+c_{7}$ and $b_{j} = c_{2j-1}+c_{7}$ with $\Phi(a_{j}) = \Phi(b_{j}) = 0$.

Hence, we have
\[\sum_{i = 1}^{g}\Phi(a_{i})\Phi(b_{i}) = 1\cdot 1 + (g-1)(0\cdot 0) = 1,\]
and so $O$ has odd spin parity, as claimed.
\end{proof}

\begin{corollary}
All of the origamis produced by the constructions of Subsection~\ref{subsec:gen-even-odd} have odd spin parity.
\end{corollary}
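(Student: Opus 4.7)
The corollary is a routine consequence of two results already established in the paper, so the proof proposal is very short.

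The plan is to combine the preceding proposition with Corollary~\ref{cor:same-spin}. The immediately preceding proposition exhibits one explicit origami $O = (\sigma_g, \tau)$ produced by the construction of Subsection~\ref{subsec:gen-even-odd} --- namely the one whose choices of pairs yield $\tau(2j) = 2j+1$ in the natural increasing order --- and computes, via the orthogonalisation procedure of Subsection~\ref{subsec:ortho}, that its Arf invariant equals $1$, so it has odd spin parity. Meanwhile, Corollary~\ref{cor:same-spin} asserts that for any fixed construction among those in Subsection~\ref{subsec:AMN-odd} and Section~\ref{sec:gen-const}, all origamis produced by that construction share a common spin parity. Applying this with the construction of Subsection~\ref{subsec:gen-even-odd}, the odd spin parity of the witness propagates to every origami produced by the construction.

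There is no real obstacle here; the substantive work has already been done in Lemma~\ref{lem:key} and Proposition~\ref{prop:same-spin}, which together justify Corollary~\ref{cor:same-spin}, and in the explicit Arf computation of the preceding proposition. The only thing to verify in writing is that the witness origami genuinely lies in the family of Subsection~\ref{subsec:gen-even-odd}, which is immediate from comparing its $\tau$ with the general form
\[\tau = (1,3,2k_1,2k_2+1,\ldots,2k_{g-4}+1,6,2,2k_1+1,2k_2,\ldots,2k_{g-4},7,5,4)\]
specified in that subsection: the witness corresponds to the ordered choice $(2k_i, 2k_i+1) = (2i+6, 2i+7)$ for $1 \le i \le g-4$.

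Hence the proof reduces to a single sentence: by the preceding proposition the witness origami has odd spin parity, and by Corollary~\ref{cor:same-spin} so does every origami produced by the construction of Subsection~\ref{subsec:gen-even-odd}. \qed
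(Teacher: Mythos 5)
Your proof is correct and mirrors exactly the paper's implicit argument: the preceding proposition exhibits a single origami from the Subsection~\ref{subsec:gen-even-odd} construction and computes its Arf invariant to be $1$, and Corollary~\ref{cor:same-spin} then transports odd spin parity to every origami produced by that construction. One small inaccuracy in your description of the witness: tracing the two-row form of $\tau$ shows the chosen pairs appear in \emph{decreasing} order, $(2k_i,2k_i+1)=(2g-2i,2g-2i+1)$, rather than $(2i+6,2i+7)$ as you wrote, but this does not affect the argument since the witness is in any case a legitimate member of the constructed family.
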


\subsection{Even genus even spin}

Finally, we will prove that all of the origamis constructed in Subsection~\ref{subsec:gen-even-even} have even spin parity.

\begin{proposition}
For even $g\geq 4$, the origami $O = (\sigma_{g},\tau)$ with
\[\tau = \begin{pmatrix}
7 & 5 & 1 & 6 & 4 & 9 & 8 & 11 & 10 & \cdots & 2g-1 & 2g-2 & 3 & 2 \\
1 & 2 & 3 & 4 & 5 & 6 & 7 & 8 & 9 & \cdots & 2g-4 & 2g-3 & 2g-2 & 2g-1
\end{pmatrix}
\]
has odd spin parity.
\end{proposition}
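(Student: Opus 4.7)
The plan is to apply the orthogonalisation recipe from case d) of Lemma~\ref{lem:key} to build a symplectic basis of $H_{1}(O,\Z_{2})$ from the generating cycles $\{c_{0}, c_{1}, \ldots, c_{2g-1}\}$, and then evaluate the Arf invariant $\sum_{j=1}^{g}\Phi(a_{j})\Phi(b_{j})$.

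First, I will record the intersection data forced by the labelling of the bottom sides via $\tau$. Direct inspection yields $c_{0}\cdot c_{i}=1$ for all $i\neq 0$; $c_{1}$ is non-orthogonal only to $\{c_{0},c_{5},c_{7}\}$; each of $c_{2},c_{3}$ is non-orthogonal to every $c_{i}$ except $c_{1}$; among $\{c_{4},c_{5},c_{6},c_{7}\}$ the only orthogonal pair is $\{c_{5},c_{6}\}$; and for $i\geq 8$, $c_{i}$ is non-orthogonal only to $c_{0}$, $c_{2}$, $c_{3}$ and its paired neighbour ($c_{i+1}$ if $i$ is even, $c_{i-1}$ if $i$ is odd).

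Next, I will invoke Lemma~\ref{lem:key} with $P=\{4,5,6,7\}$ to set $a_{1}=c_{0}$, $b_{1}=c_{1}$ with $\Phi(a_{1})=\Phi(b_{1})=1$; then $a_{2}=c_{5}+c_{1}+c_{0}$, $b_{2}=c_{7}+c_{1}+c_{0}$ and $a_{3}=c_{4}+c_{1}$, $b_{3}=c_{6}+c_{7}+c_{0}$, each with $\Phi=0$. The lemma guarantees the remaining cycles become $c_{2}+c_{4}+c_{6}+c_{7}+c_{0}$, $c_{3}+c_{4}+c_{6}+c_{7}+c_{0}$, and $c_{i}+c_{4}+c_{5}+c_{7}$ for $i\geq 8$. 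I will take $a_{4},b_{4}$ to be the first two of these and, using the quadratic form identity $\Phi(x+y)=\Phi(x)+\Phi(y)+x\cdot y$ together with the intersection table, verify $\Phi(a_{4})=0$; Lemma~\ref{lem:arf-track} then forces $\Phi(b_{4})=0$. For $5\leq j\leq g$, I will take $a_{j}=c_{2j-2}+c_{4}+c_{5}+c_{7}$ and $b_{j}=c_{2j-1}+c_{4}+c_{5}+c_{7}$; since $c_{2j-2}$ is orthogonal to each of $c_{4},c_{5},c_{7}$ (as $2j-2\geq 8$) and a short computation gives $\Phi(c_{4}+c_{5}+c_{7})=0$, this yields $\Phi(a_{j})=\Phi(c_{2j-2})=1$ and likewise $\Phi(b_{j})=1$.

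The Arf invariant is then
\[
\sum_{j=1}^{g}\Phi(a_{j})\Phi(b_{j}) \;=\; 1+0+0+0+(g-4)\cdot 1 \;=\; g-3,
\]
which is odd since $g$ is even. Hence $O$ has odd spin parity as claimed. The main bookkeeping hurdle is confirming that each later pair $(a_{j},b_{j})$ is symplectically orthogonal to every previously chosen pair; this follows from Lemmas~\ref{lem:pair-up-q-cycles} and~\ref{lem:arf-track}, together with the observation that for $i\geq 8$ the cycles $c_{i}$ have zero intersection with each of $c_{4},c_{5},c_{7}$ and with any other $c_{i'}$ unless $\{i,i'\}=\{2k,2k+1\}$ for some $k\geq 4$.
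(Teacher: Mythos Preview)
Your approach mirrors the paper's exactly: invoke the case d) orthogonalisation of Lemma~\ref{lem:key}, then finish by reading off $\Phi$ on the remaining pairs. The intersection table you record is correct, and so is everything for $j\geq 5$. The problem is the single value $\Phi(a_{4})$.

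Write $a_{4}=c_{2}+c_{4}+c_{6}+c_{7}+c_{0}$. Using $\Phi(\sum c_{i_{r}})=\sum\Phi(c_{i_{r}})+\sum_{r<s}c_{i_{r}}\cdot c_{i_{s}}$, the five $\Phi(c_{i_{r}})$ contribute $5$, and from your own table every one of the ten pairwise intersections among $\{c_{2},c_{4},c_{6},c_{7},c_{0}\}$ equals $1$ (indeed $c_{0}$ and $c_{2}$ each meet $c_{4},c_{6},c_{7}$; and among $\{c_{4},c_{6},c_{7}\}$ no pair is orthogonal since $\{c_{5},c_{6}\}$ was the only orthogonal pair in $\{c_{4},c_{5},c_{6},c_{7}\}$; finally $c_{0}\cdot c_{2}=1$). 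Hence
\[
\Phi(a_{4})\;=\;5+10\;=\;15\;\equiv\;1\pmod 2,
\]
not $0$. Lemma~\ref{lem:arf-track} then gives $\Phi(b_{4})=1$ as well, and the Arf invariant becomes
\[
1+0+0+1+(g-4)\cdot 1\;=\;g-2\;\equiv\;0\pmod 2.
\]
So the origami has \emph{even} spin parity, which is precisely what the paper's proof establishes. (The word ``odd'' in the proposition statement is a typo: this is the representative origami for the even-genus \emph{even}-spin family of Subsection~\ref{subsec:gen-even-even}, and the immediately following corollary in the paper states the even conclusion.) Your argument is otherwise sound; only this one arithmetic slip needs fixing.
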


\begin{proof}
Let $c_{i}$ denote the cycle corresponding to a simple closed between the sides labelled $i$. We have $\Phi(c_{i}) = 1$, for all $0\leq i\leq 2g-1$, and
\begin{align*}
c_{0}\cdot c_{i}&=1 \,\,\,\,\forall\,i\neq 0,\\
c_{1}\cdot c_{i}&=1 \,\,\,\,\forall\,i\in\{0,5,7\},\\
c_{2}\cdot c_{i}&=1 \,\,\,\,\forall\,i\neq 1,\\
c_{3}\cdot c_{i}&=1 \,\,\,\,\forall\,i\neq 1,\\
c_{4}\cdot c_{i}&=1 \,\,\,\,\forall\,i\in\{0,2,3,5,6,7\},\\
c_{5}\cdot c_{i}&=1 \,\,\,\,\forall\,i\in\{0,1,2,3,4,7\},\\
c_{6}\cdot c_{i}&=1 \,\,\,\,\forall\,i\in\{0,2,3,4,7\},\\
c_{7}\cdot c_{i}&=1 \,\,\,\,\forall\,i\in\{0,1,2,3,4,5,6\},\\
c_{2j}\cdot c_{i}&=1 \,\,\,\,\forall\,8\leq 2j\leq 2g-2,\,i\in\{0,2,3,2j+1\},\,\text{and}\\
c_{2j+1}\cdot c_{i}&=1 \,\,\,\,\forall\,9\leq 2j+1\leq 2g-1,\,i\in\{0,2,3,2j\},
\end{align*}
with all other intersections equal to 0.

It follows from the proof of Lemma~\ref{lem:key} that we can choose $a_{1} = c_{0}$, $b_{1} = c_{1}$, $a_{2} = c_{5}+c_{1}+c_{0}$, $b_{2} = c_{7}+c_{1}+c_{0}$, $a_{3} = c_{4}+c_{1}$ and $b_{3} = c_{6}+c_{7}+c_{0}$ with $\Phi(a_{1})=\Phi(b_{1})=1$ and $\Phi(a_{2})=\Phi(b_{2})=\Phi(a_{3})=\Phi(b_{3})=0$, and such that orthogonalisation has sent $c_{2}$ and $c_{3}$ to $c_{2}+c_{7}+c_{6}+c_{4}+c_{0}$ and $c_{3}+c_{7}+c_{6}+c_{4}+c_{0}$, and the remaining cycles $c_{i}$ to $c_{i}+c_{7}+c_{5}+c_{4}$.

We can now choose $a_{4} = c_{2}+c_{7}+c_{6}+c_{4}+c_{0}$ and $b_{4} = c_{3}+c_{7}+c_{6}+c_{4}+c_{0}$ with $\Phi(a_{4})=\Phi(b_{4})=1$. Orthogonalising fixes the remaining cycles $c_{i}+c_{7}+c_{5}+c_{4}$. Finally, for $5\leq j\leq g$, we can choose $a_{j} = c_{2j-2}+c_{7}+c_{5}+c_{4}$ and $b_{j} = c_{2j-1}+c_{7}+c_{5}+c_{4}$ with $\Phi(a_{j}) = \Phi(b_{j}) = 1$.

Hence, we have
\[\sum_{i = 1}^{g}\Phi(a_{i})\Phi(b_{i}) = 1\cdot 1 + 0\cdot 0 + 0\cdot 0 + 1\cdot 1 + (g-4)(1\cdot 1)\equiv 0,\]
since $g-4$ is even. Therefore, $O$ has even spin parity, as claimed.
\end{proof}

\begin{corollary}
All of the origamis produced by the constructions of Subsection~\ref{subsec:gen-even-even} have even spin parity.
\end{corollary}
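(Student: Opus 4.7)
The plan is to follow the same computational template used in the preceding three propositions of this section, adapted to the specific labelling produced by the construction of Subsection~\ref{subsec:gen-even-even}. Let $c_i \in H_1(O,\Z/2\Z)$ be the cycle connecting the two sides of $O$ labelled $i$, as in Subsection~\ref{subsec:ortho}. Since every $c_i$ is represented by a curve that maintains its direction with respect to the horizontal foliation, $\Phi(c_i) = 1$ for all $i$. First I would read off the intersection pattern $c_i \cdot c_j$ directly from the top/bottom labelling of $\tau$: the cycles $c_0$, $c_2$, $c_3$ intersect nearly everything (as they cross the single horizontal cylinder twice), while for $4 \le i \le 7$ the intersections are dictated by the bottom labelling $(7,5,1,6,4)$ in positions $1,\ldots,5$, and for $2j, 2j+1$ with $j \ge 4$ the only nontrivial intersections beyond $c_0, c_2, c_3$ are between $c_{2j}$ and $c_{2j+1}$.

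Next, I would invoke the proof of Lemma~\ref{lem:key} with $P = \{4,5,6,7\}$ to carry out the first three orthogonalisation steps: choose $a_1 = c_0$, $b_1 = c_1$, then $a_2 = c_5 + c_1 + c_0$, $b_2 = c_7 + c_1 + c_0$, then $a_3 = c_4 + c_1$, $b_3 = c_6 + c_7 + c_0$. Lemma~\ref{lem:key}'s proof already records both the values $\Phi(a_1) = \Phi(b_1) = 1$, $\Phi(a_2) = \Phi(b_2) = \Phi(a_3) = \Phi(b_3) = 0$, and the transformed cycles: $c_2 \mapsto c_2 + c_7 + c_6 + c_4 + c_0$, $c_3 \mapsto c_3 + c_7 + c_6 + c_4 + c_0$, and $c_i \mapsto c_i + c_7 + c_5 + c_4$ for $8 \le i \le 2g-1$.

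Then I would handle the remaining cycles coming from labels in $Q = \{2,3,8,9,\ldots,2g-1\}$. I set $a_4 = c_2 + c_7 + c_6 + c_4 + c_0$ and $b_4 = c_3 + c_7 + c_6 + c_4 + c_0$ and compute $\Phi(a_4), \Phi(b_4)$ using repeated application of the quadratic identity $\Phi(u+v) = \Phi(u) + \Phi(v) + u \cdot v$ starting from $\Phi(c_i) = 1$; because $c_2$ and $c_3$ sit in many intersection pairs within $P$, the accumulated cross-terms will produce $\Phi(a_4) = \Phi(b_4) = 1$. After orthogonalising with respect to $a_4, b_4$ (which, by the intersection pattern, fixes the cycles $c_i + c_7 + c_5 + c_4$ for $i \ge 8$), I pair these remaining cycles as $a_j = c_{2j-2} + c_7 + c_5 + c_4$ and $b_j = c_{2j-1} + c_7 + c_5 + c_4$ for $5 \le j \le g$; again by the quadratic identity one computes $\Phi(a_j) = \Phi(b_j) = 1$ for each such $j$.

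Finally, I would assemble the Arf invariant:
\[
\sum_{i=1}^{g} \Phi(a_i)\Phi(b_i) = 1 + 0 + 0 + 1 + (g-4) \cdot 1 \equiv g - 2 \pmod 2,
\]
which is $0 \mod 2$ since $g$ is even. The main obstacle is the bookkeeping in step three: verifying that the cross terms $c \cdot c'$ that arise when expanding $\Phi$ on a long sum actually combine to give the claimed parities $\Phi(a_4) = 1$ and $\Phi(a_j) = 1$ for $j \ge 5$. Once this is checked the corollary for the entire family follows immediately from Corollary~\ref{cor:same-spin}.
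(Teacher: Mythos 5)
Your proposal is correct and follows essentially the same approach as the paper: compute the Arf invariant for the canonical representative $\tau = (7,5,1,6,4,9,8,\ldots)$ via the orthogonalisation procedure (using the choices of $a_1,\ldots,b_3$ supplied by the proof of Lemma~\ref{lem:key}), arriving at $\Phi(a_4)=\Phi(b_4)=1$ and $\Phi(a_j)=\Phi(b_j)=1$ for $5\le j\le g$, so the Arf invariant is $g-2\equiv 0\pmod 2$; then propagate to the whole family by Corollary~\ref{cor:same-spin}. The only difference is presentational: the paper isolates the computation for the single representative as its own proposition and derives the corollary in one line, whereas you fold the two together.
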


This completes the proof of Theorem~\ref{thm:gen-const}.

\subsection{Even genus AMN origamis}\label{subsec:AMN-even-spin}

Following the notation of Section \ref{subsec:AMN-even}, we now prove the theorem below.

\begin{theorem}\label{thm:AMN-even-g}
    The origamis constructed by Aougab-Menasco-Nieland with even genus $g$ has even spin if and only if $k \equiv 1 \mod 4$ and $\eta(k) \equiv 2 \mod 4$.
\end{theorem}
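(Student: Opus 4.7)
The plan is to compute the Arf invariant of $O_\tau$ using the orthogonalisation method of Subsection~\ref{subsec:ortho}, exploiting the known odd spin parity of the odd-genus parent $O_\eta$ (Proposition~\ref{prop:odd-odd-2}). Writing $\tau$ in the notation of Subsection~\ref{subsec:AMN-even}, one has $\tau^{-1}(j) = \eta^{-1}(j)$ for every $j \notin \{\eta(k),\, 2g-2,\, 2g-1\}$, whereas $\tau^{-1}(\eta(k)) = 2g-2$, $\tau^{-1}(2g-2) = 2g-1$, and $\tau^{-1}(2g-1) = k$. Correspondingly, the homology basis $\{c_j\}_{j=0}^{2g-1}$ of $O_\tau$ contains cycles $c_j$ for $j \leq 2g-3$, $j \neq k$, that are combinatorially identical to the corresponding cycles in $O_\eta$; only $c_k$ is rerouted (from the top of square $k$ to the far-right bottom position $2g-1$), while two new cycles $c_{2g-2}$ and $c_{2g-1}$ are introduced by the surgery. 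The essential data of the proof is then the interaction of the three modified/new cycles with the rest of the basis.

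I would first tabulate the intersection numbers of $c_k$, $c_{2g-2}$, $c_{2g-1}$ with one another and with the remaining $c_j$. The cycle $c_{2g-1}$ is a short diagonal between the two new squares and only crosses $c_0$; the cycle $c_{2g-2}$ runs from the top of square $2g-2$ to the bottom position formerly occupied by $k$, so its crossings with each $c_j$ for $j \leq 2g-3$ agree with the crossings of the original $c_k$ from $O_\eta$; and the new $c_k$ spans the entire tail of the cylinder, so its crossings with $c_j$ are determined by whether the endpoints of $c_j$ lie between positions $k$ and $2g-1$. These last counts are controlled modulo $2$ by the parities of $\lfloor k/2 \rfloor$ and $\lfloor \eta(k)/2 \rfloor$, i.e.\ by the residues of $k$ and $\eta(k)$ modulo $4$ (recalling that $k$ is always odd by construction). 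Next, I would organise the orthogonalisation of $O_\tau$ to mirror the one used in computing $\mathrm{Arf}(O_\eta)$: begin with $a_1 = c_0$, $b_1 = c_1$, process the paired cycles $(c_{2j}, c_{2j+1})$ in the same order as for $O_\eta$, and reserve $c_k$, $c_{2g-2}$, $c_{2g-1}$ for the final step. The argument of Lemma~\ref{lem:pair-up-q-cycles} applied to $O_\tau$ lets one take the first $g-1$ symplectic pairs $\{a_i, b_i\}_{i=1}^{g-1}$ equal to those arising in the $O_\eta$ computation up to a uniform correction by cycles involving the two new squares; since that correction preserves the $\Phi$-values of each pair, these pairs contribute exactly $\mathrm{Arf}(O_\eta) = 1$ to $\mathrm{Arf}(O_\tau)$.

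The remaining symplectic pair $(a_g, b_g)$ is then assembled from the modified/new cycles after the preceding orthogonalisation, and the main task reduces to computing $\Phi(a_g)\Phi(b_g) \bmod 2$ as a function of $(k \bmod 4,\, \eta(k) \bmod 4)$. The main obstacle is the bookkeeping: the orthogonalisation steps modify $c_k$, $c_{2g-2}$, $c_{2g-1}$ simultaneously in ways that depend non-trivially on earlier choices, so controlling the cross-terms takes work. To keep this tractable I would first establish, by an argument modelled on Lemma~\ref{lem:key} and Proposition~\ref{prop:same-spin}, that the spin parity of $O_\tau$ depends only on the residues $k \bmod 4$ and $\eta(k) \bmod 4$ and not on the remaining combinatorial structure of $\eta$. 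With that reduction in hand, the theorem reduces to the verification of a finite list of representative cases, one for each admissible pair of residues; direct orthogonalisation in each representative yields $\Phi(a_g)\Phi(b_g) \equiv 1 \pmod 2$ precisely when $k \equiv 1$ and $\eta(k) \equiv 2 \pmod 4$, flipping the odd parity inherited from $O_\eta$ to even and thereby establishing the claim.
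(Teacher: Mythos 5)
Your plan of comparing $O_\tau$ directly to $O_\eta$ is reasonable and close in spirit to the paper, but the crucial step is asserted rather than argued, and the assertion is in fact false. You claim that the first $g-1$ symplectic pairs for $O_\tau$ can be taken to agree with the $g-1$ pairs for $O_\eta$ ``up to a uniform correction'' and that ``since that correction preserves the $\Phi$-values of each pair, these pairs contribute exactly $\mathrm{Arf}(O_\eta)=1$.'' Adding a correction cycle $d$ changes $\Phi(c)$ to $\Phi(c)+\Phi(d)+c\cdot d$, which depends on $c\cdot d$ and hence is not uniform across the pairs; there is no reason the products $\Phi(a_i)\Phi(b_i)$ are preserved. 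The paper's proof shows that the contributions of these pairs \emph{do} change, and that this change is precisely what produces the $k\bmod 4$, $\eta(k)\bmod 4$ dependence: after passing to the intermediate surface $\widehat{O}$ and then swapping the labels $k$ and $2g-2$, the $\Phi$-values of the pairs indexed by the sets $I_1,\dots,I_4$ shift in a way that contributes terms such as $n_2n_3$ and $n_1(1+n_2+n_3)$ to the Arf invariant, where $n_i=|I_i|/2$. So the whole content of the theorem lives exactly in the contribution you are discarding; the final new pair is not where the dependence enters.

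A second gap is the proposed reduction ``by an argument modelled on Lemma~\ref{lem:key} and Proposition~\ref{prop:same-spin}'' to show that the spin parity depends only on $k\bmod 4$ and $\eta(k)\bmod 4$. The $3$-cycle moves of Lemma~\ref{lem:key} permute pairs $(2i,2i+1)$ within a fixed construction from Section~\ref{sec:gen-const}; they are not designed for, and do not preserve, the family of even-genus AMN origamis, nor do they relate two even-genus AMN origamis with the same residues of $k$ and $\eta(k)$. The conclusion you want to establish as a preliminary reduction is in fact the output of the full computation, not something available up front. You also misstate an intersection datum: in $O_\tau$ the cycle $c_{2g-2}$ starts at the top of square $2g-2$, not at the top of square $k$, so its intersections with the remaining $c_j$ are not those of the original $c_k$ in $O_\eta$. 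The paper sidesteps these difficulties by splitting the surgery into two clean moves, $O'\to\widehat{O}$ (append two squares; always flips parity) and $\widehat{O}\to O$ (swap two bottom labels; flips parity according to a tractable mod-$2$ count of the $n_i$), and your argument would need an equivalent decomposition or an honest bookkeeping of the pair contributions to close these gaps.
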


The proof breaks into three steps, corresponding to the steps of the surgery-like construction outlined at the end of Example \ref{ex:ori:AMNeven}. Consider an origami $O = (\sigma_g,\tau)$ of even genus $g$ constructed by modifying an origami $O' = (\sigma_{g-1},\eta)$ of odd genus $g-1$ according to Aougab-Menasco-Nieland. In particular, let the choice $k\in\{3,5,\ldots,g-3\}\setminus\{\eta^{-1}(1)\}$ be made. First, we find an explicit symplectic basis of $H_1(O',\zz/2\zz)$ via orthogonalisation that can be used to compute the Arf invariant of $O$ in the way discussed in Section \ref{subsec:ortho}. Knowing that $O'$ has odd spin parity, we show that the origami $\widehat{O} = (\sigma_g, \eta(2g-1, 2g-2))$ of even genus $g$ has even spin by extending and modifying the symplectic basis of $H_1(O',\zz/2\zz)$ to one of $H_1(\widehat{O},\zz/2\zz)$. Geometrically, $\widehat{O}$ is obtained by extending two squares to the right of $O'$ where the top and the bottom side of the first square are labeled $2g-2, 2g-1$ respectively and vice versa for the second square. See Figure~\ref{fig:even-AMN-example} above or Figure~\ref{fig:extending-O} below. Observe that $O$ is obtained by swapping the labels $k$ and $2g-2$ on bottom of $\widehat{O}$. Thus, it suffices to show that the swapping operation preserves the spin parity if and only if $k \equiv 1 \mod 4$ and $\eta(k) \equiv 2 \mod 4$. This is done, again, by examining how the symplectic basis of $H_1(\widehat{O},\zz/2\zz)$ transforms to one of $H_1(O,\zz/2\zz)$.

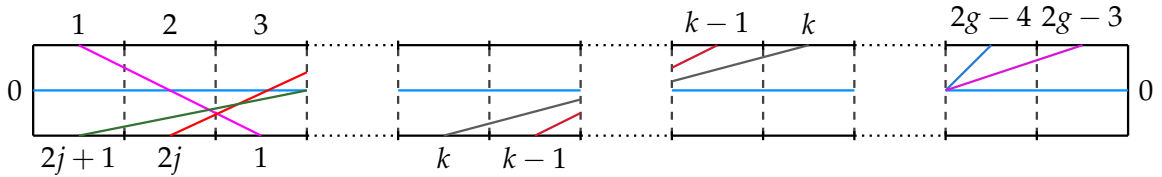
\begin{figure}[b]
    \centering
    \begin{tikzpicture}[scale = 1.2, line width = 0.3mm]
    \draw (0,0)--(0,1);
    \draw (0,1)--(3,1);
    \draw [dotted] (3,1)--(4,1);
    \draw (12,1)--(12,0);
    \draw [dotted] (3,0)--(4,0);
    \draw (0,0)--(3,0);
    \draw (4,0)--(6,0);
    \draw (4,1)--(6,1);
    \draw [dotted] (6,0)--(7,0);
    \draw [dotted] (6,1)--(7,1);
    \draw (7,0)--(9,0);
    \draw (7,1)--(9,1);
    \draw [dotted] (9,1)--(10,1);
    \draw [dotted] (9,0)--(10,0);
    \draw (10,0)--(12,0);
    \draw (10,1)--(12,1);
    
    \foreach \i in {1,2,...,11}{
        \draw [dashed, color = darkgray] (\i,0)--(\i,1);
        \draw (\i,0.05)--(\i,-0.05);
        \draw (\i,1.05)--(\i,0.95);
    }
    \draw (0,0.5) node[left] {$0$};
    \draw (12,0.5) node[right] {$0$};
    \foreach \i in {3,4}{
        \draw (14-\i+0.5,1) node[above] {$2g-\i$};
    }
    \draw (0.5,0) node[below] {$2j+1$};
    \draw (1.5,0) node[below] {$2j$};
    \draw (2.5,0) node[below] {$1$};
    \foreach \i in {1,2,3}{
        \draw (\i-0.5,1) node[above] {$\i$};
    }
    \draw (4.5,0) node[below] {$k$};
    \draw (5.5,0) node[below] {$k-1$};
    \draw (7.5,1) node[above] {$k-1$};
    \draw (8.5,1) node[above] {$k$};
    \draw [color = {rgb,100:red,0;green,100;blue,100}] (0,0.5)--(3,0.5);
    \draw [color = {rgb,100:red,0;green,100;blue,100}] (7,0.5)--(9,0.5);
    \draw [color = {rgb,100:red,0;green,100;blue,100}] (4,0.5)--(6,0.5);
    \draw [color = {rgb,100:red,0;green,100;blue,100}] (10,0.5)--(12,0.5);
    \draw [color = {rgb,100:red,100;green,0;blue,100}] (0.5,1)--(2.5,0);
    \draw [color = {rgb,100:red,80;green,20;blue,20}] (7.5,1)--(7,0.75);
    \draw [color = {rgb,100:red,36;green,64,;blue,36}] (8.5,1)--(7,0.6);
    \draw [color = {rgb,100:red,80;green,20;blue,20}] (6,0.25)--(5.5,0);
    \draw [color = {rgb,100:red,36;green,64,;blue,36}] (6,0.4)--(4.5,0);
    \draw [color = {rgb,100:red,96;green,4;blue,4}] (1.5,0)--(3,0.7);
    \draw [color = {rgb,100:red,20;green,80,;blue,20}] (0.5,0)--(3,0.5);
    \draw [color = {rgb,100:red,16;green,84,;blue,84}] (10.5,1)--(10,0.5);
    \draw [color = {rgb,100:red,84;green,16,;blue,84}] (11.5,1)--(10,0.5);
    \end{tikzpicture}
    \caption{Intersection pattern for an AMN origami $O'$ of odd genus.}\label{fig:intersec-O'}
\end{figure}

\emph{Step 1.} We start by specifying an explicit symplectic basis of $H_1(O',\zz/2\zz)$ via the orthogonalisation method, where $O' = (\sigma_{g-1},\eta)$ is an origami of odd genus $g-1$ constructed according to Aougab-Menasco-Nieland. Following the notations of Section \ref{subsec:ortho}, let $\{c_{i}\ |\ 0 \leq i \leq 2g-3\}$ be the generating set of $H_1(O',\zz/2\zz)$ corresponding to the simple closed curves that travel from the centre of one of the sides labelled $i$ to the centre of the other side labelled $i$. Denote $\eta^{-1}(2) = 2j$ and thus $\eta^{-1}(1) = 2j+1$. Recall that we have $\Phi(c_{i}) = 1$ for all $i$ in the beginning. A representative intersection data in $O'$ is shown in Figure \ref{fig:intersec-O'}. It shall prove useful to partition the labels $2 \leq i \leq 2g-3$ not equal to $k-1,k,2j,2j+1$ into the following subsets.
\begin{align*}
    I_1 &= \{2 \leq i \leq 2g-3\ |\ i > k, \eta(i) > \eta(k), i \not = 2j, 2j+1\}, \\
    I_2 &= \{2 \leq i \leq 2g-3\ |\ i > k, \eta(i) < \eta(k), i \not = 2j, 2j+1\}, \\
    I_3 &= \{2 \leq i \leq 2g-3\ |\ i < k-1, \eta(i) > \eta(k), i \not = 2j, 2j+1\}, \\
    I_4 &= \{2 \leq i \leq 2g-3\ |\ i < k-1, \eta(i) < \eta(k), i \not = 2j, 2j+1\}.
\end{align*}
Observe that all four subsets have even cardinality and let $n_i = |I_i|/2$ for $i = 1,2,3,4$. 

We begin the orthogonalisation method on $O'$. As in the sample calculation of Section \ref{subsec:ortho}, we first choose $a_{1} = c_{0}$ and $b_{1} = c_{1}$ with $a_{1}\cdot b_{1} = 1$ and $\Phi(a_{1}) = 1 = \Phi(b_{1})$. Orthogonalising we see that for $2\leq i\leq 2g-3$,
\begin{equation*}
c_{i} \mapsto \begin{cases}
     c_{i} + (c_{i}\cdot c_{0})c_{1} + (c_{i}\cdot c_{1})c_{0} = c_{i} + c_{1}, & i \not = 2j,2j+1 \\
    c_{i} + (c_{i}\cdot c_{0})c_{1} + (c_{i}\cdot c_{1})c_{0} = c_i + c_1 + c_0, & i = 2j,2j+1
\end{cases}
\end{equation*}

Next, choose $a_2 = c_{k-1} + c_1$ and $b_2 = c_k + c_1$ for the fixed $k \in \{3,5,\ldots,g-3\}\setminus\{\eta^{-1}(1)\}$ above. Again, $a_2 \cdot b_2 = 1$ and $\Phi(a_{2}) = \Phi(c_{k-1}) + \Phi(c_1) + c_{k-1} \cdot c_1 = 1 + 1  + 0 = 0 \mod 2 = \Phi(b_2)$. Orthogonalising we see that for $2\leq i\leq 2g-3$ not equal to $k-1,k,2j,2j+1$,
\begin{equation*}
    c_i + c_1 \mapsto \begin{cases}
         c_i + c_1, & i \in I_1 \cup I_4, \\
         c_i + c_1 + c_{k-1} + c_1 + c_k + c_1 = c_i + c_1 + c_{k-1} + c_k, & i \in I_2 \cup I_3.
    \end{cases}
\end{equation*}
For $i = 2j,2j+1$, there are two cases: 
\begin{equation*}
    c_{i} + c_1 + c_0 \mapsto \begin{cases}
        c_{i} + c_1 + c_0, & \text{if}\ 2j,2j+1 > k, \\
        c_i + c_1 + c_0 + c_{k-1} + c_1 + c_k + c_1 = c_i + c_1 + c_0 + c_{k-1} + c_k, & \text{if}\ 2j,2j+1 < k.
    \end{cases}
\end{equation*}
See Figure \ref{fig:intersec-O'} for an illustration of the relevant intersection patterns in $O'$ that demonstrate the computations above. 

If $2j,2j+1 > k$, choose $a_3 = c_{2j} + c_1 + c_0$ and $b_3 = c_{2j+1} + c_1 + c_0$ with $\Phi(a_3) = \Phi(c_{2j}) + \Phi(c_1) + \Phi(c_0) + c_{2j} \cdot c_1 + c_1 \cdot c_0 + c_{2j} \cdot c_0 = 1 + 1 + 1 + 1 + 1 + 1 = 0 \mod 2 = \Phi(b_3)$. If $2j,2j+1 < k$, similarly, choose $a_3 = c_{2j} + c_1 + c_0 + c_{k-1} + c_k$ and $b_3 = c_{2j+1} + c_1 + c_0 + c_{k-1} + c_k$ with $\Phi(a_3) = 1 = \Phi(b_3)$. In both cases, we note for future use that orthogonalisation yields
\begin{equation} \label{eqn:0}
    \begin{cases}
        c_i + c_1 \mapsto c_i+ c_1 + (c_i\cdot c_{2j} + 1)(a_3+b_3), & i \in I_1 \cup I_4 \\
        c_i + c_1 + c_{k-1} + c_k \mapsto c_i + c_1 + c_{k-1} + c_k + (c_i\cdot c_{2j} + 1)(a_3+b_3), & i \in I_2 \cup I_3.
    \end{cases}
\end{equation}

At this point, by Lemma \ref{lem:pair-up-q-cycles}, we may choose the remaining symplectic basis elements $a_n,b_n$ for $4 \leq n \leq g-1$ to be some remaining cycles coming from $2i, 2i+1$ for $2 \leq 2i \leq 2g-3$ not equal to $k-1, 2j$ in each iteration. As it is known that $O'$ has odd spin by Corollary \ref{cor:odd-odd}, note that if $2j,2j+1 > k$,
\begin{equation*}
    \sum_{n = 4}^{g-1} \Phi(a_n)\Phi(b_n) = 1 - \Phi(a_1)\Phi(b_1) - \Phi(a_2)\Phi(b_2) - \Phi(a_3)\Phi(b_3) = 1 - 1 - 0 - 0 = 0 \mod 2,
\end{equation*}
and if $2j,2j+1 < k$,
\begin{equation*}
     \sum_{n = 4}^{g-1} \Phi(a_n)\Phi(b_n) = 1 - \Phi(a_1)\Phi(b_1) - \Phi(a_2)\Phi(b_2) - \Phi(a_3)\Phi(b_3) = 1 - 1 - 0 - 1 = 1 \mod 2,
\end{equation*}
In summary, we have proven the following lemma.

\begin{lemma}\label{lem:odd-sym-basis}
    The set
    \begin{align*}
        a_1 &= c_0, \qquad\qquad\,\,\,\,\,\, b_1 = c_1, \\
        a_2 &= c_{k-1} + c_1, \qquad b_2 = c_k + c_1, \\
        a_3 &= \begin{cases}
            c_{2j}+c_1+c_0, & \text{if}\ 2j,2j+1 > k \\
            c_{2j} + c_1 + c_0 + c_{k-1} + c_k, & \text{if}\ 2j,2j+1 < k
        \end{cases},\\
        \ b_3 &= \begin{cases}
            c_{2j+1}+c_1+c_0, & \text{if}\ 2j,2j+1 > k \\
            c_{2j+1} + c_1 + c_0 + c_{k-1} + c_k, & \text{if}\ 2j,2j+1 < k
        \end{cases},
    \end{align*}
    and for $4 \leq n \leq g-1$, $a_n,b_n$ the cycles coming from $2i,2i+1$ for $2i \in I_1 \cup I_2 \cup I_3 \cup I_4$ is a symplectic basis of $H_1(O',\zz/2\zz)$. Moreover, the Arf invariants are given by
    \begin{align*}
        \Phi(a_1) = \Phi(b_1) &= 1, \\
        \Phi(a_2) = \Phi(b_2) &= 0, \\
        \Phi(a_3) = \Phi(b_3) &= \begin{cases}
            0,\ & \text{if}\ 2j,2j+1 > k \\
            1,\ & \text{if}\ 2j,2j+1 < k
        \end{cases}, \\
        \sum_{n = 4}^{g-1} \Phi(a_n)\Phi(b_n) &= \begin{cases}
            0,\ & \text{if}\ 2j,2j+1 > k \\
            1,\ & \text{if}\ 2j,2j+1 < k
        \end{cases}.
    \end{align*}
\end{lemma}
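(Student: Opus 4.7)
The plan is to verify the lemma by following the orthogonalisation procedure of Subsection~\ref{subsec:ortho} through the first three iterations, reading off the claimed $\Phi$-values at each stage, and then using Corollary~\ref{cor:odd-odd} to determine the remaining contribution. The heart of the argument is already present in the discussion preceding the lemma statement; what remains is to package these computations as a formal verification of (i) symplectic orthogonality, (ii) the individual Arf values, and (iii) the total sum $\sum_{n=4}^{g-1}\Phi(a_n)\Phi(b_n)$.

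First I would record the intersection data for $O'$ that is displayed in Figure~\ref{fig:intersec-O'}: each $c_i$ has $\Phi(c_i)=1$, $c_0$ meets every other $c_i$ once, $c_1$ meets only $c_0, c_{k-1}, c_k$, the pair $c_{2j}, c_{2j+1}$ meets every $c_i$ except $c_1$, and the pair $c_{k-1}, c_k$ meets $c_0, c_1, c_{2j}, c_{2j+1}$. Taking $a_1=c_0$, $b_1=c_1$, one checks directly that $a_1\cdot b_1=1$ and $\Phi(a_1)=\Phi(b_1)=1$; orthogonalisation then sends $c_i\mapsto c_i+c_1$ for $i\neq 2j,2j+1$ and $c_i\mapsto c_i+c_1+c_0$ for $i=2j,2j+1$, as recorded in Step 1. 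A short calculation using $\Phi(c+c')=\Phi(c)+\Phi(c')+c\cdot c'$ shows $\Phi(a_2)=\Phi(b_2)=0$ for $a_2=c_{k-1}+c_1$, $b_2=c_k+c_1$, and $a_2\cdot b_2=1$ is immediate from the intersection data.

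Next I would treat the third pair $(a_3,b_3)$, splitting into the two cases prescribed in the statement. When $2j,2j+1>k$, the cycles $c_{2j}+c_1+c_0$ and $c_{2j+1}+c_1+c_0$ are already symplectically orthogonal to $(a_2,b_2)$ (since $c_{2j}$ and $c_{2j+1}$ have the same intersections with $c_{k-1}$ and $c_k$), and an expansion of $\Phi$ via the quadratic form identity gives $\Phi(a_3)=\Phi(b_3)=0$. When $2j,2j+1<k$, the correction by $c_{k-1}+c_k$ is precisely what is needed to restore symplectic orthogonality with $(a_2,b_2)$, and the same expansion gives $\Phi(a_3)=\Phi(b_3)=1$. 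In both cases $a_3\cdot b_3=1$ because $c_{2j}\cdot c_{2j+1}=1$ and the added corrections cancel pairwise.

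For the remaining basis elements, I would invoke Lemma~\ref{lem:pair-up-q-cycles}: after each of the three orthogonalisation steps, every pair of cycles originating from $c_{2i}, c_{2i+1}$ with $2i\in I_1\cup I_2\cup I_3\cup I_4$ is transformed to a pair of the form $(c_{2i}+d, c_{2i+1}+d)$ for a common $d$, so it continues to satisfy the pairing relations needed to be selected as $(a_n,b_n)$ in some order. This produces the required symplectic basis, and no further analysis of the $\Phi$-values on these pairs is needed: by Corollary~\ref{cor:odd-odd} the total Arf invariant of $O'$ is $1$, so
\[
\sum_{n=4}^{g-1}\Phi(a_n)\Phi(b_n)\equiv 1-\Phi(a_1)\Phi(b_1)-\Phi(a_2)\Phi(b_2)-\Phi(a_3)\Phi(b_3)\pmod 2,
\]
which yields $0$ when $2j,2j+1>k$ and $1$ when $2j,2j+1<k$. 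The main bookkeeping obstacle is consistently tracking the sign conventions and the two cases in the definition of $(a_3,b_3)$; once these are fixed, every identity reduces to a direct application of the quadratic form identity and the intersection data in Figure~\ref{fig:intersec-O'}.
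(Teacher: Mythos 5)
Your proposal follows the same overall strategy as the paper: choose $a_1 = c_0$, $b_1 = c_1$, then $a_2 = c_{k-1}+c_1$, $b_2 = c_k+c_1$, then $a_3, b_3$ case-by-case, invoke Lemma~\ref{lem:pair-up-q-cycles} for the rest, and finally back out $\sum_{n\geq 4}\Phi(a_n)\Phi(b_n)$ from the known odd spin parity of $O'$ (Corollary~\ref{cor:odd-odd}). That last back-out step is the same trick the paper uses, and it is the right idea.

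However, the intersection data you record at the outset is incorrect and, more importantly, inconsistent with the very computations you then perform. You state that $c_1$ meets only $c_0, c_{k-1}, c_k$, and that $c_{2j}, c_{2j+1}$ meet every $c_i$ \emph{except} $c_1$. But the orthogonalisation formula you quote immediately afterwards, $c_i \mapsto c_i + c_1 + c_0$ precisely for $i = 2j, 2j+1$, forces $c_1\cdot c_{2j} = c_1\cdot c_{2j+1} = 1$ and $c_1\cdot c_{k-1} = c_1\cdot c_k = 0$ (the opposite of what you wrote). You also omit $c_{k-1}\cdot c_k = 1$, which is needed for $a_2\cdot b_2 = 1$. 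With the data as you stated it, the claimed calculation $\Phi(a_2) = 0$ fails: one would get $\Phi(c_{k-1}+c_1) = \Phi(c_{k-1}) + \Phi(c_1) + c_{k-1}\cdot c_1 = 1+1+1 = 1$. So the proof as written is not self-supporting; the subsequent bookkeeping happens to come out right only because you are quietly copying the paper's orthogonalisation formulas rather than deriving them from your stated intersection pattern. To make the argument correct and self-contained, replace the intersection data with: $c_0$ meets every $c_i$; $c_1$ meets $c_0, c_{2j}, c_{2j+1}$ and nothing else; $c_{k-1}\cdot c_k = 1$; and $c_{k-1}, c_k$ do not meet $c_1$. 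The remainder of your argument (the case split for $(a_3,b_3)$, Lemma~\ref{lem:pair-up-q-cycles}, and the use of the known Arf invariant of $O'$) matches the paper and is sound once the initial data is fixed.
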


\begin{figure}[b]
    \centering
    \begin{tikzpicture}[scale = 1.2, line width = 0.3mm]
    \draw (0,0)--(0,1);
    \draw (0,1)--(3,1);
    \draw [dotted] (3,1)--(4,1);
    \draw (12,1)--(12,0);
    \draw [dotted] (3,0)--(4,0);
    \draw (0,0)--(3,0);
    \draw (4,0)--(6,0);
    \draw (4,1)--(6,1);
    \draw [dotted] (6,0)--(7,0);
    \draw [dotted] (6,1)--(7,1);
    \draw (7,0)--(9,0);
    \draw (7,1)--(9,1);
    \draw [dotted] (9,1)--(10,1);
    \draw [dotted] (9,0)--(10,0);
    \draw (10,0)--(12,0);
    \draw (10,1)--(12,1);
    
    \foreach \i in {1,2,...,11}{
        \draw [dashed, color = darkgray] (\i,0)--(\i,1);
        \draw (\i,0.05)--(\i,-0.05);
        \draw (\i,1.05)--(\i,0.95);
    }
    \draw (0,0.5) node[left] {$0$};
    \draw (12,0.5) node[right] {$0$};
    \foreach \i in {1,2}{
        \draw (12-\i+0.5,1) node[above] {$2g-\i$};
    }
    \draw (10.5,0) node[below] {$2g-1$};
    \draw (11.5,0) node[below] {$2g-2$};
    \draw (0.5,0) node[below] {$2j+1$};
    \draw (1.5,0) node[below] {$2j$};
    \draw (2.5,0) node[below] {$1$};
    \draw (2.5,0) node[below] {$1$};
    \draw (2.5,0) node[below] {$1$};
    \foreach \i in {1,2,3}{
        \draw (\i-0.5,1) node[above] {$\i$};
    }
    \draw (4.5,0) node[below] {$k$};
    \draw (5.5,0) node[below] {$k-1$};
    \draw (7.5,1) node[above] {$k-1$};
    \draw (8.5,1) node[above] {$k$};
    \draw [color = {rgb,100:red,0;green,100;blue,100}] (0,0.5)--(3,0.5);
    \draw [color = {rgb,100:red,0;green,100;blue,100}] (7,0.5)--(9,0.5);
    \draw [color = {rgb,100:red,0;green,100;blue,100}] (4,0.5)--(6,0.5);
    \draw [color = {rgb,100:red,0;green,100;blue,100}] (10,0.5)--(12,0.5);
    \draw [color = {rgb,100:red,100;green,0;blue,100}] (0.5,1)--(2.5,0);
    \draw [color = {rgb,100:red,80;green,20;blue,20}] (7.5,1)--(7,0.75);
    \draw [color = {rgb,100:red,36;green,64,;blue,36}] (8.5,1)--(7,0.6);
    \draw [color = {rgb,100:red,80;green,20;blue,20}] (6,0.25)--(5.5,0);
    \draw [color = {rgb,100:red,36;green,64,;blue,36}] (6,0.4)--(4.5,0);
    \draw [color = {rgb,100:red,96;green,4;blue,4}] (1.5,0)--(3,0.7);
    \draw [color = {rgb,100:red,20;green,80,;blue,20}] (0.5,0)--(3,0.5);
    \draw [color = {rgb,100:red,32;green,68;blue,68}] (10.5,1)--(11.5,0);
    \draw [color = {rgb,100:red,68;green,32;blue,68}] (11.5,1)--(10.5,0);
    \end{tikzpicture}
    \caption{Intersection pattern for $\widehat{O}$.}\label{fig:extending-O}
\end{figure}
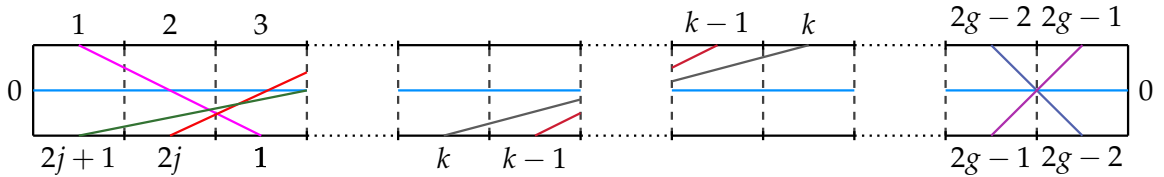

\emph{Step 2.} Next, consider the origami $\widehat{O}$ of even genus $g$ obtained by extending two squares to the right of $O'$ where the top and the bottom side of the first square are labeled $2g-2, 2g-1$ respectively and vice versa for the second square. See Figure~\ref{fig:extending-O}. We specify a symplectic basis of $H_1(\widehat{O}, \zz/2\zz)$ which proves that $\widehat{O}$ has even spin. Again, let $\{c_i\ |\ 0 \leq i \leq 2g-1\}$ be a generating set of $H_1(\widehat{O}, \zz/2\zz)$ of $H_1(\widehat{O},\zz/2\zz)$ corresponding to the simple closed curves that travel from the centre of one of the sides labeled $i$ to the centre of the other side labelled $i$. Observe that the subset $\{c_i\ |\ 0 \leq i \leq 2g-3\}$ is precisely the generating set of $H_1(O', \zz/2\zz)$ we started with before. Again, we use the same subsets $I_1,I_2,I_3,I_4$ to partition the indices $2 \leq i \leq 2g-3$ not equal to $k-1,k,2j,2j+1$. We shall omit the computational details below for efficiency and we refer readers to Figure~\ref{fig:extending-O} for verification.

By the similarity between $\widehat{O}$ and $O'$, choose $a_{1} = c_{0}$, $b_{1} = c_{1}$, $a_2 = c_{k-1} + c_1$, $b_2 = c_k+c_1$ and $c_i$'s are transformed as before for $3 \leq i \leq 2g-3$ not equal to $k-1,k$. For $i = 2g-2,2g-1$, we have
\begin{equation*}
    c_i \mapsto c_i + c_1.
\end{equation*}

Next, choose $a_3 = c_{2g-2} + c_1$ and $b_3 = c_{2g-1}+c_1$ with $\Phi(a_3) = 0 = \Phi(b_3)$. Since $c_{2g-1},c_{2g-2},c_1$ only intersect with $c_0$ among other curves, orthogonalising we see that $c_i + c_1, c_i+c_{k-1}+c_k+c_1$ stay unchanged for $i \in I_1 \cup I_4$ and $i \in I_2 \cup I_3$ respectively. For $i = 2j,2j+1$, we have
\begin{equation*}
    \begin{cases}
        c_i + c_1 + c_0 \mapsto c_i + c_1 + c_0 + c_{2g-1} + c_{2g-2}, & \text{if}\ 2j,2j+1 > k \\
        c_i+c_1+c_0+c_k+c_{k-1} \mapsto c_i+c_1+c_0+c_k+c_{k-1}+c_{2g-1}+c_{2g-2}, & \text{if}\ 2j,2j+1 < k.
    \end{cases}
\end{equation*}

For the next pair of symplectic basis elements, we break into two cases again. If $2j,2j+1 > k$, choose $a_4 = c_{2j} + c_1 + c_0 + c_{2g-1} + c_{2g-2}$, $b_4 = c_{2j+1} + c_1 + c_0 + c_{2g-1} + c_{2g-2}$ with $\Phi(a_4) = 1 = \Phi(b_4)$. If $2j,2j+1 < k$, choose $a_4 = c_{2j} + c_1 + c_0 + c_k + c_{k-1} + c_{2g-1} + c_{2g-2}$ and $b_4 = c_{2j+1} + c_1 + c_0 + c_k + c_{k-1} + c_{2g-1} + c_{2g-2}$ with $\Phi(a_4) = 0 = \Phi(b_4)$. In both cases, orthogonalisation yields
\begin{equation} \label{eqn:1}
    \begin{cases}
        c_i + c_1 \mapsto c_i+ c_1 + (c_i\cdot c_{2j} + 1)(a_3+b_3), & i \in I_1 \cup I_4 \\
        c_i + c_1 + c_{k-1} + c_k \mapsto c_i + c_1 + c_{k-1} + c_k + (c_i\cdot c_{2j} + 1)(a_3+b_3), & i \in I_2 \cup I_3.
    \end{cases}
\end{equation}

By Lemma \ref{lem:pair-up-q-cycles}, we may choose again the remaining symplectic basis elements $a_n,b_n$ for $5 \leq n \leq g$ to be the cycles corresponding to $2i,2i+1$ for $2 \leq 2i \leq 2g-3$ not equal to $k-1,2j$, which are setwise exactly the same as $a_n,b_n$ chosen for $O'$ with $4 \leq n \leq g-1$. In particular, it follows from Lemma \ref{lem:odd-sym-basis} that $\sum_{i = 5}^g \Phi(a_n)\Phi(b_n) = 1$ if $2j,2j+1 > k$ and $0$ if $2j,2j+1 < k$. We then compute that if $2j,2j+1 > k$,
\begin{align*}
    \Phi(a_1)\Phi(b_1) + \Phi(a_2)\Phi(b_2) + \Phi(a_3)\Phi(b_3) + \Phi(a_4)\Phi(b_4) + \sum_{i = 5}^g \Phi(a_n)\Phi(b_n) \\ 
    = 1 + 0 + 0 + 1 + 0 = 0 \mod 2,
\end{align*}
and if $2j, 2j+1 < k$,
\begin{align*}
    \Phi(a_1)\Phi(b_1) + \Phi(a_2)\Phi(b_2) + \Phi(a_3)\Phi(b_3) + \Phi(a_4)\Phi(b_4) + \sum_{i = 5}^g \Phi(a_n)\Phi(b_n) \\
    = 1 + 0 + 0 + 0 + 1 = 0 \mod 2.
\end{align*}
In summary, we have proven the following lemma.

\begin{lemma}
    The origami $\widehat{O}$ defined above has even spin parity. 
\end{lemma}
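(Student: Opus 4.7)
The plan is to extend the symplectic basis of $H_{1}(O',\zz/2\zz)$ produced in Lemma~\ref{lem:odd-sym-basis} to one of $H_{1}(\widehat{O},\zz/2\zz)$ and then compute the Arf invariant directly via the orthogonalisation method of Subsection~\ref{subsec:ortho}. Passing from $O'$ to $\widehat{O}$ enlarges the generating set by exactly the two cycles $c_{2g-2}$ and $c_{2g-1}$ coming from the two new squares; since the genus grows by one, I expect these to furnish a new symplectic pair in the basis, contributing $0$ to the Arf invariant while simultaneously flipping the contribution of the $(2j,2j+1)$-pair.

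First, I would run the orthogonalisation on $\widehat{O}$ in parallel with the one in Step~1 for $O'$, choosing $a_{1}=c_{0}$, $b_{1}=c_{1}$, $a_{2}=c_{k-1}+c_{1}$, $b_{2}=c_{k}+c_{1}$ exactly as before. Inspecting Figure~\ref{fig:extending-O}, the new cycles $c_{2g-2}$ and $c_{2g-1}$ meet only $c_{0}$ nontrivially, because the corresponding sides live in the two final squares that are attached to the rest of $\widehat{O}$ only through the sides labelled $0$. Consequently, after orthogonalising against $a_{1},b_{1}$ both become $c_{2g-2}+c_{1}$ and $c_{2g-1}+c_{1}$, each with $\Phi$-value $0$, and they are untouched by the orthogonalisation against $a_{2},b_{2}$. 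I then take $a_{3}=c_{2g-2}+c_{1}$, $b_{3}=c_{2g-1}+c_{1}$, contributing $0$ to the Arf invariant.

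Next, I would compare the remaining orthogonalisation to the one carried out for $O'$. Because $c_{2g-2}$ and $c_{2g-1}$ intersect only $c_{0}$, the only cycles affected by the orthogonalisation against $a_{3},b_{3}$ are the two carrying an $c_{0}$-summand at this stage, namely those coming from $(2j,2j+1)$: each is shifted by $c_{2g-1}+c_{2g-2}$. A short computation with the identity $\Phi(x+y)=\Phi(x)+\Phi(y)+x\cdot y$ shows that this shift flips the value of $\Phi$ on both cycles. From this point onward, the orthogonalisation of the cycles indexed by $I_{1}\cup I_{2}\cup I_{3}\cup I_{4}$ proceeds identically to that in $O'$, yielding the same symplectic pairs $a_{n},b_{n}$ for $5\le n\le g$ as those for $O'$ with $4\le n\le g-1$. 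Summing the contributions and comparing with Lemma~\ref{lem:odd-sym-basis}, the flip on the $(2j,2j+1)$-pair changes the Arf invariant by $1\bmod 2$, so that $\mathrm{Arf}(\widehat{O})\equiv\mathrm{Arf}(O')+1\equiv 1+1\equiv 0\pmod{2}$, proving $\widehat{O}$ has even spin parity.

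The main obstacle is the bookkeeping required to verify that orthogonalisation against $a_{3},b_{3}$ leaves every cycle outside the $(2j,2j+1)$-pair unchanged; this boils down to confirming, from the explicit intersection pattern in Figure~\ref{fig:extending-O}, that $c_{2g-2}$ and $c_{2g-1}$ are supported in the last two squares and so have zero $\zz/2\zz$-intersection with the cycles $c_{i}$ for $i\in I_{1}\cup I_{2}\cup I_{3}\cup I_{4}$. Once this is checked, the computation reduces cleanly to combining Lemma~\ref{lem:odd-sym-basis} with a single new symplectic pair and a single parity flip, in both cases $2j>k$ and $2j<k$.
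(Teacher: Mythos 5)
Your proposal is correct and follows essentially the same line of attack as the paper: choose $a_{1},b_{1},a_{2},b_{2}$ exactly as in $O'$, insert the new pair $a_{3}=c_{2g-2}+c_{1}$, $b_{3}=c_{2g-1}+c_{1}$ with Arf contribution $0$, observe that the $(2j,2j+1)$-pair's contribution flips, and note that the remaining cycles are processed identically, giving $\mathrm{Arf}(\widehat{O})=\mathrm{Arf}(O')+1\equiv 0$. One point you gloss over should be made explicit: the statement that $c_{2g-2}$ and $c_{2g-1}$ ``meet only $c_{0}$ nontrivially'' is true only relative to the cycles inherited from $O'$; the two new cycles also meet \emph{each other}, $c_{2g-2}\cdot c_{2g-1}=1$, because the top/bottom labels of the two appended squares are swapped. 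This intersection is precisely what makes $a_{3}\cdot b_{3}=1$ (so that $(a_{3},b_{3})$ is a legitimate symplectic pair) and what makes $\Phi(c_{2g-2}+c_{2g-1})=1$, which is the entire source of the $\Phi$-flip on the $(2j,2j+1)$-cycles; if $c_{2g-2}\cdot c_{2g-1}$ were $0$, neither step of your argument would go through.
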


It will prove useful to be slightly more precise about our choices of $a_n,b_n$ for $5 \leq n \leq g$ and track their Arf invariants. First, we note the following key observation as a direct consequence of Lemma \ref{lem:arf-track}.

\begin{corollary} \label{cor:arf-track}
    For all $2 \leq 2i_n,2i_n+1 \leq 2g-3$ not equal to $k-1,k,2j,2j+1$, if $a_n,b_n$ ($5 \leq n \leq g$) are chosen to be the cycles coming from $2i_n,2i_n+1$ and $c,c'$ are the cycles from $2i_n,2i_n+1$ when $a_3,b_3$ are chosen, then $\Phi(a_n) = \Phi(b_n)$ differ from $\Phi(c) = \Phi(c')$ by $(c_{2i_n} + d_{2i_n}) \cdot a_4 + \sum_{l = 5}^{n-1} (c_{2i_n} + d_{2i_n}) \cdot (c_{2i_l} + d_{2i_l})$.
\end{corollary}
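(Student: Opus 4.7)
The plan is to iterate Lemma~\ref{lem:arf-track} across the orthogonalisation steps $l = 4, 5, \ldots, n-1$ carried out between the choice of $a_3, b_3$ and the choice of $a_n, b_n$. Let $2i_n$ denote the even index such that $a_n, b_n$ arise from the pair of generators $c_{2i_n}, c_{2i_n+1}$; by Lemma~\ref{lem:pair-up-q-cycles}, the form of this tracked cycle at step $n$ is $x_n := c_{2i_n} + d_{2i_n}$, while its form at step $4$ (the moment $a_3, b_3$ are chosen) is precisely the cycle $c$ in the corollary's statement. Write $x_l$ for its form at the start of step $l$, so $x_4 = c$ and $x_n = c_{2i_n} + d_{2i_n}$.

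By the second assertion of Lemma~\ref{lem:arf-track}, at each intermediate step $l$ (for $4 \leq l \leq n-1$), the cycle $x_l$ is either unchanged (if $x_l \cdot a_l = 0$) or is replaced by $x_l + a_l + b_l$ (if $x_l \cdot a_l = 1$), and in either case $\Phi$ shifts by exactly $(x_l \cdot a_l) \mod 2$. Iterating over $l$ then yields
\[
\Phi(a_n) - \Phi(c) \equiv \sum_{l=4}^{n-1} x_l \cdot a_l \mod 2.
\]

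The last input required is that intersection pairings between the tracked cycle and any $a_l$ are preserved by all subsequent orthogonalisation steps, by the last assertion of Lemma~\ref{lem:arf-track}. Hence $x_l \cdot a_l = x_n \cdot a_l = (c_{2i_n} + d_{2i_n}) \cdot a_l$ for every $l$. Substituting $a_l = c_{2i_l} + d_{2i_l}$ for $l \geq 5$, and leaving the $l = 4$ term as $(c_{2i_n} + d_{2i_n}) \cdot a_4$ (since $a_4$ does not fit the generic shape $c_{2i_l} + d_{2i_l}$), produces exactly the expression in the corollary. The equality $\Phi(a_n) = \Phi(b_n)$ (and likewise $\Phi(c) = \Phi(c')$) follows from the first assertion of Lemma~\ref{lem:arf-track}. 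The argument is essentially bookkeeping once Lemma~\ref{lem:arf-track} is in hand; the only subtlety is to treat the $l = 4$ step separately from the $l \geq 5$ steps, since $a_4, b_4$ carry the extra terms $c_0, c_{2g-2}, c_{2g-1}$ (and, in one case, $c_{k-1}, c_k$) rather than fitting the generic form.
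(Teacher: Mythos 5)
Your overall strategy is the right one: set up $\Phi(a_n) - \Phi(c) \equiv \sum_{l=4}^{n-1} x_l \cdot a_l \pmod 2$ by iterating Lemma~\ref{lem:arf-track}, and then identify each summand with an intersection product computable at a fixed reference iteration. However, the step ``$x_l \cdot a_l = x_n \cdot a_l$'' is false, and this is not a small slip: the orthogonalisation at iteration $l$ is \emph{designed} to make the transformed cycle $x_{l+1}$ orthogonal to $a_l$ and $b_l$, and these orthogonality relations persist, so $x_n \cdot a_l = a_n \cdot a_l = 0$ for all $l < n$. Under your reading, the right-hand side of the corollary collapses to zero, which would force $\Phi(a_n) = \Phi(c)$ for every $n$ --- contradicting, e.g., the explicit calculation in Example~\ref{ex:ARF:3-cycle}, where the tracked $\Phi$-value changes between iterations. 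The ``preservation of relative orthogonality'' in the last clause of Lemma~\ref{lem:arf-track} applies only \emph{among the cycles still remaining}; once $a_l$ has been chosen as a basis element it is no longer a remaining cycle, so you cannot transport $x_l \cdot a_l$ forward past iteration $l$.

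The correct transport goes \emph{backward}, and it must carry both factors, not just one. Writing $y^{(m)}_l$ for the form of the cycle that becomes $a_l$ at the start of iteration $m$ (so $y^{(l)}_l = a_l$), relative orthogonality among remaining cycles gives $x_l \cdot a_l = x_l \cdot y^{(l)}_l = x_4 \cdot y^{(4)}_l$ for each $l$. This is why the corollary's notation $c_{2i_n} + d_{2i_n}$ must be read as the iteration-$4$ form (the form recorded just after $a_3, b_3$ are chosen and orthogonalised), i.e.\ $c_{2i_n} + d_{2i_n} = x_4$, not the iteration-$n$ form $x_n$ as you assumed. Likewise $c_{2i_l} + d_{2i_l} = y^{(4)}_l$, and $y^{(4)}_4 = a_4$ accounts for the separately displayed $l=4$ term. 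With these corrections the bookkeeping does close up exactly as you intended: $\Phi(a_n) - \Phi(c) = x_4 \cdot a_4 + \sum_{l=5}^{n-1} x_4 \cdot y^{(4)}_l$, which is the stated formula.
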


Now, let $a_n,b_n$ be the cycles coming from indices in $I_4$ for $5 \leq n \leq 4+n_4$, in $I_3$ for $5 + n_4 \leq n \leq 4+n_3+n_4$, in $I_2$ for $4+n_3+n_4 \leq n \leq 5+n_2+n_3+n_4$, and in $I_1$ for $4+n_2+n_3+n_4 \leq n \leq 5+n_1+n_2+n_3+n_4 = g$. By Lemma \ref{cor:arf-track}, the Arf invariants of these basis elements are determined by the Arf invariants of their corresponding cycles after choosing $a_4,b_4$, namely,
\begin{align*}
    \Phi(c_i+c_1) = 0,\quad & i \in I_1 \cup I_4, \\
    \Phi(c_i+c_1+c_{k-1}+c_k) = 1,\quad & i \in I_2 \cup I_3,
\end{align*}
and the following intersection data: if $2j,2j+1 > k$,
\begin{align*}
    (c_i+c_1) \cdot (c_{2j}+c_1+c_0+c_{2g-1}+c_{2g-2}) = c_i \cdot c_{2j} + 1,\quad & i \in I_1 \cup I_4, \\
    (c_i+c_1+c_{k-1}+c_k) \cdot (c_{2j}+c_1+c_0+c_{2g-1}+c_{2g-2}) = c_i \cdot c_{2j} + 1, \quad & i \in I_2 \cup I_3,
\end{align*}
and if $2j,2j+1 < k$,
\begin{align*}
    (c_i+c_1) \cdot (c_{2j}+c_1+c_0+c_{k-1}+c_k+c_{2g-2}+c_{2g-1}) = c_i \cdot c_{2j} + 1,\quad & i \in I_1 \cup I_4, \\
    (c_i+c_1+c_{k-1}+c_k) \cdot (c_{2j}+c_1+c_0+c_{k-1}+c_k+c_{2g-2}+c_{2g-1}) = c_i \cdot c_{2j} + 1, \quad & i \in I_2 \cup I_3.
\end{align*}
For $i \in I_4$,
\begin{align*}
    (c_i + c_1) \cdot (c_l+c_1+c_{k-1}+c_k) = c_i \cdot c_l,\quad &l \in I_1 \cup I_2 \\
    (c_i + c_1) \cdot (c_l+c_1) = c_i \cdot c_l,\quad &l \in I_3 \cup I_4;
\end{align*}
for $i \in I_3$,
\begin{align*}
    (c_i+c_1+c_{k-1}+c_k) \cdot (c_l+c_1+c_{k-1}+c_k) = c_i \cdot c_l,\quad &l \in I_1 \cup I_2 \\
    (c_i+c_1+c_{k-1}+c_k) \cdot (c_l+c_1) = c_i \cdot c_l,\quad &l \in I_3;
\end{align*}
for $i \in I_2$.
\begin{align*}
    (c_i+c_1+c_{k-1}+c_k) \cdot (c_l+c_1+c_{k-1}+c_k) = c_i \cdot c_l,\quad &l \in I_1 \cup I_2;
\end{align*}
and for $i \in I_1$,
\begin{align*}
    (c_i + c_1) \cdot (c_l+c_1+c_{k-1}+c_k) = c_i \cdot c_l,\quad &l \in I_1.
\end{align*}

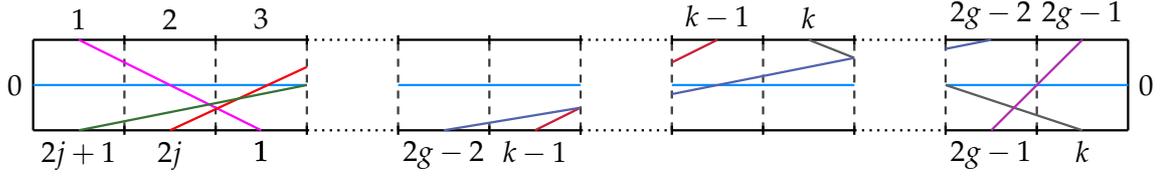
\begin{figure}[b]
    \centering
    \begin{tikzpicture}[scale = 1.2, line width = 0.3mm]
    \draw (0,0)--(0,1);
    \draw (0,1)--(3,1);
    \draw [dotted] (3,1)--(4,1);
    \draw (12,1)--(12,0);
    \draw [dotted] (3,0)--(4,0);
    \draw (0,0)--(3,0);
    \draw (4,0)--(6,0);
    \draw (4,1)--(6,1);
    \draw [dotted] (6,0)--(7,0);
    \draw [dotted] (6,1)--(7,1);
    \draw (7,0)--(9,0);
    \draw (7,1)--(9,1);
    \draw [dotted] (9,1)--(10,1);
    \draw [dotted] (9,0)--(10,0);
    \draw (10,0)--(12,0);
    \draw (10,1)--(12,1);
    
    \foreach \i in {1,2,...,11}{
        \draw [dashed, color = darkgray] (\i,0)--(\i,1);
        \draw (\i,0.05)--(\i,-0.05);
        \draw (\i,1.05)--(\i,0.95);
    }
    \draw (0,0.5) node[left] {$0$};
    \draw (12,0.5) node[right] {$0$};
    \foreach \i in {1,2}{
        \draw (12-\i+0.5,1) node[above] {$2g-\i$};
    }
    \draw (10.5,0) node[below] {$2g-1$};
    \draw (11.5,0) node[below] {$k$};
    \draw (0.5,0) node[below] {$2j+1$};
    \draw (1.5,0) node[below] {$2j$};
    \draw (2.5,0) node[below] {$1$};
    \draw (2.5,0) node[below] {$1$};
    \draw (2.5,0) node[below] {$1$};
    \foreach \i in {1,2,3}{
        \draw (\i-0.5,1) node[above] {$\i$};
    }
    \draw (4.5,0) node[below] {$2g-2$};
    \draw (5.5,0) node[below] {$k-1$};
    \draw (7.5,1) node[above] {$k-1$};
    \draw (8.5,1) node[above] {$k$};
    \draw [color = {rgb,100:red,0;green,100;blue,100}] (0,0.5)--(3,0.5);
    \draw [color = {rgb,100:red,0;green,100;blue,100}] (7,0.5)--(9,0.5);
    \draw [color = {rgb,100:red,0;green,100;blue,100}] (4,0.5)--(6,0.5);
    \draw [color = {rgb,100:red,0;green,100;blue,100}] (10,0.5)--(12,0.5);
    \draw [color = {rgb,100:red,100;green,0;blue,100}] (0.5,1)--(2.5,0);
    \draw [color = {rgb,100:red,80;green,20;blue,20}] (7.5,1)--(7,0.75);
    \draw [color = {rgb,100:red,36;green,64,;blue,36}] (8.5,1)--(9,0.8);
    \draw [color = {rgb,100:red,36;green,64,;blue,36}] (10,0.5)--(11.5,0);
    \draw [color = {rgb,100:red,80;green,20;blue,20}] (6,0.25)--(5.5,0);
    \draw [color = {rgb,100:red,32;green,68;blue,68}] (6,0.25)--(4.5,0);
    \draw [color = {rgb,100:red,96;green,4;blue,4}] (1.5,0)--(3,0.7);
    \draw [color = {rgb,100:red,20;green,80,;blue,20}] (0.5,0)--(3,0.5);
    \draw [color = {rgb,100:red,32;green,68;blue,68}] (10.5,1)--(10,0.9);
    \draw [color = {rgb,100:red,32;green,68;blue,68}] (7,0.4)--(9,0.8);
    \draw [color = {rgb,100:red,68;green,32;blue,68}] (11.5,1)--(10.5,0);
    \end{tikzpicture}
    \caption{Intersection pattern for an AMN origami $O$ of even genus $g$.}\label{fig:AMN-even-origami}
\end{figure}

\emph{Step 3.} Finally, consider the origami $O$ obtained by swapping the labels $k$ and $2g-2$ on bottom of $\widehat{O}$, which represents a generic origami of even genus $g$ constructed by Aougab-Menasco-Nieland. See Figure~\ref{fig:AMN-even-origami}. To complete the proof of Theorem \ref{thm:AMN-even-g}, it suffices to prove the following proposition.

\begin{proposition}
    Let origamis $O, \widehat{O}$ be defined above. $O$ has the same spin parity as $\widehat{O}$ if and only if $k \equiv 1 \mod 4$ and $\eta(k) \equiv 2 \mod 4$.
\end{proposition}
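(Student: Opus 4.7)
The plan is to compute the spin parity of $O$ by adapting the symplectic basis we constructed for $\widehat{O}$ in Step 2 to account for the swap of the bottom labels $k$ and $2g-2$. This swap is a purely local operation: comparing Figure~\ref{fig:extending-O} with Figure~\ref{fig:AMN-even-origami}, the only intersection data that changes between $\widehat{O}$ and $O$ concerns the four cycles $c_{k-1}, c_k, c_{2g-2}, c_{2g-1}$. In $\widehat{O}$ the key adjacencies are $c_{k-1}\cdot c_k = 1$ and $c_{2g-2}\cdot c_{2g-1} = 1$; in $O$ these are replaced by $c_{k-1}\cdot c_{2g-2} = 1$ and $c_k\cdot c_{2g-1} = 1$, while all other intersection data is unchanged.

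I would begin the orthogonalisation on $O$ with $a_1 = c_0, b_1 = c_1$ exactly as before, and then introduce symplectic pairs adapted to the new adjacencies, namely $a_2 = c_{k-1}+c_1, b_2 = c_{2g-2}+c_1$ and $a_3 = c_k+c_1, b_3 = c_{2g-1}+c_1$, followed by the pair $a_4, b_4$ built from the cycles coming from $2j, 2j+1$ as in Step 2 (with its form depending on whether $2j > k$ or $2j < k$). The Arf invariants of these first four symplectic pairs are computed directly from the quadratic form identity, and differ from their counterparts in $\widehat{O}$ in a controlled manner determined by the residues of $k$ mod $2$ and the relative order of $k$ and $2j$.

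The heart of the proof lies in analyzing the basis elements $a_n, b_n$ for $5\leq n\leq g$ corresponding to indices in the partition $I_1\cup I_2\cup I_3\cup I_4$. Using Lemma~\ref{lem:pair-up-q-cycles} and Lemma~\ref{lem:arf-track}, each such pair can still be built from cycles $c_{2i}, c_{2i+1}$ with matching decorations, but the decoration picked up during the modified orthogonalisation differs from the $\widehat{O}$ case in a way that depends on which subset $I_l$ contains $2i$. Specifically, the values $\Phi(a_n)\Phi(b_n)$ are flipped for a subset of pairs whose cardinality, modulo $2$, is a linear function of the $n_l = |I_l|/2$. Writing out this change yields a formula
\[
\mathrm{Arf}(\Phi_O) - \mathrm{Arf}(\Phi_{\widehat{O}}) \equiv \varepsilon_0 + \varepsilon_1 n_1 + \varepsilon_2 n_2 + \varepsilon_3 n_3 + \varepsilon_4 n_4 \pmod 2
\]
for some explicit constants $\varepsilon_l\in\{0,1\}$ coming from the first four orthogonalisation steps.

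The main obstacle, and the place where the arithmetic condition will enter, is the following. The cardinalities $n_1, n_2, n_3, n_4$ count indices in $\{2,\dots,2g-3\}\setminus\{k-1,k,2j,2j+1\}$ that lie in prescribed arithmetic sectors cut out by $k$ and $\eta(k)$; they are therefore determined modulo $2$ by the residues of $k$ and $\eta(k)$ modulo $4$. Since $O$ has the same spin parity as $\widehat{O}$ precisely when the displayed expression vanishes mod $2$, the desired equivalence reduces to a finite case analysis in $(k\bmod 4, \eta(k)\bmod 4)\in\{0,1,2,3\}^2$. Verifying that the expression vanishes exactly in the case $(k,\eta(k))\equiv(1,2)\pmod 4$, and is nonzero otherwise, is the essential combinatorial computation, and I would carry it out by tabulating $(\varepsilon_1, \varepsilon_2, \varepsilon_3, \varepsilon_4)$ against the parities of $n_1,\dots,n_4$ in each of the $16$ residue classes.
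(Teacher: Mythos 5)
Your proposal is built on two assumptions that fail, and together they would sink the argument.

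First, the claim that the label swap changes only the four adjacencies $c_{k-1}\cdot c_k$, $c_{2g-2}\cdot c_{2g-1}$, $c_{k-1}\cdot c_{2g-2}$, $c_k\cdot c_{2g-1}$, while ``all other intersection data is unchanged,'' is not correct. Swapping the bottom labels $k$ and $2g-2$ changes where the cycles $c_k$ and $c_{2g-2}$ land on the bottom row: in $\widehat{O}$ the cycle $c_k$ terminates at bottom position $\eta(k)$, whereas in $O$ it terminates at position $2g-1$; the situation is reversed for $c_{2g-2}$. These two cycles are now routed through a different stretch of the horizontal cylinder, so their intersection numbers with \emph{all} the cycles $c_i$ for $i$ in the sectors cut out by $k$ and $\eta(k)$ are altered. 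This is exactly why the paper's argument tracks the effect of the swap through the orthogonalisation against every remaining $c_i$, organised by the partition $I_1\cup I_2\cup I_3\cup I_4$.

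Second, and as a direct consequence, the difference $\mathrm{Arf}(\Phi_O)-\mathrm{Arf}(\Phi_{\widehat O})$ is not a linear function of $n_1,n_2,n_3,n_4$. The correct expression (in the case $2j,2j+1>k$, say) is $1+n_3(n_1+n_2)+n_1(1+n_2)$ modulo $2$, which is genuinely quadratic. The cross terms arise because the intersection data between cycles from $I_l$ and cycles from $I_m$ (for $l\neq m$) shifts by $1$ in a pattern that depends on both sectors, and by Lemma~\ref{cor:arf-track} the $\Phi$-values of later basis vectors pick up a sum over all previously chosen pairs, producing products $n_l n_m$. A linear ansatz $\varepsilon_0+\sum_l\varepsilon_l n_l$ simply cannot reproduce this. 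Relatedly, your statement that $n_1,\dots,n_4$ are individually determined mod $2$ by $k$ and $\eta(k)$ mod $4$ is false: the sets $I_l$ depend on the entire permutation $\eta$, and only the sums $n_1+n_2$, $n_3+n_4$, $n_1+n_3$, $n_2+n_4$ are pinned down by the residues of $k$ and $\eta(k)$. The concluding case analysis therefore has to relate the quadratic expression to these \emph{sums}, not to a $16$-way table of the individual residues; this is the nontrivial combinatorics the paper performs, and it would be hidden by a linear formula.

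What survives of your plan: starting from $a_1=c_0,b_1=c_1$, pairing the $Q$-cycles using Lemma~\ref{lem:pair-up-q-cycles}, and invoking Lemma~\ref{lem:arf-track} to track Arf values is exactly the right machinery, and a basis adapted to the new adjacencies (your $a_2,b_2,a_3,b_3$, or the paper's slightly different choice $a_2=c_k+c_1$, $b_2=c_{2g-1}+c_1$, $a_3=c_{k-1}+c_1$, $b_3=c_{2g-2}+c_{2g-1}$) is a sensible starting point. The missing ingredient is the careful bookkeeping of how the decorations $d_i$ of the $Q$-cycles accumulate \emph{cross}-sector contributions, which is what turns the answer quadratic in the $n_l$ and makes the final arithmetic reduction nontrivial.
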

\begin{proof}
    The idea is to find a symplectic basis of $O$ and compare it with that of $\widehat{O}$ found above. Let $\{c_i\ |\ 0 \leq i \leq 2g-1\}$ be a generating set of $H_1(O,\zz/2\zz)$ corresponding to the simple closed curves that travel from the centre of one of the sides labeled $i$ to the centre of the other side labelled $i$, where for $i \not = k,2g-2$, $c_i$'s coincide with those used for $\widehat{O}$. Again, we use the same subsets $I_1,I_2,I_3,I_4$ to partition the indices $2 \leq i \leq 2g-3$ not equal to $k-1,k,2j,2j+1$. We refer readers to Figure \ref{fig:AMN-even-origami} for verification of the computation below.

    First, choose $a_1 = c_0$, $b_1 = c_1$ with $\Phi(a_1) = 1 = \Phi(b_1)$. Orthogonalising we see that for $2\leq i\leq 2g-3$, 
    \begin{equation*}
    c_{i} \mapsto \begin{cases}
        c_{i} + (c_{i}\cdot c_{0})c_{1} + (c_{i}\cdot c_{1})c_{0} = c_{i} + c_{1}, & i \not = 2j,2j+1, \\
        c_{i} + (c_{i}\cdot c_{0})c_{1} + (c_{i}\cdot c_{1})c_{0} = c_i + c_1 + c_0, & i = 2j,2j+1.
    \end{cases}
    \end{equation*}

    Next, choose $a_2 = c_k + c_1$ and $b_2 = c_{2g-1} + c_1$ with $\Phi(a_2) = \Phi(c_k) + \Phi(c_1) + c_k \cdot c_1 = 1 + 1 + 0 = 0 \mod 2 = \Phi(b_2)$. Orthogonalisation yields that
    \begin{equation*}
        c_i + c_1 \mapsto \begin{cases}
            c_i + c_{2g-1}, & i \in I_1 \cup I_2, \\
            c_i + c_1, & i \in I_3 \cup I_4.
        \end{cases}
    \end{equation*}
    For $i = 2j, 2j+1$, we have
    \begin{equation*}
        c_i + c_1 + c_0 \mapsto \begin{cases}
            c_i + c_0 + c_k, & \text{if}\ 2j,2j+1 > k \\
            c_i + c_1 + c_0 + c_k + c_{2g-1}, & \text{if}\ 2j,2j+1 < k.
        \end{cases}
    \end{equation*}

    Then, choose $a_3 = c_{k-1} + c_1$ and $b_3 = c_{2g-2} + c_{2g-1}$ with $\Phi(a_3) = \Phi(c_{k-1}) + \Phi(c_1) + c_{k-1} \cdot c_1 = 1 + 1 + 0 = 0 \mod 2 = \Phi(b_3)$. Orthogonalising we see that 
    \begin{equation*}
        \begin{cases}
            c_i + c_{2g-1} \mapsto c_i + c_{2g-1} + c_{k-1} + c_1, & i \in I_1, \\
            c_i + c_{2g-1} \mapsto c_i + c_{2g-2}, & i \in I_2 \\
            c_i + c_1 \mapsto c_i + c_{k-1} + c_{2g-2} + c_{2g-1}, & i \in I_3 \\
            c_i + c_1 \mapsto c_i + c_1, & i \in I_4.
        \end{cases}
    \end{equation*}
    For $i = 2j, 2j+1$, we have
    \begin{equation*}
        \begin{cases}
            c_i + c_0 + c_k \mapsto c_i + c_0 + c_k, & \text{if}\ 2j,2j+1 > k, \\
            c_i + c_1 + c_0 + c_k + c_{2g-1} \mapsto c_{i} + c_1 + c_0 + c_k + c_{2g-2}, & \text{if}\ 2j,2j+1 < k.
        \end{cases}
    \end{equation*}

    For the next pair of symplectic basis elements, we break into two cases. If $2j,2j+1 > k$, choose $a_4 = c_{2j} + c_0 + c_k$ and $b_4 = c_{2j+1} + c_0 + c_k$ with $\Phi(a_4) = 0 = \Phi(b_4)$. If $2j,2j+1 < k$, choose $a_4 = c_{2j} + c_1 + c_0 + c_k + c_{2g-2}$ and $b_4 = c_{2j+1} + c_1 + c_0 + c_k + c_{2g-2}$ with $\Phi(a_4) = 1 = \Phi(b_4)$.

    Finally, by Lemma \ref{lem:pair-up-q-cycles}, we choose in either case the remaining symplectic basis elements $a_n,b_n$ for $5 \leq n \leq g$ to be the cycles coming from $2i,2i+1$ for $2 \leq 2i \leq 2g-3$ not equal to $k-1,2j$, in the same order specified at the end of step 2. To complete the proof, it remains to compare the Arf invariants of $a_n,b_n$ for $O,\widehat{O}$.

    First, observe that $\Phi(a_n)\Phi(b_n)$ are the same for $n = 1,2,3$ whereas $\Phi(a_4)\Phi(b_4)$ differs by $1$ for $O$ and $O'$. By Lemma \ref{cor:arf-track}, the Arf invariants of $a_n,b_n$ for $O$ and $5 \leq n \leq g$ are determined by
    \begin{align*}
        \Phi(c_i+c_{2g-1}+c_{k-1}+c_1) = 0,\quad & i \in I_1, \\
        \Phi(c_i+c_{2g-2}) = 0,\quad & i \in I_2, \\
        \Phi(c_i+c_{k-1}+c_{2g-2}+c_{2g-1}) = 1,\quad & i \in I_3, \\
        \Phi(c_i+c_1) = 0,\quad & i \in I_4,
        \end{align*}
    and the following intersection data: if $2j,2j+1 > k$,
    \begin{align*}
        (c_i+c_{2g-1}+c_{k-1}+c_1) \cdot (c_{2j}+c_0+c_k) = c_i\cdot c_{2j},\quad  & i \in I_1,\\
        (c_i+c_{2g-2}) \cdot (c_{2j}+c_0+c_k) = c_i\cdot c_{2j},\quad & i \in I_2,\\
        (c_i+c_{k-1}+c_{2g-2}+c_{2g-1}) \cdot (c_{2j}+c_0+c_k) = c_i\cdot c_{2j}+1,\quad & i \in I_3, \\
        (c_i+c_1) \cdot (c_{2j}+c_0+c_k) = c_i\cdot c_{2j}+1,\quad & i \in I_4, 
    \end{align*}
    and if $2j,2j+1 < k$,
    \begin{align*}
        (c_i+c_{2g-1}+c_{k-1}+c_1) \cdot (c_{2j}+c_1+c_0+c_k+c_{2g-2}) = c_i\cdot c_{2j}+1,\quad  & i \in I_1,\\
        (c_i+c_{2g-2}) \cdot (c_{2j}+c_1+c_0+c_k+c_{2g-2}) = c_i\cdot c_{2j},\quad & i \in I_2,\\
        (c_i+c_{k-1}+c_{2g-2}+c_{2g-1}) \cdot (c_{2j}+c_1+c_0+c_k+c_{2g-2}) = c_i\cdot c_{2j},\quad & i \in I_3, \\
        (c_i+c_1) \cdot (c_{2j}+c_1+c_0+c_k+c_{2g-2}) = c_i\cdot c_{2j}+1,\quad & i \in I_4.
    \end{align*}
    For $i \in I_4$,
    \begin{align*}
        (c_i + c_1) \cdot (c_l+c_{2g-1}+c_{k-1}+c_1) = c_i \cdot c_l,\quad &l \in I_1,\\
        (c_i + c_1) \cdot (c_l+c_{k-1}+c_{2g-2}+c_{2g-1}) = c_i \cdot c_l,\quad &l \in I_2,\\
        (c_i + c_1) \cdot (c_l+c_{2g-2}) = c_i \cdot c_l,\quad &l \in I_3,\\
        (c_i + c_1) \cdot (c_l+c_1) = c_i \cdot c_l,\quad &l \in I_4;
    \end{align*}
    for $i \in I_3$,
    \begin{align*}
        (c_i+c_{k-1}+c_{2g-2}+c_{2g-1}) \cdot (c_l+c_{2g-1}+c_{k-1}+c_1) = c_i \cdot c_l + 1,\quad &l \in I_1,\\
        (c_i+c_{k-1}+c_{2g-2}+c_{2g-1}) \cdot (c_l+c_{2g-2}) = c_i \cdot c_l+1,\quad &l \in I_2, \\
        (c_i+c_{k-1}+c_{2g-2}+c_{2g-1}) \cdot (c_l+c_{k-1}+c_{2g-2}+c_{2g-1}) = c_i \cdot c_l,\quad &l \in I_3;
    \end{align*}
    for $i \in I_2$,
    \begin{align*}
        (c_i+c_{2g-2}) \cdot (c_l+c_{2g-1}+c_{k-1}+c_1) = c_i \cdot c_l+1,\quad &l \in I_1, \\
        (c_i+c_{2g-2}) \cdot (c_l+c_{2g-2}) = c_i \cdot c_l,\quad &l \in I_2;
    \end{align*}
    and for $i \in I_1$,
    \begin{align*}
        (c_i+c_{2g-1}+c_{k-1}+c_1) \cdot (c_l+c_{2g-1}+c_{k-1}+c_1) = c_i \cdot c_l,\quad & l \in I_1.
    \end{align*}
    Comparing with the computation at the end of step 2, it follows from Lemma \ref{cor:arf-track} that if $2j,2j+1 > k$, $\sum_{n = 5}^{n_4} \Phi(a_n)\Phi(b_n)$, $\sum_{n = 5+n_4}^{4+n_3+n_4} \Phi(a_n)\Phi(b_n)$ are the same, whereas $\sum_{n = 5+n_3+n_4}^{4+n_2+n_3+n_4} \Phi(a_n)\Phi(b_n)$, $\sum_{n = 5+n_2+n_3+n_4}^{g}\Phi(a_n)\Phi(b_n)$ differ by $n_2n_3$ and $n_1(1+n_2+n_3)$ respectively for $O,\widehat{O}$. The same holds for the case when $2j,2j+1 < k-1$ except that now $\sum_{n = 5+n_4}^{4+n_3+n_4} \Phi(a_n)\Phi(b_n)$ and $\sum_{n = 5+n_2+n_3+n_4}^{g}\Phi(a_n)\Phi(b_n)$ differ by $n_3$ and $n_1(n_2+n_3)$ respectively instead. We break the remaining proof into two cases.\\

    \paragraph{\textbf{Case 1: $\boldsymbol{2j,2j+1 > k}$}}
        The Arf invariants of $O,\widehat{O}$ differ by $1+n_2n_3+n_1(1+n_2+n_3) = 1+n_3(n_1+n_2)+n_1(1+n_2)$ and their spin parity are the same if and only if $n_3(n_1+n_2)+n_1(1+n_2)$ is odd.

        Suppose $k \equiv 1 \mod 4$ and $\eta(k) \equiv 2 \mod 4$. The first assumption implies that the number of labels from $2$ to $k-2$ and from $k+1$ to $2g-3$ excluding $2j,2j+1$ are both $2 \mod 4$. It follows that $n_1+n_2$ and $n_3+n_4$ are both odd.
        
        The second assumption implies that the number of labels from $1$ to $k$ (both exclusive) and from $k-1$ to $2g-1$ (both exclusive) at bottom of $O$ are both $2 \mod 4$. It follows that $n_1+n_3$ and $n_2+n_4$ are both odd.
        
        Consider now $n_3(n_1+n_2)+n_1(1+n_2)$. If $n_1$ is even, then $n_1(1+n_2)$ is even and $n_3$ is odd. Thus, $n_3(n_1+n_2)$ is odd and the sum $n_3(n_1+n_2)+n_1(1+n_2)$ is odd as desired. If $n_1$ is odd, then both $n_2$ and $n_3$ are even. This implies that $n_1(1+n_2)$ is odd and $n_3(n_1+n_2)$ is even. Again, $n_3(n_1+n_2)+n_1(1+n_2)$ is odd as desired.

        Conversely, suppose $n_3(n_1+n_2)+n_1(1+n_2)$ is odd. If $n_3(n_1+n_2)$ is odd and $n_1(1+n_2)$ is even, the first assumption implies that $n_3, n_1+n_2$ are odd and the second assumption implies that $n_1$ is even or $n_1,n_2$ are both odd. However, $n_1+n_2$ being odd rules out the second possibility and thus, $n_1$ is even. It follows that $n_1 + n_2, n_1 + n_3$ are both odd, which implies that $k \equiv 1 \mod 4$ and $p \equiv 2 \mod 4$. If $n_3(n_1+n_2)$ is even and $n_1(1+n_2)$ is odd, the first assumption implies that $n_1$ is odd and $n_2$ is even so that $n_1+n_2$ is odd; the second assumption implies that $n_3$ is even so that $n_1 + n_3$ is odd. Again, we have $k \equiv 1 \mod 4$ and $p \equiv 2 \mod 4$ as desired.\\

    \paragraph{\textbf{Case 2: $\boldsymbol{2j,2j+1 < k}$}}
        Similarly, the Arf invariants of $O,\widehat{O}$ differ by $1+n_3+n_2n_3+n_1(n_2+n_3) = 1+n_3(1+n_1+n_2)+n_1n_2$ and their spin parity are the same if and only if $n_3(1+n_1+n_2)+n_1n_2$ is odd.

        Suppose $k \equiv 1 \mod 4$ and $\eta(k) \equiv 2 \mod 4$. The first assumption implies that the number of labels from $2$ to $k-2$ excluding $2j,2j+1$ and from $k+1$ to $2g-3$ are both $0 \mod 4$. It follows that $n_1+n_2$ and $n_3+n_4$ are both even.
        
        The second assumption implies that the number of labels from $1$ to $k$ (both exclusive) and from $k-1$ to $2g-1$ (both exclusive) at bottom of $O$ are both $0 \mod 4$. It follows that $n_1+n_3$ and $n_2+n_4$ are both odd.
        
        Consider now $n_3(1+n_1+n_2)+n_1n_2$. If $n_1$ is even, then $n_1n_2$ is even and $n_3$ is odd. Thus, $n_3(1+n_1+n_2)$ is odd and the sum $n_3(1+n_1+n_2)+n_1n_2$ is odd as desired. If $n_1$ is odd, then $n_2$ is odd and $n_3$ are even. This implies that $n_1n_2$ is odd and $n_3(1+n_1+n_2)$ is even. Again, $n_3(1+n_1+n_2)+n_1n_2$ is odd as desired.

        Conversely, suppose $n_3(1+n_1+n_2)+n_1n_2$ is odd. If $n_3(1+n_1+n_2)$ is odd and $n_1n_2$ is even, the first assumption implies that $n_3$ is odd and $n_1+n_2$ is even; the second assumption implies that $n_1$ is even or $n_2$ is even. Thus, $n_1+n_2$ being even yields that $n_1,n_2$ are both even. It follows that $n_1 + n_2$ is even and $n_1 + n_3$ is odd, which implies that $k \equiv 1 \mod 4$ and $p \equiv 2 \mod 4$ in this case. If $n_3(1+n_1+n_2)$ is even and $n_1n_2$ is odd, the first assumption implies that $n_3$ is even or $n_1+n_2$ is odd; the second assumption implies that $n_1,n_2$ are both odd. Thus, $n_1+n_2$ cannot be odd so that $n_3$ is even. Again, we have $n_1+n_2$ even and $n_1+n_3$ odd so that $k \equiv 1 \mod 4$ and $p \equiv 2 \mod 4$ as desired.
\end{proof}

To complete the proof of Theorem~\ref{thm:AMN}. We need to count how many of the $(g-3)(g-3)!$ even genus AMN origamis have even spin.

Let $g = 2m$ and consider an AMN origami of genus $g-1$ that can be used to build a genus $g$ AMN origami. There are $m-1$ choices of $k$ with $5\leq k\leq 2g-3$ and $k\equiv 1\!\!\mod 4$. There are also $m-2$ positions $\eta(k)$ with $6\leq \eta(k) \leq 2g-3$ and $\eta(k)\equiv 2\!\!\mod 4$. Given a genus $g-1$ AMN origami, we can record the $(m-2)$-tuple of values $k\equiv 1\!\!\mod 4$ with $\eta(k)\equiv 2\!\!\mod 4$. Note that some of the entries in this $(m-2)$-tuple may be empty. For example, the origami of genus 7 in Figure~\ref{fig:tuple-example} gives the 2-tuple $(\underline{\,\,\,\,}\,,13)$ since there is no $k\equiv 1\!\!\mod 4$ with $\eta(k) = 6$ but $13$ has $\eta(13) = 10$. 

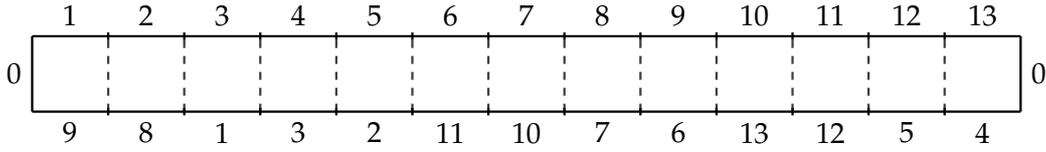
\begin{figure}[b]
    \centering
    \begin{tikzpicture}[scale = 1, line width = 0.3mm]
        \foreach \i in {0}{
            \draw (0,\i)--(0,1+\i);
            \draw (0,1+\i)--(13,1+\i);
            \draw (13,1+\i)--(13,\i);
            \draw (13,\i)--(0,\i);
            \draw (0,0.5) node[left] {$0$};
            \draw (13,0.5) node[right] {$0$};
            \foreach \j in {1,...,12}{
                \draw [dashed, color = darkgray] (\j,\i)--(\j,1+\i);
                \draw (\j,0.95+\i)--(\j,1.05+\i);
                \draw (\j,-0.05+\i)--(\j,0.05+\i);
            }
            \foreach \j in {1,...,13}{
                \draw (\j-0.5,1+\i) node[above] {$\j$};
                \draw (\j-0.5,0.5) node[color = darkgray] {\footnotesize $\j$};
            }
        }
        \draw (0.5,0) node[below] {$9$};
        \draw (1.5,0) node[below] {$8$};
        \draw (2.5,0) node[below] {$1$};
        \draw (3.5,0) node[below] {$3$};
        \draw (4.5,0) node[below] {$2$};
        \draw (5.5,0) node[below] {$11$};
        \draw (6.5,0) node[below] {$10$};
        \draw (7.5,0) node[below] {$7$};
        \draw (8.5,0) node[below] {$6$};
        \draw (9.5,0) node[below] {$13$};
        \draw (10.5,0) node[below] {$12$};
        \draw (11.5,0) node[below] {$5$};
        \draw (12.5,0) node[below] {$4$};
    \end{tikzpicture}
    \caption{The origami above corresponds to the tuple $(\underline{\,\,\,\,}\,,13)$.}
    \label{fig:tuple-example}
\end{figure}

For every non-empty entry in the $(m-2)$-tuple we can produce an even genus AMN origami with even spin, so the count should proceed as follows. First, determine the number $t_{i}$ of $(m-2)$-tuples that have $i$ non-empty entries. Second, determine the number $o_{i}$ of odd genus AMN origamis that can give rise to such an $(m-2)$-tuple. Finally, we calculate the sum
\[\sum_{i = 1}^{m-2} i\cdot t_{i} \cdot o_{i}.\]
This will give us the number of even genus AMN origamis that have even spin.

The value of $t_{i}$ is $\binom{m-2}{i}\binom{m-1}{i}i!$ since we have $\binom{m-2}{i}$ ways to choose the non-empty positions, and $\binom{m-1}{i}i!$ ways to choose and order the values of $k \equiv 1\!\!\mod 4$ appearing in those positions.

Now we calculate the value of $o_{i}$. Suppose we have an $(m-2)$-tuple with $i$ non-empty entries. When building an associated odd genus AMN origami, we see that $i$ non-empty entries in the tuple force us to place $i$ pairs of the form $(k,k-1)$ with $k\equiv 1\!\!\mod 4$ in the positions corresponding to $\eta(k)\equiv 2\!\!\mod 4$. This process can never close up any cycles. Next, we must fill the empty entries with values $j\equiv 3\!\!\mod 4$. There are $m-1$ such values $3\leq j\leq 2g-3$. Here we must be careful about closing cycles. Indeed, we must not place $(j,j-1)$ under $(j-1,j)$. However, this is the only restriction. Hence, we have $(m-2)$ ways to fill the first empty entry, $(m-3)$ ways for the second and so on until we have $(i+1)$ ways to fill the final empty entry. That is, we have $\frac{(m-2)!}{i!}$ ways to satisfy the conditions forced by the $(m-2)$-tuple. Finally, we still have $m$ pairs to place. We can proceed along the lines of the normal odd genus AMN construction. That is, place a pair so that it does not complete a cycle, and continue. So we have $(m-1)!$ ways to complete the origami. This gives
\[o_{i} = \frac{(m-1)!(m-2)!}{i!}.\]

For example, the tuple $(\underline{\,\,\,\,}\,,13)$ in Figure~\ref{fig:tuple-example} forces us to place (13,12) below (10,11). Next, we must place one of the pairs (3,2), (7,6), or (11,10) under (6,7). We cannot choose (7,6), but we have two other choices. In the figure, we choose (11,10). Next, we place the remaining pairs (3,2), (5,4), (7,6) and (9,8). First, we place (3,2) under (4,5). We could have placed it under (8,9) or (12,13) instead. Next we place (5,4) under (12,13). We could have placed it under (8,9) instead. Now, (13,12) is already placed so we place the next pair along the cycle [$(13,12)\rightarrow(11,10)\rightarrow(7,6)$] that has not been placed which is (7,6). We must place this pair under (8,9) which then forces (9,8) under (1,2), and we are done.

As a sanity check, we can see that
\[\sum_{i = 0}^{m-2}t_{i}\cdot o_{i} = (2(m-2)+1)! = (g-3)!\]
which, recalling that $g-1$ is odd, is the correct number of genus $g-1$ AMN origamis.

Finally, we calculate the number of even spin even genus AMN origamis to be
\begin{align*}
    \sum_{i = 1}^{m-2} i\cdot t_{i} \cdot o_{i} &= \sum_{i = 1}^{m-2}i\cdot \binom{m-2}{i}\binom{m-1}{i}i! \cdot \frac{(m-1)!(m-2)!}{i!} \\
    &= (m-1)!(m-2)!\sum_{i = 1}^{m-2}i\binom{m-2}{i}\binom{m-1}{i} \\
    &= (m-1)!(m-2)!(m-2)\binom{2(m-2)}{m-2} \\
    &= (m-1)(m-2)(2(m-2))! \\
    &= \left(\frac{g}{2}-1\right)\left(\frac{g}{2}-2\right)(g-4)!.
\end{align*}
Hence, there are
\[(g-3)(g-3)! - \left(\frac{g}{2}-1\right)\left(\frac{g}{2}-2\right)(g-4)! = \frac{1}{4}(3(g-3)^2+1)(g-4)!\]
even genus AMN origamis with odd spin parity. This completes the proof of Theorem~\ref{thm:AMN}.


\section{Monodromy calculations}\label{sec:mono}

In this section, we investigate the monodromy groups of the origamis considered in this paper (i.e., those of Sections~\ref{sec:AMN} and~\ref{sec:gen-const}). In doing so, we will prove Theorem~\ref{thm:monodromy}.

First, we will show that all of the monodromy groups are primitive. Following this, we will show that the monodromy groups of the odd genus AMN origamis of Subsection~\ref{subsec:AMN-odd} contain 3-cycles. It then follows from the result of Jordan below that the monodromy groups of the odd genus AMN origamis are the alternating group. Indeed, since all of the monodromy groups are generated by $n$-cycles for $n$ odd, they are already subgroups of $\Alt_{n}$. The following theorem gives the other inclusion.

\begin{theorem}[\cite{Jord}]\label{thm:Jordan}
Let $G\leqslant \Sym_{n}$ be a primitive permutation group.
\begin{itemize}
    \item[1.] If $G$ contains a transposition, then $G=\Sym_{n}$.
    \item[2.] If $G$ contains a 3-cycle, then $\Alt_{n}\leqslant G$.
    \item[3.] If $G$ contains a $p$-cycle for prime $p\leq n-3$, then $\Alt_{n}\leqslant G$.
\end{itemize}
\end{theorem}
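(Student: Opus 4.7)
The unifying idea is to exploit the equivalence that $G \leqslant \Sym_n$ is primitive iff there is no nontrivial proper $G$-invariant equivalence relation on $\{1,\ldots,n\}$; equivalently, any $G$-invariant graph on $\{1,\ldots,n\}$ has the property that its connected components form a $G$-invariant partition, so is either edgeless or has a single component. The plan is to build such a graph from the cycles of the relevant type contained in $G$, use primitivity to force connectedness, and then generate $\Sym_n$ or $\Alt_n$ explicitly.

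For part (1), let $\Gamma$ be the graph on $\{1,\ldots,n\}$ with edge $\{i,j\}$ whenever $(i,j)\in G$. The set of transpositions of $G$ is closed under $G$-conjugation, so $\Gamma$ is $G$-invariant; the existence of a transposition rules out the singleton partition into components, and primitivity forces $\Gamma$ connected, after which transpositions along any spanning tree generate all of $\Sym_n$. For part (2), let $N \trianglelefteq G$ be the subgroup generated by the 3-cycles of $G$; normality follows from the conjugacy-closedness of the set of 3-cycles, and primitivity forces the nontrivial normal subgroup $N$ to be transitive. Define the analogous graph $\Gamma'$ with edge $\{i,j\}$ iff some 3-cycle of $N$ has both $i$ and $j$ in its support; the same argument makes $\Gamma'$ connected. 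The identity $(a,b,c)(b,d,c) = (a,b,d)$ (easy to verify under the paper's multiplication convention) lets one propagate 3-cycles across shared-support pairs, so connectedness of $\Gamma'$ forces $N$ to contain every 3-cycle, whence $N = \Alt_n \leqslant G$.

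Part (3) is the main obstacle, and the plan is to reduce to part (2) by manufacturing a 3-cycle. Let $\sigma \in G$ be a $p$-cycle with $p$ prime and $p \leqslant n-3$, so its fixed set $F$ satisfies $|F| \geqslant 3$. Using primitivity one chooses $g \in G$ moving some point of $F$ into $\mathrm{supp}(\sigma)$, so that $\tau := g\sigma g^{-1}$ has support overlapping that of $\sigma$ in some $k$ points with $0 < k < p$. One then analyses elements of the form $[\sigma^{i}, \tau] = \sigma^{i}\tau\sigma^{-i}\tau^{-1}$: their supports lie in $\mathrm{supp}(\sigma) \cup \mathrm{supp}(\tau)$ and, for suitable choices of $i$ and of the overlap $k$, turn out to be strictly shorter than $p$ while still containing a cycle of prime length dividing $p$. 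Iterating the construction, at each stage using primitivity to find a new conjugator producing the required overlap, shrinks the support until a 3-cycle appears in $G$; part (2) then concludes.

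The hard step throughout is controlling this iteration in part (3): one must use primitivity repeatedly to guarantee conjugators with the correct overlap, carry out a case analysis on how $\mathrm{supp}(\sigma)$ and $\mathrm{supp}(\tau)$ intersect (and on the cycle structure of the resulting commutator), and verify that each new element is both nontrivial and genuinely closer to a 3-cycle than its predecessor. The boundary case $p = n-3$, where $|F| = 3$ and there is little room to manoeuvre, is the most delicate, and is where the precise choice of the sequence of conjugators must be made most carefully.
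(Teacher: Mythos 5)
The paper cites this classical theorem of Jordan (1873) without proof, so your proposal has to stand on its own. Parts (1) and (2) follow the standard route: build a $G$-invariant graph, use primitivity to force connectivity (the components would otherwise form a nontrivial block system), and then generate explicitly. Part (1) is sound. Part (2) has a small patchable gap: the identity $(a,b,c)(b,d,c)=(a,b,d)$ lets you merge two 3-cycles sharing \emph{two} points, but consecutive edges of $\Gamma'$ may be witnessed by 3-cycles sharing only a single point; there you need a double conjugation, e.g.\ from $(a,i,j)$ and $(j,b,k)$ form $(a,i,j)(j,b,k)(a,i,j)^{-1}=(a,b,k)$ and then $(a,i,j)(a,b,k)(a,i,j)^{-1}=(i,b,k)$, giving a 3-cycle through both $i$ and $k$.

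Part (3) contains a genuine gap that is not merely a matter of delicacy. The central claim --- that for suitable $i$ the commutator $[\sigma^{i},\tau]$ has support strictly smaller than $p$ while still containing a cycle of prime length dividing $p$ --- is internally inconsistent: for $p$ prime the only prime dividing $p$ is $p$, and a $p$-cycle has support of size exactly $p$, so no element can satisfy both conditions. Reading charitably as ``a cycle of some prime length $q<p$'', there is still no argument that $[\sigma^{i},\tau]$ has a power equal to a \emph{single} $q$-cycle rather than a product of several disjoint $q$-cycles, which is what the proposed descent requires. The classical proof of part (3) proceeds differently: one first establishes Jordan's multiple-transitivity criterion (if $G$ is primitive and the pointwise stabiliser of a subset $\Gamma$ with $1\leq|\Gamma|\leq n-2$ acts primitively on the complement, then $G$ is $(|\Gamma|+1)$-transitive), applies it with $\Gamma$ equal to the fixed-point set of the $p$-cycle to deduce that $G$ is at least $4$-transitive when $p\leq n-3$, and then argues onward from high transitivity. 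Your iterative-shrinking scheme, as sketched, does not reach a proof of part (3).
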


We should note that there is also the more recent result of Jones~\cite[Corollary 1.3]{Jones2} that strengthens Jordan's theorem by removing the primality condition in part 3. Unfortunately, we have not been able to take advantage of this stronger result.

We remark that it is necessary to consider a case by case proof below. This is because there is no hope for a general result of the form ``All minimal $[1,1]$-origamis in the minimal stratum are primitive." Indeed, it can be checked that if a minimal $[1,1]$-origami in the minimal stratum is not primitive, then it must cover another minimal $[1,1]$-origami in a minimal stratum of lower genus ($\sigma,\tau,$ and $[\sigma,\tau]$, being $(2g-1)$-cycles, must all act as $k$-cycles on the set of blocks in a block system of size $k$). The lowest genus surface that can be covered is then genus $3$ with $5$ squares. Since minimal $[1,1]$-origamis have an odd number of squares, the smallest odd degree cover will have $15$ squares and so genus $8$ (recall that $2g-1 = 15$, here) and, indeed, the genus $8$ $[1,1]$-origamis
\[O_{1} = ((1,2,3,4,5,6,7,8,9,10,11,12,13,14,15),(1,3,15,4,12,6,8,10,9,7,11,13,5,14,2))\]
in $\odd(14)$ and
\[O_{2} = ((1,2,3,4,5,6,7,8,9,10,11,12,13,14,15),(1,3,5,14,7,6,8,10,9,12,11,13,15,4,2))\]
in $\even(14)$ are both covers of the origami
\[O' = ((1,2,3,4,5),(1,3,5,4,2))\]
in $\odd(4)$.

For genus less than 8, we have the following.

\begin{proposition}
Let $3\leq g\leq 7$, then every minimal $[1,1]$-origami in $\calH(2g-2)$ is primitive.
\end{proposition}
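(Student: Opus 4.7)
The plan is to exploit the constraint noted in the paragraph preceding the proposition: any non-primitive minimal $[1,1]$-origami in $\calH(2g-2)$ must cover another minimal $[1,1]$-origami in a minimal stratum of strictly lower genus. This is because if $O = (\sigma,\tau)$ is imprimitive, then $\sigma, \tau$, and $[\sigma,\tau]$ each preserve a non-trivial block system of, say, $k$ blocks of common size $m$, where $km = 2g-1$. Since all three permutations are $(2g-1)$-cycles, each of them must act as a single $k$-cycle on the $k$ blocks, so the quotient origami is again a $[1,1]$-origami (single horizontal and single vertical cylinder) whose commutator is also a $k$-cycle, placing it in the minimal stratum $\calH(k-1)$ as a minimal $[1,1]$-origami built from $k = 2g'-1$ squares for some $g'$ with $2g'-1 \mid 2g-1$.

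With this reduction in hand, the strategy is to go case by case on $g \in \{3,4,5,6,7\}$ and show that no non-trivial factorisation $2g-1 = km$ can give rise to an actual covering quotient. For $g = 3, 4, 6, 7$, I would simply observe that $2g-1 \in \{5, 7, 11, 13\}$ is prime, so no non-trivial block system can exist and primitivity is immediate.

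The only remaining case is $g = 5$, where $2g-1 = 9 = 3 \cdot 3$, so the only candidate imprimitivity would produce a quotient minimal $[1,1]$-origami with $3$ squares lying in $\calH(2)$. The key observation that resolves this case is that $\calH(2)$ coincides with its own hyperelliptic component, so by the third author's result recalled in Section~\ref{sec:prelim}, any $[1,1]$-origami in $\calH(2)$ must be built from at least $4g-4 = 4$ squares. In particular, no $3$-square $[1,1]$-origami in $\calH(2)$ exists, ruling out the only potential covering and forcing every minimal $[1,1]$-origami in $\calH(8)$ to be primitive.

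There is no real obstacle in this proof: the arithmetic of $2g-1$ in the range $3 \leq g \leq 7$ makes the block-system analysis essentially vacuous, with the only subtlety being the single case $g=5$, which is dispatched by quoting the hyperelliptic lower bound. The reason the argument does not extend past $g = 7$ is precisely that $2g-1 = 15 = 3 \cdot 5$ first allows a non-trivial cover of the $5$-square origami in $\odd(4)$, as illustrated by the explicit examples $O_{1}$ and $O_{2}$ above.
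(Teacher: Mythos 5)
Your proof is correct and follows the same overall strategy as the paper: reduce imprimitivity to a covering of a lower-genus minimal $[1,1]$-origami, note that $2g-1$ is prime for $g \in \{3,4,6,7\}$, and separately rule out $g=5$. The only difference is in how you dispatch $g=5$. The paper's proof observes directly that there do not exist two $3$-cycles in $\Sym_{3}$ whose commutator is also a $3$-cycle, which is an elementary two-line check. You instead invoke the third author's lower bound that a $[1,1]$-origami in the hyperelliptic component $\hyp(2)$ requires at least $4\cdot 2 - 4 = 4$ squares, together with the fact that $\calH(2)$ coincides with its hyperelliptic component, so no $3$-square $[1,1]$-origami can exist in $\calH(2)$. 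Both arguments are valid; yours leans on a deeper imported result but has the virtue of matching the structural framing used throughout the paper, while the paper's is self-contained at this point. One small notational slip: you write ``$4g-4 = 4$'' while $g$ still denotes the genus of the covering surface ($g=5$); you mean the quotient's genus $g'=2$, for which $4g'-4 = 4$.
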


\begin{proof}
Let $O$ be a minimal $[1,1]$-origami in $\calH(2g-2)$ for some $3\leq g\leq 7$. Then $\Mon(O)\leq\Sym_{2g-1}$, with $2g-1\in\{5,7,9,11,13\}$. If $2g-1$ is prime, then any block system of $\{1,\ldots,2g-1\}$ must be trivial (by transitivity all blocks have the same size, so this size must be a divisor of $2g-1$), and so $\Mon(O)$ is primitive. We are left to handle the case of $2g-1 = 9$. If such an origami is not primitive then, from the discussion above, such an origami must be a cover of a $[1,1]$-origami with 3 squares and a single singularity of order $2$. However, no such origami exists. Indeed, there do not exist two 3-cycles in $\Sym_{3}$ whose commutator is also a 3-cycle. (Topological proofs of this fact can be found in~\cite{J1}.)
\end{proof}

This can also just be checked computationally by iterating through the $(2g-1)$-cycles $\tau\in\Sym_{2g-1}$, $3\leq g\leq 7$, whose commutator with the standard $(2g-1)$-cycle $\sigma_{g}$ is also a $(2g-1)$-cycle, and verifying that the subgroup $\langle \sigma_{g}, \tau\rangle$ that they generate is primitive.

\subsection{Primitivity}\label{subsec:primitivity}

Here we prove that all of the origamis constructed in Sections~\ref{sec:AMN} and~\ref{sec:gen-const} have primitive monodromy groups.

\subsubsection{Odd genus AMN origamis}

\begin{proposition}\label{prop:prim-odd-odd}
All of the origamis constructed in Subsection~\ref{subsec:AMN-odd} are primitive.
\end{proposition}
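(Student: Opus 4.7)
The plan is to show that $\langle \sigma_g, \tau \rangle$ admits no non-trivial block system. Since $\sigma_g = (1, \ldots, 2g-1)$ is a $(2g-1)$-cycle, the group is already transitive, and any block system must be $\sigma_g$-invariant. Standard arguments then give that such a block system consists precisely of the residue classes modulo some divisor $m$ of $2g-1$. When $2g-1$ is prime (covering, for example, odd $g = 3, 7, 9, 15, \ldots$), primitivity is automatic. Otherwise, I would fix a non-trivial divisor $m$; since $m$ is odd and $(2g-1)/m \geq 3$, one has $m \leq (2g-1)/3 \leq g-2$ for $g \geq 5$.

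Assuming for contradiction that $\tau$ preserves the mod-$m$ partition, the induced permutation on $\Z/m\Z$ is a transitive image of the $(2g-1)$-cycle $\tau$, hence an $m$-cycle. So the entries of the $\tau$-cycle reduced mod $m$ must form a sequence of period exactly $m$. I would then read off the explicit cycle of $\tau$: parametrising the construction of Subsection~\ref{subsec:AMN-odd} by a sequence $(a_1, \ldots, a_{g-2})$ of distinct indices in $\{2, \ldots, g-1\}$, the cycle takes the form $(c_0, c_1, \ldots, c_{2g-2})$ with $c_0 = 3$, $c_1 = 1$, $c_g = 2$, and for $2 \leq k \leq g-1$ (first half) $c_k$ equals $2a_{g-k}+1$ or $2a_{g-k}$ according as $k$ is even or odd; for $g+1 \leq k \leq 2g-2$ (second half) $c_k$ equals $2a_{2g-1-k}+1$ or $2a_{2g-1-k}$ according as $k$ is odd or even.

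The key step is to exploit the fact that the same index $\ell = g-1-m$ appears in both halves of the cycle but with opposite ``$+1$'' offsets. Applying the periodicity constraint $c_i \equiv c_{i+m} \pmod m$ at $i = 1$: since $m$ is odd and $m \leq g-2$, the position $1 + m$ is an even index in the first half, giving $c_{1+m} = 2a_{g-1-m} + 1$; thus $1 \equiv 2a_{g-1-m}+1 \pmod m$ forces $a_{g-1-m} \equiv 0 \pmod m$. Applying it at $i = g$: the position $g + m$ is an even index in the second half, giving $c_{g+m} = 2a_{g-1-m}$; thus $2 \equiv 2a_{g-1-m} \pmod m$ forces $a_{g-1-m} \equiv 1 \pmod m$. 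Since $m > 1$, these two conclusions are incompatible, giving the required contradiction.

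The hard part will be finding a contradiction uniform in both the divisor $m$ and the parameter sequence $(a_i)$. My expectation is that the identification of the shared index $g-1-m$ appearing with opposite offsets in the two halves, paired with the rigid boundary values $c_1 = 1$ and $c_g = 2$, will yield the contradiction in one shot, avoiding any case analysis on $m$ or on the $a_i$.
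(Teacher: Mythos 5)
Your computation is correct (it is a complete proof, not merely a plan), and it follows a genuinely different route from the paper's. The paper's proof works with the commutator $\nu = \tau^{-1}\sigma_g^{-1}\tau\sigma_g$: in the AMN construction, $\nu$ contains the long arithmetic-progression run $1,3,5,\ldots,2g-1$, which rigidly determines how $\nu$ must cycle the blocks of any putative block system, and this in turn forces $\tau$ to fix the block containing $2$, contradicting that $\tau$ is a $(2g-1)$-cycle. You instead impose the mod-$m$ periodicity condition directly on the cycle of $\tau$ (implicitly you have written down the cycle of $\tau^{-1}$ starting from $3$, which is harmless since a partition is $\tau$-invariant iff $\tau^{-1}$-invariant), and exploit the mirror-symmetric structure of the AMN cycle: each parameter $k_i$ appears once in each half, at positions that differ by $g-1$, with offsets $+1$ and $0$ swapped between the halves; since the fixed entries $1$ and $2$ are also $g-1$ positions apart, reading periodicity at $i=1$ and $i=g$ forces the same $a_{g-1-m}$ to be both $\equiv 0$ and $\equiv 1 \pmod m$. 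Your index bookkeeping checks out: $m$ odd and $m \leq (2g-1)/3 \leq g-2$ for $g \geq 5$ ensures $1+m$ and $g+m$ fall in the first and second halves respectively with the correct parity, and the prime case $g=3$ is handled separately. The trade-off: the paper's commutator argument is the one that transfers almost verbatim to the other four constructions (their primitivity proofs all reuse that template), whereas your direct periodicity argument is slightly more self-contained — it never needs the computed form of $[\sigma_g,\tau]$ — but would need to be rederived from scratch for each of the modified constructions in Section 4.
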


\begin{proof}
Let $O = (\sigma_{g},\tau)$ be an origami constructed using the method of Subsection~\ref{subsec:AMN-odd}. Let $\Mon(O)$ be its monodromy group and suppose that $\Mon(O)$ admits a block system of size $k$ for some $k\neq 1,2g-1$; that is, a non-trivial block system of size $k$.

Since $\sigma_{g}$, being a $(2g-1)$-cycle, must act as a $k$-cycle on these blocks we can enumerate them as $\{\Delta_{i} | 1\leq i\leq k\}$ with $\Delta_{i} = \{j\,|\,j\equiv i\mod k\}$.

Recall that 
\[\nu:=\tau^{-1}\sigma_{g}^{-1}\tau\sigma_{g} = (1,3,5,\ldots,2g-3,2g-1,\rho^{-1}(1),\rho^{-3}(1),\ldots,\rho^{2}(1)),\]
where $\rho := \tau^{-1}\sigma_{g}\tau$ was the $(2g-1)$-cycle given by the top row in the construction of $\tau$. Since $2g-1$ is odd, $k|2g-1$, and $k<2g-1$, we have $2g-1\geq 3k$. Therefore, we see that $\nu$ acts on the blocks as the $k$-cycle
\[ (\Delta_{1},\Delta_{3},\ldots,\Delta_{k},\Delta_{2},\Delta_{4},\ldots,\Delta_{k-1}).\]
Indeed, $1\in\Delta_{1}$ is sent to $3\in\Delta_{3}$, $3$ is sent to $5\in\Delta_{5}$, and so on until the $k$-cycle closes up when $2k-1\in\Delta_{k-1}$ is sent to $2k+1\in\Delta_{1}$. This now forces $\rho^{-1}(1)$ to be in $\Delta_{2}$, since $2g-1\in\Delta_{k}$ is mapped to $\rho^{-1}(1)$.

However, $\rho$ is the $(2g-1)$-cycle corresponding to the top row of
\[\tau = \begin{pmatrix}
\bullet & \bullet & 1 & \bullet & \bullet & \cdots & \bullet & \bullet \\
1 & 2 & 3 & 4 & 5 & \cdots & 2g-2 & 2g-1
\end{pmatrix}.\]
In particular, $\tau(\rho^{-1}(1)) = 2\in\Delta_{2}$ which forces $\tau(\Delta_{2}) = \Delta_{2}$. So $\tau$ does not act as a $k$-cycle on the blocks and so cannot be a $(2g-1)$-cycle. This is a contradiction, and so no block system can exist. Therefore, $O$ is indeed a primitive origami.
\end{proof}

\begin{remark}\label{rem:prim-odd-odd}
Continuing the analysis of the proof of Proposition~\ref{prop:prim-odd-odd}, we see that $\tau$ is forced to act on the block system as the permutation $(\Delta_{1},\Delta_{3})(\Delta_{4},\Delta_{k})\cdots(\Delta_{i},\Delta_{k+4-i})\cdots(\Delta_{\frac{k+3}{2}},\Delta_{\frac{k+5}{2}})$
and so the action of $\sigma_{g}$ and $\tau$ on the block system generates the dihedral group $D_{k}$ of degree $k$. Therefore, the monodromy group of an origami $O$ obtained from the modified construction of Subsection~\ref{subsec:AMN-odd} considered in Remark~\ref{rem:gen-const} must be a subgroup of $\Sym_{\frac{2g-1}{k}} \wr D_{k}$.

It can be shown that the hyperelliptic origami considered in Remark~\ref{rem:spin} has monodromy group isomorphic to the dihedral group $D_{2g-1}$ of degree $2g-1$ and so is only primitive when $2g-1$ is prime. In the other cases, computer investigations suggest that either \[ [(\Sym_{\frac{2g-1}{k}} \wr D_{k}):\Mon(O)] = 2\]
and we have 
\[ \Mon(O)\cong(\Sym_{\frac{2g-1}{k}} \wr D_{k})\cap\Alt_{2g-1},\]
where we view $\Sym_{\frac{2g-1}{k}} \wr D_{k}$ as a subgroup of $\Sym_{2g-1}$; or 
\[ [(\Sym_{\frac{2g-1}{k}} \wr D_{k}):\Mon(O)] = 2^{k},\] 
and 
\[ \Mon(O)\cong(\Alt_{\frac{2g-1}{k}})^{k}\rtimes\Sym_{k}\leq(\Sym_{\frac{2g-1}{k}} \wr D_{k})\cap\Alt_{2g-1},\]
where the semidirect product action is not necessarily that coming from the standard wreath product.
\end{remark}

\subsubsection{Even genus AMN origamis}

Here we will argue that all of the even genus AMN origamis are also primitive. We will use the notation of Subsection~\ref{subsec:AMN-even}.

Let $g$ be even, and let $(\sigma_{g-1},\eta)$ be an AMN origami of genus $g-1$. Set $\rho = \eta^{-1}\sigma_{g-1}\eta$. Recall then that $\rho$ is the cycle given by the top row of $\eta$ when written as a 2-row permutation. To construct an even genus AMN origami we choose 
\[ k\in\{3,5,\ldots,2g-3\}\setminus\{\eta^{-1}(1)\},\]
and set
\[\tau = \begin{pmatrix}
    \eta^{-1}(1) & \eta^{-1}(2) & 1 & \eta^{-1}(4) & \cdots & 2g-2 & \cdots & \eta^{-1}(2g-3) & 2g-1 & k \\
    1 & 2 & 3 & 4 & \cdots & \eta(k) & \cdots & 2g-3 & 2g-2 & 2g-1
\end{pmatrix}\]
and define the origami to be $(\sigma_{g},\tau)$.

\begin{proposition}
    All even genus AMN origamis $(\sigma_{g},\tau)$ are primitive.
\end{proposition}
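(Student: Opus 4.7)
The plan is to adapt the argument of Proposition~\ref{prop:prim-odd-odd}. Suppose for contradiction that a non-trivial block system $\{\Delta_{1},\ldots,\Delta_{d}\}$ exists. Since $\sigma_{g}$ is a $(2g-1)$-cycle acting on the blocks as a $d$-cycle, $d$ must be an odd divisor of $2g-1$ with $1<d<2g-1$ and the blocks are forced to be the residue classes $\Delta_{i}=\{j\in\{1,\ldots,2g-1\} : j\equiv i\!\!\pmod{d}\}$. Since $\tau$ is also a $(2g-1)$-cycle, it induces a $d$-cycle $\bar{\tau}$ on the blocks.

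As in the odd genus argument, the key tool is the permutation $\nu_{\tau}:=\tau^{-1}\sigma_{g}^{-1}\tau\sigma_{g}=\rho_{\tau}^{-1}\sigma_{g}$, where $\rho_{\tau}=\tau^{-1}\sigma_{g}\tau$ is the $(2g-1)$-cycle corresponding to reading the top row of $\tau$ left to right. This $\nu_{\tau}$ is a $(2g-1)$-cycle because the origami lies in $\calH(2g-2)$, and hence it too acts as a $d$-cycle on the blocks. I would compute the cycle structure of $\nu_{\tau}$ using the explicit description of the top row of $\tau$: the construction prescribes that it agrees with the top row of the underlying odd-genus origami $\eta$ except that the value $k$ at position $\eta(k)$ is replaced by $2g-2$, and that two new positions $2g-2$ and $2g-1$ carry the values $2g-1$ and $k$ respectively. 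Exactly as in the proof of Proposition~\ref{prop:prim-odd-odd}, this forces the initial segment of $\nu_{\tau}$'s cycle (starting at $1$) to be
\[
(1,\,3,\,5,\,\ldots,\,k-2,\,2g-2,\,\ldots),
\]
the only deviation from the odd case being that the entry after $k-2$ is now $2g-2$ rather than $k$, since the position $\eta(k)$ of the value $k-1$'s predecessor in the top row of $\tau$ now holds $2g-2$.

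From this initial segment I would read off the residues modulo $d$ visited by $\bar{\nu}_{\tau}$ in its first $(k+1)/2$ steps, pinning down enough of the block-level $d$-cycle that the remainder of the computation can be completed by tracking how $\nu_{\tau}$ cycles back through the even-indexed values and finally through $k$ and $2g-1$. Having identified $\bar{\nu}_{\tau}$ on the blocks, I would then locate an explicit element $x$ (analogous to the $\rho^{-1}(1)$ of the odd case, but adjusted by the insertion of the two new squares) for which the construction of $\tau$ forces $\tau(x)$ to lie in the same block as $x$; this contradicts $\bar{\tau}$ being a non-trivial $d$-cycle.

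The main technical obstacle is that the tail of $\nu_{\tau}$'s cycle depends on several parameters of the underlying $\eta$ (in particular on the index $i_{0}$ with $k=2k'_{i_{0}-1}+1$, on the value $\eta^{-1}(2g-3)$, and on whether the position $2\ell=\eta(k)$ coincides with any of these special positions in the top row of $\eta$). A small case split will be needed to handle the boundary situation $k=3$, where the arithmetic-progression segment $1,3,\ldots,k-2$ degenerates, and the situation $\eta(k)=2(g-2)$, where the modified position collides with $\eta^{-1}(2g-3)$. Once these cases are dispatched, the conclusion that $\bar{\tau}$ must fix a block proceeds uniformly, giving the required contradiction.
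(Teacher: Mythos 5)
Your high-level strategy is the same one the paper uses: assume a non-trivial block system of size $d$, force the blocks to be residue classes mod $d$ from the $\sigma_g$-action, then use the explicit cycle structure of $\nu=\tau^{-1}\sigma_g^{-1}\tau\sigma_g$ to find an element whose block must be fixed by $\tau$, contradicting $\bar\tau$ being a $d$-cycle. However, as written this is a plan rather than a proof, and the plan diverges from what actually makes the paper's argument work.

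Two substantive gaps. First, you attempt to reconstruct the \emph{entire} block-level cycle $\bar\nu_\tau$ from an initial segment and then ``track how $\nu_\tau$ cycles back through the even-indexed values''; this is far more than is needed and the tail of $\nu$ is genuinely messy (it passes through $\rho^{-3}(1),\ldots,\rho^{-2m-1}(1)=k-1$, then $2g-1$, then $\tau^{-1}(2)$, then $k,k+2,\ldots,2g-3$, then more $\rho$-translates). The paper avoids this entirely by picking the single element $l=2g-1-2t$: it is odd, lies in $\Delta_t$, and since $t\ge 5$ it satisfies $l\ge t\ge 5$; then one only needs to examine $\nu$ locally near $l$ and near $2g-1$, splitting into three short cases according to whether $l\in\{3,\ldots,k-4\}$, $l\in\{k,\ldots,2g-5\}$, or $l=k-2$. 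Your proposed case split ($k=3$ and $\eta(k)=2(g-2)$) is not the relevant one, and there is no evidence that ``the conclusion proceeds uniformly'' once those are dispatched --- the paper's three cases cut on a different parameter ($l$ relative to $k$) and are not boundary degeneracies.

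Second, you never invoke the lower bound $d\ge 5$ (which in the paper comes from the observation that a non-primitive minimal $[1,1]$-origami must cover a minimal $[1,1]$-origami of genus $\ge 3$, hence have block size at least $5$). This bound is essential: it guarantees $l\ge 5$ so the arithmetic-progression segment $1,3,\ldots,k-2$ is non-degenerate, and it is used directly in the $l=k-2$ case to conclude $\Delta_{t-2}\neq\Delta_1$. Without it, your argument cannot close. Finally, the choice of the ``explicit element $x$ analogous to $\rho^{-1}(1)$'' is never specified; in the paper's Cases 1 and 2 the element is $\tau^{-1}(2)$ (reached via $\nu(2g-1)=\tau^{-1}(2)$), while Case 3 uses $k-1=\nu^{-1}(2g-1)$ instead, so there is no single choice of $x$ that works uniformly.
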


\begin{proof}
    It can be checked that
    \begin{align*}\nu &:= \tau^{-1}\sigma_{g}^{-1}\tau\sigma_{g} \\
    &= (1,3,5,\ldots,k-2,2g-2,\rho^{-3}(1),\rho^{-5}(1),\ldots,\underbrace{\rho^{-2m-1}(1)}_{=k-1},2g-1,\\
    & \hspace{2cm}\underbrace{\rho^{-1}(1)}_{=\tau^{-1}(2)},k,k+2,k+4,\ldots,2g-3,\rho^{-2m-3}(1),\rho^{-2m-5}(1),\ldots,\rho^{2}(1)).
    \end{align*}
    Now suppose that the monodromy group admits a non-trivial block system $\{\Delta_{i}\}_{i = 1}^{t}$. The permutation $\sigma_{g}$ must act on these blocks as a $t$-cycle and so we can relabel them as $\Delta_{i} = \{j| j\equiv i\mod 2g-1\}$. By the discussion at the beginning of this section, we have $2g-1\geq 3t$ and $t\geq 5$. Therefore, there are at least 2 odd numbers in $\Delta_{t}$. Indeed, we will have $2g-1,2g-1-2t\in\Delta_{t}$.
    
    Let $l = 2g-1-2t$. We will consider where $l$ lies inside the permutation $\nu$.

    \textbf{Case 1: $\boldsymbol{l\in\{3,5,7,\ldots,k-4\}}$.} Since $l\in\Delta_{t}$, we have $\nu(l) = l+2\in \Delta_{2}$. Hence, on the block system, $\nu(\Delta_{t}) = \Delta_{2}$. We also see that $\nu(2g-1) = \tau^{-1}(2)$. Since $2g-1\in\Delta_{t}$, it must be the case that $\tau^{-1}(2)\in\Delta_{2}$. Now $\tau(\tau^{-1}(2)) = 2\in\Delta_{2}$ so we must have $\tau(\Delta_{2}) = \Delta_{2}$. However, this contradicts the fact that $\tau$ is a $(2g-1)$-cycle and so must act as a $t$-cycle on the blocks.

    \textbf{Case 2: $\boldsymbol{l\in\{k,k+2,\ldots,2g-3\}}$.} Since $2g-3\in\Delta_{t-2}\neq\Delta_{t}$, we must have $l\in\{k,\ldots,2g-5\}$. So we again have $\nu(l) = l+2$ giving $\nu(\Delta_{t}) = \Delta_{2}$, and the same contradiction as above follows.

    \textbf{Case 3: $\boldsymbol{l = k-2}$.} We have $k-2\in\Delta_{t}$ so that $k-1\in\Delta_{1}$ and $k-4\in\Delta_{t-2}$. Now $\nu^{-1}(k-2) = k-4$ implies that $\nu^{-1}(\Delta_{t}) = \Delta_{t-2}$ and $\nu^{-1}(2g-1) = k-1$ implies that $\nu^{-1}(\Delta_{t}) = \Delta_{1}$. However, this leads to a contradiction since $t\geq 5$ implies that $\Delta_{t-2}\neq\Delta_{1}$.

    In all cases we arrive at a contradiction and hence no non-trivial block system can exist. Thus, the origami is primitive.
\end{proof}

\subsubsection{Odd genus even spin}

\begin{proposition}\label{prop:prim-odd-even}
All of the origamis constructed using the method of Subsection~\ref{subsec:gen-odd-even} are primitive.
\end{proposition}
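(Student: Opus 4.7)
The plan is to mirror the template of Proposition~\ref{prop:prim-odd-odd}. Suppose for contradiction that $\Mon(O)$ admits a non-trivial block system of size $k$. Since $k$ must divide $2g-1$ (the order of $\sigma_g$) and $1 < k < 2g-1$, with $2g-1$ odd, $k$ is odd and $3 \leq k \leq (2g-1)/3$; in particular $1 + 2k \leq 2g-1$. As in Proposition~\ref{prop:prim-odd-odd}, $\sigma_g$ acts as a single $k$-cycle on the blocks, so I may enumerate them as $\Delta_i = \{j \mid j \equiv i \pmod k\}$, and the task reduces to showing that the commutator $\nu := \sigma_g^{-1}\tau^{-1}\sigma_g\tau$ cannot preserve this partition.

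The engine of the argument is the explicit cycle form of the commutator recorded in the construction of Subsection~\ref{subsec:gen-odd-even}:
\[\sigma_g^{-1}\tau^{-1}\sigma_g\tau \;=\; (1,\; \rho^{2}(1),\; \rho^{4}(1),\; \ldots,\; \tau^{-1}(2g-1),\; 8,\; 6,\; 2g-1,\; 2g-3,\; \ldots,\; 9,\; 5,\; 3).\]
Reading off the tail, one sees that $\nu$ satisfies $\nu(\ell) = \ell - 2$ for every $\ell \in \{11, 13, \ldots, 2g-1\} \cup \{5, 3\}$, together with the two exceptional values $\nu(9) = 5$ and $\nu(1) = 7$. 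Since $\nu$ lies in the monodromy group, it must respect the block partition, so the plan is simply to exhibit, for each admissible $k$, two elements of a common block whose $\nu$-images lie in different blocks.

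I would split the argument into two cases. When $k = 3$, both $3$ and $9$ lie in $\Delta_3$, yet $\nu(3) = 1 \in \Delta_1$ and $\nu(9) = 5 \in \Delta_2$, an immediate contradiction. When $k \geq 5$, I would use the witness pair $1$ and $\ell := 1 + 2k$: the bound $k \geq 5$ places $\ell \geq 11$, while $k \leq (2g-1)/3$ places $\ell \leq 2g-1$, so $\ell$ sits squarely in the uniform-shift range of the tail, giving $\nu(\ell) = 2k - 1 \equiv k - 1 \pmod k$. Both $1$ and $\ell$ are congruent to $1$ modulo $k$, yet $\nu(1) = 7$ and $\nu(\ell) = 2k-1$ lie in $\Delta_{7 \bmod k}$ and $\Delta_{k-1}$ respectively; for these to coincide we would need $7 \equiv k - 1 \pmod k$, i.e., $k \mid 8$, which is impossible for odd $k \geq 5$.

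I anticipate no substantial obstacle: the cycle form of $\nu$ is already laid out in the construction, and the only subtlety is verifying that the witness $\ell = 1 + 2k$ falls in the uniform-shift portion of the tail rather than hitting one of the two exceptional values $\{1, 7\}$ or the middle segment. This is precisely what the numerical bounds $k \geq 5$ and $3k \leq 2g-1$ guarantee, so once the case split is set up the contradiction follows by a short calculation in each case.
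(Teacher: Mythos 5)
Your proof is correct, and it follows a genuinely different route from the paper's. You work throughout with the commutator $\nu = \sigma_g^{-1}\tau^{-1}\sigma_g\tau$, reading off the uniform shift $\nu(\ell)=\ell-2$ on the tail of its cycle form together with the exceptional values $\nu(1)=\rho^2(1)=7$ and $\nu(9)=5$, and then exhibit, for every admissible block count $k$, two elements of $\Delta_1$ or $\Delta_3$ whose images under $\nu$ lie in different blocks: the pair $(3,9)$ when $k=3$, and the pair $(1,\,1+2k)$ when $k\ge 5$, the latter forcing $k\mid 8$, which is impossible for odd $k\ge 5$. The paper instead first invokes the covering argument (a non-trivial block system of size $k$ would make $O$ a cover of a minimal $[1,1]$-origami in the minimal stratum with $k$ squares, so $k\ge 5$), rules out $k=5$ via the action of $\tau$ itself ($\tau(1)=3$ forces $\tau(\Delta_1)=\Delta_3$, contradicted by $\tau(6)=2$), and handles $7\le k<2g-1$ via the conjugate $\rho=\tau^{-1}\sigma_g\tau$ ($\rho(k+6)=k+5$ forces $\rho(\Delta_6)=\Delta_5$, contradicted by $\rho(6)=1$). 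Your argument is more elementary and more uniform: it dispenses with the covering argument and the case split at $k=5$ versus $k\ge 7$, treating all $k\ge 5$ by one witness pair. Structurally it is closer in spirit to the paper's proof of Proposition~\ref{prop:prim-odd-odd}, which also argues via a commutator, so in effect you have shown that that technique transfers cleanly to the even-spin construction. The bookkeeping you flag as the only subtlety — that $\ell=1+2k$ is odd, lies in $[11,2g-1]$, and avoids the exceptional value $9$ — is indeed immediate from $k$ being an odd proper divisor of $2g-1$ with $k\ge 5$, so $3k\le 2g-1$.
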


\begin{proof}
Let $O = (\sigma_{g},\tau)$ be an origami constructed using the method of Subsection~\ref{subsec:gen-odd-even}. From the discussion above, we observe that such an origami must cover a minimal $[1,1]$-origami in the minimal stratum of genus at least 3 (that is, with at least 5 squares). In terms of the monodromy group, any block system of size $k$ for $\Mon(O)$ must have $k = 1$ or $k = 2g-1$ corresponding to the trivial block systems, or $5\leq k < 2g-1$.

In fact, $k = 5$ is not possible. Indeed, since $\sigma_{g}$ must act by a 5-cycle on these blocks we can enumerate them as $\{\Delta_{i} | 1\leq i\leq 5\}$ with $\Delta_{i} = \{j\,|\,j\equiv i\mod 5\}$. Recall that $\tau$ has the structure
\[\tau = \begin{pmatrix}
9 & 6 & 1 & 8 & 7 & 5 & 4 & \bullet & \bullet & \cdots & \bullet & \bullet \\
1 & 2 & 3 & 4 & 5 & 6 & 7 & 8 & 9 & \cdots & 2g-2 & 2g-1
\end{pmatrix}.\]
Now, $\tau(1) = 3$ enforces $\tau(\Delta_{1}) = \Delta_{3}$ but this contradicts $\tau(6) = 2$ since $6\in\Delta_{1}$ and $2\in\Delta_{2}\neq\Delta_{3}$. In fact, $k = 3$ can be ruled out by an argument similar to this without having to use the $[1,1]$-origami covering argument discussed above. See Remark~\ref{rem:prim} for why this is note-worthy.

A non-trivial block system must therefore have $7\leq k < 2g-1$. So assume that we have a such block system of size $k$ and again enumerate it as $\{\Delta_{i} | 1\leq i\leq k\}$ with $\Delta_{i} = \{j\,|\,j\equiv i\mod k\}$. Now, since $2g-1$ is odd, $k|2g-1$, and $7\leq k < 2g-1$, we have $2g-1\geq 3k>k+6>9$. Now recall that the top row of $\tau$ which we previously called $\rho = \tau^{-1}\sigma_{g}\tau$ has the property that $\rho(k+6) = k+5$. In particular, this forces $\rho(\Delta_{6}) = \Delta_{5}$. However, we have $\rho(6) = 1$ which also forces $\rho(\Delta_{6}) = \Delta_{1}\neq \Delta_{5}$, a contradiction.

Therefore, the only possible block systems are the trivial block systems. That is, the origami $O$ is primitive.
\end{proof}

\subsubsection{Even genus odd spin}

\begin{proposition}\label{prop:prim-even-odd}
All of the origamis constructed using the method of Subsection~\ref{subsec:gen-even-odd} are primitive.
\end{proposition}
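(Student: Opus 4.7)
The plan is to follow the template of Proposition~\ref{prop:prim-odd-even}. Suppose for contradiction that $\Mon(O)$ admits a non-trivial block system, so there is an odd proper divisor $k>1$ of $2g-1$, and label the blocks $\Delta_i=\{j:j\equiv i\pmod k\}$ using the action of $\sigma_g$ (which acts as a $k$-cycle on the blocks). Since $\rho = \tau^{-1}\sigma_g\tau$ is a $(2g-1)$-cycle whose cycle form is exactly the top row of $\tau$, it also induces a $k$-cycle on the $\Delta_i$. The first five entries of the top row are fixed at $[4,6,1,5,7]$, from which I would read off $\rho(6) = 1$, hence $\rho(\Delta_{6\bmod k}) = \Delta_{1\bmod k}$.

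The second ingredient exploits the variable part of the top row: for every $j\in\{4,5,\ldots,g-1\}$, the pair $(2j+1,2j)$ is placed consecutively somewhere in the top row, giving $\rho(2j+1) = 2j$. Choosing $j = (k+5)/2$ (an integer since $k$ is odd) then yields the additional identity $\rho(k+6) = k+5$, valid provided $(k+5)/2\in\{4,\ldots,g-1\}$. The lower bound holds since $k\geq 3$, and the upper bound $k\leq 2g-7$ is guaranteed because any proper divisor of the odd integer $2g-1$ is at most $(2g-1)/3$, together with the elementary inequality $(2g-1)/3\leq 2g-7$ for $g\geq 5$.

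For $k\geq 7$, reducing $k+6$ and $k+5$ modulo $k$ gives $\rho(\Delta_6) = \Delta_5$, which together with $\rho(\Delta_6) = \Delta_1$ forces $\Delta_1 = \Delta_5$, i.e.\ $k\mid 4$, which is impossible for odd $k\geq 3$. The case $k=3$ admits the same contradiction: the two relations now become $\rho(\Delta_3) = \Delta_2$ and $\rho(\Delta_3) = \Delta_1$, again absurd. The remaining case $k=5$ needs a separate (easier) argument: since $6\equiv 1\pmod 5$, the single identity $\rho(6) = 1$ already forces $\rho$ to fix $\Delta_1$, contradicting that $\rho$ acts as a nontrivial $5$-cycle on the blocks. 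Primes $2g-1$ (covering $g=4,6,10,12,\ldots$) are handled automatically. The main subtlety I anticipate is keeping careful track of the bound $k\leq 2g-7$ so that the pair $(k+6,k+5)$ really does appear in the construction for every relevant $k$; everything else reduces to short reductions modulo $k$.
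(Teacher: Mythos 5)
Your proof is correct and takes essentially the same approach as the paper: both exploit $\rho(6)=1$ (read off from the fixed prefix $[4,6,1,5,7]$ of the top row) together with $\rho(k+6)=k+5$ (from the placed pair $(2j+1,2j)$ with $j=(k+5)/2$) to force $\rho(\Delta_6)=\Delta_1$ and $\rho(\Delta_6)=\Delta_5$, a contradiction for $k\geq 7$. The only cosmetic difference lies in the small cases: the paper invokes the covering argument to get $k\geq 5$ and then rules out $k=5$ using $\tau(1)=3$ versus $\tau(6)=2$, whereas you treat $\rho$ uniformly, observing that $\rho(6)=1$ alone forces $\rho$ to fix a block when $k=5$ (since $6\equiv 1\bmod 5$), and that $\rho(9)=8$ together with $\rho(6)=1$ disposes of $k=3$ directly, which is a slightly more self-contained way to clear the same hurdle.
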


\begin{proof}
Let $O = (\sigma_{g},\tau)$ be an origami constructed using the method of Subsection~\ref{subsec:gen-even-odd}. As above, any block system of size $k$ for $\Mon(O)$ must have $k = 1$ or $k = 2g-1$ corresponding to the trivial block systems, or $5\leq k < 2g-1$.

Again, $k = 5$ is not possible. Indeed, enumerating the blocks of such a block system as $\{\Delta_{i} | 1\leq i\leq 5\}$ with $\Delta_{i} = \{j\,|\,j\equiv i\mod 5\}$. Since $\tau$ has the structure
\[\tau = \begin{pmatrix}
4 & 6 & 1 & 5 & 7 & \bullet & \bullet & \cdots & \bullet & \bullet \\
1 & 2 & 3 & 4 & 5 & 6 & 7 & \cdots & 2g-2 & 2g-1
\end{pmatrix},\]
$\tau(1) = 3$ enforces $\tau(\Delta_{1}) = \Delta_{3}$ but this contradicts $\tau(6) = 2$ since $6\in\Delta_{1}$ and $2\in\Delta_{2}\neq\Delta_{3}$. In fact, $k = 3$ can be ruled by a similar analysis without relying on the $[1,1]$-origami covering argument.

A non-trivial block system must therefore have $7\leq k < 2g-1$. So assume that we have a such block system of size $k$ and again enumerate it as $\{\Delta_{i} | 1\leq i\leq k\}$ with $\Delta_{i} = \{j\,|\,j\equiv i\mod k\}$. Here, we have $2g-1\geq 3k>k+6>7$. Again, $\rho(k+6) = k+5$ forces $\rho(\Delta_{6}) = \Delta_{5}$. However, we have $\rho(6) = 1$ which also forces $\rho(\Delta_{6}) = \Delta_{1}\neq \Delta_{5}$, a contradiction.

Therefore, the only possible block systems are the trivial block systems. That is, the origami $O$ is primitive.
\end{proof}

\subsubsection{Even genus even spin}

\begin{proposition}\label{prop:prim-even-even}
All of the origamis constructed using the method of Subsection~\ref{subsec:gen-even-even} are primitive.
\end{proposition}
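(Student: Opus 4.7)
The plan is to follow the template of Propositions~\ref{prop:prim-odd-even} and~\ref{prop:prim-even-odd} essentially verbatim, the only substantive change being the numerical content of the fixed prefix $(7,5,1,6,4)$ of $\tau$ coming from Subsection~\ref{subsec:gen-even-even}. By the discussion at the beginning of Section~\ref{sec:mono}, every non-trivial block system of $\Mon(O)$ has size $k$ (the number of blocks) dividing $2g-1$ and satisfying $5 \leq k < 2g-1$; in particular $k$ is odd. Since $\sigma_{g}$ cycles the blocks, I will relabel so that $\Delta_{i} = \{j : j \equiv i \bmod k\}$. The fixed prefix forces $\tau(1) = 3$, $\tau(4) = 5$, $\tau(5) = 2$, $\tau(6) = 4$, and $\tau(7) = 1$. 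For $k = 5$ we have $1, 6 \in \Delta_{1}$ but $\tau(1) = 3 \in \Delta_{3}$ and $\tau(6) = 4 \in \Delta_{4}$, contradicting that $\tau$ must send $\Delta_{1}$ into a single block; similarly $\tau(1) = 3$ and $\tau(4) = 5$ rule out $k = 3$ (both $1,4 \in \Delta_{1}$ modulo $3$, but $3$ and $5$ fall in distinct blocks).

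For $7 \leq k < 2g-1$ I will use the $(2g-1)$-cycle $\rho = \tau^{-1}\sigma_{g}\tau$ recorded by the top row of $\tau$. The fixed prefix yields $\rho(6) = 4$, hence $\rho(\Delta_{6}) = \Delta_{4}$. For the opposing identity, every placed pair $(2m+1, 2m)$ sitting at consecutive top-row positions produces $\rho(2m+1) = 2m$; in Subsection~\ref{subsec:gen-even-even} the placed pairs are $(3,2),(9,8),(11,10),\ldots,(2g-1,2g-2)$, whose odd first coordinates form $\{3\} \cup \{9,11,13,\ldots,2g-1\}$. The crucial arithmetic check is that $k+6$ always belongs to this set whenever $k$ is an odd proper divisor of $2g-1$ with $k \geq 7$: $k$ is odd so $k+6$ is odd; $k \geq 7$ gives $k+6 \geq 13$; and $(2g-1)/k \geq 3$ yields $k+6 \leq 3k \leq 2g-1$. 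Therefore $(k+6, k+5)$ is a placed pair, so $\rho(k+6) = k+5$ and $\rho(\Delta_{6}) = \Delta_{5}$. Since $4 \not\equiv 5 \pmod{k}$ for every $k \geq 2$, this conflicts with $\rho(\Delta_{6}) = \Delta_{4}$. No non-trivial block system can exist, so $O$ is primitive.

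I do not foresee any serious obstacle: every step is a direct analog of the computations in Subsections~\ref{subsec:gen-odd-even} and~\ref{subsec:gen-even-odd}, and the only real work is translating the new prefix $(7,5,1,6,4)$ into the handful of equalities $\tau(1) = 3$, $\tau(4) = 5$, $\tau(6) = 4$, and $\rho(6) = 4$ that drive the contradictions, together with the routine arithmetic verification that places $k+6$ inside the set of odd first coordinates of placed pairs.
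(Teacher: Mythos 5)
Your proof is correct and follows essentially the same route as the paper: rule out $k=5$ (and $k=3$) directly from the fixed prefix via $\tau(1)=3$, $\tau(6)=4$ (resp.\ $\tau(4)=5$), then for $7\leq k<2g-1$ derive the same contradiction $\rho(\Delta_6)=\Delta_4$ from $\rho(6)=4$ versus $\rho(\Delta_6)=\Delta_5$ from $\rho(k+6)=k+5$. The only difference is cosmetic: you spell out the arithmetic ($k+6$ odd, $13\leq k+6\leq 3k\leq 2g-1$) showing $(k+6,k+5)$ is a placed pair, which the paper leaves implicit behind the inequality $2g-1\geq 3k>k+6>7$.
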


\begin{proof}
Let $O = (\sigma_{g},\tau)$ be an origami constructed using the method of Subsection~\ref{subsec:gen-even-even}. As above, any block system of size $k$ for $\Mon(O)$ must have $k = 1$ or $k = 2g-1$ corresponding to the trivial block systems, or $5\leq k < 2g-1$.

Again, $k = 5$ is not possible. Indeed, enumerating the blocks of such a block system as $\{\Delta_{i} | 1\leq i\leq 5\}$ with $\Delta_{i} = \{j\,|\,j\equiv i\mod 5\}$. Since $\tau$ has the structure
\[\tau = \begin{pmatrix}
7 & 5 & 1 & 6 & 4 & \bullet & \bullet & \cdots & \bullet & \bullet \\
1 & 2 & 3 & 4 & 5 & 6 & 7 & \cdots & 2g-2 & 2g-1
\end{pmatrix},\]
$\tau(1) = 3$ enforces $\tau(\Delta_{1}) = \Delta_{3}$ but this contradicts $\tau(6) = 4$ since $6\in\Delta_{1}$ and $4\in\Delta_{4}\neq\Delta_{3}$. Again, as in the last two propositions, $k=3$ can in fact be ruled out by a similar analysis without using the $[1,1]$-origami covering argument.

A non-trivial block system must therefore have $7\leq k < 2g-1$. So assume that we have a such block system of size $k$ and again enumerate it as $\{\Delta_{i} | 1\leq i\leq k\}$ with $\Delta_{i} = \{j\,|\,j\equiv i\mod k\}$. As above, we have $2g-1\geq 3k>k+6>7$. Again, $\rho = \tau^{-1}\sigma_{g}\tau$ has the property that $\rho(k+6) = k+5$. In particular, this forces $\rho(\Delta_{6}) = \Delta_{5}$. However, we have $\rho(6) = 4$ which also forces $\rho(\Delta_{6}) = \Delta_{4}\neq \Delta_{5}$, a contradiction.

Therefore, the only possible block systems are the trivial block systems. That is, the origami $O$ is primitive.
\end{proof}

\begin{remark}\label{rem:prim}
We observe that the proofs of Propositions~\ref{prop:prim-odd-even},~\ref{prop:prim-even-odd} and~\ref{prop:prim-even-even} only make use of the parts of the structure of $\tau$ that remain in the modified constructions of Remarks~\ref{rem:gen-const} and~\ref{rem:spin}. That is, the proofs do not require that $\tau$ is a $(2g-1)$-cycle. Therefore, all of the single cylinder origamis produced by the modifications of the constructions of Subsections~\ref{subsec:gen-odd-even},~\ref{subsec:gen-even-odd} and~\ref{subsec:gen-even-even} discussed in the above remarks are all primitive.
\end{remark}

\subsection{Constructing 3-cycles}\label{subsec:3-cycles}

Here we will construct a 3-cycle in the monodromy groups of the odd genus AMN origamis constructed in Subsection~\ref{subsec:AMN-odd}. We start by illustrating two motivating examples for the general construction.

\begin{example}\label{ex:3-cycle} Consider $g = 7$ and
\[\tau = \begin{pmatrix}
1 & 3 & 2 & 13 & 12 & 11 & 10 & 7 & 6 & 5 & 4 & 9 & 8 \\
1 & 2 & 3 & 4 & 5 & 6 & 7 & 8 & 9 & 10 & 11 & 12 & 13
\end{pmatrix}.\]
Then, $\rho = (1,3,2,13,12,11,10,7,6,5,4,9,8)$ and
\begin{equation*}
    \rho\sigma_g = (1,13,3,9,7),\ \sigma_g\rho = (1,4,10,8,2).
\end{equation*}
Conjugating $\sigma_g\rho$ by $\rho\sigma_g$, we obtain
\begin{equation*}
    \beta = (\rho\sigma_g)\sigma_g\rho(\rho\sigma_g)^{-1} = (13,4,10,8,2).
\end{equation*}
Note that $\beta$ is the same permutation as $\sigma_g\rho$ except $1$ is replaced by $13$. We compute that
\begin{equation*}
    (\sigma_g\rho)^{-1}\beta = (1,2,13),
\end{equation*}
as desired.

\vskip 5pt

Next, let 
\[\tau = \begin{pmatrix}
1 & 13 & 12 & 9 & 8 & 11 & 10 & 5 & 4 & 3 & 2 & 7 & 6 \\
1 & 2 & 3 & 4 & 5 & 6 & 7 & 8 & 9 & 10 & 11 & 12 & 13
\end{pmatrix}.\]
Similarly, we have
\begin{equation*}
    \rho\sigma_g = (1,7,11,9,5),\ \sigma_g\rho = (2,8,12,10,6).
\end{equation*}
Note in particular that $1$ is only in the cycle representation for $\rho\sigma_g$ but not $\sigma_g\rho$. Hence, the previous strategy of relabeling $1$ with another odd number fails. Instead, we compute that
\begin{equation*}
    \sigma_g^{-1}\rho = (1,12,8,10,4,2,6,13,11,9,7,5,3).
\end{equation*}
Conjugating $\rho\sigma_g$ by $(\sigma_g^{-1}\rho)^{-2}$, we obtain
\begin{equation*}
    \alpha = (\sigma_g^{-1}\rho)^{-2}\rho\sigma_g(\sigma_g^{-1}\rho)^2 = (5,11,6,13,9).
\end{equation*}
Now, $6$ is in both $\alpha$ and $\sigma_g\rho$ where the rest entries of $\pi$ are odd and those of $\sigma_g\rho$ are even. Hence, we may relabel $6$ in $\sigma_g\rho$ with an odd number while keeping other indices fixed by conjugating $\sigma_g\rho$ with $\alpha$:
\begin{equation*}
    \beta = \alpha\sigma_g\rho\alpha^{-1} = (2,8,12,10,13).
\end{equation*}
Observe that $\beta$ is the same permutation as $\sigma_g\rho$ except $6$ is replaced by $13$. Finally, we compute that
\begin{equation*}
    (\sigma_g\rho)^{-1}\beta = (6,10,13),
\end{equation*}
as desired.
\end{example}

\begin{proposition} 
\label{prop:3-cycle}
The monodromy groups of the odd genus AMN origamis constructed in Subsection~\ref{subsec:AMN-odd} contain a $3$-cycle.
\end{proposition}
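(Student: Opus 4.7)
The plan is to construct an explicit 3-cycle inside $\Mon(O)=\langle\sigma_g,\tau\rangle$ by adapting the conjugation trick illustrated in Example~\ref{ex:3-cycle}. Throughout I will work with two elements of $\Mon(O)$: the $(2g-1)$-cycle $\rho=\tau^{-1}\sigma_g\tau$ obtained by reading the top row of $\tau$, and the commutator $\sigma_g^{-1}\rho=[\sigma_g^{-1},\tau^{-1}]$, which is again a $(2g-1)$-cycle because $O\in\calH(2g-2)$.

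First I would pin down the supports of $\rho\sigma_g$ and $\sigma_g\rho$ using the pair structure of $\tau$. Each pair of values $(2k+1,2k)$ placed at a position $(2j,2j+1)$ of $\tau$ gives $\rho(2k+1)=2k$, so $\sigma_g\rho(2k+1)=\sigma_g(2k)=2k+1$; together with $\sigma_g\rho(3)=3$ coming from the initial pair $(3,2)$, this shows that $\sigma_g\rho$ fixes every odd integer in $\{3,5,\ldots,2g-1\}$, and hence its support lies in $\{1,2,4,6,\ldots,2g-2\}$. The mirror calculation shows that the support of $\rho\sigma_g$ lies in $\{1,3,5,\ldots,2g-1\}$, so the two supports can only intersect in the singleton $\{1\}$.

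Next I would split into two cases according to whether $\rho(1)=2g-1$. In the generic case $\rho(1)\neq 2g-1$, one checks that $\sigma_g\rho(1)=\sigma_g(\rho(1))\neq 1$ and $\rho\sigma_g(1)=\rho(2)\neq 1$ (the latter because $\rho(2)$ is an odd value $\geq 3$), so $1$ lies in both supports and is their only common element. The conjugate $\beta=(\rho\sigma_g)(\sigma_g\rho)(\rho\sigma_g)^{-1}\in\Mon(O)$ then differs from $\sigma_g\rho$ only by replacing $1$ with $\rho\sigma_g(1)$ in the cycle of $\sigma_g\rho$ containing $1$, because $\rho\sigma_g$ fixes every other element of that cycle. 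A direct computation exactly as in Example~\ref{ex:3-cycle} then shows that $(\sigma_g\rho)^{-1}\beta\in\Mon(O)$ is the 3-cycle $\bigl(1,\,(\sigma_g\rho)^{-1}(1),\,\rho\sigma_g(1)\bigr)$.

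In the exceptional case $\rho(1)=2g-1$, the element $1$ is fixed by $\sigma_g\rho$, the two supports become disjoint, and direct conjugation cannot change any support element of $\sigma_g\rho$. Here I would instead use a conjugate of $\rho\sigma_g$ by a power of $\sigma_g^{-1}\rho$: because $\sigma_g^{-1}\rho$ is a $(2g-1)$-cycle acting transitively on $\{1,\ldots,2g-1\}$, its powers can shift the support of $\rho\sigma_g$ throughout the whole index set. One then chooses an integer $k$ so that $\alpha:=(\sigma_g^{-1}\rho)^{-k}(\rho\sigma_g)(\sigma_g^{-1}\rho)^{k}\in\Mon(O)$ shares exactly one element $x$ with the support of $\sigma_g\rho$ and satisfies $\alpha(x)\notin\mathrm{supp}(\sigma_g\rho)$, and repeats the Case~1 argument with $\alpha$ in place of $\rho\sigma_g$. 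The main obstacle is the uniform existence of such a $k$: the condition $\rho(1)=2g-1$ forces the pair $(2g-1,2g-2)$ to sit at position $(4,5)$ of $\tau$, which rigidly constrains the remainder of the AMN placement, and one exploits this rigidity to exhibit an explicit $k$; Example~\ref{ex:3-cycle} shows that $k=2$ suffices when $g=7$.
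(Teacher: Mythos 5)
Your Case 1 (when $\rho(1)\neq 2g-1$) matches the paper's argument and is correct: the supports of $\rho\sigma_g$ and $\sigma_g\rho$ are separated by parity, their intersection is exactly $\{1\}$, and the conjugate-then-cancel computation yields the $3$-cycle $\bigl(1,(\sigma_g\rho)^{-1}(1),\rho\sigma_g(1)\bigr)$. The exceptional case $\rho(1)=2g-1$, however, is left with a genuine gap: you correctly observe that $\mathrm{supp}(\rho\sigma_g)$ and $\mathrm{supp}(\sigma_g\rho)$ become disjoint and propose conjugating $\rho\sigma_g$ by a power $(\sigma_g^{-1}\rho)^{-k}$, but you never prove that a suitable $k$ exists, and the remarks offered in place of a proof do not carry it. The condition $\rho(1)=2g-1$ only pins the pair $(2g-1,2g-2)$ to positions $(4,5)$ and leaves every other pair placement free, so it does not ``rigidly constrain the remainder of the AMN placement''; and $k$ depends on $\tau$, not only on $g$, so ``$k=2$ suffices when $g=7$'' is not a meaningful reduction.

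The ingredient you are missing is the explicit shape of $\sigma_g^{-1}\rho=(1,\rho^2(1),\rho^4(1),\ldots,\rho^{-1}(1),2g-1,2g-3,\ldots,3)$ recorded in Subsection~\ref{subsec:AMN-odd}: its first $g$ entries are $1$ followed by the $g-1$ even indices, its last $g-1$ entries are the odd indices $2g-1,2g-3,\ldots,3$ in this order, and $\rho^{-1}(1)$ sits at position $g$. Because $\rho\sigma_g(2g-1)=\rho(1)=2g-1$ is now a fixed point, the positions of $\mathrm{supp}(\rho\sigma_g)$ in this cycle are $\{1\}\cup S$ with $\varnothing\neq S\subseteq\{g+2,\ldots,2g-1\}$. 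Taking $k=\min(S)-g\geq 2$ shifts every occupied position into $\{g,\ldots,2g-1\}$ with exactly one of them, $\min(S)$, landing at position $g$, whose value is $\rho^{-1}(1)$. The resulting $\alpha$ therefore meets $\mathrm{supp}(\sigma_g\rho)$ in exactly the single even element $\rho^{-1}(1)$ and sends it to an odd index $\geq 3$ outside $\mathrm{supp}(\sigma_g\rho)$, which is precisely the configuration your Case 1 argument needs.
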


\begin{proof} Following the example above, we break the proof into two cases, i.e., when $\rho(1) \not = 2g-1$ and when $\rho(1) = 2g-1$. The reader is encouraged to refer back to the example throughout the argument.

Assume first $\rho(1) \not = 2g-1$. We start by giving an explicit description for $\rho\sigma_g$ and $\sigma_g\rho$. For an even index $i$, recall that $\rho(i) = i-1$ and $\sigma_g(i) = i+1$ is an odd index not equal to $1$. Thus, $\rho\sigma_g(i) = i$ and $\sigma_g\rho$ fixes all even indices in $\{1,2,\ldots,2g-1\}$. Now, consider the index $1$, for which $\sigma_g(1) = 2$. In construction of $\tau$, $(3,2)$ is the first pair to be placed in the algorithm so that it cannot be at the end of the cycle representation of $\rho$; thus, $2$ is followed by an odd index $i$ not equal to $1$. Similarly, since $\sigma_g(2g-1) = 1$ and by assumption $\rho(1)$ is an odd number not equal to $2g-1$, we know that $2g-1$ is in a non-trivial cycle of $\rho\sigma_g$. In summary, we conclude that every non-trivial cycle of $\rho\sigma_g$ contains only odd indices, some of which contains $1$ and $2g-1$.

Next, observe that
\begin{equation*}
    \sigma_g\rho = \sigma_g(\rho\sigma_g)\sigma^{-1}.
\end{equation*}
Hence, $\sigma_g\rho$ is obtained by adding $1$ (mod $2g-1$) to each entry of $\rho\sigma_g$. It follows from the characterization of $\sigma_g\rho$ above that every non-trivial cycle of $\rho\sigma_g$ contains only $1$ or even indices.

Now, since every even index is fixed by $\rho\sigma_g$ and every odd index is fixed by $\sigma_g\rho$ except $1$, conjugating $\sigma_g\rho$ by $\rho\sigma_g$ yields a permutation $\beta$ that is the same as $\sigma_g\rho$ except $1$ is replaced by an odd index $\rho\sigma_g(1) \not = 1$. We claim that $(\sigma_g\rho)^{-1}\beta$ is a $3$-cycle. 

To verify, for $i \not = 1, \rho\sigma_g(1), (\sigma_g\rho)^{-1}(1)$, the characterizations of $\beta$ and $\sigma_g\rho$ imply that $\beta(i) = \sigma_g\rho(i)$ and thus,
\begin{equation*}
    (\sigma_g\rho)^{-1}\beta(i) = i.
\end{equation*}
Next, considering the index $1$, we have $\beta(1) = 1$ so that $(\sigma_g\rho)^{-1}\beta(1) = (\sigma_g\rho)^{-1}(1)$. Then, since $\beta(\sigma_g\rho)^{-1}(1) = \rho\sigma_g(1)$ and $\sigma_g\rho$ fixes $\rho\sigma_g(1)$, we have $((\sigma_g\rho)^{-1}\beta)(\sigma_g\rho)^{-1}(1) = \rho\sigma_g(1)$. Finally, since $(\sigma_g\rho)^{-1}\beta$ fixes all other indices, it has to send $\rho\sigma_g(1)$ back to $1$. Therefore, we obtain a $3$-cycle
\begin{equation*}
    (\sigma_g\rho)^{-1}\beta = (1,(\sigma_g\rho)^{-1}(1),\rho\sigma_g(1)),
\end{equation*}
in the monodromy group as desired.

Next, suppose $\rho(1) = 2g-1$. By the same argument above, we know that every non-trivial cycle of $\rho\sigma_g$ contains only odd indices and necessarily $1$. In particular, note that $(\rho\sigma_g)^{-1}(1) = \rho^{-1}(1)-1$. Further, since $\sigma_g(2g-1) = 1$ and $\rho(1) = 2g-1$, $\rho\sigma_g$ fixes $2g-1$ and this is different from the first case. In summary, we conclude that every non-trivial cycle of $\rho\sigma_g$ contains only odd indices except $2g-1$ and $\rho\sigma_g(\rho^{-1}(1)-1) = 1$.

Since $\sigma_g\rho$ is obtained by adding $1$ (mod $2g-1$) to each entry of $\rho\sigma_g$, again, it follows from the characterization of $\rho\sigma_g$ that every non-trivial cycle of $\sigma_g\rho$ contains only even indices and $\rho\sigma_g(\rho^{-1}(1)) = 2$.

Consider the permutation $\sigma_g^{-1}\rho$. It has been shown in Section~\ref{subsec:AMN-odd} that
\begin{equation*}
    \sigma_g^{-1}\rho = ((1,\rho^{2}(1),\rho^{4}(1),\ldots,\rho^{-1}(1),2g-1,2g-3,\ldots,3).
\end{equation*}
Since non-trivial cycles of $\rho\sigma_g$ contain odd indices only, conjugating $\rho\sigma_g$ by a power of $(\sigma_g^{-1}\rho)^{-1}$ is equivalent to moving each index of $\rho\sigma_g$ along $\sigma_g^{-1}\rho$ from right to left. In particular, there exists a power of $(\sigma_g^{-1}\rho)^{-1}$ such that every non-trivial cycle of the resulting conjugation $\alpha$ contains only $\rho^{-1}(1)$ or odd indices not equal to $1$.

Now, we conjugate $\sigma_g\rho$ by $\alpha$ and obtain a permutation $\beta$ that is the same as $\sigma_g\rho$ except $\rho^{-1}(1)$ is replaced by an odd index $\alpha\rho^{-1}(1) \not = 1$. Thus, by the same argument as the first case, we obtain a $3$-cycle
\begin{equation*}
    (\sigma_g\rho)^{-1}\beta = (\rho^{-1}(1), (\sigma_g\rho)^{-1}\rho^{-1}(1), \alpha\rho^{-1}(1)),
\end{equation*}
in the monodromy group as desired.
\end{proof}

\begin{corollary}
    By Theorem~\ref{thm:Jordan} and Propositions~\ref{prop:prim-odd-odd} and~\ref{prop:3-cycle}, the odd genus AMN origamis have monodromy group equal to the alternating group.
\end{corollary}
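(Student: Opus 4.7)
The plan is to assemble the three cited ingredients into a two-sided containment $\Alt_{2g-1} \subseteq \Mon(O) \subseteq \Alt_{2g-1}$. The upper bound requires only the observation (already made in the paper's introduction, just before Theorem~\ref{thm:monodromy}) that every generator of $\Mon(O)$ lies in $\Alt_{2g-1}$: for an odd genus AMN origami $O = (\sigma_g, \tau)$, both $\sigma_g$ and $\tau$ are $(2g-1)$-cycles, and since $2g-1$ is odd, a $(2g-1)$-cycle is an even permutation. Hence $\Mon(O) = \langle \sigma_g, \tau \rangle \leqslant \Alt_{2g-1}$.

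For the lower bound, I would invoke Proposition~\ref{prop:prim-odd-odd} to conclude that $\Mon(O)$ is a primitive subgroup of $\Sym_{2g-1}$, and Proposition~\ref{prop:3-cycle} to conclude that $\Mon(O)$ contains a $3$-cycle. Then part~(2) of Jordan's theorem (Theorem~\ref{thm:Jordan}) applies directly and yields $\Alt_{2g-1} \leqslant \Mon(O)$.

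Combining the two inclusions gives $\Mon(O) = \Alt_{2g-1}$, as claimed. There is no real obstacle here: the corollary is purely a matter of quoting the three named results in the correct order, the only subtlety being the tacit step of noting that odd-length cycles are even permutations so that the ambient group is $\Alt_{2g-1}$ rather than $\Sym_{2g-1}$ (which would otherwise make Jordan's theorem only give $\Alt_{2g-1} \leqslant \Mon(O)$ with the possibility of equality to $\Sym_{2g-1}$).
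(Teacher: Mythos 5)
Your proof is correct and follows exactly the same route as the paper: the upper bound $\Mon(O)\leqslant\Alt_{2g-1}$ from the generators being odd-length cycles (a remark the paper makes just before stating Jordan's theorem), and the lower bound from primitivity, the $3$-cycle, and Jordan's theorem. No substantive difference.
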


This, combined with Theorem~\ref{thm:simple} proved in Section~\ref{sec:simple} and some calculations to rule out the sporadic exmaples, completes the proof of Theorem~\ref{thm:monodromy}.

\section{Orientation double covers}\label{sec:double-covers}

Here we will show that $(g-3)!!$ of the $(g-2)!$ odd genus AMN origamis are orientation double covers. This implies that there are at least two $\SL(2,\Z)$-orbits of odd genus AMN origamis. We will also prove that none of the other origamis considered in this paper are orientation double covers. This is the content of Theorem~\ref{thm:double-covers}.

A translation surface $(X,\omega)$ is said to be an \emph{orientation double cover} or a \emph{translation double cover} if $X$ is a (possibly ramified) double cover of a surface $Y$ equipped with a quadratic differential $q$ so that the pullback of $q$ to $X$ is equal to $\omega^{2}$. See~\cite[pp. 15-16]{inftrans} for more details. If $(X,\omega)$ is an orientation double cover then it is fixed by $-I\in\SL(2,\R)$ and double covers $X/\langle -I\rangle$.

\begin{figure}[b]
    \centering
    \begin{tikzpicture}[scale = 1.2, line width = 0.3mm]
    \draw (0,0)--(0,1);
    \draw (0,1)--(5,1);
    \draw (5,1)--(5,0);
    \draw (5,0)--(0,0);
    \foreach \i in {1,2,...,4}{
        \draw [dashed, color = darkgray] (\i,0)--(\i,1);
        \draw (\i,0.05)--(\i,-0.05);
        \draw (\i,1.05)--(\i,0.95);
    }
    \draw (0,0.5) node[left] {$0$};
    \draw (5,0.5) node[right] {$0$};
    \foreach \i in {1,...,5}{
        \draw (\i-0.5,1) node[above] {$\i$};
        \draw (\i-0.5,0.5) node[color = darkgray] {\footnotesize $\i$};
    }
    \draw (0.5,0) node[below] {$5$};
    \draw (1.5,0) node[below] {$4$};
    \draw (2.5,0) node[below] {$1$};
    \draw (3.5,0) node[below] {$3$};
    \draw (4.5,0) node[below] {$2$};
    \draw (6,0)--(6,1);
    \draw (6,1)--(11,1);
    \draw (11,1)--(11,0);
    \draw (11,0)--(6,0);
    \foreach \i in {7,...,10}{
        \draw [dashed, color = darkgray] (\i,0)--(\i,1);
        \draw (\i,0.05)--(\i,-0.05);
        \draw (\i,1.05)--(\i,0.95);
    }
    \draw (6,0.5) node[left] {$0$};
    \draw (11,0.5) node[right] {$0$};
    \foreach \i in {5,...,1}{
        \draw (12-\i-0.5,0) node[below] {$\i$};
        \draw (12-\i-0.5,0.5) node[color = darkgray, rotate = 180] {\footnotesize $\i$};
    }
    \draw (6.5,1) node[above] {$2$};
    \draw (7.5,1) node[above] {$3$};
    \draw (8.5,1) node[above] {$1$};
    \draw (9.5,1) node[above] {$4$};
    \draw (10.5,1) node[above] {$5$};
    \end{tikzpicture}
    \caption{Rotating the origami on the left by $\pi$ gives the origami on the right. Relabelling the sides according to $(1)(2,4)(3,5)$ recovers the original origami.}
    \label{fig:orientation-example}
\end{figure}
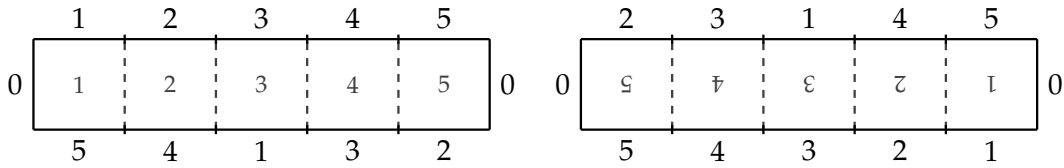

Before proving the proposition below, we will give an example that demonstrates the main considerations of the proof. 

\begin{example}\label{ex:double-cover:rotation}
Consider the genus 3 AMN origami on the left of Figure~\ref{fig:orientation-example}. The matrix $-I\in\SL(2,\R)$ acts on the polygons by rotating the picture upside down. The origami is then an orientation double cover if there is a choice of relabelling of the sides (and cut and paste of the squares) that gives the original origami back. For example, the origami on the right of Figure~\ref{fig:orientation-example} is the image under $-I$ and we see that the side relabelling given by the permutation $(1)(2,4)(3,5)$ returns (up to cut and paste of the squares) the original origami.

We remark that the action of $-I$ can always be seen as a central rotation around the centre of a square in the origami. Indeed, on the level of permutations, the origami $(\sigma_{g},\tau)$ is sent to $(\sigma_{g}^{-1},\tau^{-1})$ by $-I$. If this fixes the origami, then there exists some $\rho$ such that $\rho \sigma_{g}^{-1}\rho^{-1} = \sigma_{g}$ and $\rho \tau^{-1} \rho^{-1} = \tau$. That is, the action of $-I$ can be carried out by a relabelling of the squares of the origami. Since $(-I)^{2} = I$, $\rho^{2} = 1$. Since we have an odd number of squares, $\rho$ fixes some square $i$ and since $\rho$ conjugates $\sigma_{g} = (1,2,\ldots,2g-1)$ to $\sigma_{g}^{-1}$, we must have that $\rho = (i-1,i+1)(i-2,i+2)\cdots$. This is exactly the effect on the squares caused by rotation around the square $i$ and the correct relabelling of the sides corresponds exactly to the relabelling induced by the rotation.

Returning to our example, the origami in Figure~\ref{fig:orientation-example} corresponds to $((1,2,3,4,5),(1,3,4,2,5))$ and, in the language of the previous paragraph, $\rho = (1,3)(4,5)$ and so we see that the rotational symmetry can be realised by rotating around the centre of square 2. This can be seen in Figure~\ref{fig:canonical-rotation} and we recover the induced relabelling of the sides $(1)(2,4)(3,5)$. 
\end{example}

\begin{figure}[t]
    \centering
    \begin{tikzpicture}[scale = 1.2, line width = 0.3mm]
    \draw (0,0)--(0,1);
    \draw (0,1)--(5,1);
    \draw (5,1)--(5,0);
    \draw (5,0)--(0,0);
    \foreach \i in {1,2,...,4}{
        \draw [dashed, color = darkgray] (\i,0)--(\i,1);
        \draw (\i,0.05)--(\i,-0.05);
        \draw (\i,1.05)--(\i,0.95);
    }
    \draw (0,0.5) node[left] {$0$};
    \draw (5,0.5) node[right] {$0$};
    \draw (1-0.5,1) node[above] {$5$};
    \draw (1-0.5,0.5) node[color = darkgray] {\footnotesize $5$};
    \foreach \i in {1,2,3,4}{
        \draw (1+\i-0.5,1) node[above] {$\i$};
        \draw (1+\i-0.5,0.5) node[color = darkgray] {\footnotesize $\i$};
    }
    \draw (0.5,0) node[below] {$2$};
    \draw (1.5,0) node[below] {$5$};
    \draw (2.5,0) node[below] {$4$};
    \draw (3.5,0) node[below] {$1$};
    \draw (4.5,0) node[below] {$3$};
    \end{tikzpicture}
    \caption{The surface is symmetric by rotation around the centre of square 2 (after the side relabelling $(1)(2,4)(3,5)$ induced by the rotation).}
    \label{fig:canonical-rotation}
\end{figure}
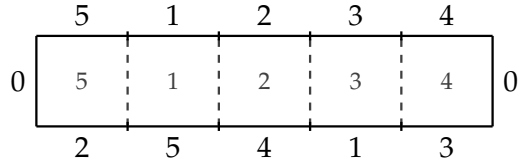

In the proof below, we will call this rotation in Example~\ref{ex:double-cover:rotation} the \emph{canonical rotation} and we will analyse the relabelling forced by such a canonical rotation in order to arrive at a contradiction.

We will also give the construction of $(g-3)!!$ many odd genus AMN origamis that are orientation double covers. Here, we give an example of this construction in genus 5 that is rotationally symmetric about the centre of square 2 (we will prove that this must be the centre of the rotation). 

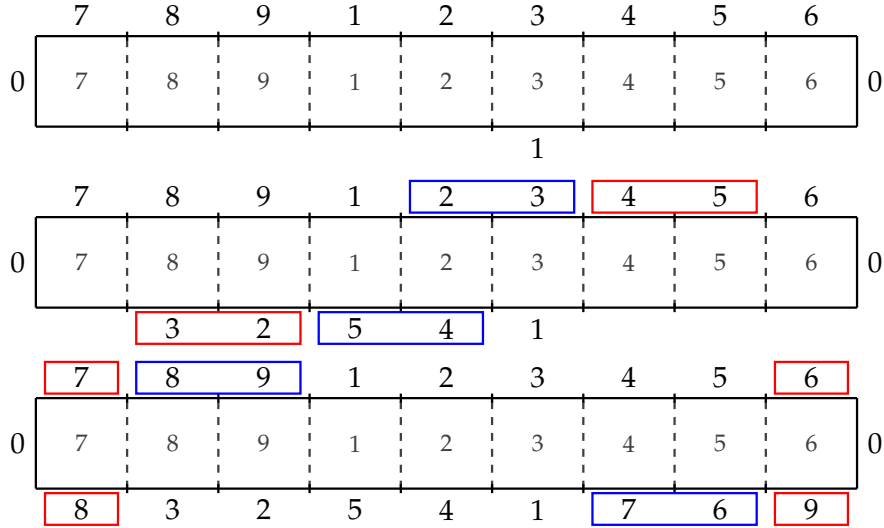
\begin{figure}[t]
    \centering
    \begin{tikzpicture}[scale = 1.2, line width = 0.3mm]
    \foreach \j in {0,2,4}{
    \draw (0,0-\j)--(0,1-\j);
    \draw (0,1-\j)--(9,1-\j);
    \draw (9,1-\j)--(9,0-\j);
    \draw (9,0-\j)--(0,0-\j);
    \foreach \i in {1,...,8}{
        \draw [dashed, color = darkgray] (\i,0-\j)--(\i,1-\j);
        \draw (\i,0.05-\j)--(\i,-0.05-\j);
        \draw (\i,1.05-\j)--(\i,0.95-\j);
    }
    \draw (0,0.5-\j) node[left] {$0$};
    \draw (9,0.5-\j) node[right] {$0$};
    \foreach \i in {7,...,9}{
        \draw (\i-6-0.5,1-\j) node[above] {$\i$};
        \draw (\i-6-0.5,0.5-\j) node[color = darkgray] {\footnotesize $\i$};
    }
    \foreach \i in {1,...,6}{
        \draw (3+\i-0.5,1-\j) node[above] {$\i$};
        \draw (3+\i-0.5,0.5-\j) node[color = darkgray] {\footnotesize $\i$};
    }
    \draw (5.5,0-\j) node[below] {$1$};
    }
    \draw (1.5,-2) node[below] {$3$};
    \draw (2.5,-2) node[below] {$2$};
    \draw (3.5,-2) node[below] {$5$};
    \draw (4.5,-2) node[below] {$4$};
    \draw[color = red] (1.1,-2.05) -- (2.9,-2.05) -- (2.9,-2.4) -- (1.1,-2.4) -- cycle;
    \draw[color = red] (6.1,-0.95) -- (7.9,-0.95) -- (7.9,-0.6) -- (6.1,-0.6) -- cycle;
    \draw[color = blue] (3.1,-2.05) -- (4.9,-2.05) -- (4.9,-2.4) -- (3.1,-2.4) -- cycle;
    \draw[color = blue] (4.1,-0.95) -- (5.9,-0.95) -- (5.9,-0.6) -- (4.1,-0.6) -- cycle;
    \draw (0.5,-4) node[below] {$8$};
    \draw (1.5,-4) node[below] {$3$};
    \draw (2.5,-4) node[below] {$2$};
    \draw (3.5,-4) node[below] {$5$};
    \draw (4.5,-4) node[below] {$4$};
    \draw (6.5,-4) node[below] {$7$};
    \draw (7.5,-4) node[below] {$6$};
    \draw (8.5,-4) node[below] {$9$};
    \draw[color = red] (0.1,-4.05) -- (0.9,-4.05) -- (0.9,-4.4) -- (0.1,-4.4) -- cycle;
    \draw[color = red] (0.1,-2.95) -- (0.9,-2.95) -- (0.9,-2.6) -- (0.1,-2.6) -- cycle;
    \draw[color = red] (8.1,-4.05) -- (8.9,-4.05) -- (8.9,-4.4) -- (8.1,-4.4) -- cycle;
    \draw[color = red] (8.1,-2.95) -- (8.9,-2.95) -- (8.9,-2.6) -- (8.1,-2.6) -- cycle;
    \draw[color = blue] (6.1,-4.05) -- (7.9,-4.05) -- (7.9,-4.4) -- (6.1,-4.4) -- cycle;
    \draw[color = blue] (1.1,-2.95) -- (2.9,-2.95) -- (2.9,-2.6) -- (1.1,-2.6) -- cycle;
    \end{tikzpicture}
    \caption{A construction of a genus 5 AMN origami that is an orientation double cover. It is symmetric by rotation around the centre of square 2.}
    \label{fig:genus-5-double}
\end{figure}

\begin{example}\label{ex:double-cover:pairs}
Consider the start of the construction with square 2 placed centrally, as in the top of Figure~\ref{fig:genus-5-double}. We first place the pair $(3,2)$ somewhere on the bottom. As usual, we cannot place it below $(1,2)$ as this will close the cycle. Suppose that we place it below $(8,9)$. We see that $(3,2)$ on the bottom is rotated to $(4,5)$ on the top. So, in order to preserve rotational symmetry, $(2,3)$ on the top must be rotated to $(5,4)$ on the bottom which forces us to place $(5,4)$ below $(1,2)$. Note that this process of preserving rotational symmetry prevents us from having placed $(3,2)$ under $(6,7)$. Indeed, in this case the rotational symmetry would have forced us to place $(7,6)$ under $(1,2)$ which would have closed the cycle. Next, since we placed $(3,2)$ below $(8,9)$, we place $(9,8)$. We cannot place it below $(4,5)$ as this would close the cycle, so we must place it below $(6,7)$. To preserve rotational symmetry, we place $(7,6)$ below $(5,4)$ and we are done.
\end{example}

\begin{theorem}
    For all odd $g\geq 3$, exactly $(g-3)!!$ of the $(g-2)!$ odd genus AMN origamis as constructed in Subsection~\ref{subsec:AMN-odd} are orientation double covers of quadratic differentials in the stratum $\mathcal{Q}_{\frac{g-1}{2}}(2g-3,-1^{3})$.

    None of the remaining origamis considered in this paper are orientation double covers.
\end{theorem}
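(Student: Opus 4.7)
The plan is to characterise orientation double covers at the permutation level: the origami $(\sigma, \tau)$ is such a cover precisely when the pair $(\sigma^{-1}, \tau^{-1})$ obtained from the $-I$-action is simultaneously conjugate to $(\sigma, \tau)$, i.e., there is $\rho$ with $\rho \sigma \rho^{-1} = \sigma^{-1}$ and $\rho \tau \rho^{-1} = \tau^{-1}$. For $\sigma = \sigma_g$ a direct computation forces $\rho(j) = c + 1 - j \bmod (2g-1)$ for some $c$; since $\rho$ is an involution of an odd-sized set it must fix a point, so $c$ is odd and $\rho$ is geometrically the rotation by $\pi$ around the centre of square $(c+1)/2$. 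The whole problem reduces to asking which $\tau$ satisfy $\rho \tau \rho = \tau^{-1}$.

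For the odd genus AMN origamis, I would first use the hard relation $\tau(1) = 3$ to derive $\tau^{-1}(c) = c - 2 \bmod (2g-1)$, and then exploit the parity structure built into the AMN template --- odd labels at position 1 and at even positions $\geq 4$; even labels at position 2 and at odd positions $\geq 5$; the special label $1$ at position $3$ --- to force $c = 3$. Thus the only candidate involution is $\rho(j) = 4 - j$, the rotation around the centre of square 2. Comparing the explicit cycle
\[\tau = (1, 3, 2k_1, 2k_2+1, 2k_3, \ldots, 2k_{g-2}, 2, 2k_1+1, 2k_2, \ldots, 2k_{g-2}+1)\]
entry-by-entry with the cycle-reverse $\tau^{-1}$ and with $\rho \tau \rho$ (obtained by applying $\rho$ to each entry of $\tau$) reduces the symmetry to the palindromic condition $k_i + k_{g-1-i} = g+1$ for $1 \leq i \leq g-2$. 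This forces $k_{(g-1)/2} = (g+1)/2$ and matches the remaining $(g-3)/2$ position-pairs with the label-pairs $\{k, g+1-k\}$; counting $((g-3)/2)!$ assignments times $2^{(g-3)/2}$ orderings within pairs gives $(g-3)!!$ origamis, as required. The quotient stratum is then identified via Riemann-Hurwitz applied to $X \to Y := X/\langle -I\rangle$: the unique zero of $\omega$ is $\rho$-fixed (as $\rho$ permutes the corners identifying to it) and descends to a zero of order $2g-3$ of the induced quadratic differential, while the other three fixed points lie in the smooth locus and project to simple poles, giving $g_Y = (g-1)/2$ and $(Y,q) \in \mathcal{Q}_{(g-1)/2}(2g-3, -1^3)$.

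For the remaining origamis (even genus AMN and the three generalisations of Section~\ref{sec:gen-const}), the strategy is to rule out any $-I$-symmetry by showing no $c$ satisfies $\tau^{-1}(\rho(p)) = \rho(\tau(p))$. In each of the three generalisation constructions the initial template of $\tau$ fixes $\tau^{-1}$ at five or seven positions (not merely at $\tau^{-1}(3) = 1$), and substituting each used label into the symmetry identity gives simultaneous constraints on $\tau^{-1}(c-2), \tau^{-1}(c-3), \ldots$; a finite case check on $c \in \{1, \ldots, 2g-1\}$ then produces a contradiction for every $c$. The main obstacle will be the even genus AMN case, where the extra forced values $\tau^{-1}(2g-2) = 2g-1$, $\tau^{-1}(2g-1) = k$ and $\tau^{-1}(\eta(k)) = 2g-2$ interact non-trivially with the underlying odd-genus AMN permutation $\eta$; for this I would combine parity and cycle-structure arguments, first forcing $c = 3$ (as in the odd-genus analysis) and then exhibiting incompatibility between $\tau^{-1}(2g-1) = k$ (with $k$ odd, between $3$ and $g-3$) and the palindromic structure that the symmetry would inherit from $\eta$.
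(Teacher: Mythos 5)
Your overall strategy matches the paper's: reduce the $-I$-symmetry to a simultaneous-conjugation condition, observe that the conjugating element $\rho$ must be a rotation around some square, case-check on the centre, and identify the quotient stratum via Riemann--Hurwitz. Two remarks on the odd-genus AMN part, which is the most developed part of your proposal. First, a minor slip: $c$ need \emph{not} be odd. The involution $\rho(j)=c+1-j \pmod{2g-1}$ on an odd-sized set always has a unique fixed point, namely the unique solution of $2j\equiv c+1 \pmod{2g-1}$, whether $c$ is odd or even (if $c$ is even the fixed square is $c/2+g$, not $(c+1)/2$). Fortunately this does not affect your conclusion, since the parity structure of $\tau^{-1}$ together with $\tau^{-1}(c)=c-2$ rules out every $c\ne 3$ of either parity. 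Second, and more positively: your explicit palindromic characterisation $k_i+k_{g-1-i}=g+1$, forcing $k_{(g-1)/2}=(g+1)/2$ and giving the count $((g-3)/2)!\cdot 2^{(g-3)/2}=(g-3)!!$, is a genuinely cleaner packaging than the paper's argument, which instead builds the symmetric origamis one pair at a time and counts the choices along the way. Your version makes both necessity and sufficiency transparent and is a nice improvement.

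The second half of your proposal — ruling out the even-genus AMN origamis and the three generalisations of Section~\ref{sec:gen-const} — is only a sketch, and here there is a concrete gap. You write that $k$ lies ``between $3$ and $g-3$''; this is taken from the statement of the construction in Subsection~\ref{subsec:AMN-even}, but that range is a typo (it is inconsistent with the count $(g-3)(g-3)!$ and with Example~\ref{ex:ori:AMNeven}, where $k=7$ for $g=6$). The correct range, used later in Section~\ref{sec:mono}, is $k\in\{3,5,\ldots,2g-3\}\setminus\{\eta^{-1}(1)\}$, so $k$ can be as large as $2g-3$. This matters: the case $c=2g-1$ (rotation around square $g$) is not eliminated by the single relation $\tau^{-1}(c)=c-2$ when $k=2g-3$; the paper has to handle it by a separate argument using the images of the label $2g-1$. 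Likewise, for $c=3$ in the even-genus AMN case and in the three generalisations, the first relation alone does not give a contradiction, and the paper finds for each construction a different label ($2g-2$, $2g-1$, $4$, or $5$) whose images have mismatched parities. Your proposal correctly anticipates that further relations from the fixed template of $\tau$ are needed, but none of these sub-cases is actually worked out, so as written this part is a plan rather than a proof.
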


\begin{proof}
For each $1\leq i\leq 2g-1$, we suppose that the canonical rotation is centred at square $i$. We will see if the induced relabelling of the sides gives rise to a contradiction.

\textbf{Case 1: $\boldsymbol{4\leq i\leq g}$.} If we have an odd genus AMN origami it can be checked that the side labelled 1 on the top gets rotated to an even number on the bottom, whereas the side labelled 1 on the bottom gets rotated to an odd number on the top. The images of 1 must therefore be different but this contradicts the existence of the relabelling induced by the canonical rotation. The same argument works for the generalised constructions of Section~\ref{sec:gen-const}. For the even AMN construction, rotation around square $g$ will send $1$ to $k$ (where $k$ is the choice of odd number used in the even genus construction), so the parity issue above cannot be used. Instead, it can be checked that $2g-1$ on the top gets sent to an odd number on the bottom, while $2g-1$ on the bottom gets sent to an even number on the top, so a similar contradiction is reached.

\textbf{Case 2: $\boldsymbol{i = 3}$.} For the AMN constructions in all genus, it can be checked that the parity of the images of 1 disagree as above. For the generalisations of Section~\ref{sec:gen-const}, it can be directly checked that the images of 1 are distinct as they lie in the fixed prefixes of those constructions.

\textbf{Case 3: $\boldsymbol{i = 2}$.} In this case, 1 on the top is sent to 1 on the bottom and so this provides no contradiction. In fact, no contradiction arises in the odd genus AMN construction. In the even genus AMN construction, it can be checked that $2g-2$ has images with differing parities and we get the desired contradiction. For the construction of Subsection~\ref{subsec:gen-odd-even}, a parity contradiction arises from the images of $2g-1$. For the construction of Subsection~\ref{subsec:gen-even-odd}, the images of 4 can be used. Finally, for the construction of Subsection~\ref{subsec:gen-even-even}, the images of $5$ can be used.

\textbf{Case 4: $\boldsymbol{i = 1}$ or $\boldsymbol{g+3\leq i \leq 2g-1}$.} The parity of the images of 1 can be used for the odd genus AMN construction, and the constructions of Subsections~\ref{subsec:gen-odd-even} and~\ref{subsec:gen-even-even}. For the even genus AMN construction, the parity of the images of 1 can be used apart from in the situation where 1 on the top is sent to $2g-2$ on the bottom (where 1 on the bottom is also sent to an even number on the top). However, in this case, the parity of the images of $k$ will differ. For the construction of Subsection~\ref{subsec:gen-even-odd}, if $i = 1$ then 1 on the top gets sent to 4 on the bottom but 1 on the bottom gets sent to $2g-2\neq 4$ on the top. For the remaining choices of $i$, the parity of the images of 1 can be used.

\textbf{Case 5: $\boldsymbol{i = g + 2}$.} In all cases, 1 on the bottom is sent to 2 on the top and 1 on the top gets sent to the side to the right of 1 on the bottom which is never equal to 2.

\textbf{Case 6: $\boldsymbol{i = g+1}$.} In all cases, 1 on the bottom is sent to $2g-1$ on the top and 1 on the top is sent to the side to the left of 1 on the bottom which is never $2g-1$.

So we see that a contradiction arises in all cases apart from $i = 2$ in the odd genus AMN construction. We will now argue that there are $(g-3)!!$ many of these origamis that are orientation double covers, using the same logic as in Example~\ref{ex:double-cover:pairs}.

Start building the odd genus AMN origami with square 2 in the centre. Add the pair $(3,2)$ as normal in the construction. We cannot place it under $(1,2)$ as this closes the cycle, and we cannot place it under $(g+1,g+2)$ as the forced rotational symmetry would place $(g+2,g+1)$ under $(1,2)$ and the cycle will be closed again. So we have $(g-3)$ choices of a pair $(2j,2j+1)$ under which to place $(3,2)$, and we then place the required pair to preserve rotational symmetry (using up another choice of pair). Next, we place the pair $(2j+1,2j)$. We have one choice forbidden as it would close a cycle, so we have $(g-5)$ choices. Again, we place the pair forced by rotational symmetry. This continues until we close the vertical cycle after placing the final pair. This process produces $(g-3)!!$ many origamis, as claimed.

Finally, we see that in the above construction there are 4 points fixed by the rotation around square 2. These points are the point at the centre of square 2, the point at the center of the side labelled 1, the point at the centre of the side labelled 0 (the vertical side), and the point corresponding to the vertices of the squares (the zero). It can then be checked (e.g., using Riemann-Hurwitz) that the quotient surface has genus $\frac{g-1}{2}$ and carries a quadratic differential with a zero of order $2g-3$ (the image of the zero on the origami) and three poles (the images of the other fixed points). Hence, the resulting surface lies in the stratum $\mathcal{Q}_{\frac{g-1}{2}}(2g-3,-1^{3})$.
\end{proof}

Since the property of being an orientation double cover is preserved by the action of $\SL(2,\Z)$, we obtain the following.

\begin{corollary}
    For $g\geq 5$, the odd genus AMN origamis lie in at least two $\SL(2,\Z)$-orbits. For $g = 3$, there is only one AMN origami and thus only one orbit.
\end{corollary}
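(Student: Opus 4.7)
The plan is short because the hard work has already been done in Theorem~\ref{thm:double-covers}. The key observation is that the property of being an orientation double cover is $\SL(2,\Z)$-invariant: if $(X,\omega)$ admits an involution $\iota$ with $\iota^{*}\omega = -\omega$, and $A\in\SL(2,\Z)$, then the same $\iota$ satisfies $\iota^{*}(A\cdot\omega) = -A\cdot\omega$, so $A\cdot(X,\omega)$ is still an orientation double cover of a half-translation surface (indeed, of $A\cdot(Y,q)$, where $(Y,q)$ is the quotient). I would spell this invariance out in one or two lines at the start.

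From there the argument splits into the two cases. For $g=3$, the odd genus AMN construction produces $(g-2)!=1$ origami, so there is nothing to prove. For odd $g\geq 5$, I would simply combine the two halves of Theorem~\ref{thm:double-covers}: exactly $(g-3)!!$ of the $(g-2)!$ odd genus AMN origamis are orientation double covers, while none of the remaining origamis considered in the paper (in particular none of the other $(g-2)!-(g-3)!!$ odd genus AMN origamis) are orientation double covers. A quick check shows that both of these counts are strictly positive when $g\geq 5$ is odd: $(g-3)!!\geq 2!!=2$, and $(g-2)!-(g-3)!!\geq 3!-2=4$. Thus the set of odd genus AMN origamis partitions into a non-empty set of orientation double covers and a non-empty set of non-orientation double covers, and by the $\SL(2,\Z)$-invariance above these two sets are unions of distinct $\SL(2,\Z)$-orbits, yielding at least two orbits.

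There is no real obstacle here; the only thing worth being careful about is making the invariance statement airtight, since the action of $\SL(2,\Z)$ on origamis is usually phrased combinatorially via $(\sigma,\tau)\mapsto(\sigma,\tau\sigma^{-1})$ and $(\sigma,\tau)\mapsto(\sigma\tau^{-1},\tau)$. I would therefore either argue geometrically using the central element $-I\in\SL(2,\R)$ (which commutes with every $A\in\SL(2,\Z)$ and hence carries the fixed locus of $-I$ to itself under $A$), or, equivalently, note that if $(\sigma,\tau)$ is conjugate to $(\sigma^{-1},\tau^{-1})$ by some $\rho$, then $T\cdot(\sigma,\tau)=(\sigma,\tau\sigma^{-1})$ is conjugate to $(\sigma^{-1},\sigma\tau^{-1})=(\sigma^{-1},(\tau\sigma^{-1})^{-1})$ by the same $\rho$, and similarly for $S$. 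Either framing completes the proof.
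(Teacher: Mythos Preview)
Your proposal is correct and follows essentially the same approach as the paper: the paper's proof is the single sentence ``Since the property of being an orientation double cover is preserved by the action of $\SL(2,\Z)$, we obtain the following,'' and you have unpacked exactly this, together with the positivity checks on the counts coming from Theorem~\ref{thm:double-covers}. Your added remarks on verifying the $\SL(2,\Z)$-invariance (either via commutation with $-I$ or combinatorially via the generators $T,S$) are a welcome elaboration but not a departure from the paper's argument.
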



\section{$\SL(2,\Z)$-orbit conjectures}\label{sec:SL2Z}

The work of Section~\ref{sec:double-covers} shows that the odd genus AMN origamis lie in at least two $\SL(2,\Z)$-orbits. It also shows that the remaining origamis constructed in this paper have no $\SL(2,\Z)$-orbit constraints arising from being a double cover. Computer investigations carried out with the \texttt{surface\_dynamics}~\cite{surf-dyn} package of SageMath~\cite{Sage} suggest that there are no further restrictions on the $\SL(2,\Z)$-orbits. That is, the situation is described by Table~\ref{tab:SL2Z-orbit}.

\begin{table}[H]
    \centering
    \begin{tabular}{|c|c|c|}
    \hline
        Genus & no. of $\SL(2,\Z)$-orbits of  & no. of $\SL(2,\Z)$-orbits of \\
        & origamis constructed in $\odd(2g-2)$ & origamis constructed in $\even(2g-2)$ \\
    \hline
        3 & 1 & n/a \\
        4 & 1 & 1 \\
        5 & 2 & 1 \\
        6 & 1 & 1 \\
        7 & 2 & 1 \\
        8 & 1 & 1 \\
    \hline
    \end{tabular}
    \caption{The number of $\SL(2,\Z)$ orbits containing the AMN origamis and their generalisations constructed in this paper}
    \label{tab:SL2Z-orbit}
\end{table}

It is interesting that the even genus AMN origamis have the same $\SL(2,\Z)$-orbits as the generalisations we have introduced in this paper. One might hope to be able to show that the combinatorial moves of Proposition~\ref{prop:same-spin} are realisable inside $\SL(2,\Z)$. However, we know that this cannot be true in general due to the orientation double covers that exist in the case of the odd genus AMN origamis. It therefore seems difficult to prove that the pattern in Table~\ref{tab:SL2Z-orbit} continues in higher genus. However, we conjecture that this is indeed the case (see Conjecture~\ref{con:SL2Z}).


\section{Simple monodromy groups}\label{sec:simple}

Below, $C_p$ denotes the cyclic group of order $p$ and $\text{AGL}(1,p)$ denotes the affine general linear group of transformations of $\mathbb{F}_{p}$ of the form $x\mapsto \alpha x + \beta$, $\alpha\in\mathbb{F}_{p}^{*},\beta\in\mathbb{F}_{p}$. The groups $\PSL(d,q)$ and $\PGL(d,q)$ are the projective special linear group and projective general linear group, respectively, of the projective space $\PG(d-1,q)$ of dimension $d-1$ over the finite field $\mathbb{F}_{q}$, where $q = p^{k}$ is some prime power. The group $\PGamL(d,q)$ is the projective semilinear group defined by
\[\PGamL(d,q) := \PGL(d,q) \rtimes \Gal(\mathbb{F}_{q}/\mathbb{F}_{p}).\]
The groups $M_{11}$ and $M_{23}$ are the Mathieu groups of degree 11 and 23, respectively.

We will prove Theorem~\ref{thm:simple} by establishing the following group theoretic result.

\begin{theorem}\label{thm:grp-simple}
    Let $G$ be a primitive permutation group of degree $n$ that is generated by two $n$-cycles whose commutator is also an $n$-cycle. Then $G$ is $\PGamL(2,8)$, the Mathieu group $M_{11}$ with $n = 11$, the Mathieu group $M_{23}$ with $n = 23$, $PSL(d,q)$ with $n = \frac{q^{d}-1}{q-1}$ and $\gcd(d,q-1) = 1$, or $\Alt_{n}$.

    In particular, with the exception of $\PGamL(2,8)$, $G$ is simple.
\end{theorem}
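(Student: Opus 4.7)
The plan is to reduce to the classification of primitive permutation groups of degree $n$ containing an $n$-cycle (due in its modern form to Jones, with earlier contributions by Burnside, Schur, and Feit in the prime-degree case), and then eliminate the families that are incompatible with the commutator hypothesis. First, since the $n$-cycle $[\sigma,\tau]$ is a commutator, it lies in $\Alt_n$, forcing $n$ odd; hence $\sigma,\tau$ are themselves even, $G\leq \Alt_n$, and $\Sym_n$ is immediately excluded. The classification then leaves the following possibilities for $G$: the full alternating group $\Alt_n$; a group $C_p \leq G \leq \text{AGL}(1,p)$ when $n=p$ is prime; a group with $\PGL(d,q) \leq G \leq \PGamL(d,q)$ acting on points of $\PG(d-1,q)$, for which $n=(q^d-1)/(q-1)$; or one of the sporadic actions $\PSL(2,11)$ on $11$ points, $M_{11}$ on $11$ points, or $M_{23}$ on $23$ points (the even-degree Mathieu actions are excluded by $n$ being odd).

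The affine case vanishes at once: every $p$-cycle in $\text{AGL}(1,p)$ is a translation $x\mapsto x+c$, and translations commute, so $[\sigma,\tau]=e$. The sporadic exception $\PSL(2,11)$ on $11$ points is eliminated by a direct computation using the character table (or a GAP calculation) of the structure constants counting triples of $11$-cycles with a prescribed product; one verifies that no pair of $11$-cycles has an $11$-cycle commutator inside $\PSL(2,11)$. This leaves the projective family, $\Alt_n$, and the Mathieu groups $M_{11}$ and $M_{23}$, which are all on the allowed list.

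For the projective family the key observation is that $[\PGamL(d,q),\PGamL(d,q)] = \PSL(d,q)$, so $[\sigma,\tau]$ is always contained in $\PSL(d,q)$. If $\gcd(d,q-1)>1$, then $\PSL(d,q)$ is a proper subgroup of $\PGL(d,q)$, and the image in $\PSL(d,q)$ of a Singer cycle of $\PGL(d,q)$ has order $n/\gcd(d,q-1)<n$. A regular cyclic subgroup in the action on projective points is forced to come from a Singer cycle, so $\PSL(d,q)$ contains no element acting as an $n$-cycle; this contradicts $[\sigma,\tau]$ being one. Therefore $\gcd(d,q-1)=1$, which yields $\PSL(d,q)=\PGL(d,q)$, the form listed in the theorem.

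The final and hardest step is to rule out proper extensions of $\PSL(d,q)$ by field automorphisms. Here one must analyse when an element $(g,\phi)\in \PGamL(d,q)$ with $\phi$ a nontrivial field automorphism can act as an $n$-cycle on projective points: the order of such an element is a multiple of $|\phi|$, so requiring it to equal $n=(q^d-1)/(q-1)$ imposes strong arithmetic compatibility between $d$, $q$, and $f$ (where $q=p^f$). One then checks which of these possibilities can actually be realised in a generating pair having an $n$-cycle commutator and producing a primitive action. This case analysis is the main obstacle of the proof; the outcome is that the only surviving configuration outside $\PSL(d,q)$ itself is $(d,q,f)=(2,8,3)$, giving the exceptional degree-$9$ action of $\PGamL(2,8)$ in $\odd(8)$. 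Combined with the earlier steps, this completes the classification.
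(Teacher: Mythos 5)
Your overall structure matches the paper's: reduce to Jones's classification of primitive groups containing a full cycle, then eliminate families using the commutator hypothesis. The eliminations of $\Sym_{n}$ (via $n$ odd), the affine family (translations commute), and $\PSL(2,11)$ (direct check), and the observation that $[\sigma,\tau]\in\PSL(d,q)$ forces $\gcd(d,q-1)=1$, all agree with the paper's argument.

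The gap is in what you yourself flag as ``the final and hardest step'' --- ruling out groups strictly between $\PGL(d,q)$ and $\PGamL(d,q)$. You state the correct outcome (only $\PGamL(2,8)$ survives) but your argument ends at ``one then checks which of these possibilities can actually be realised\ldots'' without carrying out the arithmetic case analysis; this is precisely the substantive content of a published theorem. The paper avoids the work by invoking a second result of Jones (\cite[Theorem 1]{Jones1}), which classifies abelian subgroups of order $n=(q^{d}-1)/(q-1)$ in $\PGamL(d,q)$: they are all Singer subgroups inside $\PGL(d,q)$, except in $\PGamL(2,3)$ (degree $4$, even, hence excluded) and $\PGamL(2,8)$ (degree $9$). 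From this, a group generated by two $n$-cycles in case (3) of the classification is immediately forced to equal $\PGL(d,q)$ or $\PGamL(2,8)$. You should either cite this theorem or supply the full analysis; as written the step is unjustified. A smaller point: the assertion $[\PGamL(d,q),\PGamL(d,q)]=\PSL(d,q)$ is not true in general --- for instance $\PGamL(3,4)/\PSL(3,4)\cong S_{3}$ is nonabelian, so there the derived subgroup is $\PGL(3,4)$; what the argument actually needs, once $G$ is pinned down to $\PGL(d,q)$, is only that $\PGL(d,q)'=\PSL(d,q)$, which is the statement the paper uses.
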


The computer investigations discussed below were performed using GAP~\cite{Gap} and the \texttt{surface\_dynamics}~\cite{surf-dyn} package of SageMath~\cite{Sage}.

\subsection{Primitive permutation groups containing a cycle}

We make use of the following theorem of Jones.

\begin{theorem}[{\cite[Theorem 3]{Jones1}}]
    A primitive permutation group $G$ of finite degree $n$ has a cyclic regular subgroup if and only if one of the following holds:
    \begin{enumerate}
        \item $C_{p} \leq G \leq \text{AGL}(1,p)$ where $n = p$ is prime;
        \item $G = \Sym_{n}$ for some $n\geq 2$, or $G = \Alt_{n}$ for some odd $n\geq 3$;
        \item $PGL(d,q) \leq G \leq \PGamL(d,q)$ where $n = \frac{q^{d}-1}{q-1}$ for some $d\geq 2$;
        \item $G = \PSL(2,11), M_{11},$ or $M_{23}$ where $n = 11, 11$ or $23$, respectively.
    \end{enumerate}
\end{theorem}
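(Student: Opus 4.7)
The plan is to use the O'Nan–Scott theorem to split primitive permutation groups into five structural types, and then invoke the classification of finite simple groups (CFSG) to finish the almost simple case. Throughout, let $G$ be a primitive permutation group of degree $n$ with a cyclic regular subgroup $C = \langle \sigma \rangle$, so $\sigma$ is an $n$-cycle.

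First I would apply the O'Nan–Scott theorem, which partitions primitive permutation groups into the affine (HA), almost simple (AS), simple diagonal (SD), product action (PA), and twisted wreath (TW) types. In the SD, PA, and TW types the socle has the form $T^k$ with $k \geq 2$ for some non-abelian simple $T$, and the point set inherits a natural product or diagonal factorisation. Any cyclic regular subgroup would have to permute the $k$ factors transitively (so via a $k$-cycle, forcing $k \mid n$) while acting compatibly on each factor. A direct order/centraliser comparison — the centraliser of a cyclic regular subgroup in $\Sym_n$ is itself, and the socle has too rich a centraliser structure to sit inside such a cyclic group — rules out all three types.

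Next, for the affine case, write $G = V \rtimes G_0$ with $V \cong \mathbb{F}_p^d$ acting regularly by translation and $n = p^d$. Any regular subgroup $C$ satisfies $C \cap G_0 = 1$, so $C$ projects injectively to a $p$-subgroup of $G_0 \leq \GL(d,p)$ via $G/V$. A Sylow-theoretic argument using the fact that the Sylow $p$-subgroup of $\GL(d,p)$ is a unipotent upper-triangular group (hence has exponent much smaller than $p^d$ for $d \geq 2$) forces $d = 1$, yielding case (1), $C_p \leq G \leq \AGL(1,p)$.

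The substantive work, and the main obstacle, is the almost simple case: given the socle $T$ of $G$, enumerate all non-abelian finite simple $T$ admitting a transitive permutation action of some degree $n$ with a regular cyclic subgroup. By CFSG, $T$ is alternating, a group of Lie type, or sporadic. The alternating groups $\Alt_n$ (odd $n$) and symmetric groups $\Sym_n$ always admit such a cycle, giving case (2). For classical groups one shows, via the structure of their maximal subgroups, that only the natural action of $\PSL(d,q)$ on $\PG(d-1,q)$ survives, with the regular cyclic subgroup being a Singer cycle; then one verifies that $\PGL(d,q) \leq G \leq \PGamL(d,q)$ is the precise overgroup range compatible with this, giving case (3). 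Exceptional Lie-type groups are ruled out by a delicate analysis of their cyclic subgroup orders versus their permutation degrees. Finally, sporadic groups are handled by direct inspection of their maximal subgroup lists (in the ATLAS), leaving only $\PSL(2,11)$, $M_{11}$, $M_{23}$ with degrees $11$, $11$, $23$ respectively, which is case (4). The hard part is the Lie-type analysis, where one must combine Zsigmondy's theorem on primitive prime divisors with detailed knowledge of subgroup structure to rule out all families other than $\PSL(d,q)$ in its projective action.
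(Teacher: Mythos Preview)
The paper does not prove this theorem; it is quoted verbatim as \cite[Theorem~3]{Jones1} and used as a black box in Section~\ref{sec:simple}. There is therefore no ``paper's own proof'' to compare against. Your outline is, in spirit, the approach Jones takes (O'Nan--Scott reduction followed by a CFSG-based enumeration in the almost simple case), so in that sense your strategy is correct.

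That said, your affine-case argument contains a genuine error. You claim that $C\cap G_0=1$ implies $C$ projects \emph{injectively} to $G/V\cong G_0$, but the kernel of that projection is $C\cap V$, not $C\cap G_0$, and there is no reason for $C\cap V$ to be trivial (indeed when $d=1$ one has $C=V$). Moreover, the exponent bound you invoke fails already for $d=2$, $p=2$: the group $\text{AGL}(2,2)\cong\Sym_4$ acts on four points and certainly contains a $4$-cycle, so a cyclic regular subgroup exists. This is not a counterexample to the theorem --- $\Sym_4$ falls under case~(2) --- but it shows your argument cannot force $d=1$ on its own. The cleaner route is to note that a regular \emph{abelian} subgroup of an affine primitive group is self-centralising and hence must coincide with the socle $V$; since $V$ is elementary abelian and $C$ is cyclic, $d=1$ follows. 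Your SD/PA/TW sketch is also thin: the phrase ``a direct order/centraliser comparison'' hides real work, and in practice one argues via the structure of point stabilisers in these types rather than centralisers alone.
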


It can be checked that the $p$-cycles in $\text{AGL}(1,p)$ are contained in $C_{p}$. In $\PGL(d,q)$, there is a natural construction of an $n$-cycle called a Singer cycle (see~\cite{Singer} and~\cite{Hir}). Jones proved the following result that establishes that, with the exception of $\PGamL(2,8)$, the only $n$-cycles in $\PGamL(d,q)$ are the Singer cycles contained in $\PGL(d,q)$.

\begin{theorem}[{\cite[Theorem 1]{Jones1}}]
    Let $A$ be an abelian subgroup of order $n = \frac{q^{d}-1}{q-1}$ in $\PGamL(d,q)$, where $d\geq 2$ and $q$ is any prime power.
    \begin{enumerate}
        \item If $d\geq 3$, or if $d = 2$ and $q \neq 3$ or 8, then $A$ is a Singer subgroup.
        \item In $\PGamL(2,3)$, with $n = 4$, $A$ is a Singer subgroup, or a transitive normal Klein four-group contained in $\PSL(2,3)$, or one of three conjugate intransitive Klein four-groups.
        \item In $\PGamL(2,8)$, with $n = 9$, $A$ is a Singer subgroup, or one of a conjugacy class of cyclic regular subgroups not contained in $\PGL(2,8)$, or one of a conjugacy class of intransitive elementary abelian subgroups, also not contained in $\PGL(2,8)$.
    \end{enumerate}
\end{theorem}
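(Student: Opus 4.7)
The plan is to analyse $A$ via its interaction with the normal subgroup $\PGL(d,q) \trianglelefteq \PGamL(d,q)$. Writing $q = p^k$, set $A_0 = A \cap \PGL(d,q)$ and $\overline{A} = A/A_0$, so $\overline{A}$ embeds into $\Gal(\mathbb{F}_q/\mathbb{F}_p) \cong C_k$. I would first show that $A_0$ is essentially forced to be a Singer subgroup of $\PGL(d,q)$, and then rule out the Galois quotient except in the exceptional small cases.

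For the first step, lift to the preimage $\widetilde{A}_0 \leq \GL(d,q)$, which is still abelian, and apply the Jordan decomposition: $\widetilde{A}_0$ is the internal direct product of its semisimple and unipotent parts. The semisimple part is contained in some maximal torus of $\GL(d,q)$, and every such torus is a product $\prod_i \mathbb{F}_{q^{d_i}}^*$ for some composition $d = d_1 + \cdots + d_r$, of order $\prod_i(q^{d_i}-1)$. A number-theoretic comparison via Zsygmondy's theorem on primitive prime divisors of $q^d-1$ shows that when $d \geq 3$, or when $d=2$ and $q \notin \{2,3,8\}$, the integer $n = (q^d-1)/(q-1)$ cannot divide $\prod_i(q^{d_i}-1)$ for any proper composition; this forces the Singer torus corresponding to the trivial partition $[d]$. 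Because the Singer torus is regular semisimple and its centraliser in $\GL(d,q)$ is itself, it intersects every nontrivial unipotent subgroup trivially, so the unipotent part of $\widetilde{A}_0$ is trivial and $A_0$ lies in a Singer subgroup $S$ of order $n$. Counting orders then gives $A_0 = S$ or $A_0 \lneq S$ with $|\overline{A}|>1$.

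For the second step, take $s \in A_0$ generating the cyclic group $\langle A_0 \rangle \leq S$, and let $\sigma \in A$ project to a nontrivial $\phi \in \Gal(\mathbb{F}_q/\mathbb{F}_p)$. Since $A$ is abelian, $\sigma s \sigma^{-1} = s$; but conjugation by a Galois-acting element on $\PGL(d,q)$ is the entrywise application of $\phi$ to matrix representatives, so $s^\phi = s$. A Singer generator is regular semisimple with eigenvalues a full $\Gal(\mathbb{F}_{q^d}/\mathbb{F}_q)$-orbit generating $\mathbb{F}_{q^d}$ over $\mathbb{F}_p$; no nontrivial $\phi \in \Gal(\mathbb{F}_q/\mathbb{F}_p)$ fixes such an element, so $\overline{A}$ must be trivial and $A = S$. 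This establishes conclusion (1).

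The main obstacle is precisely the Zsygmondy-exceptional pairs $(d,q) = (2,3)$ and $(2,8)$ (where the previous two steps break down because there are genuine non-Singer abelian subgroups of the right order). I would handle these by direct computation in the groups $\PGamL(2,3) \cong \Sym_4$ (order $24$) and $\PGamL(2,8)$ (order $1512$): in the first, enumerating subgroups of order $4$ yields the Singer $C_4$, the normal Klein four-group in $\PSL(2,3) \cong \Alt_4$, and the three intransitive Klein four-groups of $\Sym_4$; in the second, the Galois group $C_3$ centralises an element of order $3$ in $\PGL(2,8)$ (since $q=8$ and the cube root of unity lies in $\mathbb{F}_2$), which is exactly what permits the appearance of a non-Singer cyclic regular subgroup of order $9$ and of intransitive elementary abelian subgroups $C_3 \times C_3$, all necessarily meeting $\PGL(2,8)$ in a group of order $3$. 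Verifying these explicit lists either by hand or in \textsc{Gap} completes the classification.
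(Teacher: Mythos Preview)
The paper does not prove this statement at all: it is quoted directly as \cite[Theorem 1]{Jones1} and used as a black box in establishing Theorem~\ref{thm:n-cycle-gen}. There is therefore no proof in the paper against which to compare your proposal.

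That said, your sketch is broadly along the lines of Jones's original argument (Zsygmondy primes to pin down the torus, then control of the Galois part), but two steps as written are not quite right. First, the preimage $\widetilde{A}_0 \leq \GL(d,q)$ of an abelian subgroup of $\PGL(d,q)$ need only satisfy $[\widetilde{A}_0,\widetilde{A}_0] \leq Z(\GL(d,q))$, i.e.\ it is nilpotent of class at most two, not abelian; the torus argument still goes through but needs to be phrased more carefully. Second, and more seriously, conjugation in $\PGamL(d,q)$ by an element $\sigma$ projecting to $\phi \in \Gal(\mathbb{F}_q/\mathbb{F}_p)$ is conjugation by $g\phi$ for some $g \in \PGL(d,q)$, not bare application of $\phi$ to matrix entries; so $\sigma s \sigma^{-1} = s$ does not immediately give $s^\phi = s$. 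Indeed the normaliser of a Singer subgroup in $\PGamL(d,q)$ genuinely contains Galois-type elements, which act on $S$ by $s \mapsto s^{p}$; the point is rather that the \emph{centraliser} of $S$ in $\PGamL(d,q)$ is $S$ itself, and you need to argue this (or the analogous statement for the relevant subgroup of $S$) directly.
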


So, with the aid of some computer checks for the sporadic examples, we obtain the following result.

\begin{theorem}\label{thm:n-cycle-gen}
    Let $G$ be a primitive permutation group of degree $n$ that is generated by two $n$-cycles. Then one of the following holds:
    \begin{enumerate}
        \item $G = C_{p}$ where $n = p$ is prime;
        \item $G = \Sym_{n}$ for some even $n\geq 2$, or $G = \Alt_{n}$ for some odd $n\geq 3$;
        \item $G = \PGL(d,q)$ where $n = \frac{q^{d}-1}{q-1}$ for some $d\geq 2$;
        \item $G = \PGamL(2,8)$ where $n = 9$;
        \item $G = \PSL(2,11), M_{11},$ or $M_{23}$ where $n = 11, 11$ or $23$, respectively.
    \end{enumerate}
\end{theorem}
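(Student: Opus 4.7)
The plan is to derive Theorem~\ref{thm:n-cycle-gen} essentially as a corollary of the two quoted theorems of Jones, followed by a routine case analysis. First I would observe that any $n$-cycle generating $G$ already produces a cyclic regular subgroup, so the first Jones theorem places $G$ in one of four families: (i) $C_p\leq G\leq \mathrm{AGL}(1,p)$ with $n=p$ prime; (ii) $G\in\{\Sym_n,\Alt_n\}$, the latter only for odd $n$; (iii) $\PGL(d,q)\leq G\leq \PGamL(d,q)$ with $n=(q^d-1)/(q-1)$; (iv) $G$ is $\PSL(2,11)$, $M_{11}$, or $M_{23}$. The task is then to upgrade each sandwich to an equality, or rule the family out, using the stronger hypothesis that $G$ is \emph{generated by} $n$-cycles rather than merely containing one.

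Cases (i), (ii), and (iv) I would dispatch quickly. The remark in the excerpt that every $p$-cycle of $\mathrm{AGL}(1,p)$ already lies in $C_p$ immediately yields $G=C_p$ in case (i); case (iv) is already in the desired form. For case (ii) I would use the parity of an $n$-cycle, which has sign $(-1)^{n-1}$: when $n$ is odd both generators are even, so $G\leq\Alt_n$, forcing $G=\Alt_n$; when $n$ is even, $\Alt_n$ is not on Jones's list at all, leaving $G=\Sym_n$. This matches part (2) of the conclusion.

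The substantive step is case (iii), where the second Jones theorem is precisely what is needed. Outside the exceptional pairs $(d,q)\in\{(2,3),(2,8)\}$, every cyclic subgroup of order $n$ in $\PGamL(d,q)$ is a Singer subgroup and therefore sits inside $\PGL(d,q)$. Applying this to the cyclic groups generated by the two $n$-cycle generators gives $G\leq\PGL(d,q)$, which combined with Jones's lower bound yields $G=\PGL(d,q)$. The pair $(d,q)=(2,3)$ adds nothing new because $\PGL(2,3)\cong\Sym_4$ is already captured by case (ii) at $n=4$. The pair $(d,q)=(2,8)$ is the genuine exception in the statement: the second Jones theorem exhibits a conjugacy class of cyclic regular subgroups of $\PGamL(2,8)$ lying outside $\PGL(2,8)$, leaving $G=\PGamL(2,8)$ as a legitimate possibility.

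The main obstacle is essentially nonexistent once the two Jones theorems are in hand; the proof is a short classification argument with no delicate estimates and no need to verify that each candidate is actually realised, since the theorem only asserts a list of possibilities. The one genuine judgement call is in case (iii), namely recognising that the hypothesis ``generated by $n$-cycles'' (rather than merely containing one) is exactly what collapses the sandwich $\PGL(d,q)\leq G\leq\PGamL(d,q)$ outside the exceptional pair $(2,8)$. Small-degree coincidences such as $\PGL(2,3)\cong\Sym_4$, or the overlap of $C_{11}$, $\PSL(2,11)$, and $M_{11}$ at $n=11$, cause no difficulty because the output list is allowed to be redundant.
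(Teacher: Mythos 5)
Your proposal is correct and follows essentially the same route as the paper: apply Jones's classification of primitive groups with a cyclic regular subgroup, then use the observation about $p$-cycles in $\mathrm{AGL}(1,p)$ and Jones's second theorem on abelian subgroups of $\PGamL(d,q)$ to collapse the $\mathrm{AGL}$ and $\PGamL$ sandwiches, with parity handling the $\Sym_n$ versus $\Alt_n$ split. Your explicit treatment of the sign of an $n$-cycle and the exceptional pairs $(2,3)$ and $(2,8)$ matches what the paper leaves implicit in its terser presentation.
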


This gives us the following result for monodromy groups of primitive $[1,1]$-origamis.

\begin{corollary}
    Let $O$ be a primitive $[1,1]$-origami of genus $g\geq 1$. Then one of the following holds:
    \begin{enumerate}
        \item $O$ has genus 1 and the monodromy group of $O$ is $C_{p}$;
        \item $O$ has genus $g\geq 2$ and has monodromy group given by one of the groups in items (2)-(5) of Theorem~\ref{thm:n-cycle-gen}.
    \end{enumerate}
\end{corollary}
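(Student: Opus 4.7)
The plan is to apply Theorem~\ref{thm:n-cycle-gen} essentially verbatim, after translating the origami hypotheses into permutation group hypotheses. First I would note that if $O = (\sigma,\tau)$ is a $[1,1]$-origami with $n$ squares then, by definition, the single horizontal cylinder forces $\sigma$ to act as an $n$-cycle on the set of squares, and likewise the single vertical cylinder forces $\tau$ to be an $n$-cycle. Primitivity of $O$ means precisely that $\Mon(O) = \langle\sigma,\tau\rangle$ is a primitive subgroup of $\Sym_{n}$, and we have just observed that this group is generated by two $n$-cycles. Theorem~\ref{thm:n-cycle-gen} therefore applies directly and shows that $\Mon(O)$ must be one of the five families listed there.

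Next I would separate the two cases claimed by the corollary. For $g = 1$, the surface carries an Abelian differential with no zeros, so the commutator $[\sigma,\tau]$ is trivial; equivalently, $\sigma$ and $\tau$ commute. Since the centraliser of an $n$-cycle in $\Sym_{n}$ is exactly the cyclic group it generates, this forces $\tau \in \langle\sigma\rangle$ and $\Mon(O) = \langle\sigma\rangle \cong C_{n}$ acting regularly. A regular cyclic action of $C_{n}$ is primitive if and only if $n$ is prime, recovering case (1) of Theorem~\ref{thm:n-cycle-gen} and giving item (1) of the corollary. For $g \geq 2$, the description of strata via the commutator recalled in Section~\ref{sec:prelim} shows that $[\sigma,\tau]$ is a non-trivial product of cycles (one cycle of length $k_{i}+1$ for each zero of $\omega$ of order $k_{i}$), so $\sigma$ and $\tau$ do not commute; this rules out case (1) of Theorem~\ref{thm:n-cycle-gen}, leaving exactly the options in items (2)--(5), as required.

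There is no substantive obstacle to the corollary itself: once Theorem~\ref{thm:n-cycle-gen} is in hand, the result is a direct repackaging in the language of origamis, the only mild subtlety being the genus $1$ dichotomy above (where one must remember that regularity of a cyclic group demands primality for primitivity). The genuine work is already absorbed into Theorem~\ref{thm:n-cycle-gen} and, through it, into Jones's classification of primitive permutation groups containing a cyclic regular subgroup.
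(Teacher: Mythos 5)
Your proof is correct and takes essentially the same approach the paper implicitly intends: the corollary is stated immediately after Theorem~\ref{thm:n-cycle-gen} with no separate proof, so it is meant as a direct translation of that theorem into the language of origamis, which is exactly what you do. The extra detail you supply for the genus dichotomy (commutator trivial forces $\tau\in\langle\sigma\rangle$ via the centraliser of an $n$-cycle, and a regular $C_{n}$-action is primitive iff $n$ is prime; conversely non-trivial commutator for $g\geq 2$ rules out the abelian case) is correct and fills in the bookkeeping the paper leaves to the reader.
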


\subsection{Adding the commutator condition}

Now we add the condition that the commutator of the generating $n$-cycles is also an $n$-cycle.

We see that $C_{p}$ and $\Sym_{n}$ are now ruled out. It can also be checked that $\PSL(2,11)$ can not be generated by two $n$-cycles whose commutator is also an $n$-cycle, but $\PGamL(2,8)$, $M_{11}$ and $M_{23}$ can be. Finally, we observe that if $h,v\in G = \PGL(d,q)$ are two generating $n$-cycles, then $[h,v]\in G' = \PSL(d,q)$. However, an element in $\PSL(d,q)$ has maximum order $\frac{n}{\gcd(d,q-1)}$. So, if $[h,v]$ is also an $n$-cycle then we require $\gcd(d,q-1) = 1$. Hence, $\PGL(d,q) \cong \PSL(d,q)$ in such a case. This completes the proof of Theorem~\ref{thm:grp-simple}, and the translation into the language of monodromy groups gives us Theorem~\ref{thm:simple}.

\subsection{Open questions}\label{subsec:finite-simple}

A modification of the proof of \cite[Proposition 5.14]{BF} in the recent work of Burness-Fusari demonstrates that $\PGL(d,q)$ can always be generated by two Singer cycles. Determining the possibilities for the cycle structure of the commutator of two generating Singer cycles is still wide open.

\begin{question}
    Let $\PGL(d,q)$ be generated by two Singer cycles $h$ and $v$. What can be said about the cycle structure of $[h,v]$? In particular, when $\gcd(d,q-1) = 1$, when can $[h,v]$ also be realised as a Singer cycle in $\PSL(d,q)\cong\PGL(d,q)$?
\end{question}

Note that $\PSL(2,2)\cong\Sym_{3}$ and $\PSL(3,2)\cong\PSL(2,7)\not\cong\PGL(2,7)$ cannot be generated by two Singer cycles whose commutator is also a Singer cycle. However, these may be the only examples of $\PSL(d,q)\cong\PGL(d,q)$ that cannot be generated by two $n$-cycles whose commutator is also an $n$-cycle. We have tested this for all $(d,q)$ in the set
\begin{multline*}
    \{(2,4),(2,8),(2,16),(2,32),(2,64),(2,128),(2,256),(3,8),(3,3),(3,9),(3,27), \\
    (3,5),(3,11),(3,17),(3,23),(4,2),(4,4),(4,8),(5,2),(5,4),(5,3),(5,5)\}.
\end{multline*}

Once we know that the monodromy group $\PGL(d,q)$ can be realised by some $[1,1]$-origami in a particular stratum, it is natural to ask which components of that stratum can be achieved.

For example, we can check that the origami
\[O_1 = ((1,2,3,4,5,6,7,8,9,10,11,12,13,14),(1,2,11,5,9,14,4,12,7,8,6,13,10,3))\]
in $\odd(6,6)$ and the origami
\[O_2 = ((1,2,3,4,5,6,7,8,9,10,11,12,13,14),(1,3,6,14,12,7,11,8,5,9,4,2,10,13))\]
in $\even(6,6)$ both have monodromy group isomorphic to $\PGL(2,13)$. So both non-hyperelliptic components are realised here.

Interestingly, however, it seems that when we restrict to minimal $[1,1]$-origamis in the minimal stratum then the component may be determined by $d$ and $q$. That is, it seems that the groups $\PSL(d,q)$ when realised as monodromy groups of primitive minimal $[1,1]$-origamis in $\calH(2g-2)$ are only achieved in one of the components. Indeed, testing some examples we found the following.

Let $X$ be a primitive minimal $[1,1]$-origami in $\calH(2g-2)$, then:
\begin{itemize}
    \item if $\Mon(X)\cong\PSL(2,8)$ then $X\in\odd(8)$;
    \item if $\Mon(X)\cong\PGamL(2,8)$ then $X\in\odd(8)$;
    \item if $\Mon(X)\cong M_{11}$ then $X\in\even(10)$;
    \item if $\Mon(X)\cong\PSL(3,3)$ then $X\in\even(12)$;
    \item if $\Mon(X)\cong\PSL(4,2)$ then $X\in\even(14)$;
    \item if $\Mon(X)\cong\PSL(2,16)$ then $X\in\odd(16)$;
    \item if $\Mon(X)\cong M_{23}$ then $X\in\odd(22)$;
    \item if $\Mon(X)\cong\PSL(3,5)$ then $X\in\odd(30)$;
    \item if $\Mon(X)\cong\PSL(2,32)$ then $X\in\odd(32)$.
\end{itemize}

We ask the following question:

\begin{question}
    Let $X$ be a primitive minimal $[1,1]$-origami in $\calH(2g-2)$ with $\Mon(X)\cong \PSL(d,q)$ for some $d$ and $q$. Is the spin parity of $X$ determined by $d$ and $q$?
\end{question}


\section{Kontsevich-Zorich monodromy calculations}\label{sec:KZ-calcs}

Recall from Subsection~\ref{subsec:KZ-prep} that the Kontsevich-Zorich monodromy group of an origami $(X,\omega)$ is defined to be
\[\alpha(\Aff(X,\omega)):= \widetilde{\alpha}(\Aff(X,\omega))|_{H_{1}^{(0)}(X)},\]
where
\[\widetilde{\alpha}:\Aff(X,\omega)\to\Sp(H_{1}(X,\R))\cong\Sp(2g,\R)\]
is the symplectic representation of $\Aff(X,\omega)$ induced by its action on the fibers of the Hodge bundle.

We will calculate the Zariski closure $\overline{\alpha(\Aff(X,\omega))}^{Zarsiki}$. To do this, we will make use of the following criterion for Zariski density in $\Sp(2d,\R)$ due to Kany-Matheus~{\cite[Section 2.1]{KanMat}}.

Let $G\leq \Sp(2d,\R)$ for some $d\geq 1$. Suppose that
\begin{enumerate}
    \item there exists an element $A\in G$ that is \emph{Galois-pinching}; that is, the characteristic polynomial of $A$ is irreducible over $\Z$, splits over $\R$, and has Galois group of order $2^{d}\cdot d!$; and
    \item there exists a non-identity unipotent element $B\in G$ with $(B-I)$ having non-Lagrangian image;
\end{enumerate}
then $\overline{G}^{Zariski}\cong \Sp(2d,\R)$.

To study the action of the Veech groups of our origamis on their homology, we make use of the \texttt{sage-flatsurf} package~\cite{flatsurf}.

\subsection{Orientation double covers}

Recall that our orientation double covers of genus $g$ admit an anti-involution $\tau:X\to X$ with $\tau^{*}(\omega) = -\omega$ and $(X,\omega)/\tau$ being a quadratic differential in $Q_{\frac{g-1}{2}}(2g-3,-1^{3})$. This map $\tau$ then induces a further splitting of $H_{1}^{(0)}(X)$ as
\[H_{1}^{(0)}(X) = H_{1}^{+}(X)\oplus H_{1}^{-}(X),\]
where
\[H_{1}^{\pm}(X) = \{\gamma \in H_{1}^{(0)}(X)\,:\,\tau_{*}(\gamma) = \pm \gamma\}.\]

Note that, since the quotient surface has genus $\frac{g-1}{2}$, $H_{1}^{+}(X)$ has dimension $g-1$. Thus, since $H_{1}^{(0)}(X)$ has dimension $2g-2$, $H_{1}^{-}(X)$ has dimension $g-1$ as well. Therefore,
\[\alpha(\Aff(X,\omega))\leq \Sp(H_{1}^{+}(X))\times\Sp(H_{1}^{-}(X))\cong\Sp(g-1,\R)^{2}.\]
We will show that, in genus 3, 5 and 7, $\overline{\alpha(\Aff(X,\omega))}^{Zariski}\cong\Sp(g-1,\R)^{2}$. That is, the Kontsevich-Zorich monodromy is as large as possible.

By the results of Section~\ref{sec:SL2Z}, the orientation double covers in genus 3, 5 and 7 are contained in a single $\SL(2,\Z)$-orbit. Hence, it suffices to calculate the monodromy for a single surface in each orbit. Recall that the origami in Figure~\ref{fig:origami-anti-inv} is one of the odd genus AMN-origamis admitting an anti-involution.

\begin{figure}[b]
    \centering
    \begin{tikzpicture}[scale = 1.2, line width = 0.3mm]
    \draw (0,0)--(0,1);
    \draw (0,1)--(5,1);
    \draw [dotted] (5,1)--(6,1);
    \draw (6,1)--(11,1);
    \draw (11,1)--(11,0);
    \draw (11,0)--(6,0);
    \draw [dotted] (6,0)--(5,0);
    \draw (5,0)--(0,0);
    
    \foreach \i in {1,2,...,10}{
        \draw [dashed, color = darkgray] (\i,0)--(\i,1);
        \draw (\i,0.05)--(\i,-0.05);
        \draw (\i,1.05)--(\i,0.95);   
    }
    \foreach \i in {1,2,3,4,5}{
        \draw [color = darkgray] (\i-0.5,0.5) node {\footnotesize {${\i}$}};
        \draw [color = darkgray] (11.5-\i,0.5) node {\footnotesize ${{2g-\i}}$};
    }
    \draw (0,0.5) node[left] {$0$};
    \draw (11,0.5) node[right] {$0$};
    \foreach \i in {1,2,...,5}{
        \draw (\i-0.5,1) node[above] {$\i$};
        \draw (11-\i+0.5,1) node[above] {$2g-\i$};
    }
    \foreach \i in {1,2}{
        \draw (\i-0.5,0) node[below] {$2g-\i$};
    }
    \draw (2.5,0) node[below] {$1$};
    \foreach \i in {3,2}{
        \draw (6-\i+0.5,0) node[below] {$\i$};
    }
    \foreach \i in {4,6,8}{
        \draw (14-\i+0.5,0) node[below] {$2g-\i$};
    }
    \foreach \i in {3,5}{
        \draw (12-\i+0.5,0) node[below] {$2g-\i$};
    }
    \end{tikzpicture}
    \caption{A genus $g$ $[1,1]$-origami admitting an anti-involution.}\label{fig:origami-anti-inv}
\end{figure}

\begin{figure}[t]
    \centering
    \begin{tikzpicture}[scale = 1.2, line width = 0.3mm]
        \draw (0,0) -- (0,2) -- (-1,2) -- (-1,3) -- (0,3);
        \foreach \j in {0,-6}{
            \foreach \i in {0,1}{
                \draw (0+\i,3+\j) -- (0+\i,3.2+\j);
                \draw [dotted] (0+\i,3.2+\j) -- (0+\i,3.8+\j);
                \draw (0+\i,3.8+\j) -- (0+\i,4+\j);
            }
        }
        \draw (0,4) -- (0,5) -- (2,5) -- (2,4) -- (1,4);
        \draw (1,3) -- (1,2) -- (2,2) -- (2,1) -- (1,1) -- (1,-1) -- (2,-1) -- (2,-2) -- (1,-2);
        \draw (1,-3) -- (1,-4) -- (-1,-4) -- (-1,-3) -- (0,-3);
        \draw (0,-2) -- (0,-1) -- (-1,-1) -- (-1,0) -- (0,0);
        \draw [color = darkgray] (-0.5,-0.5) node {\footnotesize $1$};
        \draw [color = darkgray] (0.5,-0.5) node {\footnotesize $2$};
        \draw [color = darkgray] (0.5,0.5) node {\footnotesize $3$};
        \draw [color = darkgray] (0.5,1.5) node {\footnotesize $5$};
        \draw [color = darkgray] (1.5,1.5) node {\footnotesize $4$};
        \draw [color = darkgray] (-0.5,2.5) node {\footnotesize $6$};
        \draw [color = darkgray] (0.5,2.5) node {\footnotesize $7$};
        \draw [color = darkgray] (0.5,4.5) node {\footnotesize $g+2$};
        \draw [color = darkgray] (1.5,4.5) node {\footnotesize $g+1$};
        \draw [color = darkgray] (0.5,-1.5) node {\footnotesize $2g-1$};
        \draw [color = darkgray] (1.5,-1.5) node {\footnotesize $2g-2$};
        \draw [color = darkgray] (-0.5,-3.5) node {\footnotesize $g+3$};
        \draw [color = darkgray] (0.5,-3.5) node {\footnotesize $g+4$};
        \draw (0.5,5) node[node font = \sffamily] {\scriptsize I};
        \draw (0.5,-4) node[node font = \sffamily] {\scriptsize I};
        \draw (1.5,5) node[node font = \sffamily] {\scriptsize II};
        \draw (1.5,4) node[node font = \sffamily] {\scriptsize II};
        \draw (-0.5,3) node[node font = \sffamily] {\scriptsize III};
        \draw (-0.5,2) node[node font = \sffamily] {\scriptsize III};
        \draw (1.5,2) node[node font = \sffamily] {\scriptsize IV};
        \draw (1.5,1) node[node font = \sffamily] {\scriptsize IV};
        \draw (-0.5,0) node[node font = \sffamily] {\scriptsize V};
        \draw (-0.5,-1) node[node font = \sffamily] {\scriptsize V};
        \draw (1.5,-1) node[node font = \sffamily] {\scriptsize VI};
        \draw (1.5,-2) node[node font = \sffamily] {\scriptsize VI};
        \draw (-0.5,-3) node[node font = \sffamily] {\scriptsize VII};
        \draw (-0.5,-4) node[node font = \sffamily] {\scriptsize VII};
        \draw (0,0.5) node[node font = \sffamily, rotate = 90] {\scriptsize I};
        \draw (1,0.5) node[node font = \sffamily, rotate = 90] {\scriptsize I};
        \draw (0,1.5) node[node font = \sffamily, rotate = 90] {\scriptsize II};
        \draw (2,1.5) node[node font = \sffamily, rotate = 90] {\scriptsize II};
        \draw (-1,-0.5) node[node font = \sffamily, rotate = 90] {\scriptsize III};
        \draw (1,-0.5) node[node font = \sffamily, rotate = 90] {\scriptsize III};
        \draw (-1,2.5) node[node font = \sffamily, rotate = 90] {\scriptsize IV};
        \draw (1,2.5) node[node font = \sffamily, rotate = 90] {\scriptsize IV};
        \draw (0,-1.5) node[node font = \sffamily, rotate = 90] {\scriptsize V};
        \draw (2,-1.5) node[node font = \sffamily, rotate = 90] {\scriptsize V};
        \draw (0,4.5) node[node font = \sffamily, rotate = 90] {\scriptsize VI};
        \draw (2,4.5) node[node font = \sffamily, rotate = 90] {\scriptsize VI};
        \draw (-1,-3.5) node[node font = \sffamily, rotate = 90] {\scriptsize VII};
        \draw (1,-3.5) node[node font = \sffamily, rotate = 90]{\scriptsize VII};
        \draw (1,-1+0.2) node[right, color = red] {$a_{3}$};
        \draw (2,1+0.2) node[right, color = red] {$a_{2}$};
        \draw (2,-2+0.2) node[right, color = red] {$a_{5}$};
        \draw (1,2+0.2) node[right, color = red] {$a_{4}$};
        \draw (1,-4+0.2) node[right, color = red] {$a_{g}$};
        \draw (2,4+0.2) node[right, color = red] {$a_{g-1}$};
        \draw (-1+0.2,-3) node[above, color = blue] {$b_{g}$};
        \draw (1.2,-2) node[below, color = blue] {$b_{5}$};
        \draw (-1+0.2,0) node[above, color = blue] {$b_{3}$};
        \draw (1.2,1) node[below, color = blue] {$b_{2}$};
        \draw (-1+0.2,3) node[above, color = blue] {$b_{4}$};
        \draw (1.2,5) node[above, color = blue] {$b_{g-1}$};
        \foreach \i in {0,1,2}{
            \draw [dashed, color = darkgray] (0,-4+3*\i) -- (0,-3+3*\i);
            \draw [dashed, color = darkgray] (1,-2+3*\i) -- (1,-1+3*\i);
            \draw [color = red, ->] (-1,-4+3*\i+0.2) -- (1,-4+3*\i+0.2); 
            \draw [color = red, ->] (0,-2+3*\i+0.2) -- (2,-2+3*\i+0.2);
            \draw [color = blue, ->] (-0.8,-4+3*\i) -- (-0.8,-3+3*\i);
            \draw [color = blue, ->] (1.2,-2+3*\i) -- (1.2,-1+3*\i);
        }
        \foreach \i in {0,...,7}{
            \draw [dashed, color = darkgray] (0,-3+\i) -- (1,-3+\i);
        }
        \draw [color = red, ->] (0,0.2) -- (1,0.2) node[right, color = red]{$a_{1}$};
        \draw [color = blue] (0.2,-4) -- (0.2,-3+0.2);
        \draw [dotted, color = blue] (0.2,-3+0.2) -- (0.2,-2-0.2);
        \draw [color = blue] (0.2,-2-0.2) -- (0.2,3+0.2);
        \draw [dotted, color = blue] (0.2,3+0.2) -- (0.2,4-0.2);
        \draw [color = blue, ->] (0.2,4-0.2) -- (0.2,5) node[above, color = blue] {$b_{1}$};
    \end{tikzpicture}
    \caption{The orientation double cover origami under consideration. A homology basis is shown.}
    \label{fig:orientation-homology}
\end{figure}

For ease of calculation, we will instead move to the origami that is the image of this origami under $S\circ T$ (recall that $T$ and $S$ are the parabolic generators of $\SL(2,\Z)$ defined in Subsection~\ref{subsec:ori-prep}). The resulting origami, shown in Figure~\ref{fig:orientation-homology}, is given by $(h,v)$ with
\[h = (1,2)(3)(4,5)(6,7)\cdots(2g-2,2g-1)\]
and
\[v = (1)(2,3,5,7,\ldots,2g-1)(4)(6)\cdots(2g-2).\]
The cycles $\{a_{i},b_{i}\}_{i = 1}^{g}$ form a basis for the homology. The anti-involution $\tau$ acts as rotation by $\pi$ around the center of square 3. It has the following effect on the homology basis:
\[a_{1}\mapsto -a_{1}, b_{1}\mapsto -b_{1},\,\text{and, for }i\in\{2,4,\ldots,g-1\},\, a_{i}\mapsto-a_{i+1}, b_{i}\mapsto -b_{i+1}.\]

\begin{remark}
    We observe that the cycles $a_{i}$ for $i = 1,\ldots,g$ can be used in Forni's criterion~\cite{Forni-crit} to deduce that the Lyapunov spectrum has no zero exponents.
\end{remark}

We define
\begin{align*}
    \Sigma &:= a_{1}+a_{2}+\cdots+a_{g} \\
    Z &:= b_{1}+b_{2}+\cdots+b_{g}
\end{align*}
and, for $i\in\{2,4,\ldots,g-1\},$
\[\begin{aligned}
    & \left.\begin{aligned}
    \mu_{i} &:= a_{i}-a_{i+1} \\
    \nu_{i} &:= b_{i}-b_{i+1}
    \end{aligned}\,\,\,\right\rbrace\,\,\,\text{fixed by $\tau$} \\
    &\left.\begin{aligned}
    \eta_{i} &:= 4a_{1}-a_{i}-a_{i+1} \\
    \chi_{i} &:= 2b_{1} - g\cdot b_{i}-g\cdot b_{i+1}
    \end{aligned}\,\,\,\right\rbrace\,\,\,\text{negated by $\tau$.}
\end{aligned}\]
It can then be checked that we have the splitting
\[H_{1}(X,\R) = H_{1}^{st}(X)\oplus H_{1}^{+}(X)\oplus H_{1}^{-}(X) = \langle\Sigma,Z\rangle\oplus\langle\mu_{i},\nu_{i}\rangle\oplus\langle\eta_{i},\chi_{i}\rangle.\]

\subsubsection{Genus three}

In genus three, our surface is as shown in Figure~\ref{fig:genus-3-double-homology}.

\begin{figure}[t]
    \centering
    \begin{tikzpicture}[scale = 1.2, line width = 0.3mm]
        \draw (0,0) -- (0,2) -- (2,2) -- (2,1) -- (1,1) -- (1,-1) -- (-1,-1) -- (-1,0) -- (0,0);
        \draw [color = darkgray] (-0.5,-0.5) node {\footnotesize $1$};
        \draw [color = darkgray] (0.5,-0.5) node {\footnotesize $2$};
        \draw [color = darkgray] (0.5,0.5) node {\footnotesize $3$};
        \draw [color = darkgray] (0.5,1.5) node {\footnotesize $5$};
        \draw [color = darkgray] (1.5,1.5) node {\footnotesize $4$};
        \draw (0.5,2) node[node font = \sffamily] {\scriptsize I};
        \draw (0.5,-1) node[node font = \sffamily] {\scriptsize I};
        \draw (1.5,2) node[node font = \sffamily] {\scriptsize II};
        \draw (1.5,1) node[node font = \sffamily] {\scriptsize II};
        \draw (-0.5,0) node[node font = \sffamily] {\scriptsize III};
        \draw (-0.5,-1) node[node font = \sffamily] {\scriptsize III};
        \draw (0,0.5) node[node font = \sffamily, rotate = 90] {\scriptsize I};
        \draw (1,0.5) node[node font = \sffamily, rotate = 90] {\scriptsize I};
        \draw (0,1.5) node[node font = \sffamily, rotate = 90] {\scriptsize II};
        \draw (2,1.5) node[node font = \sffamily, rotate = 90] {\scriptsize II};
        \draw (-1,-0.5) node[node font = \sffamily, rotate = 90] {\scriptsize III};
        \draw (1,-1+0.2) node[right, color = red] {$a_{3}$};
        \draw (2,1+0.2) node[right, color = red] {$a_{2}$};
        \draw (-1+0.2,0) node[above, color = blue] {$b_{3}$};
        \draw (1.2,2) node[above, color = blue] {$b_{2}$};
        \foreach \i in {0,1}{
            \draw [dashed, color = darkgray] (0+\i,-1+2*\i) -- (0+\i,0+2*\i);
            \draw [dashed, color = darkgray] (0,0+\i) -- (1,0+\i);
            \draw [color = red, ->] (-1+\i,-1+2*\i+0.2) -- (1+\i,-1+2*\i+0.2);
            \draw [color = blue, ->] (-0.8+2*\i,-1+2*\i) -- (-0.8+2*\i,2*\i);
        }
        \draw [color = red, ->] (0,0.2) -- (1,0.2) node[right, color = red]{$a_{1}$};
        \draw [color = blue, ->] (0.2,-1) -- (0.2,2) node[above, color = blue] {$b_{1}$};
    \end{tikzpicture}
    \caption{The genus three orientation double cover and homology basis.}
    \label{fig:genus-3-double-homology}
\end{figure}

We work with the basis
\begin{align*}
    \Sigma &= a_{1}+a_{2}+a_{3} \\
    Z &= b_{1}+b_{2}+b_{3} \\
    \mu_{1} &= a_{2}-a_{3} \\
    \nu_{1} &= b_{2}-b_{3} \\
    \eta_{1} &= 4a_{1} -a_{2}-a_{3} \\
    \chi_{1} &= 2b_{1} - 3b_{2}-3b_{3}.
\end{align*}

Now, $\Aff(X,\omega) = \SL(X,\omega)$ contains the elements $A = \begin{bmatrix}
    1 & 2 \\
    0 & 1
\end{bmatrix}$ and $B = \begin{bmatrix}
    1 & 0 \\
    3 & 1
\end{bmatrix}$, and we can use \texttt{sage-flatsurf}~\cite{flatsurf} (although it is possible to calculate by hand in this small example) to see that
\[\alpha(A) = \begin{bmatrix}
    1 & 1 \\
    0 & 1
\end{bmatrix}\oplus\begin{bmatrix}
    1 & 1 \\
    0 & 1
\end{bmatrix} =: A^{+}\oplus A^{-},\]
and
\[\alpha(B) = \begin{bmatrix}
    1 & 0 \\
    3 & 1
\end{bmatrix}\oplus\begin{bmatrix}
    1 & 0 \\
    1 & 1
\end{bmatrix} =: B^{+}\oplus B^{-}.\]
We then see that the groups $\langle A^{+},B^{+}\rangle$ and $\langle A^{-},B^{-}\rangle$ are finite index subgroups of $\SL(2,\Z)$ and hence have Zariski closure $\SL(2,\R)\cong \Sp(2,\R)\cong\Sp(H_{1}^{\pm}(X))$. Thus, we have
\[\overline{\alpha(\Aff(X,\omega))}^{Zariski}\cong\Sp(2,\R)^{2}.\]
In fact, $(X,\omega)$ lies in the Prym eigenform locus of $\odd(4)$ (see Lanneau-Nguyen~\cite{LN14}) and so, by work of M\"oller~\cite{Moller} and Eskin-Kontsevich-Zorich~\cite{EKZ}, we know that the full Lyapunov spectrum is
\[1>\frac{2}{5} > \frac{1}{5} > -\frac{1}{5} > -\frac{2}{5} > -1.\]

\subsubsection{Genus five}

We do not draw the figure for this genus. However, the situation is as above using the splitting of homology discussed at the beginning of the section.

Here, $\Aff(X,\omega)\cong \SL(X,\omega)$ contains the elements
\[A = \begin{bmatrix}
    1 & 0 \\
    5 & 1
\end{bmatrix}, \,\,\,B = \begin{bmatrix}
    4215 & -2582 \\
    3038 & -1861
\end{bmatrix},\,\,\,\text{and}\,\,\,C = \begin{bmatrix*}[r]
    148 & -63 \\
    343 & -146
\end{bmatrix*}.\]
We can use \texttt{sage-flatsurf} to calculate that
\[\alpha(A) = A^{+}\oplus A^{-} := \begin{bmatrix}
    1 & 0 & 0 & 0 \\
    5 & 1 & 0 & 0 \\
    0 & 0 & 1 & 0 \\
    0 & 0 & 5 & 1 \\
\end{bmatrix}\oplus\begin{bmatrix}
    1 & 0 & 0 & 0 \\
    1 & 1 & 0 & 0 \\
    0 & 0 & 1 & 0 \\
    0 & 0 & 1 & 1 \\
\end{bmatrix},\]
\[\alpha(B) = B^{+}\oplus B^{-} := \begin{bmatrix*}[r]
    19 & -7 & -3 & 3 \\
    30 & -11 & -5 & 5 \\
    0 & 0 & 1 & -1 \\
    -5 & 2 & 6 & -5 \\
\end{bmatrix*}\oplus\begin{bmatrix*}[r]
    -3 & 17 & -3 & -3 \\
    2/5 & -9/5 & 3/5 & 1/5 \\
    2 & -14 & 1 & 1 \\
    3/5 & -16/5 & 2/5 & -1/5 \\
\end{bmatrix*},\]
and
\[\alpha(C) = C^{+}\oplus C^{-} := \begin{bmatrix*}[r]
    1 & 0 & 0 & 0 \\
    0 & 1 & 0 & 0 \\
    0 & 0 & 15 & -7 \\
    0 & 0 & 28 & -13 \\
\end{bmatrix*}\oplus\begin{bmatrix*}[r]
    13 & -24 & 0 & 6 \\
    28/5 & -51/5 & 0 & 14/5 \\
    -6 & 12 & 1 & -3 \\
    8/5 & 16/5 & 0 & 1/5 \\
\end{bmatrix*}.\]
It can then be checked that $B^{+}$ and $A^{-}B^{-}$ are Galois-pinching while $C^{\pm}$ are unipotent with
\[\dim_{\R}((C^{\pm}-I)H_{1}^{\pm}(X)) = 1\neq \frac{4}{2} = \frac{1}{2}\dim_{\R}H_{1}^{\pm}(X).\]
Hence, it follows from the criterion of Kany-Matheus that
\[\overline{\alpha(\Aff(X,\omega))}^{Zariski}\cong\Sp(4,\R)^{2}.\]

\subsubsection{Genus seven}

Again, we do not draw the figure here. Similar calculations to the above can be carried out. In particular, it is possible to use \texttt{sage-flatsurf} to find an element $M$ (whose entries are too large to write here) of $\Aff(X,\omega)\cong\SL(X,\omega)$ with $\alpha(M) =: M^{+}\oplus M^{-}$ such that $M^{\pm}$ are both Galois pinching. Letting $N$ be the multitwist in the $(3,4)$ direction so that $N = \begin{bmatrix}
    -251 & 189 \\
    -336 & 253
\end{bmatrix}$, we obtain $\alpha(N) =: N^{+}\oplus N^{-}$ with $N^{\pm}$ both being non-identity unipotents and 
\[\dim_{\R}((N^{\pm}-I)H_{1}^{\pm}(X)) = 1\neq \frac{6}{2} = \frac{1}{2}\dim_{\R}H_{1}^{\pm}(X).\]

Hence, we see that
\[\overline{\alpha(\Aff(X,\omega))}^{Zariski}\cong\Sp(6,\R)^{2}.\]

\subsection{Non-symmetric origamis}

Here, our origamis have no further symmetries and also no non-trivial automorphisms. As such, we get no further splitting of the homology beyond
\[H_{1}(X,\R) = H_{1}^{st}(X)\oplus H_{1}^{(0)}(X).\]

Given a square labelled $i$ we define $\sigma_{i}$ to be the cycle corresponding to the bottom side of the square oriented to the right and $\zeta_{i}$ to be left side of the square oriented upwards. See Figure~\ref{fig:square-cycles}. Since, for our origamis, every corner of every square corresponds to the single zero, these cycles are indeed absolute cycles. 

\begin{figure}[b]
    \centering
    \begin{tikzpicture}[scale = 1.2, line width = 0.3mm]
        \draw (0,0) -- node[below]{$\sigma_{i}$} (1,0) -- (1,1) -- (0,1) -- node[left]{$\zeta_{i}$}(0,0);
        \draw[->] (0,0) -- (0.55,0);
        \draw[->] (0,0) -- (0,0.55);
        \draw[color = darkgray] (0.5,0.5) node{\footnotesize $i$};
    \end{tikzpicture}
    \caption{The cycles $\sigma_{i}$ and $\zeta_{i}$.}
    \label{fig:square-cycles}
\end{figure}

\begin{figure}[t]
    \centering
    \begin{tikzpicture}[scale = 1.2, line width = 0.3mm]
    \draw (0,0)--(0,1)--(7,1)--(7,0)--(0,0);
    \foreach \i in {1,2,...,6}{
        \draw [dashed, color = darkgray] (\i,0)--(\i,1);
        \draw (\i,0.05)--(\i,-0.05);
        \draw (\i,1.05)--(\i,0.95);
        \draw (\i-0.5,0.5) node[color = darkgray] {\footnotesize $\i$};
        \draw (\i-0.5,1) node[above] {$\i$};
    }
    \draw (0,0.5) node[left] {$0$};
    \draw (7,0.5) node[right] {$0$};
    \draw (6.5,1) node[above] {$7$};
    \draw (6.5,0.5) node[color = darkgray] {\footnotesize $7$};
    \draw (0.5,0) node[below] {$5$};
    \draw (1.5,0) node[below] {$4$};
    \draw (2.5,0) node[below] {$1$};
    \draw (3.5,0) node[below] {$6$};
    \draw (4.5,0) node[below] {$2$};
    \draw (5.5,0) node[below] {$7$};
    \draw (6.5,0) node[below] {$3$};
    \end{tikzpicture}
    \caption{A $[1,1]$-origami in $\odd(6)$.}
    \label{fig:H-odd-6}
\end{figure}

If the origami has $n$ squares, then the tautological plane is spanned by
\[\Sigma := \sigma_{1}+\cdots+\sigma_{n}\,\,\,\,\,\,\,\text{and}\,\,\,\,\,\,\,Z := \zeta_{1}+\cdots+\zeta_{n}.\]
We will continue to use this notation below. In each case, we will apply the criterion of Kany-Matheus to prove that
\[\overline{\alpha(\Aff(X,\omega))}^{Zariski}\cong\Sp(H_{1}^{(0)}(X))\cong\Sp(2g-2,\R).\]

\subsubsection{Genus four}

Consider the origami in $\odd(6)$ shown in Figure~\ref{fig:H-odd-6}. We define $\Sigma$ and $Z$ as above and further define
\begin{align*}
    \eta_{1} = \sigma_{1}-\sigma_{3},  &&\eta_{2} = \sigma_{2}-\sigma_{5}, &&&\eta_{3} = \sigma_{3}-\sigma_{7} \\
    \eta_{4} = \sigma_{4}-\sigma_{2},  &&\eta_{5} = \sigma_{5}-\sigma_{1}, &&&\eta_{6} = \sigma_{6}-\sigma_{4}.
\end{align*}
It can be checked that we have
\[H_{1}(X,\R) = H_{1}^{st}(X)\oplus H_{1}^{(0)}(X) = \langle\Sigma,Z\rangle\oplus\langle\eta_{i}\rangle_{i=1}^{6}.\]

The group $\Aff(X,\omega)\cong\SL(X,\omega)$ contains the elements
\[A = \begin{bmatrix*}[r]
    -40 & 11 \\
    -11 & 3
\end{bmatrix*},\,\,\,B = \begin{bmatrix*}[r]
    -16 & 7 \\
    -7 & 3
\end{bmatrix*},\,\,\,\text{and}\,\,\,C = \begin{bmatrix}
    4 & -3 \\
    3 & -2
\end{bmatrix}.\]
We can check with \texttt{sage-flatsurf} that
\[\alpha(A) = \begin{bmatrix*}[r]
    -1 & 1 & 0 & 1 & -2 & 0 \\
    -1 & 0 & 0 & 2 & -2 & 0 \\
    -1 & 0 & 0 & 1 & -1 & 0 \\
    -1 & 3 & -3 & 3 & -3 & 1 \\
    -1 & 0 & 1 & 1 & -2 & 0 \\
    -1 & 2 & -3 & 3 & -1 & 0
\end{bmatrix*},\,\,\,\,\,\,\alpha(B) = \begin{bmatrix*}[r]
    -3 & 1 & 0 & 1 & 0 & 0 \\
    0 & 0 & 0 & 0 & 0 & 1 \\
    0 & -1 & 1 & 0 & 0 & -1 \\
    -1 & 0 & 0 & 0 & 0 & 1 \\
    0 & -1 & 0 & 0 & 1 & -1 \\
    0 & -1 & 0 & 0 & 0 & 1
\end{bmatrix*},\]
and
\[\alpha(C) = \begin{bmatrix*}[r]
    2 & -1 & 0 & 1 & 0 & 0 \\
    0 & 1 & 0 & 0 & 0 & 1 \\
    0 & 0 & 1 & 0 & 0 & -1 \\
    -1 & 1 & 0 & 0 & 0 & 1 \\
    0 & 0 & 0 & 0 & 1 & -1 \\
    0 & 0 & 0 & 0 & 0 & 1
\end{bmatrix*}.\]
Moreover, $\alpha(A)\alpha(B)$ is Galois pinching and $\alpha(C)$ is unipotent with 
\[\dim_{\R}((\alpha(C)-I)H_{1}^{(0)}(X)) = 2\neq \frac{6}{2} = \frac{1}{2}\dim_{\R}H_{1}^{(0)}(X).\]
Hence, we have
\[\overline{\alpha(\Aff(X,\omega))}^{Zariski}\cong\Sp(6,\R).\]

\begin{figure}[t]
    \centering
    \begin{tikzpicture}[scale = 1.2, line width = 0.3mm]
    \draw (0,0)--(0,1)--(7,1)--(7,0)--(0,0);
    \foreach \i in {1,2,...,6}{
        \draw [dashed, color = darkgray] (\i,0)--(\i,1);
        \draw (\i,0.05)--(\i,-0.05);
        \draw (\i,1.05)--(\i,0.95);
        \draw (\i-0.5,0.5) node[color = darkgray] {\footnotesize $\i$};
        \draw (\i-0.5,1) node[above] {$\i$};
    }
    \draw (0,0.5) node[left] {$0$};
    \draw (7,0.5) node[right] {$0$};
    \draw (6.5,1) node[above] {$7$};
    \draw (6.5,0.5) node[color = darkgray] {\footnotesize $7$};
    \draw (0.5,0) node[below] {$7$};
    \draw (1.5,0) node[below] {$5$};
    \draw (2.5,0) node[below] {$1$};
    \draw (3.5,0) node[below] {$6$};
    \draw (4.5,0) node[below] {$4$};
    \draw (5.5,0) node[below] {$3$};
    \draw (6.5,0) node[below] {$2$};
    \end{tikzpicture}
    \caption{A $[1,1]$-origami in $\even(6)$.}
    \label{fig:H-even-6}
\end{figure}

Now consider the origami in $\even(6)$ shown in Figure~\ref{fig:H-even-6}. Define
\begin{align*}
    \eta_{1} = \sigma_{1}-\sigma_{3},  &&\eta_{2} = \sigma_{2}-\sigma_{7}, &&&\eta_{3} = \sigma_{3}-\sigma_{6} \\
    \eta_{4} = \sigma_{4}-\sigma_{5},  &&\eta_{5} = \sigma_{5}-\sigma_{2}, &&&\eta_{6} = \sigma_{6}-\sigma_{4}.
\end{align*}
It can again be checked that we have
\[H_{1}(X,\R) = H_{1}^{st}(X)\oplus H_{1}^{(0)}(X) = \langle\Sigma,Z\rangle\oplus\langle\eta_{i}\rangle_{i=1}^{6}.\]

The group $\Aff(X,\omega)\cong\SL(X,\omega)$ contains the elements
\[A = \begin{bmatrix*}[r]
    5 & 24 \\
    -4 & -19
\end{bmatrix*},\,\,\,B = \begin{bmatrix*}[r]
    -15 & -64 \\
    4 & 17
\end{bmatrix*},\,\,\,\text{and}\,\,\,C = \begin{bmatrix*}[r]
    -11 & -36 \\
    4 & 13
\end{bmatrix*}.\]
We can check with \texttt{sage-flatsurf} that
\[\alpha(A) = \begin{bmatrix*}[r]
    0 & -1 & 0 & 0 & 0 & 0 \\
    1 & 2 & 0 & 2 & 0 & -2 \\
    1 & -2 & -1 & 1 & 0 & 0 \\
    1 & 0 & -1 & 0 & -1 & 1 \\
    1 & 0 & -1 & 1 & -1 & 0 \\
    1 & -2 & -1 & -3 & 0 & 3
\end{bmatrix*},\,\,\,\,\,\,\,\,\alpha(B) = \begin{bmatrix*}[r]
    1 & -4 & -2 & 0 & 2 & 2 \\
    0 & 3 & 2 & 0 & 0 & -2 \\
    0 & -4 & -3 & 0 & 0 & 4 \\
    0 & -2 & -1 & 1 & 1 & 1 \\
    0 & 2 & 2 & 0 & 1 & -2 \\
    0 & -2 & -2 & 0 & 0 & 3
\end{bmatrix*},\]
and
\[\alpha(C) = \begin{bmatrix*}[r]
    1 & -2 & -2 & -3 & 2 & 2 \\
    0 & 3 & 4 & 3 & -2 & -4 \\
    0 & 3 & -2 & -3 & 3 & 3 \\
    0 & -1 & -1 & 0 & 1 & 1 \\
    0 & 1 & 3 & 1 & 0 & -3 \\
    0 & -3 & -3 & -3 & 3 & 4
\end{bmatrix*}.\]
Moreover, $\alpha(A)^{2}\alpha(B)$ is Galois pinching and $\alpha(C)^{2}$ is unipotent with 
\[\dim_{\R}((\alpha(C)^{2}-I)H_{1}^{(0)}(X)) = 2\neq \frac{6}{2} = \frac{1}{2}\dim_{\R}H_{1}^{(0)}(X).\]
Hence, we have
\[\overline{\alpha(\Aff(X,\omega))}^{Zariski}\cong\Sp(6,\R).\]

\subsubsection{Genus five}

Consider the origami $(h,v)$ in $\odd(8)$ defined by $h = (1,2,\ldots,9)$ and $v = (1,3,6,5,8,2,7,4,9)$. Define
\begin{align*}
    \eta_{1} = \sigma_{1}-\sigma_{3},  &&\eta_{2} = \sigma_{2}-\sigma_{7}, &&&\eta_{3} = \sigma_{3}-\sigma_{6}, &&\eta_{4} = \sigma_{4} - \sigma_{9} \\
    \eta_{5} = \sigma_{5}-\sigma_{8},  &&\eta_{6} = \sigma_{6}-\sigma_{5}, &&&\eta_{7} = \sigma_{7}-\sigma_{4}, &&\eta_{8} = \sigma_{8} - \sigma_{2}.
\end{align*}
It can again be checked that we have
\[H_{1}(X,\R) = H_{1}^{st}(X)\oplus H_{1}^{(0)}(X) = \langle\Sigma,Z\rangle\oplus\langle\eta_{i}\rangle_{i=1}^{8}.\]

The group $\Aff(X,\omega)\cong\SL(X,\omega)$ contains the elements 
\[A = \begin{bmatrix*}[r]
    -367 & 77 \\
    -143 & 30
\end{bmatrix*},\,\,\,B = \begin{bmatrix*}[r]
    -2809 & 609 \\
    -369 & 80
\end{bmatrix*},\,\,\,\text{and}\,\,\,C = \begin{bmatrix*}[r]
    596 & -175 \\
    2023 & -594
\end{bmatrix*}.\]
It can be checked that $\alpha(A)\alpha(B)$ is Galois pinching and $\alpha(C)$ is unipotent with 
\[\dim_{\R}((\alpha(C)-I)H_{1}^{(0)}(X)) = 2\neq \frac{8}{2} = \frac{1}{2}\dim_{\R}H_{1}^{(0)}(X).\]
Hence, we have
\[\overline{\alpha(\Aff(X,\omega))}^{Zariski}\cong\Sp(8,\R).\]

Now, consider the origami $(h,v)$ in $\even(8)$ defined by $h = (1,2,\ldots,9)$ and $v = (1,3,8,4,7,5,6,3,9)$. Define
\begin{align*}
    \eta_{1} = \sigma_{1}-\sigma_{3},  &&\eta_{2} = \sigma_{2}-\sigma_{9}, &&&\eta_{3} = \sigma_{3}-\sigma_{8}, &&\eta_{4} = \sigma_{4} - \sigma_{7} \\
    \eta_{5} = \sigma_{5}-\sigma_{6},  &&\eta_{6} = \sigma_{6}-\sigma_{2}, &&&\eta_{7} = \sigma_{7}-\sigma_{5}, &&\eta_{8} = \sigma_{8} - \sigma_{4}.
\end{align*}
As above, it can be checked that we have
\[H_{1}(X,\R) = H_{1}^{st}(X)\oplus H_{1}^{(0)}(X) = \langle\Sigma,Z\rangle\oplus\langle\eta_{i}\rangle_{i=1}^{8}.\]

The group $\Aff(X,\omega)\cong\SL(X,\omega)$ contains the elements
\[A = \begin{bmatrix*}[r]
    15175 & -4473 \\
    1737 & -512
\end{bmatrix*},\,\,\,B = \begin{bmatrix*}[r]
    -30853 & 9121 \\
    -3545 & 1048
\end{bmatrix*},\,\,\,\text{and}\,\,\,C = \begin{bmatrix*}[r]
    13 & -3 \\
    48 & -11
\end{bmatrix*}.\]
It can be checked that $\alpha(A)^{5}\alpha(B)$ is Galois pinching and $\alpha(C)$ is unipotent with 
\[\dim_{\R}((\alpha(C)-I)H_{1}^{(0)}(X)) = 2\neq \frac{8}{2} = \frac{1}{2}\dim_{\R}H_{1}^{(0)}(X).\]
Hence, we have
\[\overline{\alpha(\Aff(X,\omega))}^{Zariski}\cong\Sp(8,\R).\]

\subsubsection{Genus six}

Consider the origami $(h,v)$ in $\odd(10)$ defined by $h = (1,2,\ldots,11)$ and $v = (1,3,11,10,4,7,8,2,5,6,9)$. Define
\begin{align*}
    \eta_{1} = \sigma_{1}-\sigma_{3},  &&\eta_{2} = \sigma_{2}-\sigma_{5}, &&&\eta_{3} = \sigma_{3}-\sigma_{11}, &&\eta_{4} = \sigma_{4} - \sigma_{7} &&\eta_{5} = \sigma_{5} - \sigma_{6} \\
    \eta_{6} = \sigma_{6}-\sigma_{9},  &&\eta_{7} = \sigma_{7}-\sigma_{8}, &&&\eta_{8} = \sigma_{8}-\sigma_{2}, &&\eta_{9} = \sigma_{9} - \sigma_{1}, &&\eta_{10} = \sigma_{10}-\sigma_{4}.
\end{align*}
It can again be checked that we have
\[H_{1}(X,\R) = H_{1}^{st}(X)\oplus H_{1}^{(0)}(X) = \langle\Sigma,Z\rangle\oplus\langle\eta_{i}\rangle_{i=1}^{10}.\]

We can use \texttt{sage-flatsurf} to find an element $M$ in $\Aff(X,\omega)\cong\SL(X,\omega)$ with $\alpha(M)$ Galois-pinching (the entries are too large to write it here). Defining $N$ to be the $(1,1)$ multitwist so that $N = \begin{bmatrix}
    -11 & 12 \\
    -12 & 13
\end{bmatrix}$, it can be checked that $\alpha(N)$ is unipotent with 
\[\dim_{\R}((\alpha(N)-I)H_{1}^{(0)}(X)) = 2\neq \frac{10}{2} = \frac{1}{2}\dim_{\R}H_{1}^{(0)}(X).\]
Hence, we have
\[\overline{\alpha(\Aff(X,\omega))}^{Zariski}\cong\Sp(10,\R).\]

Now, consider the origami $(h,v)$ in $\even(10)$ defined by $h = (1,2,\ldots,11)$ and $v = (1,3,8,11,6,4,5,2,9,10,7)$. Define
\begin{align*}
    \eta_{1} = \sigma_{1}-\sigma_{3},  &&\eta_{2} = \sigma_{2}-\sigma_{9}, &&&\eta_{3} = \sigma_{3}-\sigma_{8}, &&\eta_{4} = \sigma_{4} - \sigma_{5} &&\eta_{5} = \sigma_{5} - \sigma_{2} \\
    \eta_{6} = \sigma_{6}-\sigma_{4},  &&\eta_{7} = \sigma_{7}-\sigma_{1}, &&&\eta_{8} = \sigma_{8}-\sigma_{11}, &&\eta_{9} = \sigma_{9} - \sigma_{10}, &&\eta_{10} = \sigma_{10}-\sigma_{7}.
\end{align*}
It can again be checked that we have
\[H_{1}(X,\R) = H_{1}^{st}(X)\oplus H_{1}^{(0)}(X) = \langle\Sigma,Z\rangle\oplus\langle\eta_{i}\rangle_{i=1}^{10}.\]

We can use \texttt{sage-flatsurf} to find an element $M$ in $\Aff(X,\omega)\cong\SL(X,\omega)$ with $\alpha(M)$ Galois-pinching (the entries are too large to write it here). Defining $N$ to be the $(-1,1)$ multitwist so that $N = \begin{bmatrix*}[r]
    10 & 9 \\
    -9 & -8
\end{bmatrix*}$, it can be checked that $\alpha(N)$ is unipotent with 
\[\dim_{\R}((\alpha(N)-I)H_{1}^{(0)}(X)) = 2\neq \frac{10}{2} = \frac{1}{2}\dim_{\R}H_{1}^{(0)}(X).\]
Hence, we have
\[\overline{\alpha(\Aff(X,\omega))}^{Zariski}\cong\Sp(10,\R).\]

This completes the proof of Theorem~\ref{thm:KZ-monodromy}.


\appendix
\section{Topological realisations of the constructions}\label{app:alg}

Here we describe an algorithm for topologically constructing the filling pairs associated to the odd genus origamis constructed by the first author and Menasco-Nieland (those of Subsection~\ref{subsec:AMN-odd}). Similar algorithms can be formulated in order to construct the filling pairs associated to the generalised constructions of Section~\ref{sec:gen-const}, but we do not include them here. We also direct the reader to recent work of Chang-Menasco~\cite{CM} which gives a geometric construction of coherent minimally intersecting filling pairs (coherent here meaning that the filling pairs are those arising as core curves of $[1,1]$-origamis).

\begin{figure}[b]
    \centering
    \includegraphics{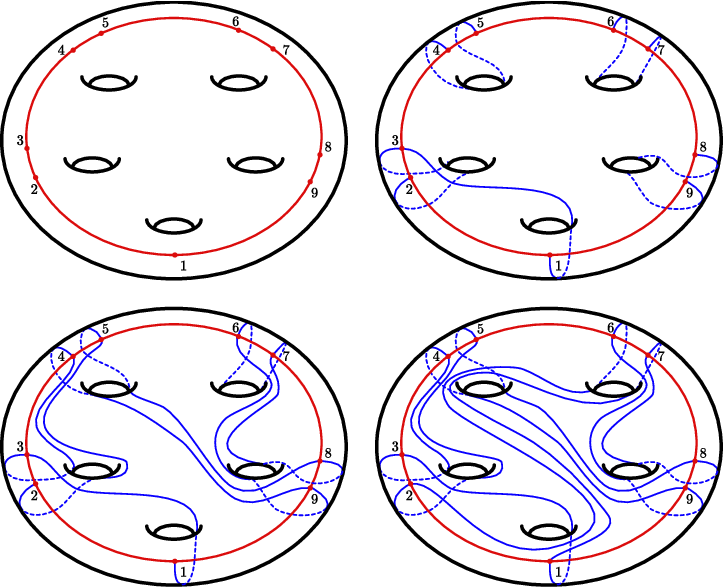}
    \caption{The steps of Algorithm~\ref{alg:draw} for $\tau = (1,3,4,9,6,2,5,8,7)$. The red curve corresponds to $\alpha$, and the blue curve in the bottom right diagram corresponds to $\beta$.}
    \label{fig:draw-alg}
\end{figure}

\begin{algorithm}\label{alg:draw}
Let $g\geq 3$ be odd and let $O = (\sigma_{g},\tau)$ be a $[1,1]$-origami constructed using the method of Subsection~\ref{subsec:AMN-odd}. The following steps will construct a filling pair corresponding to the core curves of the cylinders of $O$.

\begin{itemize}
\item[{\bf Step 1.}] Begin with a genus $g$ surface $S$ as shown in the top left of Figure~\ref{fig:draw-alg}. Draw a simple closed curve $\alpha$ enclosing all of the genus of the surface, and along $\alpha$ add $2g-1$ labelled vertices so that vertex 1 is below one handle, and each pair $i,i+1$, for even $2\leq i\leq 2g-2$, is placed next to another handle going clockwise around $\alpha$.

\item[{\bf Step 2.}] Next, draw a line from each vertex on $\alpha$ outwards around the back of the surface and through the nearest handle to represent a segment of $\beta$ on the underside of $S$. Also add a solid line from the dotted line leaving the vertex labelled 1 to the vertex labelled 3, as shown in the top right of Figure~\ref{fig:draw-alg}.

\item[{\bf Step 3.}] Now, start with the pair of dotted lines that originate from $3 = \tau(1)$ and $2 = \tau(1)-1$, and draw a pair of solid lines from each of the dotted lines (since now $\beta$ is using the handle to get to the top side of $S$) to vertices labelled by $\tau(3)$ and $\tau(2)=\tau(3)+1$, making sure that they fellow travel through the surface. We continue in this way until we reach $\tau^{-1}(1)$ and $\tau^{-1}(1)-1 = \tau^{-1}(2)$. See the bottom left of Figure~\ref{fig:draw-alg}.

\item[{\bf Step 4.}] Finally, draw two solid lines from the dotted lines starting at $\tau^{-1}(1)$ and $\tau^{-1}(2)$ to the dotted lines coming from $1$ and $2$, respectively. See the bottom right of Figure~\ref{fig:draw-alg}.
\end{itemize}
\end{algorithm}

We clearly have that $\alpha$ is a simple closed curve. Next, we observe that $\beta$ is forced to be a simple closed curve. Indeed, we do not introduce any self-intersections since the lines in Step 3 can always be drawn without intersections as the problem is equivalent to drawing lines from the boundary of a circle to points in the interior. The lines in Step 4 can always be drawn through the space between the vertex labelled 1 and its closest handle. Moreover, since $\tau$ is a $(2g-1)$-cycle, we have a single curve. Since $\alpha$ and $\beta$ have been constructed so that, given the correct orientation, they follow the permutations $\sigma_{g}$ and $\tau$, respectively, the fact that $[\sigma_{g},\tau]$ is a $(2g-1)$-cycle forces the complement of $\alpha$ and $\beta$ to be a single disk (it is in fact an $(8g-4)$-gon). Therefore, $\alpha$ and $\beta$ are a filling pair whose dual square-complex realises the origami $O$.


\end{document}